\documentclass[reqno]{amsart}

\usepackage[margin=1.4in]{geometry}
\usepackage{amssymb, amsmath, amsthm, amsfonts, amscd}
\usepackage{mathrsfs,mathtools}
\usepackage[shortlabels]{enumitem}
\usepackage{tikz, tikz-cd}
\usepackage[all]{xy}
\usepackage[new]{old-arrows}
\usepackage{pst-node}
\usepackage{varwidth}
\usepackage{graphicx}
\usepackage{subfig}
\usepackage{tabularx, booktabs}
\usepackage{multirow, multicol}
\usepackage{xcolor}
\usepackage{mathrsfs}
\usepackage{yfonts, euscript}
\usepackage{csquotes, dirtytalk}
\usepackage[pagewise]{lineno} 
\usepackage[colorlinks=true, allcolors=blue]{hyperref}

\usetikzlibrary{arrows, arrows.meta}

\newtheorem{thm}{{\bf Theorem}}[section]
\newtheorem{lemma}[thm]{{\bf Lemma}}

\newtheorem{cor}[thm]{{\bf Corollary}}
\newtheorem*{defn}{{\bf Definition}}

\newtheorem*{rmk}{{\bf Remark}}

\newcommand{\RNum}[1]{\uppercase\expandafter{\romannumeral #1\relax}}
\newcommand{\Aut}{\,\mathrm{Aut}\,}

\newcommand{\Hom}{\,\mathrm{Hom}\,}
\newcommand{\ad}{\,\mathrm{ad}\,}

\newcommand{\GL}{\,\mathrm{GL}\,}
\newcommand{\SL}{\,\mathrm{SL}\,}

\newcommand{\PGL}{\,\mathrm{PGL}\,}

\newcommand{\Rad}{\,\mathrm{Rad}\,}

\newcommand{\diag}{{\rm diag}}

\begin{document}


\title[Automorphisms of Twisted Chevalley Groups of type ${}^2 A_\ell$]{Automorphisms of Twisted Chevalley Groups \\ of type ${}^2 A_\ell \ (\ell \geq 5)$ over Local Rings}


\author{Elena I. Bunina}
\address{Department of Mathematics,
    	Bar–Ilan University, Ramat Gan, Israel}
\email{\href{mailto:helenbunina@gmail.com}{helenbunina@gmail.com}}
\thanks{}

\author{Deep H. Makadiya}
\address{Department of Mathematics,
    	Bar–Ilan University, Ramat Gan, Israel}
\email{\href{mailto:deepmakadia25.dm@gmail.com}{deepmakadia25.dm@gmail.com}}
\thanks{}


\subjclass[2020]{20G35}
\keywords{Twisted Chevalley groups, Automorphisms, Unitary groups}


\begin{abstract}
    This paper is the first in a series devoted to the classification of automorphisms and isomorphisms of twisted Chevalley groups over commutative rings. In the present paper, we prove that every automorphism of a twisted Chevalley group of type ${}^2A_\ell$ $(\ell \geq 5)$ over a local ring in which $2$ is invertible is standard.
\end{abstract}


\maketitle 


\section{Introduction}

Automorphisms and isomorphisms of Chevalley groups over commutative rings have been extensively studied over the past decades. 
For untwisted Chevalley groups, starting from the classical work of Steinberg~\cite{RS1} and Humphreys~\cite{JH0} over fields, a comprehensive theory describing automorphisms has been developed and extended to various classes of rings. 
In particular, the structure of normal subgroups and the classification of automorphisms of Chevalley groups over various classes of commutative rings are now well understood, thanks to the contributions by Abe~\cite{EA0, EA2, EA3, EA4, EA&KS}, Suzuki~\cite{EA&KS}, Taddei~\cite{GT}, Vaserstein~\cite{LV}, Klyachko~\cite{AK}, Vavilov~\cite{NV1}, and many others. 
In recent years, Bunina and collaborators have obtained a series of results describing automorphisms of Chevalley groups over general commutative rings; see, for example,~\cite{EB24:final, EB12:main, EB07:first, EB08:b2andg2, EB09:alwithhalfele, EB10:alwithhalf, EB10:alwithouthalf, EB10:blwithhalf, EB10:f4withhalf, EB&PV14:g2withouthlfnor, EB&PV14:g2withouthalf, EB&MV24:g2with1/3}.

In contrast, the case of twisted Chevalley groups over rings is much less developed, even though the corresponding groups over fields are classical objects and their automorphisms are again standard in the sense of Steinberg. 
A systematic investigation in this direction has only recently been initiated by Garge and Makadiya~\cite{SG&DM1, SG&DM2, SG&DM3}. 
In these works, the authors described the normal subgroup structure of elementary twisted Chevalley groups, established congruence subgroup properties, and studied normalizers and factorization properties over arbitrary commutative rings.

The present paper initiates a series devoted to the classification of automorphisms and isomorphisms of twisted Chevalley groups over commutative rings. In this first part, we focus on adjoint twisted Chevalley groups of type ${}^2 A_\ell$ with $\ell \geq 5$, defined over local rings $R$ satisfying $1/2 \in R$. Our main result shows that, under these assumptions, all automorphisms are standard in an appropriate sense. 

Let $G_{\pi,\sigma}(\Phi,R)$ be a twisted Chevalley group obtained as the fixed point subgroup of the untwisted Chevalley group $G_{\pi}(\Phi,R)$ under an automorphism $\sigma=\rho\circ\theta$, where $\rho$ is a graph automorphism and $\theta$ is a ring automorphism of order $2$. 
Denote by $E'_{\pi,\sigma}(\Phi,R)$ the corresponding elementary twisted Chevalley group. 

More precisely, let $G=G_{\pi,\sigma}(A_\ell,R)$ (respectively, $E'_{\pi,\sigma}(A_\ell,R)$) be a twisted Chevalley group (respectively, an elementary twisted Chevalley group) of type ${}^2A_\ell$, where $\ell\ge 5$ and $R$ is a local ring satisfying $1/2\in R$. 
We show that every automorphism of $E'_{\pi,\sigma}(A_\ell,R)$ is a composition of an inner automorphism and a ring automorphism, whereas every automorphism of $G_{\pi,\sigma}(A_\ell,R)$ is a composition of an inner, a ring, and a central automorphism.

The proof proceeds in several steps. We first establish the result for elementary adjoint groups (see Theorem~\ref{MT_local1}). 
The key observation is that the description of normal subgroups obtained in~\cite{SG&DM1} makes it possible to analyze automorphisms modulo the Jacobson radical. After passing to the corresponding quotient group over the residue field, we reduce the problem to the classical field case and normalize the automorphism by conjugation.

The subsequent argument is of a matrix-theoretic nature. By performing a sequence of suitable basis changes, we arrange that the automorphism fixes distinguished generators of the group, namely the elements $h_{[\alpha]}(-1)$, $w_{[\alpha]}(1)$ and $x_{[\alpha]}(1)$. 
The final step consists in proving that an automorphism fixing all these elements must be induced by an automorphism of the underlying ring.

Having established the result for adjoint elementary groups, we extend it to groups corresponding to arbitrary weight lattices (see Theorem~\ref{MT_local2}). Finally, using the fact that the elementary twisted Chevalley group is characteristic in the full twisted Chevalley group, we derive the description of automorphisms of $G_{\pi,\sigma}(A_\ell,R)$ (see Theorem~\ref{MT_local3}).

The case ${}^2A_2$ is not considered here, since the normal-subgroup description required for our proof is not currently available. The remaining cases ${}^2A_\ell$ with $\ell=3,4$ are also excluded from the present work, since the normalization arguments used in this paper do not apply in these ranks. Consequently, these cases require a separate analysis and will be treated in subsequent papers.

Although the main results of this paper concern groups of type ${}^2 A_\ell \ (\ell \geq 5)$, we develop the necessary definitions and general framework in a broader setting. This allows the present paper to serve as a unified reference for subsequent parts of the series and avoids repetition. In forthcoming works, we shall consider the remaining twisted types ${}^2A_\ell$ $(\ell = 3,4)$, ${}^2D_\ell$ $(\ell \geq 4)$, ${}^2E_6$, and ${}^3D_4$.

The paper is organized as follows. 
In Section~\ref{sec:preliminaries}, we review the basic definitions and general properties of Chevalley groups and twisted Chevalley groups. 
In Section~\ref{sec:G(R)_and_MT}, we give explicit matrix realizations of $G_{\mathrm{sc},\sigma}(A_{\ell}, R)$ and $G_{\ad,\sigma}(A_{\ell}, R)$; more precisely, these groups can be realized as the fixed-point subgroups of $\SL_{\ell+1}(R)$ and $\PGL_{\ell+1}(R)$, respectively, under the automorphism $\sigma$.
In Section~\ref{sec:outline_of_main_thm}, we outline the proof of Theorem~\ref{MT_local1} and reduce it to several key steps. 
Sections~\ref{sec:image_of_h(-1)}--\ref{sec:image_of_x(t)} are devoted to the proofs of these steps. 
Finally, in Section~\ref{sec:proof_of_MT2_and_MT3}, we prove Theorem~\ref{MT_local2} and Theorem~\ref{MT_local3}.


\section{Chevalley and Twisted Chevalley Groups} \label{sec:preliminaries}


In this section, we provide the formal definitions of Chevalley groups and twisted Chevalley groups. 
For comprehensive treatments of these topics, we refer the reader to \cite{EA1}, \cite{EB24:final}, \cite{RC}, \cite{SG&DM1}, \cite{SG&DM2}, \cite{SG&DM3}, \cite{JH}, \cite{EP&NV}, \cite{RS}, and \cite{NV1}. 
We primarily follow the notation established in \cite{SG&DM1}.


\subsection{Chevalley groups}\label{subsec:CG}

Throughout the paper, let $\mathcal{L}$ be a finite-dimensional semisimple Lie algebra over $\mathbb{C}$. 
We fix a Cartan subalgebra $\mathcal{H} \subseteq \mathcal{L}$ and denote by $\Phi \subseteq \mathcal{H}^*$ the associated root system. 
The Lie algebra then decomposes as
\[
    \mathcal{L}
    =
    \mathcal{H}
    \oplus
    \bigoplus_{\alpha \in \Phi} \mathcal{L}_{\alpha},
\text{  where }
    \mathcal{L}_\alpha
    =
    \{\, X \in \mathcal{L}
    \mid
    [H, X] = \alpha(H) X
    \text{ for all } H \in \mathcal{H} \,\}.
\]
It is well known that each root space is one-dimensional, i.e.\ $\dim \mathcal{L}_\alpha = 1$ for all $\alpha \in \Phi$.  
Consequently, $\dim \mathcal{L} = \dim \mathcal{H} + |\Phi|$, and the integer $\ell := \dim \mathcal{H}$ is called the \emph{rank} of~$\mathcal{L}$.

Fix a set of simple roots $\Delta \subseteq \Phi$, and let $\Phi^+$ denote the corresponding set of positive roots.  
For each $\alpha \in \Phi$, there exists a unique element
\[
H_\alpha \in [\mathcal{L}_\alpha, \mathcal{L}_{-\alpha}]
\subseteq \mathcal{H}
\]
such that $\alpha(H_\alpha) = 2$.  
For each $\alpha \in \Phi^+$, choose a nonzero element
$X_\alpha \in \mathcal{L}_\alpha$; then there exists a unique element
$X_{-\alpha} \in \mathcal{L}_{-\alpha}$ satisfying
$H_\alpha = [X_\alpha, X_{-\alpha}]$.
It follows immediately that $H_{-\alpha} = -H_\alpha$ for all $\alpha \in \Phi$.

The set
\[
\{\, H_i (= H_{\alpha_i}), X_\alpha
\mid
\alpha_i \in \Delta,\ \alpha \in \Phi \,\}
\]
forms a basis of $\mathcal{L}$.  
Moreover, this basis may be chosen so that the Lie bracket of any two basis elements is an integral linear combination of basis elements; such a basis is called a \textbf{Chevalley basis} (see, for example, \cite[p.\,147]{JH} or \cite[p.\,7]{RS}).  
In what follows, we fix such a Chevalley basis.


\subsubsection{} 

Consider an $n$-dimensional faithful representation
$\pi : \mathcal{L} \longrightarrow \mathfrak{gl}(V)$
of the Lie algebra $\mathcal{L}$. 
For any $\lambda \in \mathcal{H}^*$, define
\[
    V_\lambda
    =
    \{\, v \in V
    \mid
    \pi(H)v = \lambda(H)v
    \text{ for all } H \in \mathcal{H} \,\}.
\]
If $V_\lambda \neq 0$, then $V_\lambda$ is called a \textbf{weight space}, and the corresponding $\lambda$ is called a \textbf{weight} of the representation $\pi$.  
Any nonzero vector $v \in V_\lambda$ is called a \textbf{weight vector} of weight $\lambda$.  
Let $\Omega \subseteq \mathcal{H}^*$ denote the set of all weights of $\pi$.  
Then $V$ decomposes into its weight spaces:
\[
    V = \bigoplus_{\lambda \in \Omega} V_\lambda.
\]

Now let $\mathcal{U} = \mathcal{U}(\mathcal{L})$ denote the universal enveloping algebra of $\mathcal{L}$, and let $\mathcal{U}_{\mathbb{Z}}$ be the corresponding \textbf{Kostant $\mathbb{Z}$-form}, generated by the elements $X_\alpha^{\,m}/m!$ for $m \in \mathbb{Z}_{\ge 0}$ and $\alpha \in \Phi$.  
The representation $\pi$ extends naturally to an action of $\mathcal{U}$, and hence also of $\mathcal{U}_{\mathbb{Z}}$, on $V$.
The module $V$ contains a lattice $M$ that is invariant under the action of $\mathcal{U}_{\mathbb{Z}}$; such a lattice is called an \textbf{admissible lattice}.  


\subsubsection{}

Let $R$ be a commutative ring with unity. 
Given an admissible lattice $M \subseteq V$, define
\[
V(R) = M \otimes_{\mathbb{Z}} R.
\]
Then $V(R)$ is a free $R$-module of rank $n$. For each $\alpha \in \Phi$ and $t \in R$, consider the automorphism of $V(R)$ defined by
\[
x_\alpha(t) := \exp\!\bigl(t\,\pi(X_\alpha)\bigr),
\text{ where }
    \exp\!\bigl(t\,\pi(X_\alpha)\bigr)
        =
        \sum_{m=0}^{\infty}
        \frac{t^m\,\pi(X_\alpha)^m}{m!}.
\]
Since the Kostant $\mathbb{Z}$-form $\mathcal{U}_{\mathbb{Z}}$ acts on $M$ and each $X_\alpha$ is nilpotent, the above exponentials are well defined on $V(R)$ (see \cite[Chapter~3]{RS} for more details).

The subgroup of $\operatorname{Aut}(V(R))$ generated by all $x_\alpha(t)$, with $t \in R$ and $\alpha \in \Phi$, is called an \textbf{elementary Chevalley group} and is denoted by $E_\pi(\Phi, R)$.  
It is known that $E_\pi(\Phi, R)$ is independent of the choice of admissible lattice $M$.

For a representation $\pi$, let $\Lambda_\pi$ denote the weight lattice of $\pi$, i.e.\ the lattice generated by all weights of $\pi$.  
If $\pi$ and $\pi'$ are representations of $\mathcal{L}$ such that
$\Lambda_\pi = \Lambda_{\pi'}$, then
\[
E_\pi(\Phi, R) \cong E_{\pi'}(\Phi, R).
\]

Let $\Lambda_r$ be the root lattice and $\Lambda_{sc}$ the lattice generated by the fundamental weights.  
If $\pi$ satisfies $\Lambda_\pi = \Lambda_r$ (resp.\ $\Lambda_\pi = \Lambda_{sc}$), then the associated elementary Chevalley group $E_\pi(\Phi, R)$ is called the \textbf{adjoint elementary Chevalley group} (resp.\ the \textbf{universal} or \textbf{simply connected elementary Chevalley group}), and is denoted by $E_{\mathrm{ad}}(\Phi, R)$ (resp.\ $E_{sc}(\Phi, R)$).

Let $U = U_\pi(\Phi, R)$ (resp.\ $U^{-} = U^{-}_\pi(\Phi, R)$) denote the subgroup of $E_\pi(\Phi, R)$ generated by all $x_\alpha(t)$ with $\alpha \in \Phi^{+}$ (resp.\ $\alpha \in \Phi^{-}$) and $t \in R$.  
Let $H = H_\pi(\Phi, R)$ be the subgroup generated by all elements
\[
    h_\alpha(t)
    =
    w_\alpha(t) w_\alpha(1)^{-1},
    \qquad\text{where }
    w_\alpha(t)
    =
    x_\alpha(t)\,
    x_{-\alpha}(-t^{-1})\,
    x_\alpha(t),
    \quad t \in R^{\times}.
\]
If $B = B_\pi(\Phi, R)$ is the subgroup generated by $U$ and $H$, then $U \cap H = 1$, $U$ is normal in $B$, and $B = UH$.
Let $N = N_\pi(\Phi, R)$ be the subgroup generated by all $w_\alpha(t)$, and let $W = W(\Phi)$ be the Weyl group. Then $H$ is normal in $N$, and
$W \cong N/H$
via the correspondence
$s_\alpha \longmapsto H w_\alpha(1)$,
for $\alpha \in \Phi$.


\subsubsection{}

Let $k$ be an algebraically closed field. Then every connected semisimple linear algebraic group over $k$ is, up to isomorphism, an elementary Chevalley group $E_\pi(\Phi, k)$ (see \cite[Chapter~5]{RS}). All these groups can be viewed as subgroups of $GL_n(k)$ defined by a common set of zeros of polynomials in the matrix entries $x_{ij}$ with integer coefficients. Note that the multiplication map and the inverse map are also defined by polynomials with integer coefficients. Therefore, these polynomials can be regarded as polynomials over an arbitrary commutative ring with unity. 


\subsubsection{}

Let $E_\pi(\Phi, \mathbb{C})$ be an elementary Chevalley group viewed as a subgroup of $GL_n(\mathbb{C})$ defined as the zero locus of polynomials
$p_1(x_{ij}), \dots, p_m(x_{ij})$.

These polynomials can be chosen to have integer coefficients. Let $R$ be a commutative ring with unity and consider the group
\[
G(R)
=
\left\{
(a_{ij}) \in GL_n(R)
\ \middle|\
\widetilde{p}_1(a_{ij}) = 0,\dots,
\widetilde{p}_m(a_{ij}) = 0
\right\},
\]
where
\[
\widetilde{p}_1(x_{ij}), \dots, \widetilde{p}_m(x_{ij})
\]
are the polynomials with the same integer coefficients as
$p_1(x_{ij}), \dots, p_m(x_{ij})$, regarded as polynomials over $R$. This group is called the \textbf{Chevalley group} $G_\pi(\Phi, R)$ of type $\Phi$ over the ring~$R$.

If $\pi$ is a representation such that $\Lambda_{\pi} = \Lambda_r$, then
\[
G_{\pi}(\Phi, R) = G_{\mathrm{ad}}(\Phi, R)
\]
is called an \textbf{adjoint Chevalley group}. If $\pi$ is a representation such that $\Lambda_\pi = \Lambda_{sc}$, then
\[
G_\pi(\Phi, R) = G_{sc}(\Phi, R)
\]
is called a \textbf{universal} (or \textbf{simply connected}) \textbf{Chevalley group}.

Note that
$E_\pi(\Phi, R) \subseteq G_\pi(\Phi, R)$.

If $k$ is an algebraically closed field, then
\[
E_\pi(\Phi, k) = G_\pi(\Phi, k).
\]
In general, however, equality need not hold, even over a field. 


\subsubsection{}

The subgroup of diagonal matrices, with respect to the standard basis of weight vectors, in the Chevalley group $G_\pi(\Phi, R)$ is called the \textbf{standard maximal torus} of $G_\pi(\Phi, R)$ and is denoted by $T_\pi(\Phi, R)$.  
This group is isomorphic to
$\mathrm{Hom}(\Lambda_\pi, R^\times)$,
where $R^{\times}$ denotes the group of units of $R$. The isomorphism is defined as follows.

Let
\[
\chi \in \mathrm{Hom}(\Lambda_\pi, R^{\times})
\]
be a character of the weight lattice $\Lambda_\pi$.  
For a weight space $V_\mu$ of weight $\mu$ in $\pi$, let
\[
V_\mu(R)
=
(V_\mu \cap M) \otimes_\mathbb{Z} R
\subseteq V(R).
\]
Define an automorphism $h(\chi)$ of $V(R)$ by
\[
    h(\chi)\cdot v = \chi(\mu)\,v,
    \qquad v \in V_\mu(R).
\]  
Since
\[
V(R)
=
\bigoplus_{\mu \in \Omega_\pi} V_\mu(R),
\]
where $\Omega_\pi$ is the set of weights of $\pi$, this defines $h(\chi)$ on all of $V(R)$.  
Hence,
\[
    T_\pi(\Phi, R)
    =
    \{\, h(\chi)
    \mid
    \chi \in \mathrm{Hom}(\Lambda_\pi, R^{\times}) \,\}.
\]

The element $h(\chi)$ acts on the root subgroup
\[
\mathfrak{X}_\alpha
=
\{\,x_\alpha(t)\mid t\in R\,\}
\]
by conjugation as follows:
\[
    h(\chi)\,x_\alpha(\zeta)\,h(\chi)^{-1}
    =
    x_\alpha\bigl(\chi(\alpha)\,\zeta\bigr),
    \qquad
    \alpha \in \Phi,\quad \zeta \in R.
\]

Observe that
\[
H_\pi(\Phi, R) \subseteq T_\pi(\Phi, R).
\]
An element $h(\chi) \in T_\pi(\Phi, R)$ belongs to $H_\pi(\Phi, R)$ if and only if the character
\[
\chi \in \mathrm{Hom}(\Lambda_\pi, R^{\times})
\]
can be extended to a character
\[
    \chi' \in \mathrm{Hom}(\Lambda_{sc}, R^{\times})
    \qquad\text{such that}\qquad
    \chi'|_{\Lambda_\pi} = \chi.
\]
In particular, for $t \in R^{\times}$ and $\alpha \in \Phi$, we have
\[
    h_\alpha(t) = h(\chi_{\alpha,t}),
    \qquad
    \chi_{\alpha,t}(\lambda)
    =
    t^{\langle \lambda,\alpha^\vee\rangle},
    \quad \lambda \in \Lambda_\pi.
\]
Therefore,
\[
H_\pi(\Phi, R)
=
E_\pi(\Phi, R)\cap T_\pi(\Phi, R).
\]
Moreover, in the simply connected case one has
\[
H_{sc}(\Phi, R)=T_{sc}(\Phi, R).
\]


\subsubsection{}

We now record several standard structural properties of Chevalley groups.

A subgroup $H$ of a group $G$ is called \emph{characteristic} if it is mapped into itself under every automorphism of $G$.  
In particular, every characteristic subgroup is normal.  
Assume that $\operatorname{rank}(\Phi)\geq 2$. If $\Phi$ is of type $B_2$ or $G_2$, assume in addition that $2\in R^{\times}$. Then $E_\pi(\Phi, R)$ is a characteristic subgroup of $G_\pi(\Phi, R)$ (see \cite[Theorem~5]{LV}).

A group $G$ is said to be \emph{perfect} if $[G,G]=G$, where $[G,G]$ denotes the commutator subgroup of $G$.  
Assume that $\operatorname{rank}(\Phi)\geq 2$. If $\Phi$ is of type $B_2$ or $G_2$, assume in addition that $2\in R^{\times}$. Then the elementary Chevalley group $E_\pi(\Phi, R)$ is perfect (see \cite[Theorem~5]{LV}).  

Consider the subgroup
\[
G_{\pi}^{0}(\Phi, R)
=
E_\pi(\Phi, R)\,T_\pi(\Phi, R)
\subseteq G_\pi(\Phi, R).
\]
If $R$ is a semilocal ring, then
\[
G_\pi(\Phi, R)=G_\pi^{0}(\Phi, R)
\]
(see \cite[Corollary~2.4]{EA2}).
In particular, in the simply connected case, if $R$ is a semilocal ring, then
\[
G_{sc}(\Phi, R)=E_{sc}(\Phi, R).
\]

For an ideal $I$ of $R$, the natural projection map
$R \longrightarrow R/I$
induces a group homomorphism
\[
    \lambda_I:
    G_\pi(\Phi, R)
    \longrightarrow
    G_\pi(\Phi, R/I).
\]  
Define
\[
    G_\pi(\Phi, I)
    \coloneq
    \ker(\lambda_I)
\]
and
\[
    G_\pi(\Phi, R, I)
    \coloneq
    \lambda_I^{-1}
    \bigl(
    Z(G_\pi(\Phi, R/I))
    \bigr),
\]
where $Z(G_\pi(\Phi, R/I))$ denotes the center of $G_\pi(\Phi, R/I)$.  
The subgroup $G_\pi(\Phi, I)$ of $G_\pi(\Phi, R)$ is referred to as the \emph{principal congruence subgroup} of level $I$, while $G_\pi(\Phi, R, I)$ is called the \emph{full congruence subgroup} of level $I$.

Let $E_\pi(\Phi, I)$ denote the subgroup of $E_\pi(\Phi, R)$ generated by $x_\alpha(t)$ for all $\alpha \in \Phi$ and $t \in I$.  
Additionally, define $E_\pi(\Phi, R, I)$ as the normal subgroup of $E_\pi(\Phi, R)$ generated by $E_\pi(\Phi, I)$. The subgroup $E_\pi(\Phi, R, I)$ is referred to as the \emph{relative elementary subgroup} of level $I$. 

\begin{thm}[{L. N. Vaserstein \cite{LV}}, E. Abe \cite{EA3}]
\label{thm:LV&EA}
Let $\Phi$ be an irreducible root system of rank at least $2$, and let $R$ be a commutative ring with unity. Assume that $1/2\in R$ if
\[
\Phi\sim B_\ell\ (\ell\geq 2),\quad
C_\ell\ (\ell\geq 3),\quad
F_4,
\]
and that $1/3\in R$ and $R$ has no factor ring with two elements if $\Phi\sim G_2$. If $H$ is a subgroup of $G_\pi(\Phi, R)$ normalized by $E_\pi(\Phi, R)$, then there exists a unique ideal $J$ of $R$ such that
\[
E_\pi(\Phi, R, J)
\subseteq
H
\subseteq
G_\pi(\Phi, R, J).
\]
\end{thm}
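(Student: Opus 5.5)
The plan is to follow the classical Vaserstein--Abe strategy. First I attach to $H$ its \emph{level}. Set
\[
J=\{\,t\in R \mid x_\alpha(t)\in H \text{ for all } \alpha\in\Phi\,\}.
\]
Showing that $J$ is an ideal takes three observations. The additive property follows from $x_\alpha(s)x_\alpha(t)=x_\alpha(s+t)$. Since $H$ is normalized by the Weyl elements $w_\beta(1)$ and these permute the root subgroups up to sign, it suffices that $x_\alpha(t)\in H$ for one root of each length. Closure under multiplication by $R$ comes from the Chevalley commutator formula: for roots $\alpha,\beta$ with $\alpha+\beta\in\Phi$ we have
\[
[x_\alpha(t),x_\beta(r)] = x_{\alpha+\beta}(\pm N t r)\cdots .
\]
For the long/short interplay I use the hypotheses $1/2$ (for $B_\ell,C_\ell,F_4$) and $1/3$ (for $G_2$) to divide out the structure constants $N_{\alpha\beta}\in\{\pm2,\pm3\}$. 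The inclusion $E_\pi(\Phi,R,J)\subseteq H$ is then immediate, since $H$ contains $E_\pi(\Phi,J)$ and is normalized by $E_\pi(\Phi,R)$.

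The heart of the proof is the reverse inclusion $H\subseteq G_\pi(\Phi,R,J)$: every $g\in H$ must become central modulo $J$. Replacing $R$ by $R/J$ and $H$ by its image, which requires knowing that the image of $H$ in $G_\pi(\Phi,R/J)$ has level $0$, I reduce to showing that a subgroup normalized by $E$ with zero level lies in the center. For $g\in H$, the commutators $[g,x_\alpha(t)]$ lie in $H$. The strategy is to manufacture from such commutators nontrivial elementary root elements whenever $g$ is noncentral. I would localize: pass to $R_{\mathfrak m}$ for maximal ideals $\mathfrak m$, or first to the case of local and then semilocal rings. Over a local ring I use the Gauss (Bruhat-type) decomposition $G=UTU^-$ (up to $E$), valid because $G=G^0=ET$ in the semilocal case. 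This lets me assume $g$ is close to the big cell. Taking suitable double commutators $[[g,x_\alpha(t)],x_\beta(s)]$ then yields elements of $H$ that lie in a single root subgroup $x_\gamma(c)$. Here $c$ is an entry of $g$ minus its diagonal value, multiplied by $ts$. Since the level is $0$, all such entries vanish; hence $g$ is diagonal, acts by scalars on weight spaces, and commutes with all $x_\alpha$, so $g$ is central.

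To globalize from local to arbitrary $R$, I would use Quillen--Suslin-type patching (the ``localization and patching'' of Suslin, Vaserstein, and Taddei). A relation of the form $x_\gamma(c)\in H$ proved in each $R_{\mathfrak m}$ with denominators is lifted back to $R$ by multiplying $c$ by a high power of elements outside $\mathfrak m$ and using that these elements generate the unit ideal. Uniqueness of $J$ is easy: $E_\pi(\Phi,R,J)\subseteq G_\pi(\Phi,R,J')$ forces $J\subseteq J'$, because $x_\alpha(t)$ is central modulo $J'$ only if $t\in J'$, the center consisting of elements of $T$.

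The main obstacle I expect is the ``extraction'' step, which produces genuine root elements from a noncentral $g$ uniformly in all types. The exceptional commutator relations for $B_2$ and $G_2$ need care, and the condition that $R$ has no residue field $\mathbb F_2$ in type $G_2$ enters exactly here. Also delicate is the local-global patching needed to avoid assuming $R$ semilocal. For the purposes of this paper it would be reasonable to cite \cite{LV} and \cite{EA3} for these technical parts.
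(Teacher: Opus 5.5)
The paper gives no proof of this theorem. It quotes it from \cite{LV} and \cite{EA3}, so your closing suggestion to cite those papers is exactly what the paper does. Your treatment of the level $J$ as an ideal, of the inclusion $E_\pi(\Phi,R,J)\subseteq H$, and of uniqueness is fine in outline. The argument you sketch for the hard inclusion $H\subseteq G_\pi(\Phi,R,J)$, however, has genuine gaps.

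\textbf{The reduction to $R/J$ is circular.} To know that the image of $H$ in $G_\pi(\Phi,R/J)$ has level $0$, you must show that $x_\alpha(t)g\in H$ with $g\in G_\pi(\Phi,J)$ forces $t\in J$. The identity $[x_\alpha(t)g,y]=x_\alpha(t)[g,y]x_\alpha(t)^{-1}\,[x_\alpha(t),y]$ only helps if $[g,y]\in H$. That amounts to $[G_\pi(\Phi,J),E_\pi(\Phi,R)]\subseteq E_\pi(\Phi,R,J)$, the relative standard commutator formula. This is a deep result in its own right (for $J=R$ it is the normality of $E_\pi(\Phi,R)$ in $G_\pi(\Phi,R)$), and your outline does not provide it. A cleaner route is elementwise: for $g\in H$, let $J_g$ be the smallest ideal with $g\in G_\pi(\Phi,R,J_g)$, prove that $E_\pi(\Phi,R,J_g)$ lies in the $E_\pi(\Phi,R)$-normal closure of $g$, and take $J=\sum_{g\in H}J_g$.

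\textbf{The extraction step would not work as stated.} For a general $g$, even one in the big cell $U^-TU$, the double commutator $[[g,x_\alpha(t)],x_\beta(s)]$ is not a root element; this happens only for very special $g$, e.g.\ $g\in T$. Moreover, you may replace $g$ only by $E_\pi(\Phi,R)$-conjugates or by products with other elements of $H$, not by a convenient element of the big cell. What is needed is a procedure producing root elements $x_\gamma(c)$ in the normal closure of $g$, with $c$ running over generators of the whole ideal $J_g$. This includes the case, over a local ring, where $g$ is central modulo the maximal ideal and all relevant entries are non-units. Also, ``diagonal'' gives ``central'' only after you use $[h(\chi),x_\alpha(1)]=x_\alpha(\chi(\alpha)-1)$ to force $\chi|_{\Lambda_r}=1$.

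\textbf{The globalization is not meaningful as stated.} The image of $H$ in $G_\pi(\Phi,R_{\mathfrak m})$ is normalized only by the image of $E_\pi(\Phi,R)$, not by $E_\pi(\Phi,R_{\mathfrak m})$, so no local theorem applies to it. A local computation also conjugates by elements with denominators, and this is not undone by multiplying the final parameter by a power of some $s\notin\mathfrak m$. Quillen--Suslin-type patching (as used by Suslin and Taddei) is a criterion for an element of $G_\pi(\Phi,R[X])$ to lie in $E_\pi(\Phi,R[X])$. It says nothing about membership in an arbitrary subgroup $H$.

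Controlling such conjugations, or bypassing localization by explicit formulas, is precisely the substance of \cite{LV} and \cite{EA3}. As it stands, your proposal describes the expected shape of a proof rather than giving one.
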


Finally, we recall the result on automorphisms of Chevalley groups.
For untwisted Chevalley groups over commutative rings, the automorphism problem has by now been completely resolved in full generality.
We adopt the standard terminology: an automorphism of a Chevalley group is called \emph{standard} if it can be written as a composition of ring, inner, central, and graph automorphisms.

\begin{thm}[E. I. Bunina \cite{EB24:final}]
\label{thm:Bunina-untwisted}
Let $R$ be a commutative ring and let $\Phi$ be an irreducible root system of rank greater than $1$.  
Assume that $1/2\in R$ for the types
$A_2$,
$B_\ell$ ($\ell\geq 2$),
$C_\ell$ ($\ell\geq 3$),
$F_4$,
and that $1/2,1/3\in R$ for the type $G_2$.  
Then every automorphism of $G_{\pi}(\Phi, R)$ and of $E_{\pi}(\Phi, R)$ is standard.
\end{thm}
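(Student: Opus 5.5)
The plan is to reduce to the adjoint elementary group $E_{\mathrm{ad}}(\Phi,R)$ and then go from local rings to arbitrary rings by localization. Since $E_\pi(\Phi,R)$ is perfect and characteristic in $G_\pi(\Phi,R)$ (\cite[Theorem~5]{LV}), any automorphism $\varphi$ of $G_\pi(\Phi,R)$ restricts to an automorphism of $E_\pi(\Phi,R)$. Central automorphisms account for the difference between $G_\pi$ and $E_\pi$, and the passage between different weight lattices is done through the adjoint quotient. So the core statement is: every automorphism of $E_{\mathrm{ad}}(\Phi,R)$ is a composition of an inner automorphism (by an element of the normalizer in $\GL_n(R)$, with $n=\dim\mathcal L$), a ring automorphism and a graph automorphism. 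Once this holds, lifting back to $G_\pi$ and to other lattices should be a bookkeeping argument with the torus $T_\pi(\Phi,R)=\Hom(\Lambda_\pi,R^\times)$ and the centre.

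\emph{Step 1 (local rings).} Let $R$ be local with maximal ideal $\mathfrak m$. By Theorem~\ref{thm:LV&EA}, the subgroup $E_{\mathrm{ad}}(\Phi,R)\cap G_{\mathrm{ad}}(\Phi,R,\mathfrak m)$ is the largest proper normal subgroup of a specific congruence type, so $\varphi$ preserves it. Hence $\varphi$ induces an automorphism of $E_{\mathrm{ad}}(\Phi,R/\mathfrak m)$, which is standard by Steinberg's theorem over fields. After composing with a graph automorphism and lifting the inner part from $k=R/\mathfrak m$ to $R$, we may assume $\varphi\equiv\mathrm{id}$ modulo $\mathfrak m$, up to a field automorphism that we also lift.

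Next we conjugate successively by matrices $\equiv I \pmod{\mathfrak m}$:
\begin{enumerate}
\item The commuting involutions $\varphi(h_\alpha(-1))$ are simultaneously diagonalizable with eigenvalues $\pm1$, because $1/2\in R$ (or, in characteristic two cases, by a separate argument). Using that $R$ is local, we conjugate them to $h_\alpha(-1)$.
\item Within the centralizer of the torus of involutions, we normalize $\varphi(w_\alpha(1))$ to $w_\alpha(1)$.
\item Using the commutator relations and the action of the $w$'s, we force $\varphi(x_\alpha(1))=x_\alpha(1)$.
\end{enumerate}
Finally, define $\rho(t)$ by $\varphi(x_{\alpha}(t))=x_\alpha(\rho(t))$. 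To justify this, show that $\varphi(x_\alpha(t))$ lies in the centralizer of the appropriate subgroup, which equals the root subgroup times a central piece. The Chevalley commutator formula then gives that $\rho$ is additive and multiplicative, hence a ring automorphism.

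\emph{Step 2 (arbitrary rings).} Here I would use Bunina's device of extending $\varphi$ to the localizations $R_{\mathfrak m}$. First, show that $\varphi$ maps the images of the relative elementary subgroups $E(\Phi,R,J)$ to subgroups of the same form. Theorem~\ref{thm:LV&EA} gives a bijection between ideals of $R$ and such normal subgroups, so $\varphi$ induces a lattice automorphism of ideals, and in particular a permutation of maximal ideals. Then build, for each $\mathfrak m$, a compatible automorphism between the localized groups, apply Step~1, and glue the resulting local conjugating matrices and ring maps. I expect this gluing to be the main obstacle. The conjugating matrix is only determined up to the centralizer, i.e.\ up to scalars and central elements, so the local data define a cocycle that must be trivialized. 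The natural attempt is to pass to the $R$-linear span of $E_{\mathrm{ad}}(\Phi,R)$ in $M_n(R)$, namely the image of the Lie algebra $\mathcal L\otimes R$ together with the enveloping algebra. One would show that $\varphi$ extends to a semilinear automorphism of this algebra, which is then globally of the form $X\mapsto C\rho(X)C^{-1}$ with $C$ defined over $R$, possibly after a central (torus) correction that explains the central automorphisms in the final statement.
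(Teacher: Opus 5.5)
The paper does not prove this theorem; it quotes it from \cite{EB24:final} as background, so your proposal has to stand on its own.

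\textbf{Step~1 and the missing case $2\notin R^\times$.} For local rings with $2\in R^\times$, your Step~1 is in outline the scheme the paper itself carries out for ${}^2A_\ell$ in Sections~\ref{sec:outline_of_main_thm}--\ref{sec:image_of_x(t)}. One correction is needed there. A field automorphism of $R/\mathfrak m$ need not lift to an automorphism of $R$, so it cannot be removed. It has to be carried along, as in Lemma~\ref{lemma:compatible-representatives}, using only that it fixes $\pm1$ and $1/2$.

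The first genuine gap is that the statement does \emph{not} assume $1/2\in R$ for $A_\ell$ $(\ell\geq 3)$, $D_\ell$, $E_6$, $E_7$, $E_8$. Your normalization machinery is built on it:
\begin{itemize}
\item the $\pm1$-eigenspace splitting of the involutions $\varphi(h_\alpha(-1))$ uses the projectors $(1\pm h)/2$;
\item deductions of the form ``$u^2=I$ and $u\equiv I\pmod{\mathfrak m}$ imply $u=I$'' already fail for $u=-1$ over $\mathbb Z_{(2)}$;
\item if $\operatorname{char}(R/\mathfrak m)=2$, then $h_\alpha(-1)\equiv I\pmod{\mathfrak m}$ carries no residue information, and if $2=0$ in $R$ it is the identity.
\end{itemize}
So ``a separate argument'' is not a side remark. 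It is the whole proof for these types, which needs different distinguished elements and a different normalization.

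\textbf{Step~2 and the passage to arbitrary rings.} An abstract automorphism $\varphi$ of $E_{\mathrm{ad}}(\Phi,R)$ induces nothing on $E_{\mathrm{ad}}(\Phi,R_{\mathfrak m})$.
\begin{itemize}
\item What the normal-subgroup theorem makes intrinsic are the quotients $E_{\mathrm{ad}}(\Phi,R)\to E_{\mathrm{ad}}(\Phi,R/I)$. These give you isomorphisms $E_{\mathrm{ad}}(\Phi,R/\mathfrak m)\to E_{\mathrm{ad}}(\Phi,R/\mathfrak m')$, and the maximal ideal may move.
\item Localization is not such a quotient.
\item Inverse limits over $R/\mathfrak m^k$ reach at most the completion, into which $R_{\mathfrak m}$ need not embed.
\end{itemize}
So ``build a compatible automorphism between the localized groups'' is precisely the missing idea, not a routine step.

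Conversely, the gluing/cocycle problem you single out is self-imposed. In the sense of ``inner'' used in this paper (Section~\ref{subsec:automorphism}), following Bunina, an inner automorphism may be conjugation by $g\in G_\pi(\Phi,S)$ for an overring $S\supseteq R$, provided $g$ normalizes $G_\pi(\Phi,R)$. So no conjugating matrix has to descend to $R$, and $S=\prod_{\mathfrak m}R_{\mathfrak m}\supseteq R$ is available.

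What you actually need is a way to perform the local normalizations simultaneously without ever extending $\varphi$. For example, you could normalize the images
\[
\varphi(h_\alpha(-1)),\ \varphi(w_\alpha(1)),\ \varphi(x_\alpha(1))\in E_{\mathrm{ad}}(\Phi,R)\subseteq\prod_{\mathfrak m}G_{\mathrm{ad}}(\Phi,R_{\mathfrak m})
\]
componentwise, using only relations valid in $E_{\mathrm{ad}}(\Phi,R)$ and the residue data from the induced isomorphisms above. You would then still have to prove that the resulting parameter map $R\to S$ is an automorphism of $R$. The proposal contains neither step.

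\textbf{From $E_{\mathrm{ad}}$ to $E_\pi$ and $G_\pi$.} This passage is not torus bookkeeping once $R$ is not local. The paper's proofs of Theorems~\ref{MT_local2} and~\ref{MT_local3} rely on $G=TE$, which fails in general (e.g.\ $\SL_n(R)\neq E_n(R)$ for suitable $R$ and $n$). You must therefore show separately that the conjugating element lifts to the $\pi$-representation over some overring and normalizes $G_\pi(\Phi,R)$.
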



\subsection{Twisted root system}\label{Subsec:TRS}

Let $V$ be a finite-dimensional real Euclidean vector space, and let $\Phi$ be a crystallographic root system. Let $\Delta$ and $\Phi^+$ be the sets of simple and positive roots, respectively, with respect to some fixed ordering on $V$. Let $\rho$ be a nontrivial angle-preserving permutation of $\Delta$ (such a $\rho$ exists only when $\Phi$ is of type $A_\ell \ (\ell \geq 2)$, $D_\ell \ (\ell \geq 4)$, $E_6$, $B_2$, $F_4$, or $G_2$). The order of $\rho$ is either $2$ or $3$, with the latter possible only when $\Phi$ is of type $D_4$. We define an isometry $\hat{\rho} \in GL(V)$ as follows:
\begin{enumerate}
    \item If $\Phi$ has one root length, define $\hat{\rho}(\alpha)=\rho(\alpha)$ for each $\alpha \in \Delta$.
    \item If $\Phi$ has two root lengths, define $\hat{\rho}(\alpha)=\rho(\alpha)/\sqrt{p}$ for each short root $\alpha \in \Delta$ and $\hat{\rho}(\alpha)=\sqrt{p}\,\rho(\alpha)$ for each long root $\alpha \in \Delta$, where $p=\lVert\alpha\rVert^2/\lVert\beta\rVert^2$, with $\alpha$ long and $\beta$ short. 
\end{enumerate} 

Clearly, the order of $\hat{\rho}$ is the same as that of $\rho$, and $\hat{\rho}$ preserves signs. Note that $\hat{\rho} w_{\alpha} \hat{\rho}^{-1} = w_{\rho(\alpha)}$, hence $\hat{\rho}$ normalizes $W$. Define $V_\rho = \{ v \in V \mid \hat{\rho}(v)=v \}$ and $W_\rho = \{ w \in W \mid \hat{\rho}w\hat{\rho}^{-1} =w \}$. 
It is well known that the natural action of $W_\rho$ on $V_\rho$ is faithful.

For any $\alpha \in V$, define
\[
    \hat{\alpha}
    = \frac{1}{o(\rho)} \sum_{i=0}^{o(\rho)-1} \hat{\rho}^{\,i}(\alpha),
\]
the average of the elements in the $\hat{\rho}$-orbit of $\alpha$. 
Then $\hat{\alpha} \in V_\rho$ and $(\beta, \hat{\alpha}) = (\beta, \alpha)$ for every $\beta \in V_\rho$.
Consequently, the orthogonal projection of $\alpha$ onto $V_\rho$ is precisely $\hat{\alpha}$.

Let $(\alpha) \subseteq \Phi$ denote the $\rho$-orbit of $\alpha$, and let $W_{(\alpha)}$ be the subgroup generated by all reflections $w_\beta \ (\beta \in (\alpha))$.
Let $P_\alpha$ be the positive system of the root subsystem generated by $(\alpha)$, and let $w_{(\alpha)}$ be the unique element of $W_{(\alpha)}$ such that $w_{(\alpha)}(P_\alpha)=-P_\alpha$; equivalently, $w_{(\alpha)}$ is the longest element of $W_{(\alpha)}$.
Then $w_{(\alpha)}|_{V_\rho} = w_{\hat{\alpha}}|_{V_\rho}$ and $w_{(\alpha)}|_{V_\rho} \in W_{\rho}$. 
In fact, the set $\{ w_{\hat{\alpha}}|_{V_\rho} \mid \alpha \in \Delta \}$ forms a generating set of $W_\rho$. 
Therefore, the group $W_\rho|_{V_\rho}$ is a reflection group.

Define
\[
\widetilde{\Phi}_\rho := \{ \hat{\alpha} \mid \alpha \in \Phi \},
\qquad
\widetilde{\Delta}_\rho := \{ \hat{\alpha} \mid \alpha \in \Delta \}.
\]
Then $\widetilde{\Phi}_\rho$ is the (possibly non-reduced) root system corresponding
to the Weyl group $W_\rho|_{V_\rho}$, and $\widetilde{\Delta}_\rho$ is the associated
simple system. 
By selecting, in each direction, the shortest nonzero vector in
$\widetilde{\Phi}_\rho$, we obtain a reduced root system, which we denote by
$\Phi_\rho$.

Define two equivalence relations $\equiv_1$ and $\equiv_2$ on $\Phi$ by declaring
\[
    \alpha \equiv_1 \beta \;(\text{resp., } \alpha \equiv_2 \beta)
    \quad \Longleftrightarrow \quad
    \hat{\alpha} = \hat{\beta}
    \;(\text{resp., } \hat{\alpha} \text{ is a positive scalar multiple of } \hat{\beta}).
\]
Let $(\alpha)$ (resp., $[\alpha]$) denote the equivalence class of $\alpha$ with
respect to $\equiv_1$ (resp., $\equiv_2$), and let $\Phi_1$ (resp., $\Phi_2$) be
the set of all equivalence classes of $\equiv_1$ (resp., $\equiv_2$).
Then $\widetilde{\Phi}_\rho$ is in natural bijection with $\Phi_1$, where a root
$\hat{\alpha} \in \widetilde{\Phi}_\rho$ is identified with the class
$(\alpha) \in \Phi_1$. Similarly, the reduced root system $\Phi_\rho$ is in
natural bijection with $\Phi_2$.

Using these identifications, we shall henceforth write
\[
    \widetilde{\Phi}_\rho := \{ (\alpha) \mid \alpha \in \Phi \},
    \qquad
    \Phi_\rho := \{ [\alpha] \mid \alpha \in \Phi \},
\]
and refer to $\widetilde{\Phi}_\rho$ as the \emph{non-reduced twisted root system}
and to $\Phi_\rho$ as the \emph{(reduced) twisted root system} associated with
$\Phi$. In what follows, we shall mostly work with the (reduced) twisted root
system $\Phi_\rho$.

It is immediate that
$-[\alpha] = [-\alpha]$
and 
$-(\alpha) = (-\alpha)$.

\begin{lemma}[{\cite[page 103]{RS}}]
    \normalfont
    If $\Phi$ is irreducible, then every class $[\alpha]\in\Phi_\rho$, viewed as a subset of $\Phi$, is the set of positive roots of a root subsystem of one of the following types:
    \begin{enumerate}[(a)]
        \item $A_1^n$, where $n=1,2$, or $3$;
        \item $A_2$ (this occurs only if $\Phi$ is of type $A_{2n}$);
        \item $C_2$ (this occurs if $\Phi$ is of type $C_2$ or $F_4$);
        \item $G_2$ (this occurs only if $\Phi$ is of type $G_2$).
    \end{enumerate}
\end{lemma}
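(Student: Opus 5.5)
The plan is to reduce to a finite check, organized by the type of the graph automorphism $\rho$. A nontrivial angle-preserving $\rho$ exists only for $\Phi$ of type $A_\ell$ ($\ell\ge2$), $D_\ell$ ($\ell\ge4$), $E_6$, $B_2$, $F_4$ or $G_2$. Fix a root $\alpha\in\Phi$. The class $[\alpha]$ consists of the roots $\beta$ with $\hat\beta$ a positive multiple of $\hat\alpha$. Since $\hat\rho$ preserves signs, $[\alpha]$ lies entirely in $\Phi^+$ or entirely in $\Phi^-$. Replacing $\alpha$ by $-\alpha$ if necessary, we may therefore assume $[\alpha]\subseteq\Phi^+$.

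Set $S=\Phi\cap\mathbb{R}[\alpha]$, the set of roots in the span of the class. This is a root subsystem, because it is the intersection of $\Phi$ with a subspace. The claim is that $[\alpha]$ is exactly $S\cap\Phi^+$. I would prove this as follows.

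\begin{enumerate}[(i)]
\item Let $\beta\in S\cap\Phi^+$. Then $\hat\beta$ lies in the span of the vectors $\hat\gamma$ for $\gamma\in[\alpha]$, all of which are positive multiples of $\hat\alpha$. So $\hat\beta=c\hat\alpha$ for some real $c$.
\item Since $\beta$ is positive and $\hat\rho$ permutes the simple roots up to positive scalars, $\hat\beta$ is a nonzero nonnegative combination of the averaged simple roots. Hence $c>0$, which gives $\beta\in[\alpha]$.
\item Conversely, $[\alpha]\subseteq S\cap\Phi^+$ by the choice of sign.
\end{enumerate}

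It remains to identify the type of $S$. Its positive system is $[\alpha]$, and the set $[\alpha]$ is $\rho$-stable, since $\widehat{\rho\beta}=\hat\beta$.

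\begin{enumerate}[(a)]
\item \emph{Types $A_{2n+1}$, $D_\ell$ and $E_6$ with $o(\rho)=2$, and $D_4$ with $o(\rho)=3$.} Here the $\rho$-orbit of a root consists of pairwise orthogonal roots, or else of two roots $\beta,\rho\beta$ with $\beta+\rho\beta$ not a root. I would verify this on the simple roots and then use $W_\rho$-conjugacy: every class is $W_\rho$-conjugate to the class of a simple root, and $W_\rho$ preserves the class structure. One then checks that no further root $\gamma$ has $\hat\gamma$ proportional to $\hat\alpha$, unless it lies in a single $\rho$-orbit. The result is $S\cong A_1^n$ with $n=o(\rho)$ or $n=1$.
\item \emph{Type $A_{2n}$.} The middle pair of simple roots $\alpha_n,\alpha_{n+1}$ is swapped by $\rho$ and $\alpha_n+\alpha_{n+1}$ is a root. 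This gives a class $\{\alpha_n,\alpha_{n+1},\alpha_n+\alpha_{n+1}\}$ of type $A_2$. All other classes are of type $A_1$ or $A_1^2$.
\item \emph{Types $B_2$ and $F_4$.} The map $\hat\rho$ exchanges long and short roots with rescaling. A short simple root and the long simple root it is paired with generate a $C_2$ subsystem whose four positive roots all project onto one ray; this gives type $C_2$. Otherwise the class has type $A_1^n$.
\item \emph{Type $G_2$.} All six positive roots project onto one ray, since $V_\rho$ is one-dimensional. This gives type $G_2$.
\end{enumerate}

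The main obstacle is step (a) together with its analogues: ruling out extra roots in each class. For the general root, rather than only the simple one, I would reduce to simple roots via $W_\rho$. The needed facts are that $W_\rho$ acts transitively on classes of equal length in $\Phi_\rho$, and that $w(\widetilde{\Phi}_\rho)$-orbits correspond to $W_\rho$ acting on $\Phi$, as stated earlier in this section. With that reduction, the check becomes a finite inspection of the classes of the simple roots, which can be read off from the Dynkin diagram.
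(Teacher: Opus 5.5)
Your proposal is correct and is essentially the standard argument from the source the paper cites (Steinberg's \emph{Lectures}; the paper itself gives no proof): identify $[\alpha]$ with the positive roots of $\Phi\cap\operatorname{span}[\alpha]$, use that $W_\rho$ commutes with $\beta\mapsto\hat\beta$ to move any class to one containing a simple root, and read off the type from the subdiagram of the Dynkin diagram on that root's $\rho$-orbit. The one step you should make explicit is the following, since it is what makes this reading-off uniform in $\ell$ and it supersedes the loosely worded orbit claims in your cases (a)--(c): take a simple root $\alpha$ with $\rho$-orbit $O$ and one simple root $\gamma$ from each $\rho$-orbit; the averages $\hat\gamma$ are positive combinations of pairwise disjoint sets of simple roots, hence linearly independent, so a positive root lies in $[\alpha]$ if and only if it is supported on $O$, i.e.\ $[\alpha]=\Phi_O^+$ for the parabolic subsystem $\Phi_O$ with base $O$.
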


If a class $[\alpha]$ in $\Phi_\rho$ is the set of positive roots of a root subsystem of type $X$ (where $X$ is one of the root systems listed above), then we write $[\alpha]\sim X$. If $\Phi$ is of type $X$ and $\rho$ has order $n$, we say that the pair $(\Phi,\rho)$ has twisted type ${}^nX$. 

In the following table we describe twisted root systems $\Phi_\rho$ and the non-reduced systems $\widetilde{\Phi}_\rho$ corresponding to some root systems $\Phi$:

\begin{center}
    \begin{tabular}{| >{\centering\arraybackslash}m{2.5cm} | >{\centering\arraybackslash}m{1.5cm} | >{\centering\arraybackslash}m{1.5cm} | >{\centering\arraybackslash}m{1.8cm} | >{\centering\arraybackslash}m{1.8cm} |}
        \hline
        \multirow{2}{*}{\textbf{Type}} & \multirow{2}{*}{\textbf{$\widetilde{\Phi}_\rho$}} & \multirow{2}{*}{\textbf{$\Phi_\rho$}} & \multicolumn{2}{c|}{\textbf{Type of Roots in $\Phi_\rho$}} \\
        \cline{4-5}
        & & & \textbf{Long} & \textbf{Short}  \\
        \hline 
        ${}^2 A_{2\ell-1} \ (\ell \geq 2)$ & $C_\ell$ & $C_\ell$ & $A_1$ & $A_1^2$ \\
        \hline 
        ${}^2 A_{2\ell} \ (\ell \geq 2)$ & $BC_\ell$ & $B_\ell$ & $A_1^2$ & $A_2$ \\
        \hline 
        ${}^2 D_{\ell} \ (\ell \geq 4)$ & $B_{\ell-1}$ & $B_{\ell-1}$ & $A_1$ & $A_1^2$ \\
        \hline
        ${}^3 D_{4}$ & $G_2$ & $G_2$ & $A_1$ & $A_1^3$ \\
        \hline
        ${}^2 E_{6}$ & $F_4$ & $F_4$ & $A_1$ & $A_1^2$ \\
        \hline
    \end{tabular}
\end{center}


\subsubsection{}

Finally, let us discuss the action of $\rho$ on the weight lattice $\Lambda_{sc}$. Assume that $\Phi$ has one root length. Since $\rho$ permutes the simple roots, and hence all roots, it acts naturally on the root lattice $\Lambda_r$. The fundamental dominant weights $\lambda_1,\dots,\lambda_\ell$ form a $\mathbb{Z}$-basis of $\Lambda_{sc}$. We define $\rho(\lambda_i)=\lambda_j$ whenever $\rho(\alpha_i)=\alpha_j$ and extend this action $\mathbb{Z}$-linearly to $\Lambda_{sc}$. Then $\rho(\Lambda_r)=\Lambda_r$, so $\rho$ induces an automorphism of the fundamental group $\Lambda_{sc}/\Lambda_r$.

Let $\Lambda$ be a lattice satisfying $\Lambda_r\subseteq\Lambda\subseteq\Lambda_{sc}$. If $\Lambda_{sc}/\Lambda_r$ is cyclic, then every subgroup of this quotient is invariant under $\rho$; hence $\rho(\Lambda)=\Lambda$. The only noncyclic case is $\Phi=D_{2n}$, for which
\[
    \Lambda_{sc}/\Lambda_r\cong(\mathbb{Z}/2\mathbb{Z})^2.
\]
The three subgroups of order $2$ in this quotient correspond to the three intermediate lattices
\[
    \Lambda_r\subsetneq\Lambda\subsetneq\Lambda_{sc}
\]
of index $2$. If $\rho$ has order $2$, its action on $\Lambda_{sc}/\Lambda_r$ fixes one nonzero element and interchanges the other two. Consequently, exactly two of the three intermediate lattices are not $\rho$-stable. If $\Phi=D_4$ and $\rho$ is the triality automorphism of order $3$, then $\rho$ cyclically permutes the three nonzero elements of $\Lambda_{sc}/\Lambda_r$; consequently, none of the three intermediate lattices is $\rho$-stable.

Thus, the graph automorphism of $G_\pi(\Phi,R)$ and $E_\pi(\Phi,R)$ is defined precisely when
$\rho(\Lambda_\pi)=\Lambda_\pi$.
See \cite[page~91]{RS}.


\subsection{Twisted Chevalley groups}

Let $\Phi$ be a root system of type $A_\ell$ ($\ell \geq 2$), $D_\ell$ ($\ell \geq 4$), or $E_6$.
Consider the Chevalley group $G_\pi(\Phi, R)$ over a commutative ring $R$, and let $E_\pi(\Phi, R)$ denote its elementary subgroup. Let $\sigma$ be an automorphism of $G_\pi(\Phi, R)$ given by the product of a graph automorphism $\rho$ and a ring automorphism $\theta$ satisfying $o(\rho) = o(\theta)$. We use the same symbol $\rho$ to denote the induced permutation on the set of roots. 
Since $\rho \circ \theta = \theta \circ \rho$, we have $o(\theta) = o(\rho) = o(\sigma)$. 
Further, since $E_\pi(\Phi, R)$ is a characteristic subgroup of $G_\pi(\Phi, R)$, $\sigma$ is also an automorphism of $E_\pi(\Phi, R)$. 

Define $G_{\pi,\sigma}(\Phi,R)=\{g\in G_\pi(\Phi,R)\mid \sigma(g)=g\}$. Clearly, $G_{\pi, \sigma} (\Phi, R)$ is a subgroup of $G_\pi(\Phi, R)$. We call $G_{\pi, \sigma} (\Phi, R)$ the \textbf{twisted Chevalley group} over the ring $R$. The notion of the adjoint twisted Chevalley group and the universal (or simply connected) twisted Chevalley group is clear.

Write $E_{\pi, \sigma} (\Phi, R) = E_\pi(\Phi, R) \cap G_{\pi, \sigma} (\Phi, R)$. Consider the subgroups $U = U_\pi(\Phi, R)$, $U^- = U^{-}_\pi(\Phi, R)$, $H = H_\pi(\Phi, R)$, $B = B_\pi(\Phi, R)$ and $N = N_\pi(\Phi, R)$ of $E_\pi(\Phi, R)$. Then $\sigma$ preserves $U, U^-, H, B$ and $N$. Hence we can make sense of $U_\sigma = U_{\pi, \sigma} (\Phi, R)$, $U^-_\sigma = U^{-}_{\pi, \sigma} (\Phi, R)$, $H_\sigma = H_{\pi, \sigma} (\Phi, R)$, $B_\sigma = B_{\pi, \sigma} (\Phi, R)$ and $N_\sigma = N_{\pi, \sigma} (\Phi, R)$ (if $A \subseteq G_\pi(\Phi,R)$ with $\sigma(A)\subseteq A$ then we define $A_\sigma = A \cap G_{\pi, \sigma}(\Phi, R)$). Note that $\sigma$ preserves $N/H \cong W$ (as it preserves $N$ and $H$). The induced action on $W$ is consistent with the permutation $\rho$ of the roots. Finally, let us define $E'_{\pi, \sigma} (\Phi, R) = \langle U_\sigma, U_\sigma^- \rangle$, a subgroup of $E_{\pi, \sigma} (\Phi, R)$ generated by $U_\sigma$ and $U_\sigma^-$. We call $E'_{\pi, \sigma} (\Phi, R)$ the \textbf{elementary twisted Chevalley group} over the ring $R$. Write $H'_\sigma = H'_{\pi, \sigma} (\Phi, R) = H \cap E'_{\pi, \sigma}(\Phi, R)$, $N'_\sigma = N'_{\pi,\sigma}(\Phi,R) = N \cap E'_{\pi,\sigma}(\Phi,R)$ and $B'_\sigma = B'_{\pi,\sigma}(\Phi,R) = B \cap E'_{\pi,\sigma}(\Phi,R)$. Then $B'_\sigma = U_\sigma H'_\sigma$. 

If $G_\pi(\Phi, R)$ is of type $X$ and $\sigma$ is of order $n$, we say $G_{\pi, \sigma}(\Phi, R)$ is of type ${}^nX$. We write $G_{\pi}(\Phi, R) \sim X$ and $G_{\pi, \sigma}(\Phi, R) \sim {}^nX$. We use a similar notation for $E_\pi(\Phi, R)$, $E_{\pi, \sigma}(\Phi, R)$ and $E'_{\pi, \sigma}(\Phi, R)$. 


\subsubsection{}
Let $T_\pi (\Phi, R)$ be the standard maximal torus of $G_\pi(\Phi, R)$. Define $T_{\pi,\sigma}(\Phi,R)=T_\pi(\Phi,R)\cap G_{\pi,\sigma}(\Phi,R)$ and call it the \textbf{standard maximal torus} of $G_{\pi,\sigma}(\Phi,R)$. 
For a character $\chi \in \mathrm{Hom}(\Lambda_\pi,R^{\times})$, we define its conjugation character $\overline{\chi}_\sigma \in \mathrm{Hom}(\Lambda_\pi,R^{\times})$ with respect to $\sigma$ by $\overline{\chi}_\sigma (\lambda) = \theta (\chi (\rho^{-1}(\lambda)))$ for all $\lambda \in \Lambda_\pi$. 
If $h(\chi) \in T_\pi(\Phi, R)$, then $\sigma (h(\chi)) = h(\overline{\chi}_\sigma)$.
A character $\chi \in \mathrm{Hom}(\Lambda_\pi,R^{\times})$ is called \textbf{self-conjugate (with respect to $\sigma$)} if $\chi = \overline{\chi}_\sigma$, i.e., $\chi (\rho(\lambda)) = \theta(\chi(\lambda))$ for every $\lambda \in \Lambda_\pi$. 
Let $\mathrm{Hom}_1 (\Lambda_\pi,R^{\times}) = \{ \chi \in \mathrm{Hom}(\Lambda_\pi,R^{\times}) \mid \chi = \overline{\chi}_\sigma \}$ denote the subgroup of self-conjugate characters.
Then we have $T_{\pi,\sigma} (\Phi, R) = \{ h(\chi) \mid \chi \in \mathrm{Hom}_1(\Lambda_\pi,R^{\times}) \}$. 
Moreover, an element $h(\chi) \in T_{\pi,\sigma}(\Phi,R)$ lies in $H_{\pi,\sigma}(\Phi,R)$ if and only if the self-conjugate character \(\chi\) extends to a self-conjugate character $\chi' \in \mathrm{Hom}(\Lambda_{sc},R^{\times})$ of the simply connected weight lattice $\Lambda_{sc}$.


\subsubsection{}

Assume that $2\in R^{\times}$ and, in addition, that $3\in R^{\times}$ when $(\Phi,\rho)$ has twisted type ${}^3D_4$.
The elementary subgroup $E'_{\pi,\sigma} (\Phi, R)$ is a characteristic subgroup of the twisted Chevalley group $G_{\pi, \sigma} (\Phi, R)$ (see \cite[Corollary~A.2]{SG&DM1}). 
Hence every automorphism of $G_{\pi,\sigma}(\Phi, R)$ preserves $E'_{\pi,\sigma}(\Phi, R)$.
  
Moreover, $E'_{\pi, \sigma} (\Phi, R)$ is perfect, that is, $[E'_{\pi, \sigma} (\Phi, R), E'_{\pi, \sigma} (\Phi, R)] = E'_{\pi, \sigma} (\Phi, R)$ (see \cite[Corollary~6.6]{SG&DM1}).  

Let us also define 
\[
    G_{\pi,\sigma}^0(\Phi, R) = G_\pi^0(\Phi, R) \cap G_{\pi,\sigma}(\Phi, R) 
    \quad \text{and} \quad
    G'_{\pi,\sigma}(\Phi, R) = T_{\pi,\sigma}(\Phi, R)\, E'_{\pi,\sigma}(\Phi, R).
\]
If $R$ is a semilocal ring, then $G_{\pi,\sigma} (\Phi, R) = G^{0}_{\pi,\sigma} (\Phi, R) = G'_{\pi,\sigma} (\Phi, R)$ (see \cite[Proposition~5.8]{SG&DM1}).
In particular, in the simply connected case, if $R$ is a semilocal ring, then $G_{\mathrm{sc},\sigma}(\Phi,R)=E'_{\mathrm{sc},\sigma}(\Phi,R)$.


\subsection{The subgroup \texorpdfstring{$E'_{\pi, \sigma} (\Phi, R)$}{E'(R)}}\label{subsec:subgroup_E'}

We denote $\bar{\alpha} = \rho(\alpha)$, $\bar{\bar{\alpha}} = \rho^{2}(\alpha)$, $\bar{t} = \theta(t)$, and $\bar{\bar{t}} = \theta^{2}(t)$.  
Define
\[
    R_\theta = \{\, t \in R \mid t = \bar{t} \,\}
    \quad\text{and}\quad
    R^{-}_\theta = \{\, t \in R \mid t = -\bar{t} \,\}.
\]


\subsubsection{}

Observe that the nontrivial angle-preserving permutation $\rho$ of $\Delta$ induces an automorphism of $\mathcal{L}$, also denoted by $\rho$, satisfying
\[
    \rho(H_\alpha) = H_{\bar{\alpha}}, \qquad
    \rho(X_\alpha) = X_{\bar{\alpha}}, \qquad
    \rho(X_{-\alpha}) = X_{-\bar{\alpha}},
\]
for all $\alpha \in \Delta$.  
Consequently, we have $\rho(X_\alpha) = \epsilon_\alpha X_{\bar{\alpha}}$, where $\epsilon_\alpha \in \{\pm 1\}$ for all $\alpha \in \Phi$.

\begin{lemma}[{\cite[Proposition~3.1]{EA1}}]\label{epsilonalpha}
    \normalfont
    A Chevalley basis of $\mathcal{L}$ can be chosen such that:
    \begin{enumerate}[(a)]
        \item $\epsilon_\alpha = \epsilon_{\bar{\alpha}}$, 
        \item $\epsilon_\alpha = -1$ if $[\alpha] \sim A_2$ and $\alpha = \bar{\alpha}$,
        \item $\epsilon_\alpha = 1$ otherwise.
    \end{enumerate}
\end{lemma}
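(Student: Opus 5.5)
The plan is to define the automorphism $\rho$ of $\mathcal{L}$ from its action on a set of generators, and then adjust the Chevalley basis by signs. Since $\mathcal{L}$ is generated by $X_{\pm\alpha}$ for $\alpha\in\Delta$, Serre's relations give a unique automorphism $\rho$ with $\rho(X_{\pm\alpha})=X_{\pm\bar\alpha}$ for $\alpha\in\Delta$. It permutes root spaces according to $\rho$, so $\rho(X_\beta)=c_\beta X_{\bar\beta}$ for every $\beta\in\Phi$. Because $\rho$ preserves brackets, it maps $[X_\beta,X_{-\beta}]=H_\beta$ to $H_{\bar\beta}$. Together with the integrality of the structure constants, this forces $c_\beta c_{-\beta}=1$ and $c_\beta\in\{\pm1\}$. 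The reason is that $\rho$ maps the Chevalley lattice to itself, and hence so does $\rho^{-1}$, since $\rho$ has finite order.

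For (a), I would use $\rho^{o(\rho)}=\mathrm{id}$ on $\mathcal{L}$; this holds on the generators, hence everywhere. Consequently the product of the $\epsilon$'s along each $\rho$-orbit is $1$. This product relation alone does not give $\epsilon_\alpha=\epsilon_{\bar\alpha}$, so I would modify the basis. Replace $X_\beta$ by $\eta_\beta X_\beta$ with $\eta_\beta=\eta_{-\beta}\in\{\pm1\}$; this preserves the Chevalley property. The new sign is then $\epsilon'_\beta=\eta_\beta\eta_{\bar\beta}\epsilon_\beta$. Along an orbit of length $2$ one can therefore make the signs equal, and their product $1$ then forces both signs to be $1$. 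For the order-$3$ orbits in ${}^3D_4$ the same normalization works, because the product of the signs over the orbit is $1$.

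The remaining case is the $\rho$-fixed roots $\alpha=\bar\alpha$. There $\epsilon_\alpha$ cannot be changed by rescaling, so it must be computed directly. Induct on height. If $\alpha=\beta+\gamma$ with $\bar\beta=\gamma$, then
\[
\rho[X_\beta,X_\gamma]=[X_{\bar\beta},X_{\bar\gamma}]=[X_\gamma,X_\beta]=-[X_\beta,X_\gamma],
\]
so $\epsilon_\alpha=-1$ when $\beta+\bar\beta$ is a root. This happens exactly when $[\alpha]\sim A_2$. If instead $\alpha$ is a sum of two fixed roots, or of roots from $\rho$-orbits that commute, then the signs propagate multiplicatively and give $+1$.

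The main obstacle is consistency. The sign choices $\eta$ must be made simultaneously for all orbits, and they must be compatible with $\epsilon_{-\alpha}=\epsilon_\alpha$. The latter follows from $c_\beta c_{-\beta}=1$. I would handle consistency by fixing $\eta$ only on one representative of each non-fixed orbit of positive roots, choosing it so that $\epsilon'=1$, and copying the value to the negative roots. This is essentially the argument of \cite[Proposition~3.1]{EA1}, which the statement cites.
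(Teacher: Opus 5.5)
The paper does not prove this lemma. It cites Abe's Proposition~3.1, and for type $A_\ell$ it writes down an explicit basis with $\rho(A)=-QA^tQ^{-1}$ to be checked by hand. So your argument has to stand on its own, and it has a genuine gap exactly where the lemma has content.

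\textbf{Main gap: the fixed roots.} For $\alpha=\bar\alpha$, $\epsilon_\alpha$ is the eigenvalue of $\rho$ on $\mathcal{L}_\alpha$, and, as you note, no rescaling changes it. Your $A_2$ computation is correct. However, $\epsilon_\alpha=1$ for the remaining fixed roots is only asserted, and ``the signs propagate multiplicatively'' is not a proof. If $\alpha=\beta+\gamma$, applying $\rho$ to $[X_\beta,X_\gamma]=N_{\beta,\gamma}X_\alpha$ gives
\[
\epsilon_\alpha N_{\beta,\gamma}=\epsilon_\beta\epsilon_\gamma N_{\bar\beta,\bar\gamma}.
\]
So $\epsilon_\alpha$ is governed by the ratio $N_{\bar\beta,\bar\gamma}/N_{\beta,\gamma}$, not by the summands' signs alone. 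This ratio is exactly what equals $-1$ in your own case $\gamma=\bar\beta$. Asserting that it equals $+1$ for the other decompositions is equivalent to what you want to prove.

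Multiplicativity is justified only when the ratio is controlled:
\begin{itemize}
\item for two fixed summands, the ratio is trivially $1$;
\item for $o(\rho)=2$ and $\alpha=\beta+\delta+\bar\beta$ with $\delta$ fixed and $\beta+\bar\beta\notin\Phi$, the Jacobi identity gives $[X_\beta,[X_\delta,X_{\bar\beta}]]=[X_{\bar\beta},[X_\delta,X_\beta]]$, hence $\epsilon_\alpha=\epsilon_\delta$.
\end{itemize}
If your phrase ``roots from $\rho$-orbits that commute'' means this three-term decomposition, it is the right mechanism. You must still state it and check, type by type, that every fixed root outside an $A_2$ class is reached from fixed simple roots by such steps. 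For example, $\beta_i=\alpha_i+\beta_{i+1}+\alpha_{2m-i}$ in ${}^2A_{2m-1}$.

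A cleaner route is $W_\rho$-conjugacy. Lifts of the $w_{(\alpha_i)}$ to $\mathrm{Aut}(\mathcal{L})$ can be chosen to commute with $\rho$, so $\epsilon$ is constant on $W_\rho$-orbits of fixed roots. Then:
\begin{itemize}
\item in ${}^2A_{2m-1}$, ${}^2D_\ell$ and ${}^2E_6$, the fixed roots are the long roots of $C_m$, $B_{\ell-1}$ and $F_4$ respectively, a single orbit containing a fixed simple root;
\item in ${}^3D_4$, $\epsilon_\alpha^3=1$ forces $\epsilon_\alpha=1$;
\item in ${}^2A_{2m}$, every fixed root has the form $\beta+\bar\beta$, so $\epsilon_\alpha=-1$ by your computation.
\end{itemize}
In the paper's model for type $A_\ell$ all of this is immediate: $\rho(E_{ij})=(-1)^{i-j+1}E_{\ell+2-j,\,\ell+2-i}$, so $\rho$ acts on $E_{i,\,\ell+2-i}$ by $(-1)^{\ell+1}$.

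\textbf{Secondary slip: length-two orbits.} Here the logic is reversed. A rescaling multiplies $\epsilon_\beta$ and $\epsilon_{\bar\beta}$ by the same factor $\eta_\beta\eta_{\bar\beta}$, so it cannot make unequal signs equal. Also, two equal signs with product $1$ may both be $-1$. In fact $\rho^2=\mathrm{id}$ gives $\epsilon_\beta\epsilon_{\bar\beta}=1$, which for signs already means $\epsilon_\beta=\epsilon_{\bar\beta}$. So, contrary to your remark, the product relation does give (a), except on the length-three orbits of ${}^3D_4$. Choosing $\eta_\beta\eta_{\bar\beta}=\epsilon_\beta$ then makes both signs $+1$. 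A rescaling changes signs only inside its own orbit, and simple root vectors never need rescaling. Hence the consistency issue you raise at the end resolves itself.
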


From now on, we always work with such a Chevalley basis.


\subsubsection{}

We now define certain special elements of $E'_{\pi, \sigma}(\Phi, R)$ as follows:

\begin{enumerate}
    \item If $[\alpha] \sim A_1$ (that is, $[\alpha]=\{ \alpha \}$), define $x_{[\alpha]}(t):=x_\alpha(t)$ for $t\in R_\theta$. In this case, $x_{[\alpha]}(t)x_{[\alpha]}(u)=x_{[\alpha]}(t+u)$ for every $t,u \in R_\theta$.

    \item If $[\alpha] \sim A_1^2$ (that is, $[\alpha]=\{ \alpha, \bar{\alpha} \}$), define $x_{[\alpha]}(t):=x_\alpha(t)x_{\bar{\alpha}}(\bar t)$ for $t\in R$. In this case, $x_{[\alpha]}(t)x_{[\alpha]}(u)=x_{[\alpha]}(t+u)$ for every $t,u \in R$. 
    
    \item If ${[\alpha]} \sim A_1^3$ (that is, $[\alpha]=\{ \alpha, \bar{\alpha}, \bar{\bar{\alpha}} \}$), define $x_{[\alpha]}(t):=x_\alpha(t)x_{\bar{\alpha}}(\bar t)x_{\bar{\bar{\alpha}}}(\bar{\bar t})$ for $t\in R$. 
    In this case, $x_{[\alpha]}(t)x_{[\alpha]}(u)=x_{[\alpha]}(t+u)$ for every $t,u \in R$. 
    
    \item If ${[\alpha]} \sim A_2$ with $\alpha \neq \bar{\alpha}$ (that is, ${[\alpha]}=\{ \alpha, \bar{\alpha}, \alpha + \bar{\alpha} \}$), define
    \[
        x_{[\alpha]}(t_1,t_2)
        :=
        x_\alpha(t_1)x_{\bar{\alpha}}(\bar t_1)
        x_{\alpha+\bar{\alpha}}(N_{\bar{\alpha},\alpha}t_2),
    \]
    where $t_1,t_2\in R$ satisfy $t_1\bar t_1=t_2+\bar t_2$. 
    In this case, $x_{[\alpha]}(t_1, t_2) \, x_{[\alpha]}(u_1,u_2)=x_{[\alpha]}(t_1 + u_1, t_2+u_2 + \bar{t}_1 u_1)$ for every $t_1, t_2, u_1, u_2 \in R$ such that $t_1 \bar{t}_1 = t_2 + \bar{t}_2$ and $u_1 \bar{u}_1 = u_2 + \bar{u}_2$.
\end{enumerate}

Define $\mathcal{A}(R):=\{(t_1,t_2)\mid t_1,t_2\in R \text{ and } t_1\bar t_1=t_2+\bar t_2\}$. For $[\alpha]\sim A_2$, the element $x_{[\alpha]}(t_1,t_2)$ is defined only when $(t_1,t_2)\in\mathcal{A}(R)$. The multiplication formula above suggests the following operation on $\mathcal{A}(R)$: for $(t_1,t_2),(u_1,u_2)\in\mathcal{A}(R)$, define
\[
    (t_1,t_2)\oplus(u_1,u_2)
    =
    (t_1+u_1,t_2+u_2+\bar t_1u_1).
\]
With this operation, $\mathcal{A}(R)$ is a group with identity $(0,0)$, and the inverse of $(t_1,t_2)$ is $(-t_1,\bar t_2)$. Thus,
\[
    x_{[\alpha]}(t,u)^{-1}=x_{[\alpha]}(-t,\bar u).
\]
Furthermore, the multiplicative monoid of $R$ acts on $\mathcal{A}(R)$ by
\[
    r\cdot(t_1,t_2)=(rt_1,r\bar r\,t_2),
\]
for $r\in R$ and $(t_1,t_2)\in\mathcal{A}(R)$. See \cite{EA1} for further details.

For $[\alpha] \in \Phi_\rho$, we write 
\begin{align*}
    R_{[\alpha]} = \begin{cases}
        R_\theta & \text{if } [\alpha] \sim A_1, \\
        R & \text{if } [\alpha] \sim A_1^2 \text{ or } A_1^3, \\
        \mathcal{A}(R) & \text{if } [\alpha] \sim A_2.
    \end{cases}
\end{align*}
If $[\alpha] \sim A_2$, then $t \in R_{[\alpha]}$ means that $t=(t_1,t_2)\in\mathcal{A}(R)$. We use the notation $r\cdot t$ as follows: it means $rt$ when $[\alpha]\sim A_1$, $A_1^2$, or $A_1^3$ (with $r\in R_\theta$ in the case $[\alpha]\sim A_1$), and it means $(rt_1,r\bar r\,t_2)$ when $[\alpha]\sim A_2$.

\begin{lemma}\label{lemma:generators_of_E(R)}
    The group $E'_{\pi, \sigma} (\Phi, R)$ is generated by $x_{[\alpha]}(t)$ for all $[\alpha] \in \Phi_\rho$ and $t \in R_{[\alpha]}$.
\end{lemma}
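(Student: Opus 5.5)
Set $X=\langle x_{[\alpha]}(t)\mid [\alpha]\in\Phi_\rho,\ t\in R_{[\alpha]}\rangle$. By definition $E'_{\pi,\sigma}(\Phi,R)=\langle U_\sigma,U_\sigma^-\rangle$, so I would prove the two inclusions $X\subseteq E'_{\pi,\sigma}(\Phi,R)$ and $U_\sigma\subseteq X$. The corresponding statement for $U^-_\sigma$ follows by the same argument with $\Phi^+$ replaced by $\Phi^-$.

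For the first inclusion, note that each $x_{[\alpha]}(t)$ is a product of root elements $x_\beta(\cdot)$ with $\beta\in[\alpha]$. Every class $[\alpha]$ is the set of positive roots of a subsystem, so it lies entirely in $\Phi^+$ or entirely in $\Phi^-$. Hence $x_{[\alpha]}(t)$ lies in $U$ or in $U^-$. It remains to check that $\sigma$ fixes it. This uses $\sigma(x_\beta(u))=x_{\bar\beta}(\epsilon_\beta\bar u)$ and the normalization of the signs $\epsilon_\beta$ from Lemma~\ref{epsilonalpha}.
\begin{enumerate}[(a)]
\item In the cases $A_1$, $A_1^2$ and $A_1^3$ we have $\epsilon=1$, so $\sigma$ permutes the commuting factors of $x_{[\alpha]}(t)$, using $\bar{\bar{\bar t}}=t$ in the $A_1^3$ case.
\item In the $A_2$ case, $\sigma(x_{[\alpha]}(t_1,t_2))=x_{\bar\alpha}(\bar t_1)\,x_\alpha(t_1)\,x_{\alpha+\bar\alpha}(-N_{\bar\alpha,\alpha}\bar t_2)$. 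Reordering the first two factors by the commutator formula introduces $x_{\alpha+\bar\alpha}(N_{\bar\alpha,\alpha}t_1\bar t_1)$. The condition $t_1\bar t_1=t_2+\bar t_2$ then gives back $x_{[\alpha]}(t_1,t_2)$.
\end{enumerate}

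For the main inclusion $U_\sigma\subseteq X$, fix an ordering $[\beta_1],\dots,[\beta_m]$ of the positive classes compatible with height, i.e.\ with the height of $\hat\beta$. I would prove by induction on $k$ that every element of $U_\sigma\cap U_k$ lies in $X$. Here $U_k$ is the (normal in $U$) subgroup generated by all $x_\beta$ with $\beta$ in the classes $[\beta_k],\dots,[\beta_m]$.

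Take $u\in U_\sigma$ and write it uniquely as $u=\prod_{\beta\in\Phi^+}x_\beta(c_\beta)$ in a fixed order where the roots of $[\beta_1]$ come first. Since $\sigma$ preserves $U$ and each $U_k$, and $U/U_2$ is abelian in the coordinates of $[\beta_1]$, comparing $u$ with $\sigma(u)$ modulo $U_2$ forces the following.
\begin{enumerate}[(a)]
\item For $[\beta_1]\sim A_1$: $c_{\beta_1}\in R_\theta$.
\item For $[\beta_1]\sim A_1^2$ or $A_1^3$: $c_{\bar\beta}=\bar c_\beta$.
\item For $[\beta_1]\sim A_2$: the pair $(c_\alpha,\,c_{\alpha+\bar\alpha}/N_{\bar\alpha,\alpha})$ lies in $\mathcal A(R)$. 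For this one must first treat the $A_2$ class as a single block and handle its internal commutator.
\end{enumerate}
In each case the leading part of $u$ equals some $x_{[\beta_1]}(t)$ modulo $U_2$. Then $x_{[\beta_1]}(t)^{-1}u\in U_\sigma\cap U_2$, and induction finishes.

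The main obstacle is making this leading-term extraction rigorous. One needs that the $U_k$ form a $\sigma$-stable normal filtration with abelian successive quotients $U_k/U_{k+1}$ isomorphic to the product of the root groups in the class $[\beta_k]$, and that $\sigma$ acts on each quotient coordinatewise. For the $A_2$ class this fails naively: $[\alpha,\bar\alpha]$ lands in the same class. I would therefore filter by the height of $\hat\beta$ instead, so that $\alpha+\bar\alpha$ lies at a strictly higher level than $\alpha$. I would first peel off $x_\alpha$, $x_{\bar\alpha}$, and only afterwards the middle root, checking the $\mathcal A(R)$ condition from $\sigma$-invariance at that stage. This is essentially Steinberg's argument over fields \cite{RS}, which works verbatim over rings because the expression in root elements is unique.
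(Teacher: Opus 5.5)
Your inclusion $X\subseteq E'_{\pi,\sigma}(\Phi,R)$ is correct, including the reordering in the $A_2$ case. The gap is in $U_\sigma\subseteq X$, specifically in how you handle the $A_2$ classes of ${}^2A_{2m}$.

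\textbf{The class filtration does not exist.} For $i<j\le m$ take the classes
\[
C=\{\varepsilon_i-\varepsilon_{m+1},\ \varepsilon_{m+1}-\varepsilon_{2m+2-i},\ \varepsilon_i-\varepsilon_{2m+2-i}\},\qquad D=\{\varepsilon_i-\varepsilon_{2m+2-j},\ \varepsilon_j-\varepsilon_{2m+2-i}\}.
\]
Then $(\varepsilon_i-\varepsilon_{m+1})+(\varepsilon_{m+1}-\varepsilon_{2m+2-j})\in D$ and $(\varepsilon_j-\varepsilon_{2m+2-i})+(\varepsilon_i-\varepsilon_j)\in C$. So whichever of $C$, $D$ comes later in your ordering, normality in $U$ of the corresponding $U_k$ would force it to contain root elements of the earlier class. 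Hence no ordering of classes gives $U_k\trianglelefteq U$ with $U_k/U_{k+1}$ equal to the root groups of a single class.

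\textbf{Your repair does not close the induction.} In the root-height filtration, $x_\alpha(c)x_{\bar\alpha}(\bar c)$ is not $\sigma$-fixed unless $c\bar c=0$. If you peel it off, the remainder leaves $U_\sigma$, and your induction hypothesis no longer applies. The two halves of the intended factor $x_{[\alpha]}(c,s)$ are also removed at different stages, and you never show they recombine into generators. If you instead peel a $\sigma$-fixed $x_{[\alpha]}(c,s)$, you need some $s$ with $s+\bar s=c\bar c$, and the level-$h$ data do not supply it. When $1/2\in R$ you can take $s=c\bar c/2$, but in general a norm need not be a trace; for example, $R=\mathbb{Z}[i]$ with complex conjugation and $c=1$.

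\textbf{The missing idea: use uniqueness, not a filtration.}
\begin{enumerate}[(a)]
\item Each class $[\alpha]\in\Phi_\rho^+$ is a closed set of positive roots, so $U_{[\alpha]}:=\langle x_\beta(t)\mid\beta\in[\alpha],\,t\in R\rangle$ is the product of its root subgroups.
\item Concatenate orders and use uniqueness of expressions $\prod_{\beta\in\Phi^+}x_\beta(c_\beta)$ in any fixed order of $\Phi^+$, which holds over any commutative ring. Then every $u\in U$ is uniquely $\prod_{[\alpha]}u_{[\alpha]}$ with $u_{[\alpha]}\in U_{[\alpha]}$, for a fixed order of the classes.
\item Since $\sigma(U_{[\alpha]})=U_{[\alpha]}$, the element $\sigma(u)=\prod_{[\alpha]}\sigma(u_{[\alpha]})$ is another such expression. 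So $\sigma(u)=u$ if and only if every $u_{[\alpha]}$ is $\sigma$-fixed.
\item Your own computation, run backwards, identifies these fixed points. The map $\sigma$ sends $x_\alpha(a)x_{\bar\alpha}(b)x_{\alpha+\bar\alpha}(c)$ to $x_\alpha(\bar b)x_{\bar\alpha}(\bar a)x_{\alpha+\bar\alpha}(N_{\bar\alpha,\alpha}\bar a\bar b-\bar c)$. This equals the original exactly when $b=\bar a$ and $(a,N_{\bar\alpha,\alpha}c)\in\mathcal{A}(R)$. The $A_1^n$ cases are immediate.
\end{enumerate}
This gives $U_\sigma=\prod_{[\alpha]}\{x_{[\alpha]}(t)\mid t\in R_{[\alpha]}\}$ with no hypothesis on $2$. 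It is Steinberg's description of $U_\sigma$, which the paper tacitly relies on, since it states this lemma without proof.

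Your first instinct to treat the $A_2$ class as one block was right. With classes ordered by nondecreasing height of their shortest $\hat\beta$, every tail set is closed, and $U_{k+1}\trianglelefteq U_k$ with $U_k/U_{k+1}\cong U_{[\beta_k]}$ in your notation. What fails is only normality in all of $U$. Splitting the block is what breaks the argument.
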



\subsubsection{}

Define $R^{\times}:=\{r\in R\mid \text{there exists }s\in R\text{ such that }rs=1\}$, $R_\theta^{\times}:=R_\theta\cap R^{\times}$, and $\mathcal{A}(R)^{\times}:=\{(t_1,t_2)\in\mathcal{A}(R)\mid t_2\in R^{\times}\}$.

For a given $[\alpha] \in \Phi_\rho$, we write 
\[
    R_{[\alpha]}^{\times} = \begin{cases}
        R^{\times}_\theta & \text{if } [\alpha] \sim A_1, \\
        R^{\times} & \text{if } [\alpha] \sim A_1^2 \text{ or } A_1^3, \\
        \mathcal{A}(R)^{\times} & \text{if } [\alpha] \sim A_2.
    \end{cases} 
\]

With these notations in place, we now define certain special elements of $N_{\pi, \sigma} (\Phi, R)$ and $H_{\pi, \sigma} (\Phi, R)$:
\begin{enumerate}
    \item[\textbf{(W1)}]
    For $[\alpha] \sim A_1$ and $t \in R_\theta^{{\times}}$, define
    \[
        w_{[\alpha]}(t) := x_{[\alpha]}(t) \, x_{-[\alpha]}(-t^{-1}) \, x_{[\alpha]}(t) = w_\alpha (t).
    \]

    \item[\textbf{(W2)}]
    For $[\alpha] \sim A_1^2$ and $t \in R^{{\times}}$, define
    \[
        w_{[\alpha]}(t) := x_{[\alpha]}(t) \, x_{-[\alpha]}(-t^{-1}) \, x_{[\alpha]}(t) = w_\alpha(t) \, w_{\bar{\alpha}}(\bar t).
    \]

    \item[\textbf{(W3)}]
    For $[\alpha]\sim A_1^3$ and $t \in R^{{\times}}$, define
    \[
        w_{[\alpha]}(t) := x_{[\alpha]}(t) \, x_{-[\alpha]}(-t^{-1}) \, x_{[\alpha]}(t) = w_\alpha(t) \, w_{\bar{\alpha}}(\bar{t}) \, w_{\bar{\bar{\alpha}}}(\bar{\bar{t}}).
    \]

    \item[\textbf{(W4)}]
    For $[\alpha]\sim A_2$ with $\alpha \neq \bar{\alpha}$ and for $t = (t_1,t_2) \in \mathcal{A}(R)^{{\times}}$, define
    \begin{multline*}
        w_{[\alpha]}(t) = w_{[\alpha]}(t_1,t_2) := \;  x_{[\alpha]}(t_1,t_2) \, x_{-[\alpha]}\bigl(-\bar t_2^{-1}\cdot(t_1,t_2)\bigr) \, x_{[\alpha]}\bigl(t_2\bar t_2^{-1}\cdot(t_1,t_2)\bigr) =\\
        = \;  x_{[\alpha]}(t_1,t_2) \, x_{-[\alpha]}\bigl(-\bar t_2^{-1}t_1,\bar t_2^{-1}\bigr) \, x_{[\alpha]}\bigl(t_2\bar t_2^{-1}t_1,t_2\bigr) 
        = \;  w_{\alpha}(\bar{t}_2) w_{\bar{\alpha}}(1) w_{\alpha}(t_2).
    \end{multline*}

    \item[\textbf{(W4$'$)}]
    For $[\alpha]\sim A_2$ with $\alpha \neq \bar{\alpha}$ and for $t \in R^{{\times}}$, define
    \[
        w'_{[\alpha]}(t) := w_\alpha(\bar t) \, w_{\bar{\alpha}}(1) \, w_\alpha(t).
    \] 

    \medskip

    \item[\textbf{(H1)}]
    For $[\alpha] \sim A_1$ and $t \in R_\theta^{{\times}}$, define
    \[
        h_{[\alpha]}(t) := w_{[\alpha]}(t) \, w_{[\alpha]}(-1) = h_\alpha(t).
    \]

    \item[\textbf{(H2)}]
    For $[\alpha]\sim A_1^2$ and $t\in R^{{\times}}$, define
    \[
        h_{[\alpha]}(t) := w_{[\alpha]}(t)\, w_{[\alpha]}(-1) = h_\alpha(t)\, h_{\bar{\alpha}}(\bar t).
    \]

    \item[\textbf{(H3)}]
    For $[\alpha]\sim A_1^3$ and $t\in R^{{\times}}$, define
    \[
        h_{[\alpha]}(t) := w_{[\alpha]}(t) \, w_{[\alpha]}(-1) = h_\alpha(t)\, h_{\bar{\alpha}}(\bar t)\, h_{\bar{\bar{\alpha}}}(\bar{\bar t}).
    \]

    \item[\textbf{(H4)}]
    For $[\alpha]\sim A_2$ with $\alpha \neq \bar{\alpha}$ and $(t_1, t_2),(u_1, u_2)\in\mathcal A(R)^{{\times}}$, define
    \[
        h_{[\alpha]}\bigl((t_1, t_2),(u_1, u_2)\bigr) := w_{[\alpha]}(t_1, t_2) \, w_{[\alpha]}(u_1, u_2) = h_{\alpha}(\bar{t}_2 \, u_2^{-1}) h_{\bar{\alpha}}(t_2 \, \bar{u}_2^{-1}).
    \]

    \item[\textbf{(H4$'$)}]
    For $[\alpha]\sim A_2$ with $\alpha\neq\bar{\alpha}$ and for
    $t \in R^{{\times}}$, define
    \[
        h'_{[\alpha]}(t) := h_\alpha(t) \, h_{\bar{\alpha}}(\bar t).
    \]
\end{enumerate}

\begin{rmk}
    \normalfont
    \begin{enumerate}[(a)]
        \item Let $w_{[\alpha]}(t)$, for $t \in R_{[\alpha]}^{\times}$, be as defined in $(\mathrm{W1}), (\mathrm{W2}), (\mathrm{W3})$, or $(\mathrm{W4})$. 
        Then $w_{[\alpha]}(t) \in N'_{\pi,\sigma}(\Phi,R) \subseteq N_{\pi,\sigma}(\Phi,R)$. 
        However, the element $w'_{[\alpha]}(t)$ defined in $(\mathrm{W4}')$ does not, in general, belong to $N'_{\pi, \sigma}(\Phi, R)$.
        
        \item Similarly, let $h_{[\alpha]} = h_{[\alpha]}(t)$, for $t \in R_{[\alpha]}^{\times}$, be as defined in $(\mathrm{H1}), (\mathrm{H2})$, or $(\mathrm{H3})$, and let $h_{[\alpha]} = h_{[\alpha]}(t, u)$, for $t, u \in \mathcal{A}(R)^{\times}$, be as defined in $(\mathrm{H4})$. 
        Then $h_{[\alpha]} \in H'_{\pi,\sigma}(\Phi,R) \subseteq H_{\pi,\sigma}(\Phi,R)$. 
        However, the element $h'_{[\alpha]}(t)$ defined in $(\mathrm{H4}')$ does not, in general, belong to $H'_{\pi, \sigma}(\Phi, R)$.
    \end{enumerate}
\end{rmk}

We do not discuss the properties of the above elements in detail here, and instead refer the reader to \cite{SG&DM1} and \cite{EA1} for a comprehensive treatment. 
In the subsequent sections, we shall make use of various commutation relations for twisted Chevalley groups, commonly known as the Chevalley commutator formulas, as well as the Steinberg relations describing the (conjugation) action of the elements $w_{[\alpha]}(t)$ and $h_{[\alpha]}(t)$ on the root unipotents $x_{[\alpha]}(t)$. 
For the sake of conciseness, we do not reproduce these identities here; a complete list of the relevant formulas can be found in \cite{SG&DM1}.


\subsubsection{}

We now address the following two fundamental lemmas. Although these results are generally regarded as folklore among experts, we have been unable to locate formal proofs in the existing literature. For the sake of completeness, we provide detailed proofs here.

\begin{lemma}\label{lemma:Lamda_I_is_sur}
    Let $R$ be a local ring, and let $I \subseteq \operatorname{Rad}(R)$ be a $\theta$-invariant ideal. Assume that $1/2\in R$ and, in addition, that $1/3\in R$ when $(\Phi,\rho)$ has twisted type ${}^3D_4$. Then the natural homomorphism
    \[
        \lambda_I \colon G_{\pi,\sigma}(\Phi,R) \to G_{\pi,\sigma}(\Phi,R/I)
    \]
    is surjective.
\end{lemma}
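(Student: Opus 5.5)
Since $R$ is local and $I\subseteq \Rad(R)$, the quotient $R/I$ is again local; moreover $I$ is $\theta$-invariant, so $\theta$ descends to an automorphism of $R/I$ of the same order. Hence $\sigma$ acts on $G_\pi(\Phi,R/I)$ and $\lambda_I$ commutes with $\sigma$, so $\lambda_I$ indeed maps $G_{\pi,\sigma}(\Phi,R)$ into $G_{\pi,\sigma}(\Phi,R/I)$. Both rings are semilocal, so by \cite[Proposition~5.8]{SG&DM1} we have
\[
G_{\pi,\sigma}(\Phi,R/I)=T_{\pi,\sigma}(\Phi,R/I)\,E'_{\pi,\sigma}(\Phi,R/I).
\]
It therefore suffices to lift elements of $E'_{\pi,\sigma}(\Phi,R/I)$ and of $T_{\pi,\sigma}(\Phi,R/I)$ separately.

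\emph{Elementary part.} By Lemma~\ref{lemma:generators_of_E(R)}, $E'_{\pi,\sigma}(\Phi,R/I)$ is generated by the elements $x_{[\alpha]}(\tilde t)$ with $\tilde t\in (R/I)_{[\alpha]}$, so it is enough to lift each parameter $\tilde t$ to some $t\in R_{[\alpha]}$.
\begin{itemize}
\item For $[\alpha]\sim A_1^2$ or $A_1^3$, any preimage in $R$ works.
\item For $[\alpha]\sim A_1$, take any preimage $s$ and set $t=(s+\bar s)/2\in R_\theta$, using $1/2\in R$. This $t$ still maps to $\tilde t$, because $\tilde t$ is $\theta$-fixed.
\item For $[\alpha]\sim A_2$, given $(\tilde t_1,\tilde t_2)\in\mathcal A(R/I)$, lift $\tilde t_1$ to any $t_1$ and lift $\tilde t_2$ to some $s$. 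Then put
\[
t_2 = \tfrac12 t_1\bar t_1 + \tfrac12(s-\bar s).
\]
One checks $t_2+\bar t_2=t_1\bar t_1$. Modulo $I$ we get $t_2\equiv \tfrac12(\tilde t_2+\bar{\tilde t}_2)+\tfrac12(\tilde t_2-\bar{\tilde t}_2)=\tilde t_2$.
\end{itemize}

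\emph{Torus part.} An element of $T_{\pi,\sigma}(\Phi,R/I)$ is $h(\tilde\chi)$ with $\tilde\chi\in\Hom_1(\Lambda_\pi,(R/I)^\times)$. We need a self-conjugate lift $\chi\in\Hom(\Lambda_\pi,R^\times)$. Since $R$ is local and $I\subseteq\Rad(R)$, every preimage of a unit is a unit, and $R^\times\to (R/I)^\times$ is surjective with kernel $1+I$. Choose a $\mathbb Z$-basis of $\Lambda_\pi$ compatible with the $\rho$-action. For simply-laced $\Phi$ with $\rho(\Lambda_\pi)=\Lambda_\pi$, take a basis consisting of $\rho$-orbits plus possibly some vectors $\lambda$ with $\rho(\lambda)=\lambda$.
\begin{itemize}
\item On a free orbit $\{\lambda,\rho\lambda,\dots\}$, choose an arbitrary unit lift $\chi(\lambda)$ and define $\chi(\rho^i\lambda)=\theta^i(\chi(\lambda))$.
\item On a $\rho$-fixed basis vector we need $\chi(\lambda)\in R_\theta^\times$ lifting a $\theta$-fixed unit $u$. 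Take $\tfrac12(s+\bar s)$ for a lift $s$ of $u$. It lies in $R_\theta$ and reduces to $u$, hence is a unit.
\end{itemize}

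The main obstacle is that $\Lambda_\pi$ need not admit such a permutation-type basis; for instance $\rho$ may act as $\lambda\mapsto-\lambda$ on some part modulo other vectors, as happens for ${}^2A_\ell$ on $\Lambda_{sc}$ in suitable coordinates. If this fails, I would handle it with a second-step correction. Start with an arbitrary homomorphism lift $\chi_0$ (possible since $\Lambda_\pi$ is free). The defect $\delta=\overline{(\chi_0)}_\sigma\chi_0^{-1}$ takes values in $1+I$ and is a $1$-cocycle for the order-$n$ group $\langle\sigma\rangle$ acting on $\Hom(\Lambda_\pi,1+I)$. One then needs $H^1(\langle\sigma\rangle,\Hom(\Lambda_\pi,1+I))=0$, so that $\delta=\overline{\psi}_\sigma\psi^{-1}$ and $\chi=\chi_0\psi^{-1}$ is self-conjugate. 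For this I would use that $1+I$ is uniquely $n$-divisible when $n$ is invertible, via square (cube) roots constructed by Hensel-type arguments in the complete case. In general I would try instead the averaging $\psi=\prod_i \sigma^i(\delta')$ with an $n$-th root, which requires $1/n\in R$ — exactly the hypothesis. If unique $n$-th roots in $1+I$ are unavailable, I would fall back to the alternative of lifting the torus element through $E'$ together with $H$-type elements $h'_{[\alpha]}$, using the explicit structure of $T_{\pi,\sigma}$.
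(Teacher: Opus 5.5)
\textbf{Elementary factor.} Your lifting of the elementary generators is the paper's argument; your $A_2$ formula $t_2=\tfrac12 t_1\bar t_1+\tfrac12(s-\bar s)$ is literally the paper's formula. There is one slip. For ${}^3D_4$ the average $\tfrac12(s+\bar s)$ is not $\theta$-fixed, both for $[\alpha]\sim A_1$ and for $\rho$-fixed basis vectors of $\Lambda_\pi$. You must use $\tfrac13\bigl(s+\theta(s)+\theta^2(s)\bigr)$ there; this is where the hypothesis $1/3\in R$ enters.

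\textbf{Torus factor: the gap.} The real gap is in the torus factor, and it has two parts.

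First, you need a structure theorem for $\Lambda_\pi$ as a $\mathbb{Z}[\langle\rho\rangle]$-lattice when $o(\rho)=2$. By Curtis--Reiner (Theorem~74.3), $\Lambda_\pi$ has a $\mathbb{Z}$-basis $a_i,b_j,c_k,d_k$ with
\[
\rho(a_i)=a_i,\qquad \rho(b_j)=-b_j,\qquad \rho(c_k)=d_k .
\]
Sign summands do occur, though not for $\Lambda_{sc}$ of $A_\ell$ as you suggest: there the fundamental weights are permuted. They occur for intermediate lattices. For example, in $A_3$ the lattice $\Lambda_r+\mathbb{Z}\lambda_2$ splits as $\langle\alpha_2,\lambda_2\rangle\oplus\mathbb{Z}(\alpha_1+\alpha_2-\lambda_2)$, and $\rho$ acts by $-1$ on the last summand. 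For ${}^3D_4$ only $\Lambda_r$ and $\Lambda_{sc}$ are $\rho$-stable, so your permutation-basis argument is enough there.

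Second, and more seriously, your proposed treatment of the sign summands cannot work. On $\mathbb{Z}b$ with $\rho(b)=-b$, self-conjugacy means $v\bar v=1$ for $v=\chi(b)$. The corresponding piece of $H^1(\langle\sigma\rangle,\Hom(\Lambda_\pi,1+I))$ is $(1+I)\cap R_\theta$ modulo norms $n\bar n$ with $n\in 1+I$. This group need not vanish. Take $R=\mathbb{Z}_{(3)}[i]$, which is local with residue field $\mathbb{F}_9$, with $\theta$ complex conjugation and $I=3R$. Then $7\in 1+I$ is $\theta$-fixed, but it is not even a sum of two rational squares, so it is not a norm.

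The same example shows that $1+I$ is not $2$-divisible in non-Henselian local rings, since $7$ is not a square in $R$. Also, $1/n\in R$ gives no $n$-th roots in $R^\times$, so your averaging fallback fails too. Vanishing of $H^1$ is therefore false in general. What is true is only that the specific obstruction class vanishes, and you see this by choosing the lift cleverly rather than by correcting an arbitrary lift.

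\textbf{The missing idea.} The paper uses a Hilbert~90 (Cayley) trick. Since $R/I$ is local and $2$ is invertible, one of $1\pm\tilde v$ is a unit.
\begin{itemize}
\item If $1+\tilde v$ is a unit, lift it to $w\in R^\times$ and set $v=w\,\bar w^{-1}$. Then $v\bar v=1$ automatically, and $v\equiv(1+\tilde v)/(1+\tilde v^{-1})=\tilde v$ modulo $I$.
\item Otherwise, let $w$ lift $1-\tilde v$ and set $v=-w\,\bar w^{-1}$; the same check applies.
\end{itemize}
Combine this with your averaging on the $a_i$ and an arbitrary lift on each pair $c_k,d_k$ (with $\chi(d_k)=\theta(\chi(c_k))$). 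That lifts every self-conjugate character, and the proof closes.
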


\begin{proof}
    Define $\widetilde{R} \coloneqq R/I$, and let $q \colon R \to \widetilde{R}$ be the canonical quotient homomorphism. Since the ideal $I$ is $\theta$-invariant, the automorphism $\theta$ induces a well-defined automorphism $\widetilde{\theta}$ on $\widetilde{R}$ satisfying
    \[
        q \circ \theta = \widetilde{\theta} \circ q.
    \]
    Because $R$ is a local ring, $\widetilde{R}$ is also local. Moreover, since $I \subseteq \operatorname{Rad}(R)$, the induced homomorphism on the groups of units,
    \[
        q^{\times} \colon R^{\times} \to \widetilde{R}^{\times},
    \]
    is surjective. Put $n \coloneqq o(\theta) = o(\rho)$.
    
    Since both $R$ and $\widetilde{R}$ are local rings, the twisted Chevalley groups admit the standard decompositions
    \[
        G_{\pi,\sigma}(\Phi,R) = T_{\pi,\sigma}(\Phi,R) E'_{\pi,\sigma}(\Phi,R)
        \qquad \text{and} \qquad
        G_{\pi,\sigma}(\Phi,\widetilde{R}) = T_{\pi,\sigma}(\Phi,\widetilde{R}) E'_{\pi,\sigma}(\Phi,\widetilde{R}).
    \]
    Therefore, it suffices to prove that both the torus and the elementary factors lift via $\lambda_I$. 

    We first consider the elementary factor. By Lemma~\ref{lemma:generators_of_E(R)}, it is sufficient to lift the elementary generators $x_{[\alpha]}(\widetilde{t})$ for $[\alpha] \in \Phi_\rho$ and $\widetilde{t} \in \widetilde{R}_{[\alpha]}$. That is, we must find an element $t \in R_{[\alpha]}$ such that $q(t) = \widetilde{t}$, which guarantees that $\lambda_I\bigl(x_{[\alpha]}(t)\bigr) = x_{[\alpha]}(\widetilde{t})$. 

    If $[\alpha] \sim A_1$, then $\widetilde{R}_{[\alpha]} = \widetilde{R}_{\widetilde{\theta}}$ and $R_{[\alpha]} = R_{\theta}$. We first choose an arbitrary lift $t' \in R$ such that $q(t') = \widetilde{t}$, and define
    \[
        t \coloneqq \frac{1}{n} \sum_{j=0}^{n-1} \theta^{j}(t').
    \]
    Then $t \in R_\theta$, and because $q \circ \theta = \widetilde{\theta} \circ q$ and $\widetilde{t} \in \widetilde{R}_{\widetilde{\theta}}$, we obtain $q(t) = \widetilde{t}$ as desired.

    If $[\alpha] \sim A_1^2$ or $[\alpha] \sim A_1^3$, then $\widetilde{R}_{[\alpha]} = \widetilde{R}$ and $R_{[\alpha]} = R$. In this case, any lift $t \in R$ of $\widetilde{t}$ satisfies the condition.

    If $[\alpha] \sim A_2$, then $\widetilde{R}_{[\alpha]} = \mathcal{A}(\widetilde{R})$ and $R_{[\alpha]} = \mathcal{A}(R)$. For $\widetilde{t} = (\widetilde{t}_1, \widetilde{t}_2) \in \mathcal{A}(\widetilde{R})$, we have the relation $\widetilde{t}_1 \widetilde{\theta}(\widetilde{t}_1) = \widetilde{t}_2 + \widetilde{\theta}(\widetilde{t}_2)$. Let $t_1', t_2' \in R$ be arbitrary lifts of $\widetilde{t}_1$ and $\widetilde{t}_2$, respectively. Set $t_1 \coloneqq t_1'$ and
    \[
        t_2 \coloneqq t'_2 + \frac{1}{2} \bigl( t'_1 \theta(t'_1) - t'_2 - \theta(t'_2) \bigr).
    \]
    Then $t = (t_1, t_2) \in \mathcal{A}(R)$ and $q(t) = \widetilde{t}$, as required.
    
    Consequently, it follows that $\lambda_I \bigl(E'_{\pi,\sigma}(\Phi,R)\bigr) = E'_{\pi,\sigma}(\Phi,\widetilde{R})$. 

    \smallskip

    We now consider the torus factor. By the definitions of $T_{\pi, \sigma}(\Phi, R)$ and $T_{\pi, \sigma}(\Phi, \widetilde{R})$, it suffices to show that for every self-conjugate character $\widetilde{\chi} \in \Hom_1(\Lambda_{\pi}, \widetilde{R}^{\times})$, we can construct a self-conjugate character $\chi \in \Hom_1(\Lambda_{\pi}, R^{\times})$ such that $q^{\times} \circ \chi = \widetilde{\chi}$. This ensures that $\lambda_I\bigl(h(\chi)\bigr) = h(\widetilde{\chi})$. 

    First, suppose $n=2$. By \cite[Theorem~74.3, p.~508]{CW&IR}, there exists a $\mathbb{Z}$-basis 
    \[
        a_1,\ldots,a_p,\quad b_1,\ldots,b_q,\quad c_1,d_1,\ldots,c_r,d_r
    \]
    for $\Lambda_{\pi}$ such that 
    \[
        \rho(a_i)=a_i, \qquad \rho(b_j)=-b_j, \qquad \rho(c_k)=d_k,\quad \rho(d_k)=c_k.
    \]

    For every $i$, we choose an arbitrary lift $u_i' \in R^{\times}$ such that $q(u_i') = \widetilde{\chi}(a_i)$, and define
    \[
        u_i \coloneqq \frac{1}{2}\bigl(u_i' + \theta(u_i')\bigr).
    \]
    Because $\widetilde{\chi}$ is self-conjugate, we have $\widetilde{\theta}\bigl(\widetilde{\chi}(a_i)\bigr) = \widetilde{\chi}(a_i)$, which implies that $q(u_i) = \widetilde{\chi}(a_i)$. Moreover, $u_i$ is invertible since its image modulo $I \subseteq \operatorname{Rad}(R)$ is invertible. We define $\chi(a_i) \coloneqq u_i$.

    For every $j$, set $\widetilde{v}_j \coloneqq \widetilde{\chi}(b_j)$. Since $\rho(b_j) = -b_j$ and $\widetilde{\chi}$ is self-conjugate, we have
    \[
        \widetilde{\theta}(\widetilde{v}_j) = \widetilde{\chi}(-b_j) = \widetilde{v}_j^{-1}.
    \]
    We aim to find an element $v_j \in R^{\times}$ such that $q(v_j) = \widetilde{v}_j$ and $v_j \theta(v_j) = 1$. Because $\widetilde{R}$ is a local ring and $2 \in \widetilde{R}^{\times}$, at least one of the elements $1 + \widetilde{v}_j$ or $1 - \widetilde{v}_j$ must be invertible. 

    If $1 + \widetilde{v}_j$ is invertible, we choose a lift $v_j' \in R^{\times}$ such that $q(v_j') = 1 + \widetilde{v}_j$, and define $v_j \coloneqq v_j' \theta(v_j')^{-1}$. This directly yields $v_j \theta(v_j) = 1$. Additionally, since $\widetilde{\theta}(\widetilde{v}_j) = \widetilde{v}_j^{-1}$, we obtain
    \[
        q(v_j) = \frac{1 + \widetilde{v}_j}{1 + \widetilde{v}_j^{-1}} = \widetilde{v}_j.
    \]
    Alternatively, if $1 - \widetilde{v}_j$ is invertible, we choose a lift $v_j' \in R^{\times}$ such that $q(v_j') = 1 - \widetilde{v}_j$, and define $v_j \coloneqq -v_j' \theta(v_j')^{-1}$. Again, $v_j \theta(v_j) = 1$, and 
    \[
        q(v_j) = -\frac{1 - \widetilde{v}_j}{1 - \widetilde{v}_j^{-1}} = \widetilde{v}_j,
    \]
    as desired. In either case, we set $\chi(b_j) \coloneqq v_j$.

    Finally, for every $k$, we choose an arbitrary lift $z_k \in R^{\times}$ such that $q(z_k) = \widetilde{\chi}(c_k)$, and define
    \[
        \chi(c_k) \coloneqq z_k \qquad \text{and} \qquad \chi(d_k) \coloneqq \theta(z_k).
    \]
    Because $\widetilde{\chi}$ is self-conjugate, this assignment is consistent:
    \[
        q\bigl(\chi(d_k)\bigr) = q\bigl(\theta(z_k)\bigr) = \widetilde{\theta}\bigl(\widetilde{\chi}(c_k)\bigr) = \widetilde{\chi}(d_k).
    \]
    Extending these values multiplicatively yields a character $\chi \colon \Lambda_\pi \to R^{\times}$ satisfying $q^{\times} \circ \chi = \widetilde{\chi}$ and $\chi\bigl(\rho(\lambda)\bigr) = \theta\bigl(\chi(\lambda)\bigr)$ for all $\lambda \in \Lambda_\pi$. Thus, $\chi$ is self-conjugate.

    It remains to consider the case $n=3$. In this setting, $\Phi$ is of type $D_4$, and the automorphism $\rho$ cyclically permutes the three nonzero elements of $\Lambda_{sc}/\Lambda_r \cong (\mathbb{Z}/2\mathbb{Z})^2$. Consequently, the only $\rho$-invariant intermediate lattices are $\Lambda_\pi = \Lambda_r$ and $\Lambda_\pi = \Lambda_{sc}$. In the former case, the simple roots form a $\mathbb{Z}$-basis permuted by $\rho$; in the latter case, the fundamental weights form such a basis.

    For each $\rho$-orbit of length three in this basis, we select a representative $\lambda$ and choose a lift $u_\lambda \in R^{\times}$ such that $q(u_\lambda) = \widetilde{\chi}(\lambda)$. We then define
    \[
        \chi\bigl(\rho^m(\lambda)\bigr) \coloneqq \theta^m(u_\lambda) \qquad \text{for } m = 0, 1, 2.
    \]
    For each basis element $\lambda$ fixed by $\rho$, we choose a lift $u'_\lambda \in R^{\times}$ such that $q(u'_\lambda) = \widetilde{\chi}(\lambda)$, and define
    \[
        u_\lambda \coloneqq \frac{1}{3}\bigl(u'_\lambda + \theta(u'_\lambda) + \theta^2(u'_\lambda)\bigr).
    \]
    This guarantees that $\theta(u_\lambda) = u_\lambda$ and $q(u_\lambda) = \widetilde{\chi}(\lambda)$. Because the image of $u_\lambda$ modulo $I \subseteq \operatorname{Rad}(R)$ is invertible, $u_\lambda$ is invertible in $R$. We set $\chi(\lambda) \coloneqq u_\lambda$.

    Extending these assignments multiplicatively once more produces a self-conjugate character $\chi \in \operatorname{Hom}_1(\Lambda_\pi, R^{\times})$ that lifts $\widetilde{\chi}$. 
    
    Consequently, we have shown that
    \[
        \lambda_I\bigl(T_{\pi,\sigma}(\Phi,R)\bigr) = T_{\pi,\sigma}(\Phi,\widetilde{R}).
    \]

    Since both the elementary and the torus factors lift along $\lambda_I$, the structural decompositions of the twisted Chevalley groups over $R$ and $\widetilde{R}$ ensure that the homomorphism
    \[
        \lambda_I \colon G_{\pi,\sigma}(\Phi,R) \longrightarrow G_{\pi,\sigma}(\Phi,\widetilde{R})
    \]
    is surjective.
\end{proof}

\begin{lemma}\label{lemma:E_ad=E_pi/Z}
    Assume that $(\Phi,\rho)$ has twisted type ${}^{2}A_\ell$ $(\ell\geq3)$, ${}^{2}D_\ell$ $(\ell\geq4)$, ${}^{2}E_6$, or ${}^{3}D_4$.
    Assume that $1/2\in R$ and, in the case ${}^{3}D_4$, also that $1/3\in R$.
    Then 
    \[
        E'_{\pi,\sigma}(\Phi,R) \big/ Z\!\left(E'_{\pi,\sigma}(\Phi,R)\right)  \;\cong\; E'_{\ad,\sigma}(\Phi,R).
    \]
\end{lemma}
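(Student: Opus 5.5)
The plan is to use the natural isogeny between Chevalley groups with different weight lattices. Since $\Lambda_r \subseteq \Lambda_\pi$, there is a canonical homomorphism
\[
\varphi\colon E_\pi(\Phi,R)\to E_{\ad}(\Phi,R), \qquad x_\alpha(t)\mapsto x_\alpha(t).
\]
It arises from restricting characters (equivalently, from the action on the adjoint module, i.e.\ the conjugation action on the Lie algebra $\mathcal L_R$ with its Chevalley basis). This map commutes with the graph automorphism $\rho$, because $\rho$ acts on root elements in the same way in both representations (Lemma~\ref{epsilonalpha}), and it commutes with the ring automorphism $\theta$. Hence $\varphi\circ\sigma=\sigma\circ\varphi$. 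Therefore $\varphi$ maps $G_{\pi,\sigma}$ into $G_{\ad,\sigma}$ and sends $x_{[\alpha]}(t)$ to $x_{[\alpha]}(t)$. By Lemma~\ref{lemma:generators_of_E(R)}, $\varphi$ restricts to a \emph{surjection}
\[
E'_{\pi,\sigma}(\Phi,R)\twoheadrightarrow E'_{\ad,\sigma}(\Phi,R).
\]
It then remains to show that $\ker\varphi\cap E'_{\pi,\sigma}(\Phi,R)=Z(E'_{\pi,\sigma}(\Phi,R))$.

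For the inclusion $\ker \subseteq Z$: the kernel of the adjoint action consists of elements $g$ acting trivially on $\mathcal L_R$ by conjugation, i.e.\ commuting with every $x_\alpha(t)$. Concretely, I would argue that $\ker\varphi$ on $G_\pi$ lies in the torus $T_\pi$, namely it equals the elements $h(\chi)$ with $\chi|_{\Lambda_r}=1$. Such elements act trivially on all root subgroups by the formula $h(\chi)x_\alpha(\zeta)h(\chi)^{-1}=x_\alpha(\chi(\alpha)\zeta)$. Hence they commute with all $x_{[\alpha]}(t)$ and are central in $E'_{\pi,\sigma}$.

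The reverse inclusion $Z\subseteq\ker$ is the main point. Let $g\in Z(E'_{\pi,\sigma}(\Phi,R))$; then $g$ commutes with all $x_{[\alpha]}(t)$. I would first reduce to the statement that $\varphi(g)$ is central in $E'_{\ad,\sigma}(\Phi,R)$ and then prove that this center is trivial. To do this, I would pass to the Lie algebra. The element $\varphi(g)$ is an automorphism of $\mathcal L_R$ commuting with each $x_{[\alpha]}(1)$, and hence (using $1/2$, and $1/3$ for ${}^3D_4$, to take logarithms of the unipotents) with the nilpotent derivations $\ad(X_\alpha+\epsilon\,\overline{\text{-}}\,X_{\bar\alpha})$ etc. Equivalently, $\varphi(g)$ commutes with the operators $\ad(tX_\alpha+\bar tX_{\bar\alpha})$ for all $t$. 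Using a unit $t$ with $t\neq\bar t$ where available (or working $R_\theta$-linearly otherwise), I would separate the $\alpha$ and $\bar\alpha$ components. This shows that $\varphi(g)$ normalizes each line spanned by the weight vectors $X_\beta$, preserves $\mathcal H$, and hence is diagonal. Next, commuting with $x_{[\alpha]}(1)$ forces the diagonal entries to satisfy $\chi(\alpha)=1$ for all roots; commuting with the negative root elements kills the $\mathcal H$-part. Thus $\varphi(g)=1$.

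The main obstacle is this diagonalization step, carried out over an arbitrary commutative ring rather than a field. I expect to handle it by the standard argument of taking the common stabilizer: the centralizer in $\GL(\mathcal L_R)$ of $U_\sigma$ and $U^-_\sigma$ consists of scalars on each $\rho$-isotypic piece. This uses the ranks excluded in the statement (e.g.\ avoiding ${}^2A_2$, where the $A_2$-type classes make the computation degenerate). Alternatively, one can prove the centralizer statement first for the untwisted $E_\ad$ via Theorem~\ref{thm:LV&EA}, and then check that $E'_\sigma$-centrality upgrades to $E$-centrality by conjugating with the generators $w_{[\alpha]}(1)$.
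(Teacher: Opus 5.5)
Your surjectivity step, the inclusion $\ker\subseteq Z$, and the reduction of $Z\subseteq\ker$ to the triviality of $Z(E'_{\ad,\sigma}(\Phi,R))$ match the paper's proof exactly. The paper then finishes by quoting Suzuki's theorem \cite{KS3} that this center is trivial. The gap is in your attempt to prove that triviality yourself.

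The step ``$\varphi(g)$ commutes with the twisted root operators, hence preserves each line $RX_\beta$ and is diagonal'' does not follow. In fact no argument using only these commutation relations can give it. To separate the $\alpha$- and $\bar\alpha$-components you need some $t$ with $t-\bar t$ a non-zero-divisor; a unit $t\neq\bar t$ is not enough, and in a local ring such $t$ need not exist.

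Here is a concrete obstruction. Let $R=k[\epsilon]/(\epsilon^2)$ with $\mathrm{char}\,k\neq 2$ and $\theta(\epsilon)=-\epsilon$, and take an order-two type. Let $\rho_{\mathcal L}$ be the $R$-linear graph automorphism of $\mathcal L_R$. In the adjoint representation, $\sigma(g)=\rho_{\mathcal L}\,\theta(g)\,\rho_{\mathcal L}^{-1}$, with $\theta$ applied entrywise in the Chevalley basis. Write a $\sigma$-fixed $g\in E_{\ad}(\Phi,R)$ as $g=g_0+\epsilon g_1$ with $g_0,g_1$ over $k$. Comparing $\epsilon$-free parts gives $\rho_{\mathcal L}g_0=g_0\rho_{\mathcal L}$, hence
\[
(1+\epsilon\rho_{\mathcal L})\,g=g+\epsilon g_0\rho_{\mathcal L}=g\,(1+\epsilon\rho_{\mathcal L}).
\]
So $1+\epsilon\rho_{\mathcal L}\in\GL(\mathcal L_R)$ centralizes all of $E'_{\ad,\sigma}(\Phi,R)$. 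Yet it sends $X_\alpha$ to $X_\alpha+\epsilon X_{\bar\alpha}$, so it is not diagonal. It is ``scalar on each $\rho$-isotypic piece'' (namely $1\pm\epsilon$), so your hedged version in the last paragraph does not lead to ``diagonal, hence $\chi(\alpha)=1$'' either.

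What rules such operators out is that $\varphi(g)$ itself lies in $G_{\ad}(\Phi,R)$. It preserves the Lie bracket, and it is not the graph automorphism on any component. Your sketch never uses this, and that is the missing idea. Separately, for $A_2$-type classes the logarithm in the adjoint module contains the term $N^3/3$, so ``using $1/2$'' does not suffice for that step as written.

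Your fallback does not close the gap either. The Vaserstein--Abe theorem (Theorem~\ref{thm:LV&EA}) concerns subgroups normalized by $E_{\ad}(\Phi,R)$, and nothing tells you that $Z(E'_{\ad,\sigma}(\Phi,R))$ is normalized by it. Conjugating by the $w_{[\alpha]}(1)$, which already lie in $E'_{\ad,\sigma}$, only permutes twisted root subgroups. It never produces commutation with a single $x_\alpha(t)$ when $\bar\alpha\neq\alpha$. The operator $1+\epsilon\rho_{\mathcal L}$ above centralizes $E'_{\ad,\sigma}$ but not $E_{\ad}$, so this upgrade cannot be purely formal.

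The direct repair is to invoke Suzuki's theorem, as the paper does. For ${}^2A_\ell$ alone there is also an elementary route in the natural representation:
\begin{enumerate}
\item Lift a central element to $\tilde g\in E'_{\mathrm{sc},\sigma}(\Phi,R)\subseteq\SL_{\ell+1}(R)$.
\item The map $x\mapsto\tilde g x\tilde g^{-1}x^{-1}$ is a homomorphism to $R^\times$, and it is trivial because $E'_{\mathrm{sc},\sigma}(\Phi,R)$ is perfect.
\item Hence $\tilde g$ commutes with the $R$-algebra generated by the $u_{[\alpha]}$, which is $M_{\ell+1}(R)$ by Lemma~\ref{lemma:M(R)_as_R_algebra}, so $\tilde g$ is scalar.
\end{enumerate}
This route does not cover ${}^2D_\ell$, ${}^2E_6$ and ${}^3D_4$, which the lemma also asserts.
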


\begin{proof}
    Consider the natural map 
    \[
        \delta_\pi:E_\pi(\Phi,R)\longrightarrow E_{\ad}(\Phi,R)
    \]
    given by
    \[
        \delta_\pi(x_{\alpha,\pi}(r)) = x_{\alpha,\ad}(r)
        \qquad
        (\alpha\in\Phi,\ r\in R).
    \]
    It is well known that $\delta_\pi$ is surjective and that its kernel is the center of $E_\pi(\Phi,R)$. 
    
    Since the map $\delta_\pi$ commutes with the automorphism
    $\sigma=\rho\circ\theta$, we have
    \[
        \delta_\pi(x_{[\alpha],\pi}(t))=x_{[\alpha],\ad}(t)
    \]
    for all $[\alpha] \in \Phi_\rho$ and all $t \in R_{[\alpha]}$.
    Therefore $\delta_\pi$ restricts to a surjective homomorphism
    \[
        \delta'_\pi: E'_{\pi,\sigma}(\Phi,R) \longrightarrow E'_{\ad,\sigma}(\Phi,R).
    \]
    
    We claim that
    \[
         \ker\delta'_\pi
         =
         Z\bigl(E'_{\pi,\sigma}(\Phi,R)\bigr).
    \]
    First, observe that $\ker \delta'_\pi = E'_{\pi,\sigma}(\Phi,R)\cap\ker\delta_\pi$. 
    Since $\ker\delta_\pi$ is central in $E_\pi(\Phi,R)$, it follows immediately that
    \[
        \ker\delta'_\pi\subseteq Z\bigl(E'_{\pi,\sigma}(\Phi,R)\bigr).
    \]
    Conversely, let $z\in Z(E'_{\pi,\sigma}(\Phi,R))$. Since $\delta'_\pi$ is
    surjective, the element $\delta'_\pi(z)$ commutes with every element of
    $E'_{\ad,\sigma}(\Phi,R)$. Hence
    \[
        \delta'_\pi(z)\in Z(E'_{\ad,\sigma}(\Phi,R)).
    \]
    By Suzuki's theorem~\cite{KS3}, the adjoint elementary twisted Chevalley group has trivial center under the present hypotheses. Thus $\delta'_\pi(z)=1$, and hence 
    $z \in \ker\delta'_\pi$.
    
    This proves the reverse inclusion, and hence the claim.
    
    Finally, the first isomorphism theorem gives
    \[
         E'_{\pi,\sigma}(\Phi,R)\big/Z\bigl(E'_{\pi,\sigma}(\Phi,R)\bigr)
         \cong
         E'_{\ad,\sigma}(\Phi,R),
    \]
    as required.
\end{proof}


\subsection{Normal subgroups of twisted Chevalley groups}
\label{subsec:normal_subgroup_of_G}

We now recall the main theorem from~\cite{SG&DM1}. Let $I$ be a
$\theta$-invariant ideal of $R$, i.e., $\theta(I)=I$. The canonical
projection $R\to R/I$ induces a group homomorphism
\[
    \lambda_I\colon
    G_{\pi,\sigma}(\Phi,R)
    \longrightarrow
    G_{\pi,\sigma}(\Phi,R/I).
\]
Define
\[
    G_{\pi,\sigma}(\Phi,I)
    \coloneqq
    \ker(\lambda_I)
\]
and
\[
    G_{\pi,\sigma}(\Phi,R,I)
    \coloneqq
    \lambda_I^{-1}
    \left(
        Z\bigl(G_{\pi,\sigma}(\Phi,R/I)\bigr)
    \right),
\]
where $Z(G_{\pi,\sigma}(\Phi,R/I))$ denotes the center of
$G_{\pi,\sigma}(\Phi,R/I)$. The subgroup
$G_{\pi,\sigma}(\Phi,I)$ is called the \emph{principal congruence
subgroup} of level~$I$, while $G_{\pi,\sigma}(\Phi,R,I)$ is called
the \emph{full congruence subgroup} of level~$I$.

For $[\alpha]\in\Phi_\rho$, define
\[
    I_{[\alpha]}
    =
    \begin{cases}
        I_\theta
        & \text{if }[\alpha]\sim A_1,\\
        I
        & \text{if }[\alpha]\sim A_1^2\text{ or }A_1^3,\\
        \mathcal{A}(I)
        & \text{if }[\alpha]\sim A_2,
    \end{cases}
\]
where
\[
    I_\theta
    =
    I\cap R_\theta
    =
    \{\,t\in I\mid \theta(t)=t\,\}
\]
and
\[
    \mathcal{A}(I)
    =
    \{\,(t_1,t_2)\in\mathcal{A}(R)
       \mid t_1,t_2\in I\,\}.
\]

Let $E'_{\pi,\sigma}(\Phi,I)$ denote the subgroup of
$E'_{\pi,\sigma}(\Phi,R)$ generated by the elements
$x_{[\alpha]}(t)$ for all $[\alpha]\in\Phi_\rho$ and
$t\in I_{[\alpha]}$. Additionally, define
$E'_{\pi,\sigma}(\Phi,R,I)$ as the normal subgroup of
$E'_{\pi,\sigma}(\Phi,R)$ generated by
$E'_{\pi,\sigma}(\Phi,I)$. This subgroup is called the
\emph{relative elementary subgroup} of level~$I$.

\begin{thm}[S. M. Garge and D. H. Makadiya~\cite{SG&DM1}]
\label{thm:normalsubgroups}
    \normalfont
    Assume that $(\Phi,\rho)$ has one of the following twisted
    types:
    ${}^2A_\ell$ ($\ell\geq 3$),
${}^2D_\ell$ ($\ell\geq 4$),
        ${}^2E_6$,
        ${}^3D_4$.
    
    Assume that $2\in R^\times$ and, in addition, that
    $3\in R^\times$ in the case of twisted type ${}^3D_4$.
    Then a subgroup $H$ of $G_{\pi,\sigma}(\Phi,R)$ is normalized
    by $E'_{\pi,\sigma}(\Phi,R)$ if and only if there exists a
    unique $\theta$-invariant ideal $I$ of $R$ such that
    \[
        E'_{\pi,\sigma}(\Phi,R,I)
        \subseteq
        H
        \subseteq
        G_{\pi,\sigma}(\Phi,R,I).
    \]
\end{thm}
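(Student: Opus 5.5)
The plan is to follow the Abe--Vaserstein strategy for untwisted groups (Theorem~\ref{thm:LV&EA}), transported to the twisted setting through the elements $x_{[\alpha]}(t)$ and the twisted Chevalley commutator formulas of \cite{SG&DM1}.

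\emph{The ``if'' direction.} Let $I$ be $\theta$-invariant and $E'_{\pi,\sigma}(\Phi,R,I)\subseteq H\subseteq G_{\pi,\sigma}(\Phi,R,I)$. It suffices to prove the mixed commutator inclusion
\[
[G_{\pi,\sigma}(\Phi,R,I),E'_{\pi,\sigma}(\Phi,R)]\subseteq E'_{\pi,\sigma}(\Phi,R,I).
\]
Indeed, for $h\in H$ and $g\in E'$ we then have $ghg^{-1}=[g,h]h\in H$. I would prove the inclusion first for $R$ local, and then for general $R$ by localization and patching.
\begin{enumerate}[(i)]
\item For $R$ local, use $G_{\pi,\sigma}=T_{\pi,\sigma}E'_{\pi,\sigma}$. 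Write an element of the full congruence subgroup as a central element times something congruent to $1$. Then decompose the latter, via the Gauss decomposition $U^-_\sigma T_\sigma U_\sigma$ modulo $I$, into factors with entries in $I$.
\item For general $R$, run the standard Quillen--Suslin/Bak local--global argument on the generators $x_{[\alpha]}(t)$. Here $1/2\in R$ (and $1/3$ for ${}^3D_4$) lets us pass between $R$ and $R_\theta$.
\end{enumerate}

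\emph{The ``only if'' direction.} Given $H$ normalized by $E'=E'_{\pi,\sigma}(\Phi,R)$, define $I$ as the set of $t\in R$ such that $x_{[\beta]}(t)\in H$ for a fixed class $[\beta]\sim A_1^2$ (or $A_1^3$). The steps, in order, are as follows.
\begin{enumerate}[(1)]
\item Show that $I$ is an additive subgroup, stable under multiplication by $R$ and under $\theta$. This uses conjugation by $h_{[\alpha]}(u)$ and the commutator formula $[x_{[\alpha]}(r),x_{[\gamma]}(t)]=x_{[\alpha+\gamma]}(\pm r t)\cdots$, together with $1/2$.
\item Show that $I$ does not depend on the choice of class. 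Weyl elements $w_{[\alpha]}(1)$ move between classes of the same type. Commutators move between long and short classes, and between the $A_2$ classes and $\mathcal A(I)$.
\item Conclude $E'_{\pi,\sigma}(\Phi,R,I)\subseteq H$.
\end{enumerate}

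\emph{The upper inclusion, the main obstacle.} We must show $H\subseteq G_{\pi,\sigma}(\Phi,R,I)$, i.e.\ the image $\bar H$ of $H$ in $G_{\pi,\sigma}(\Phi,R/I)$ is central. Replacing $R$ by $R/I$ reduces this to: if $H$ contains no nontrivial root element $x_{[\alpha]}(t)$, then $H$ is central. Since centrality can be checked locally, I would first localize at maximal ideals $\mathfrak m$ of $R_\theta$ (so that $\theta$ still acts) and argue over a semilocal ring.
\begin{enumerate}[(a)]
\item Take a noncentral $g\in H$ and use the Bruhat/Gauss decomposition.
\item Form iterated commutators $[[g,x_{[\alpha]}(t)],x_{[\gamma]}(s)]$ to manufacture a nontrivial element of a root subgroup lying in $H$. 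The rank assumptions (type ${}^2A_\ell$ with $\ell\ge3$, etc.) leave enough room for this.
\item Pull the resulting level information back from the localization via a Vaserstein-type ``sandwich'' argument. This yields a nonzero element of the level ideal, a contradiction.
\end{enumerate}
The case analysis in (b) depends on the twisted type ($A_2$-classes only in ${}^2A_{2n}$; $A_1^3$ only in ${}^3D_4$), and this is where I expect most of the work.

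\emph{Uniqueness.} Suppose $E'(R,I)\subseteq G(R,J)$. Then $x_{[\beta]}(t)$ with $t\in I$ is central modulo $J$. Commuting it with $x_{-[\beta]}(1)$ shows $t\in J$, so $I\subseteq J$, and symmetrically $J\subseteq I$.
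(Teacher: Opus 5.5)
The paper does not prove this statement; it quotes it from Garge--Makadiya, so your sketch has to stand on its own. Its architecture is the standard Abe--Vaserstein one. The lower inclusion (levels defined through an $A_1^2$ or $A_1^3$ class and transported to the other classes using $1/2$) and the uniqueness argument are fine in outline.

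However, step (i) of the ``if'' direction is false as stated. A central element of $G_{\pi,\sigma}(\Phi,R/I)$ need not lift to a central element of $G_{\pi,\sigma}(\Phi,R)$, even for local $R$. Take ${}^2A_5$ with $\pi=\mathrm{sc}$ (so $G_{\pi,\sigma}=\operatorname{SU}_6$), $R=\mathbb{Z}[i]_{(3)}$ with $\theta$ complex conjugation, and $I=9R$. Then $\zeta=1+3i$ satisfies $\zeta\bar\zeta\equiv 1$ and $\zeta^3\equiv 1 \pmod{9}$. So $\zeta I_6$ is central in $\operatorname{SU}_6(R/I)$, and by the paper's surjectivity lemma it is the reduction of some $g\in G_{\pi,\sigma}(\Phi,R,I)$. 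But $Z(\operatorname{SU}_6(R))=\{\pm I_6\}$, since the only sixth roots of unity in $\mathbb{Q}(i)$ are $\pm1$. Since $\zeta\not\equiv\pm1\pmod{9}$, $g$ is not a central element times an element congruent to $1$ modulo $I$. You have to split off the scalar in a larger group where it does lift and still acts trivially by conjugation (here a norm-one scalar of $\mathrm{U}_6(R)$). Alternatively, work through the adjoint group, where full and principal congruence subgroups coincide, and lift the torus part over an extension ring.

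The more serious gap is the upper inclusion. First, the reduction ``replace $R$ by $R/I$'' is not automatic. You must show that the image of $H$ in $G_{\pi,\sigma}(\Phi,R/I)$ contains no nontrivial root element. If $x_{[\beta]}(t)g\in H$ with $g\equiv 1\pmod I$, you need $[G_{\pi,\sigma}(\Phi,I),E']\subseteq E'(R,I)\subseteq H$ to discard $g$ in a commutator with a suitable $x_{[\gamma]}(s)$ and conclude $t\in I$, and similarly for $A_1$ and $A_2$ classes. This should be stated.

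Second, and decisively, (a)--(c) do not yet form an argument. After localizing at $\mathfrak m$, the image of $H$ is normalized only by the image of $E'(R)$, not by $E'_{\pi,\sigma}(\Phi,R_{\mathfrak m})$. So you cannot move $g$ into Gauss or Bruhat normal form inside $H$, and the commutator construction in (b), which relies on such a normal form, is unavailable. Moreover, an element of $H$ that becomes a nontrivial root element in $G_{\pi,\sigma}(\Phi,R_{\mathfrak m})$ need not be a root element of $G_{\pi,\sigma}(\Phi,R)$. Producing an honest $x_{[\alpha]}(a)\in H$ with $a\neq 0$ is exactly the content of the theorem, and it needs a concrete device. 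One option is Vaserstein's denominator clearing: pick $s\notin\mathfrak m$ with everything defined over $R_s$, replace parameters by $s^N t$ so that the relevant conjugates lie in the image of $E'(R)$, and control the kernel of $G_{\pi,\sigma}(\Phi,R)\to G_{\pi,\sigma}(\Phi,R_s)$. Another is a localization-free level computation, e.g.\ decomposition of unipotents, which yields root elements in $H$ with parameters built from the entries of $g$. Your phrase ``Vaserstein-type sandwich argument'' names this step but does not supply it.
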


As an immediate consequence of the above theorem, we obtain a
classification of the normal subgroups of elementary twisted
Chevalley groups.

\begin{cor}\label{cor:normalsubgroups}
    \normalfont
    Let $R$ and $(\Phi,\rho)$ be as in
    Theorem~\ref{thm:normalsubgroups}. A subgroup $H$ of
    $E'_{\pi,\sigma}(\Phi,R)$ is normal in
    $E'_{\pi,\sigma}(\Phi,R)$ if and only if there exists a unique
    $\theta$-invariant ideal $I$ of $R$ such that
    \[
        E'_{\pi,\sigma}(\Phi,R,I)
        \subseteq
        H
        \subseteq
        G_{\pi,\sigma}(\Phi,R,I)
        \cap
        E'_{\pi,\sigma}(\Phi,R).
    \]
\end{cor}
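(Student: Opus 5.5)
The corollary follows from Theorem~\ref{thm:normalsubgroups} by specializing to subgroups of $E'_{\pi,\sigma}(\Phi,R)$. We prove the two implications separately, then uniqueness.

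\emph{Forward direction.} Let $H$ be normal in $E'_{\pi,\sigma}(\Phi,R)$. Since $E'_{\pi,\sigma}(\Phi,R)\subseteq G_{\pi,\sigma}(\Phi,R)$, $H$ is a subgroup of $G_{\pi,\sigma}(\Phi,R)$ normalized by $E'_{\pi,\sigma}(\Phi,R)$. Theorem~\ref{thm:normalsubgroups} then gives a unique $\theta$-invariant ideal $I$ with
\[
E'_{\pi,\sigma}(\Phi,R,I)\subseteq H\subseteq G_{\pi,\sigma}(\Phi,R,I).
\]
Because $H\subseteq E'_{\pi,\sigma}(\Phi,R)$ as well, the upper bound sharpens to $G_{\pi,\sigma}(\Phi,R,I)\cap E'_{\pi,\sigma}(\Phi,R)$.

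\emph{Reverse direction.} Suppose $H\le E'_{\pi,\sigma}(\Phi,R)$ satisfies the displayed sandwich for some $\theta$-invariant ideal $I$. In particular $E'_{\pi,\sigma}(\Phi,R,I)\subseteq H\subseteq G_{\pi,\sigma}(\Phi,R,I)$, so the ``if'' part of Theorem~\ref{thm:normalsubgroups} says $H$ is normalized by $E'_{\pi,\sigma}(\Phi,R)$. Since $H$ lies inside $E'_{\pi,\sigma}(\Phi,R)$, this is exactly normality of $H$ in $E'_{\pi,\sigma}(\Phi,R)$.

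\emph{Uniqueness.} Suppose two $\theta$-invariant ideals $I$ and $I'$ both work for $H$. Enlarging the upper bounds (dropping the intersection with $E'_{\pi,\sigma}(\Phi,R)$) gives two sandwiches of the form in Theorem~\ref{thm:normalsubgroups}. The uniqueness clause of that theorem then forces $I=I'$.

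There is no genuine obstacle. The only point needing care is the reverse direction: one must read the ``if and only if'' in Theorem~\ref{thm:normalsubgroups} as asserting that the sandwich condition \emph{implies} normalization by $E'_{\pi,\sigma}(\Phi,R)$, and not merely that a normalized subgroup has such a sandwich.

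If one wanted to justify that implication directly rather than cite it, the plan would be as follows. Take $g\in E'_{\pi,\sigma}(\Phi,R)$ and $h\in H$. Since $\lambda_I(h)$ is central modulo $I$, the commutator $[g,h]$ lies in the principal congruence subgroup $G_{\pi,\sigma}(\Phi,I)$. It then suffices to know
\[
[E'_{\pi,\sigma}(\Phi,R),G_{\pi,\sigma}(\Phi,R,I)]\subseteq E'_{\pi,\sigma}(\Phi,R,I).
\]
Given this, $[g,h]\in E'_{\pi,\sigma}(\Phi,R,I)\subseteq H$, hence $ghg^{-1}=[g,h]h\in H$. That commutator inclusion is the standard standard-commutator-formula input and is part of the content of the cited result in \cite{SG&DM1}.
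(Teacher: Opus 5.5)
Your proposal is correct and follows the paper, which simply records this corollary as an immediate consequence of Theorem~\ref{thm:normalsubgroups} without giving a separate argument. Your specialization is exactly that argument: restrict to $H\subseteq E'_{\pi,\sigma}(\Phi,R)$, intersect the upper bound with $E'_{\pi,\sigma}(\Phi,R)$, and observe that for such $H$ the two sandwich conditions hold for the same ideals $I$, so both existence and uniqueness transfer.
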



\subsection{Automorphisms of twisted Chevalley groups}
\label{subsec:automorphism}

We now introduce some specific types of automorphisms of a twisted
Chevalley group $G_{\pi,\sigma}(\Phi,R)$. Since
$E'_{\pi,\sigma}(\Phi,R)$ is a characteristic subgroup of
$G_{\pi,\sigma}(\Phi,R)$, each of these automorphisms restricts
naturally to $E'_{\pi,\sigma}(\Phi,R)$.

\medskip

\noindent
\textbf{Inner automorphisms.}
Let $S$ be a commutative ring containing $R$ as a subring, and assume
that the automorphism $\theta$ extends to an automorphism of $S$,
also denoted by $\theta$. Let
$g\in G_{\pi,\sigma}(\Phi,S)$
be such that
\[
    gG_{\pi,\sigma}(\Phi,R)g^{-1}
    =
    G_{\pi,\sigma}(\Phi,R).
\]
Then the map
$x\longmapsto gxg^{-1}$
defines an automorphism of $G_{\pi,\sigma}(\Phi,R)$, denoted by
$i_g$, and is called an \emph{inner automorphism} induced by $g$.
If $g\in G_{\pi,\sigma}(\Phi,R)$, then $i_g$ is called
\emph{strictly inner}.

\medskip

\noindent
\textbf{Ring automorphisms.}
Let $\mu\colon R\to R$ be an automorphism of the ring $R$ that
commutes with $\theta$, i.e.,
\[
    \mu\circ\theta=\theta\circ\mu.
\]
The entrywise map
$(a_{ij})
    \longmapsto
    \bigl(\mu(a_{ij})\bigr)$
defines an automorphism of $G_{\pi,\sigma}(\Phi,R)$, which, by an
abuse of notation, is also denoted by $\mu$ and is called a
\emph{ring automorphism}. Under this automorphism,
\[
    x_{[\alpha]}(t)
    \longmapsto
    x_{[\alpha]}\bigl(\mu(t)\bigr)
\]
for all $[\alpha]\in\Phi_\rho$ and $t\in R_{[\alpha]}$. Here, if
$[\alpha]\sim A_2$ and $t=(t_1,t_2)\in\mathcal{A}(R)$, then
\[
    \mu(t)
    \coloneqq
    \bigl(\mu(t_1),\mu(t_2)\bigr).
\]

\medskip

\noindent
\textbf{Central automorphisms.}
Let
\[
    G=G_{\pi,\sigma}(\Phi,R)
\]
and let
$\tau\colon G\longrightarrow Z(G)$
be a group homomorphism. Then the map
\[
    x\longmapsto \tau(x)x
\]
defines an endomorphism of $G$, which we also denote by $\tau$.
If this endomorphism is an automorphism, it is called a
\emph{central automorphism}.

Since the adjoint twisted Chevalley groups considered here are
centerless, all central automorphisms of
$G_{\mathrm{ad},\sigma}(\Phi,R)$ are trivial. Moreover, since
$E'_{\pi,\sigma}(\Phi,R)$ is perfect, every homomorphism from
$E'_{\pi,\sigma}(\Phi,R)$ to an abelian group, in particular to its
center, is trivial. Consequently, every central automorphism of
$E'_{\pi,\sigma}(\Phi,R)$ is the identity automorphism.

\begin{defn}
    An automorphism of $G_{\pi,\sigma}(\Phi,R)$ or
    $E'_{\pi,\sigma}(\Phi,R)$ is said to be \emph{standard} if it
    can be expressed as a composition of an inner, a ring, and a
    central automorphism.
\end{defn}

We now recall Steinberg's theorem on automorphisms of elementary
twisted Chevalley groups over a field.

\begin{thm}[R. Steinberg~\cite{RS}, Theorem~36]
\label{thm:auto of E over field}
    \normalfont
    Let $k$ be a field, let $\sigma$ be nontrivial, and let
    $G=E'_{\pi,\sigma}(\Phi,k)$
    be an elementary twisted Chevalley group of one of the following
    types:
    ${}^2A_\ell$ ($\ell\geq 3$),
        ${}^2D_\ell$ ($\ell\geq 4$),
        ${}^2E_6$,
        ${}^3D_4$.
        
    Then every automorphism of $G$ is a product of an inner, a
    diagonal, and a field automorphism.
\end{thm}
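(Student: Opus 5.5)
We would follow Steinberg's original strategy: reduce to the case where the automorphism preserves the $BN$-pair of $G=E'_{\pi,\sigma}(\Phi,k)$, and then read off a field automorphism and a diagonal correction from its action on the root subgroups. We may assume $G$ is adjoint. Indeed, by Lemma~\ref{lemma:E_ad=E_pi/Z} every automorphism of $E'_{\pi,\sigma}(\Phi,k)$ induces one of $E'_{\ad,\sigma}(\Phi,k)$, and $E'_{\pi,\sigma}(\Phi,k)$ is perfect, so an automorphism inducing the identity on the quotient is itself trivial. Over a field, $E'_{\ad,\sigma}(\Phi,k)$ is simple (Tits). It carries the split $BN$-pair $(B'_\sigma,N'_\sigma)$ with Weyl group $W_\rho$ and root subgroups $\mathfrak X_{[\alpha]}=\{x_{[\alpha]}(t)\}$, and these satisfy the Chevalley commutator formulas of \cite{SG&DM1}.

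\textbf{Step 1 (the main obstacle).} Show that an automorphism $\varphi$ sends $U_\sigma$ to a conjugate of $U_\sigma$. For finite $k$, $U_\sigma$ is a Sylow $p$-subgroup, so this is immediate. For infinite $k$, we would first characterize the long-root elements intrinsically. The natural invariant is that the centralizer of $x_{[\alpha]}(1)$, for $[\alpha]$ the highest root of $\Phi_\rho$, contains the unipotent radical of a maximal parabolic as a nilpotent normal subgroup of maximal possible class. An alternative is a commuting-graph characterization of the conjugacy class of long-root elements (any two either commute, generate a unipotent group, or generate a rank-one group). Once $\varphi$ permutes the long-root subgroups, we would recover the parabolics as normalizers of the groups they generate. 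This shows that $\varphi$ maps $B'_\sigma$ to a conjugate, and after composing with an inner automorphism we may assume $\varphi(B'_\sigma)=B'_\sigma$. Since $U^-_\sigma$ is a complement opposite to $U_\sigma$, and all such opposite unipotent subgroups are conjugate under $U_\sigma$, a further conjugation gives $\varphi(U^-_\sigma)=U^-_\sigma$. Hence $\varphi$ also preserves $H'_\sigma=B'_\sigma\cap B'^-_\sigma$ and $N'_\sigma$.

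\textbf{Step 2.} Now $\varphi$ permutes the minimal parabolics $B'_\sigma\cup B'_\sigma w_{[\alpha]}B'_\sigma$ for simple $[\alpha]$. It therefore induces a diagram symmetry of $\Phi_\rho$, and hence maps each simple root subgroup $\mathfrak X_{[\alpha]}$ onto $\mathfrak X_{[\alpha']}$. For our types the diagram $\Phi_\rho$ is of type $C_\ell$, $B_\ell$, $F_4$ or $G_2$ and carries no relevant symmetry, so the symmetry is trivial (if one did occur, it would be checked against root lengths). Using $\varphi(N'_\sigma)=N'_\sigma$ and $\varphi(H'_\sigma)=H'_\sigma$, we conclude that $\varphi(\mathfrak X_{[\beta]})=\mathfrak X_{[\beta]}$ for all $[\beta]\in\Phi_\rho$.

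\textbf{Step 3.} Write $\varphi(x_{[\alpha]}(t))=x_{[\alpha]}(f_{[\alpha]}(t))$, with $f_{[\alpha]}$ additive; in the $A_2$ case, $f_{[\alpha]}$ is a group automorphism of $\mathcal A(k)$. Compose with a diagonal automorphism $h(\chi)$, $\chi\in\Hom_1$ over $\bar k$, normalizing $G$, so that $f_{[\alpha]}(1)=1$ for each simple $[\alpha]$ (or the analogous normalization in $\mathcal A(k)$). The relations $h_{[\alpha]}(s)\,x_{[\beta]}(t)\,h_{[\alpha]}(s)^{-1}=x_{[\beta]}(\chi(s)t)$, together with the commutator formulas between adjacent simple roots, force all $f_{[\beta]}$ to be induced by a single map $\mu$. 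These relations also show that $\mu$ is multiplicative, so $\mu$ is a field automorphism, and the $A_1$-type relations (where $t\in k_\theta$) force $\mu\theta=\theta\mu$. Then $\mu^{-1}\varphi$ fixes every generator $x_{[\alpha]}(t)$, which by Lemma~\ref{lemma:generators_of_E(R)} completes the proof.
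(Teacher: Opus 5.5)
The paper does not prove this theorem; it only quotes it from Steinberg's lectures, and your outline follows the BN-pair scheme of Steinberg's proof.

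\textbf{Step 1.} The genuine gap is your Step~1 for infinite $k$, which you flag as the main obstacle but do not carry out. Notions such as ``unipotent radical'', ``generates a unipotent group'', ``rank-one group'' or ``nilpotent normal subgroup of maximal possible class'' are not invariants of the abstract group. You cannot apply $\varphi$ to them until they are rephrased group-theoretically. Even then, you would still have to prove that the rephrased property singles out the long-root elements among all elements of $G$.

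The commuting-graph version has a sharper problem. $\varphi$ carries the class of long-root subgroups to a class with exactly the same abstract properties. So you need $G$ to contain only one such class, and that is precisely the statement to be proved. Over an infinite field this invariance is essentially the entire content of the theorem. In the literature it comes from arguments using the algebraic-group structure of $G$, or from the Borel--Tits theorem on abstract homomorphisms of isotropic groups, not from the BN-pair formalism. As written, Step~1 handles only finite $k$.

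\textbf{Step 2.} An automorphism preserving $(B'_\sigma,N'_\sigma)$ induces only an automorphism of the Coxeter system $(W_\rho,S)$, and this does not see root lengths. For ${}^2A_3$ and ${}^2A_4$ (Coxeter type $B_2$), ${}^2E_6$ (type $F_4$) and ${}^3D_4$ (type $G_2$), all on your list, the Coxeter graph has a nontrivial symmetry exchanging long and short simple roots. For untwisted $B_2$, $F_4$ over perfect fields of characteristic $2$, and $G_2$ over perfect fields of characteristic $3$, such a symmetry is induced by an actual automorphism. So ``checking against root lengths'' is not an argument. You must use the twisted commutator relations, for example:
\begin{itemize}
\item $Z(U_\sigma)$ lies in one specific root subgroup, and the length-exchanging symmetry would move that subgroup to a root subgroup meeting $Z(U_\sigma)$ trivially; or
\item for ${}^2A_{2m}$, the $A_2$-type root subgroups are non-abelian while the others are abelian.
\end{itemize}

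\textbf{Two smaller points.}
\begin{itemize}
\item In Step~3, the torus orbit of $(1,1/2)$ in $\mathcal{A}(k)$ is only $\{(r,r\bar r/2)\}$. So a diagonal automorphism cannot in general normalize $\varphi\bigl(x_{[\alpha]}(1,1/2)\bigr)$. That the image lies in this orbit has to be extracted from commutator relations, as the paper does in Step~(C) via $[x_{-[\alpha_{m-1}]}(t),x_{[\alpha_{m-1}]+[\alpha_m]}(1,1/2)]$.
\item Your reduction to adjoint type gives only uniqueness. You also need to lift conjugation by $G_{\mathrm{ad},\sigma}(k)=T_{\mathrm{ad},\sigma}(k)E'_{\mathrm{ad},\sigma}(k)$ to inner and diagonal automorphisms of $E'_{\pi,\sigma}(\Phi,k)$, as in the proof of Theorem~\ref{MT_local2}.
\end{itemize}
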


The diagonal automorphisms occurring in Steinberg's formulation are
induced by conjugation by suitable torus elements over field
extensions and are therefore inner automorphisms in the generalized
sense adopted above. Thus every automorphism in
Theorem~\ref{thm:auto of E over field} is standard in our
terminology.

The main objective of this series of papers is to generalize the
above result to commutative rings. In the present paper, we focus
primarily on the case ${}^2A_\ell$ $(\ell\geq 5)$ over local rings.


\section{The Group \texorpdfstring{$G_{\pi, \sigma}(A_\ell, R)$}{G(R)} and the Main Theorems}\label{sec:G(R)_and_MT}


In this section, assuming $2\in R^{\times}$, we provide a detailed treatment of the twisted Chevalley groups associated with the root system $A_\ell$ for $\ell \geqslant 3$. 
We begin by describing the structure of the underlying Lie algebra of type $A_\ell$, followed by an examination of the simply connected and adjoint twisted groups. 
Finally, we state our main theorems for automorphisms of these groups. 


\subsection{The Lie algebra of type \texorpdfstring{$A_\ell \ (\ell \geqslant 3)$}{A-l}}

Let $\Phi$ be the root system of type $A_\ell$, which can be realized in the Euclidean space $\mathbb{R}^{\ell+1}$ as the set of vectors $\{\varepsilon_i - \varepsilon_j \mid 1 \leqslant i \neq j \leqslant \ell+1\}$. We fix a system of simple roots $\Delta = \{\alpha_i = \varepsilon_i - \varepsilon_{i+1} \mid 1 \leqslant i \leqslant \ell \}$. Let $\rho$ be the non-trivial, angle-preserving permutation of $\Delta$, defined by $\rho(\alpha_i) = \alpha_{\ell+1-i}$ for all $i = 1, \dots, \ell$. This permutation extends naturally to an isometry of the root system $\Phi$, which we likewise denote by $\rho$.

The corresponding Lie algebra $\mathcal{L} = \mathfrak{sl}_{\ell+1}(\mathbb{C})$ consists of all complex $(\ell+1) \times (\ell+1)$ matrices with vanishing trace. Our primary objective is to select a Chevalley basis $\{X_\alpha, H_{\alpha_i} \mid \alpha \in \Phi, \alpha_i \in \Delta \}$ for $\mathcal{L}$ that satisfies the requirements of Lemma~\ref{epsilonalpha}. 

We begin by specifying the basis elements corresponding to the simple roots. For each $\alpha_i = \varepsilon_i - \varepsilon_{i+1} \in \Delta$, we set $X_{\alpha_i} = E_{i, i+1}$ and $X_{-\alpha_i} = E_{i+1, i}$, which implies that $H_{\alpha_i} = E_{i,i} - E_{i+1, i+1}$. Note that the permutation $\rho$ of $\Phi$ induces an automorphism of $\mathcal{L}$ satisfying:
\[
    \rho(H_{\alpha_i}) = H_{\rho(\alpha_i)}, \qquad \rho(X_{\alpha_i}) = X_{\rho(\alpha_i)}, \qquad \rho(X_{-\alpha_i}) = X_{-\rho(\alpha_i)}
\]
for all $\alpha_i \in \Delta$. It is a straightforward verification that this automorphism is given explicitly by the map
$A \mapsto -Q A^t Q^{-1}$,
where $Q$ is the antidiagonal matrix 
\[
    Q = \begin{pmatrix}
        0 & \dots & 0 & 0 & 1 \\
        0 & \dots & 0 & -1 & 0 \\
        0 & \dots & 1 & 0 & 0 \\
        \vdots & \reflectbox{$\ddots$} & \vdots & \vdots & \vdots \\
        (-1)^\ell & \dots & 0 & 0 & 0
    \end{pmatrix}_{(\ell + 1) \times (\ell + 1)}.
\]
Having established the basis elements for the simple roots, we now extend this definition to all roots in $\Phi$. For any $\alpha = \varepsilon_i - \varepsilon_j \in \Phi$, we define:
\[
    X_{\alpha} = \begin{cases}
        - E_{i, j}, & \text{if } |i-j| \text{ is even and } i + j > \ell+2; \\
        E_{i,j}, & \text{otherwise}.
    \end{cases}
\]
One may verify that this choice of basis elements satisfies the consistency conditions required by Lemma~\ref{epsilonalpha}.


\subsection{The simply connected group \texorpdfstring{$G_{\text{sc}, \sigma} (A_\ell, R)$}{G-sc(R)}}

Let $G_{\text{sc}}(A_\ell, R)$ be the simply connected Chevalley group of type $A_\ell$ for $\ell \geqslant 3$. We consider the involution $\sigma = \rho \circ \theta$ of $G_{\text{sc}}(A_\ell, R)$, where $\rho$ is the graph automorphism induced by the non-trivial symmetry of the Dynkin diagram of type $A_\ell$, and $\theta$ is a ring automorphism of order $2$. The subgroup of fixed points under $\sigma$, denoted by $G_{\text{sc}, \sigma} (A_\ell, R)$, is the simply connected twisted Chevalley group of type ${}^2 A_\ell$.

It is well known that the simply connected Chevalley group $G_{\text{sc}}(A_\ell, R)$ is isomorphic to the special linear group $\operatorname{SL}_{\ell+1}(R)$. Under this isomorphism, the graph automorphism $\rho$ of $\operatorname{SL}_{\ell+1}(R)$ is given explicitly by the mapping
\[
    A \mapsto Q (A^{-t}) Q^{-1},
\]
where $Q$ is the antidiagonal matrix defined in the previous section. Let $\bar{A}$ denote the matrix obtained by applying the ring automorphism $\theta$ to each entry of $A$. Then the action of the twisted involution $\sigma$ on an element $A \in \operatorname{SL}_{\ell+1}(R)$ is given by
\[
    \sigma(A) = Q (\bar{A}^{-t}) Q^{-1}.
\]

We define the special unitary group $\operatorname{SU}_{\ell+1}(R)$ to be the fixed-point subgroup of $\operatorname{SL}_{\ell+1}(R)$ under $\sigma$:
\[
    \operatorname{SU}_{\ell+1}(R) = \{ A \in \operatorname{SL}_{\ell+1}(R) \mid A Q \bar{A}^t = Q \}.
\]
Consequently, the simply connected twisted Chevalley group $G_{\text{sc}, \sigma} (A_\ell, R)$ is isomorphic to $\operatorname{SU}_{\ell+1}(R)$.

Under this realization, we record the explicit matrix forms of the elementary generators, Weyl elements, and torus elements. 
For $[\alpha] \in \Phi_\rho$ and $t \in R_{[\alpha]}$, the elementary generators $x_{[\alpha]}(t)$ are defined as:
\[
    x_{[\alpha]}(t) = \begin{cases}
        I + t X_\alpha & \text{if } [\alpha] \sim A_1, \\
        (I+t X_\alpha) (I + \bar{t} X_{\bar{\alpha}}) & \text{if } [\alpha] \sim A_1^2, \\
        (I + t_1 X_\alpha) (I + \bar{t_1} X_{\bar{\alpha}}) (I + N_{\bar{\alpha}, \alpha} \, t_2 X_{\alpha + \bar{\alpha}}) & \text{if } [\alpha] \sim A_2.
    \end{cases}
\]
Similarly, for $t \in R_{[\alpha]}^{{\times}}$, the Weyl elements $w_{[\alpha]}(t)$ have the following form (cf. Section~\ref{subsec:subgroup_E'}):
\[
    w_{[\alpha]}(t) = \begin{cases}
        (I - X_{\alpha} X_{-\alpha} - X_{-\alpha} X_{\alpha}) + (t X_{\alpha} - t^{-1} X_{-\alpha}) & \text{if } [\alpha] \sim A_1, \\[1.5ex]
        
        \begin{aligned}
            & \left(I - \sum_{\beta \in \{ \alpha,\bar{\alpha} \}} (X_{\beta} \, X_{-\beta} + X_{-\beta} \, X_{\beta}) \right) \\
            & \qquad + (t \, X_{\alpha} + \bar{t} \, X_{\bar{\alpha}} - t^{-1}X_{-\alpha}-\bar t^{-1}X_{-\bar\alpha}),
        \end{aligned} & \text{if } [\alpha]\sim A_1^2,\\[1.5ex]
        
        \begin{aligned}
            & \bigl(I - X_{\alpha + \bar{\alpha}} \, X_{-\alpha-\bar\alpha}
             - X_{-\alpha-\bar\alpha} \, X_{\alpha+\bar\alpha} \bigr) + N_{\alpha,\bar\alpha} \bigl(\bar t_2X_{\alpha+\bar\alpha}
            +t_2^{-1}X_{-\alpha-\bar\alpha}\bigr) \\
            & \qquad - \frac{(\bar t_2^{-1}t_2+1)}{2}
            \bigl(
            X_{\alpha}X_{-\alpha}
            +X_{-\alpha}X_{\alpha}
            +X_{\bar\alpha}X_{-\bar\alpha}
            +X_{-\bar\alpha}X_{\bar\alpha} \\
            &\hspace{5.5em}
            -X_{\alpha+\bar\alpha}X_{-\alpha-\bar\alpha}
            -X_{-\alpha-\bar\alpha}X_{\alpha+\bar\alpha}
            \bigr),
        \end{aligned} & \text{if } [\alpha]\sim A_2.
    \end{cases}
\]
For $[\alpha]\sim A_1$ or $A_1^2$, the torus elements are given by the following formula, where $t_\alpha=t$ and $t_{\bar\alpha}=\bar t$:
\[
    h_{[\alpha]}(t) = \begin{cases}
        I + (t-1) X_{\alpha} X_{-\alpha} + (t^{-1}-1) X_{-\alpha}X_{\alpha} & \text{if } [\alpha] \sim A_1, \\
        I + \sum_{\beta \in \{\alpha, \bar{\alpha}\}} \left( (t_\beta-1) X_{\beta} X_{-\beta} + (t_\beta^{-1}-1) X_{-\beta}X_{\beta} \right) & \text{if } [\alpha] \sim A_1^2.
    \end{cases}
\]

In this matrix realization, $x_{[\alpha]}(t)$ is unipotent, $w_{[\alpha]}(t)$ is a monomial matrix, and $h_{[\alpha]}(t)$ is a diagonal matrix in $\operatorname{SU}_{\ell+1}(R)$, all of which lie in the elementary subgroup $E'_{\text{sc}, \sigma} (\Phi, R)$. 
Specifically, if $\ell = 2m-1$ is odd, this elementary subgroup contains the special elements $x_{[\alpha]}(1)$, $w_{[\alpha]}(1)$, and $h_{[\alpha]}(-1)$ for all roots $[\alpha] \in \Phi_\rho$. Similarly, if $\ell = 2m$ is even, it contains these same elements for all roots $[\alpha] \in \Phi_\rho$ of type $A_1^2$, alongside $x_{[\alpha]}(1,1/2)$ and $w_{[\alpha]}(1,1/2)$ for all roots $[\alpha] \in \Phi_\rho$ of type $A_2$. 

Before turning to the adjoint case, we establish the following lemma. 
For brevity, we write $u_{[\alpha]}$ for $x_{[\alpha]}(1)$ when $[\alpha] \sim A_1$ or $A_1^2$, and $x_{[\alpha]}(1,1/2)$ when $[\alpha] \sim A_2$.

\begin{lemma}\label{lemma:M(R)_as_R_algebra}
    The elements $u_{[\alpha]}$ for $[\alpha] \in \Phi_\rho$ generate $M_{\ell+1}(R)$ as a unital $R$-algebra.
\end{lemma}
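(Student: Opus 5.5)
Let $\mathcal{B}\subseteq M_{\ell+1}(R)$ be the unital $R$-subalgebra generated by the $u_{[\alpha]}$. We show that $\mathcal{B}$ contains every matrix unit $E_{i,j}$, $1\le i\ne j\le \ell+1$. This suffices, because $E_{i,j}E_{j,i}=E_{i,i}$ then gives all diagonal units as well. In this realization each $X_\beta$ is $\pm E_{i,j}$ and $X_\beta X_\gamma=0$ unless the indices chain.

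The first step is to extract nilpotent parts. Since $1\in\mathcal{B}$, the element $n_{[\alpha]}:=u_{[\alpha]}-I$ lies in $\mathcal{B}$. Take a simple class $[\alpha_i]\sim A_1^2$, i.e.\ $i\ne \ell+1-i$ and $i\neq m$ when $\ell=2m$. Then $\bar\alpha_i=\alpha_{\ell+1-i}$, and $X_{\alpha_i}X_{\bar\alpha_i}=0$ because their indices do not chain. Hence
\[
n_{[\alpha_i]}=E_{i,i+1}+E_{\ell+1-i,\ell+2-i}.
\]
Powers and products do not separate the two summands directly. However, $E_{i,i+1}$ and $E_{\ell+1-i,\ell+2-i}$ live on the disjoint index pairs $\{i,i+1\}$ and $\{\ell+1-i,\ell+2-i\}$. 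So once $\mathcal{B}$ contains the diagonal idempotent $E_{i,i}$, we get $E_{i,i}n_{[\alpha_i]}=E_{i,i+1}$. The task therefore reduces to producing diagonal idempotents.

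The second step produces idempotents from products of positive and negative generators. Consider $n_{[\alpha_i]}n_{-[\alpha_i]}=E_{i,i}+E_{\ell+1-i,\ell+1-i}$, with the signs coming from the chosen basis, which are all $+$ for simple roots. Products of such elements together with $n$'s of non-simple classes give more. For instance, for $[\beta]$ with $\beta=\varepsilon_1-\varepsilon_{\ell+1}$ (class $A_1$, fixed by $\rho$), $n_{[\beta]}=\pm E_{1,\ell+1}$ is a single matrix unit. Then $n_{[\beta]}n_{-[\beta]}=E_{1,1}$ up to sign, and the single idempotent $E_{1,1}$ is now in $\mathcal{B}$. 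More generally, take classes $[\varepsilon_i-\varepsilon_j]\sim A_1^2$ whose two components $E_{i,j}$ and $E_{\ell+2-j,\ell+2-i}$ have different row indices. Multiplying by an idempotent already obtained, and then by $n$'s of other classes, we walk along indices: $E_{1,1}\,n_{[\varepsilon_1-\varepsilon_j]}=\pm E_{1,j}$, since the conjugate component $E_{\ell+2-j,\ell+1}$ is killed on the left by $E_{1,1}$ whenever $\ell+2-j\neq1$. Similarly $n_{-[\varepsilon_1-\varepsilon_j]}E_{1,1}=\pm E_{j,1}$. So $E_{1,j},E_{j,1}\in\mathcal{B}$ for every $2\le j\le\ell$, and $E_{1,\ell+1}$ was obtained directly. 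Hence all $E_{j,k}=E_{j,1}E_{1,k}$ lie in $\mathcal{B}$.

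When $\ell$ is even we must also check the classes of type $A_2$ (the middle ones). Here $u_{[\alpha]}-I$ equals $E_{m,m+1}+E_{m+1,m+2}\pm\tfrac12E_{m,m+2}$, with $\ell+1=2m+1$. These classes are not needed in the walk above: $E_{1,j}$ for $j\neq 1,\ell+1$ comes from $A_1^2$ classes $[\varepsilon_1-\varepsilon_j]$, and the only constraint was $\ell+2-j\ne1$. So the argument avoids them entirely.

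The main obstacle is the sign and class bookkeeping. One must confirm that $\varepsilon_1-\varepsilon_{\ell+1}$ is $\rho$-fixed with class type $A_1$, which holds in both parities since $\rho(\varepsilon_i-\varepsilon_j)=\varepsilon_{\ell+2-j}-\varepsilon_{\ell+2-i}$. One must also confirm that for $2\le j\le \ell$ the class of $\varepsilon_1-\varepsilon_j$ is of type $A_1^2$, with second component having row index $\ell+2-j\ne 1$. Signs from the convention $X_\alpha=\pm E_{i,j}$ are harmless, since $-1\in R$.
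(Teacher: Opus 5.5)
Your argument is correct for odd $\ell=2m-1$, and there it is a slightly leaner route than the paper's. You extract the single idempotent $E_{1,1}=\pm n_{[\beta]}n_{-[\beta]}$ from the $A_1$ class $[\varepsilon_1-\varepsilon_{2m}]$ and walk along the first row and column through the $A_1^2$ classes $[\varepsilon_1-\varepsilon_j]$. The paper instead produces every diagonal idempotent from $[X_{\beta_i},X_{-\beta_i}]$ and $\tfrac12(I-h_{[\beta_i]}(-1))$, and then splits the simple-root generators.

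The even case, however, fails as written. The inference ``$\varepsilon_1-\varepsilon_{\ell+1}$ is $\rho$-fixed, hence of type $A_1$'' is false for $\ell=2m$: ${}^2A_{2m}$ has no classes of type $A_1$ at all. Every $\rho$-fixed root $\varepsilon_i-\varepsilon_{2m+2-i}$ equals $\gamma+\bar\gamma$ with $\gamma=\varepsilon_i-\varepsilon_{m+1}$. It therefore lies in the $A_2$ class $[\beta_i]=\{\varepsilon_i-\varepsilon_{m+1},\ \varepsilon_{m+1}-\varepsilon_{2m+2-i},\ \varepsilon_i-\varepsilon_{2m+2-i}\}$. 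All $m$ positive classes $[\beta_i]$ are of type $A_2$, not just the middle one.

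Consequently $u_{[\beta_1]}=x_{[\beta_1]}(1,1/2)$ and $n_{[\beta_1]}=\pm E_{1,m+1}\pm E_{m+1,2m+1}\pm\tfrac12E_{1,2m+1}$. So $n_{[\beta_1]}n_{-[\beta_1]}$ has nonzero entries in positions $(m+1,m+1)$, $(1,m+1)$, $(m+1,1)$ and is not $\pm E_{1,1}$. Your only source of an idempotent disappears.

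For the same reason $\varepsilon_1-\varepsilon_{m+1}$ belongs to $[\beta_1]$ rather than to an $A_1^2$ class, so the walk never reaches $E_{1,m+1}$ or $E_{m+1,1}$. Your claim that the $A_2$ classes can be avoided is also false in principle. Every $u_{[\gamma]}$ with $[\gamma]\sim A_1^2$ fixes $e_{m+1}$ and stabilizes $\langle e_k\mid k\neq m+1\rangle$, so these generators alone lie in a proper block-diagonal subalgebra.

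The repair is the device the paper uses. Squaring $n_{\pm[\beta_1]}$ kills everything except the product of the two matrix units attached to the non-fixed roots of the class. Thus $n_{[\beta_1]}^2=\pm E_{1,2m+1}$ and $n_{-[\beta_1]}^2=\pm E_{2m+1,1}$, whence $E_{1,1}=\pm n_{[\beta_1]}^2n_{-[\beta_1]}^2\in\mathcal{B}$. Then $E_{1,1}n_{[\beta_1]}$ and $n_{-[\beta_1]}E_{1,1}$ give $E_{1,m+1}$ and $E_{m+1,1}$ after subtracting suitable multiples of $E_{1,2m+1}$ and $E_{2m+1,1}$. For $j\notin\{1,m+1,2m+1\}$ the class $[\varepsilon_1-\varepsilon_j]$ genuinely is of type $A_1^2$, with second component in row $2m+2-j\neq 1$, so your walk goes through unchanged. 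With this patch the proof is complete.
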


\begin{proof}
    Let $M$ be the unital $R$-subalgebra of $M_{\ell+1}(R)$ generated by $\{u_{[\alpha]} \mid [\alpha] \in \Phi_\rho\}$. 
    It suffices to show that $E_{i,i} \in M$ for all $i = 1, \dots, \ell+1$, and that $E_{i,i+1}, E_{i+1,i} \in M$ for all $i = 1, \dots, \ell$.

    \medskip
    
    First, consider the case $\ell = 2m-1$ with $m \geq 2$. In this case, all roots are of type $A_1$ or $A_1^2$. For each $i = 1, \dots, m$, set $\beta_i = \varepsilon_i - \varepsilon_{2m+1-i}$. Then the classes $[\beta_i]$ comprise all positive roots of type $A_1$. Observe that
    \begin{align*}
        X_{\pm\beta_i} &= x_{\pm[\beta_i]}(1) - I, \qquad E_{i,\,2m+1-i}=X_{\beta_i}, \\
        E_{i,i} - E_{2m+1-i,\,2m+1-i} &= H_{\beta_i} = [X_{\beta_i}, X_{-\beta_i}], \\
        E_{i,i} + E_{2m+1-i,\,2m+1-i} &= \tfrac{1}{2}\bigl(I - h_{[\beta_i]}(-1)\bigr) = \tfrac{1}{2}\bigl(I - \bigl(x_{[\beta_i]}(1)^{-1} x_{-[\beta_i]}(1) x_{[\beta_i]}(1)^{-1}\bigr)^2\bigr).
    \end{align*}
    Hence $E_{i,i}, E_{2m+1-i,\,2m+1-i} \in M$ for $i = 1, \dots, m$, and therefore $E_{i,i} \in M$ for all $i = 1, \dots, \ell+1$. In addition, $E_{m,\,m+1}, E_{m+1,\,m} \in M$.
    
    For each $i = 1, \dots, m-1$, we have
    \begin{align*}
        E_{i,i+1} + E_{2m-i,\,2m+1-i} &= X_{\alpha_i} + X_{\alpha_{2m-i}} = x_{[\alpha_i]}(1) - I, \\
        E_{i+1,i} + E_{2m+1-i,\,2m-i} &= X_{-\alpha_i} + X_{-\alpha_{2m-i}} = x_{-[\alpha_i]}(1) - I.
    \end{align*}
    Multiplying these elements by the diagonal matrix units already obtained, we conclude that $E_{i,i+1}, E_{i+1,i} \in M$ for all $i = 1, \dots, \ell$.

    \medskip
    
    Now consider the case $\ell = 2m$ with $m \geq 2$. Here all roots are of type $A_2$ or $A_1^2$. For each $i = 1, \dots, m$, set $\beta_i = \varepsilon_i - \varepsilon_{2m+2-i}$. Then $\rho(\beta_i) = \beta_i$ and $[\beta_i] \sim A_2$, more precisely
    \[
        [\beta_i] = \{ \varepsilon_i - \varepsilon_{m+1}, \ \varepsilon_{m+1} - \varepsilon_{2m+2-i}, \ \varepsilon_i - \varepsilon_{2m+2-i} \}.
    \]
    We have
    \begin{align*}
        X_{\pm\beta_i} &= (-1)^{m-i}\bigl(x_{\pm[\beta_i]}(1,1/2) - I\bigr)^2, \qquad E_{i,\,2m+2-i}=X_{\beta_i}, \\
        E_{i,i} - E_{2m+2-i,\,2m+2-i} &= H_{\beta_i} = [X_{\beta_i}, X_{-\beta_i}], \\
        E_{i,i} + E_{2m+2-i,\,2m+2-i} &= \frac{1}{2}\left(I - \bigl((I - X_{\beta_i})(I + X_{-\beta_i})(I - X_{\beta_i})\bigr)^2\right).
    \end{align*}
    It follows that $E_{i,i}, E_{2m+2-i,\,2m+2-i} \in M$ for $i = 1, \dots, m$, and hence $E_{i,i} \in M$ for all $i = 1, \dots, m, m+2, \dots, 2m+1$.
    
    For $i = 1, \dots, m-1$, we similarly have
    \begin{align*}
        E_{i,i+1} + E_{2m+1-i,\,2m+2-i}& = X_{\alpha_i} + X_{\alpha_{2m+1-i}} = x_{[\alpha_i]}(1) - I, \\
        E_{i+1,i} + E_{2m+2-i,\,2m+1-i}& = X_{-\alpha_i} + X_{-\alpha_{2m+1-i}} = x_{-[\alpha_i]}(1) - I,
    \end{align*}
    which implies $E_{i,i+1}, E_{i+1,i} \in M$ for all $i = 1, \dots, m-1, m+2, \dots, 2m$.
    
    It remains to treat the middle indices. Consider the root class
    \[
        [\beta_m] = \{\alpha_m, \alpha_{m+1}, \beta_m\}.
    \]
    Then
    \begin{align*}
        E_{m,\,m+2} &= X_{\beta_m} = \bigl(x_{[\beta_m]}(1,1/2) - I\bigr)^2, \\
        E_{m+2,\,m} &= X_{-\beta_m} = \bigl(x_{-[\beta_m]}(1,1/2) - I\bigr)^2, \\
        E_{m,m+1} + E_{m+1,m+2} &= X_{\alpha_m} + X_{\alpha_{m+1}} = \bigl(x_{[\beta_m]}(1,1/2) - I\bigr) - \tfrac{1}{2}X_{\beta_m}, \\
        E_{m+1,m} + E_{m+2,m+1} &= X_{-\alpha_m} + X_{-\alpha_{m+1}} = \bigl(x_{-[\beta_m]}(1,1/2) - I\bigr) - \tfrac{1}{2}X_{-\beta_m}.
    \end{align*}
    Combining these relations with the previously obtained elements, we deduce that $E_{i,i+1}, E_{i+1,i} \in M$ for $i = m, m+1$, and consequently $E_{m+1,m+1} \in M$.
\end{proof}


\subsection{The adjoint group \texorpdfstring{$G_{\text{ad}, \sigma} (A_\ell, R)$}{G-ad(R)}}

In its classical matrix realization, the adjoint Chevalley group $G_{\text{ad}}(A_\ell, R)$ is isomorphic to the projective general linear group $\operatorname{PGL}_{\ell+1}(R)$, which is defined as the group of all $R$-algebra automorphisms of the matrix algebra $M_{\ell+1}(R)$. This yields the exact sequence
\[
    1 \to R^{\times} \to \operatorname{GL}_{\ell+1}(R) \to \operatorname{PGL}_{\ell+1}(R) \to \operatorname{Pic}(R),
\]
where $\operatorname{Pic}(R)$ denotes the Picard group of $R$ (cf.\ \cite{HM}). When $R$ is a local ring, $\operatorname{Pic}(R)$ is trivial, and the sequence simplifies to the isomorphism
\[
    \operatorname{PGL}_{\ell+1}(R) \cong \operatorname{GL}_{\ell+1}(R) / \{\lambda I_{\ell+1} \mid \lambda \in R^{\times}\}.
\]
Consequently, over a local ring, each element of the adjoint group can be represented as an equivalence class $[A]$ of a matrix $A \in \operatorname{GL}_{\ell+1}(R)$. For the remainder of this subsection, we assume $R$ is a local ring.

Consider the involution $\sigma$ on $\operatorname{PGL}_{\ell+1}(R)$ defined by the mapping
\[
    [A] \mapsto [Q \bar{A}^{-t} Q^{-1}],
\]
where $Q$ is the antidiagonal matrix established in the previous section, and $\bar{A} = \theta(A)$ denotes the entrywise application of the ring involution $\theta$. The adjoint twisted Chevalley group $G_{\text{ad}, \sigma}(A_\ell, R)$ is isomorphic to the subgroup of $\sigma$-fixed points in $\operatorname{PGL}_{\ell+1}(R)$:
\[
    G_{\text{ad}, \sigma}(A_\ell, R) \cong \{[A] \in \operatorname{PGL}_{\ell+1}(R) \mid [A Q \bar{A}^t] = [Q]\}.
\]
This fixed-point condition dictates that for any representative $A \in \operatorname{GL}_{\ell+1}(R)$ of such an element, there exists a scalar $\lambda \in R^{\times}$ satisfying
\[
    A Q \bar{A}^t = \lambda Q.
\]

By Lemma~\ref{lemma:E_ad=E_pi/Z}, we have $E'_{\text{ad}, \sigma}(A_\ell, R) \cong E'_{\text{sc}, \sigma}(A_\ell, R) / Z(E'_{\text{sc}, \sigma}(A_\ell, R))$. Therefore, any class $[A] \in E'_{\text{ad}, \sigma}(A_\ell, R)$ admits a representative $A \in E'_{\text{sc}, \sigma}(A_\ell, R) \subset \operatorname{GL}_{\ell+1}(R)$ with determinant $1$. 

In particular, every elementary generator $x_{[\alpha]}(t) \in E'_{\text{ad}, \sigma}(A_\ell, R) \subset \operatorname{PGL}_{\ell+1}(R)$ lifts to a corresponding element $x_{[\alpha]}(t) \in E'_{\text{sc}, \sigma}(A_\ell, R) \subset \operatorname{GL}_{\ell+1}(R)$, as defined in the preceding section. Consequently, we slightly abuse notation within the adjoint group by identifying the generator with its representative, writing $x_{[\alpha]}(t) = [x_{[\alpha]}(t)]$. The distinction between the group element and its lift in $\operatorname{GL}_{\ell+1}(R)$ will remain clear from context. We adopt an analogous convention for the Weyl elements $w_{[\alpha]}(t)$ and the torus elements $h_{[\alpha]}(t)$.


\subsection{The main theorems}

We conclude this section by stating the main results of the paper.

\begin{thm}\label{MT_local1}
    Let $R$ be a local ring with $2\in R^{\times}$, and let $G = E'_{\text{ad}, \sigma}(\Phi,R)$ be an adjoint elementary twisted Chevalley group of type ${}^2A_{\ell} \ (\ell \geqslant 5)$. Then every automorphism of $G$ is standard; that is, it can be written as a composition of a strictly inner automorphism and a ring automorphism of~$G$.  
\end{thm}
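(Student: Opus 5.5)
Let $\varphi$ be an automorphism of $G=E'_{\ad,\sigma}(A_\ell,R)$, and let $\mathfrak m=\Rad(R)$ be the maximal ideal. Note that $\theta(\mathfrak m)=\mathfrak m$, so $\mathfrak m$ is $\theta$-invariant. I would argue in four steps.

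\textbf{Step 1: reduction to the residue field.} By Corollary~\ref{cor:normalsubgroups}, the normal subgroups of $G$ are sandwiched by $\theta$-invariant ideals. I would show that $\varphi$ permutes these and preserves the subgroup $G\cap G_{\ad,\sigma}(\Phi,R,\mathfrak m)$. The reason is that it is the unique maximal proper normal subgroup, or is characterized as the largest proper normal subgroup corresponding to a proper ideal. Since $G_{\ad,\sigma}$ over a field is centerless, this subgroup is the kernel of $\lambda_{\mathfrak m}$ on $G$. By Lemma~\ref{lemma:Lamda_I_is_sur} and the elementary-generator lifting argument in its proof, $\lambda_{\mathfrak m}(G)=E'_{\ad,\sigma}(\Phi,k)$ with $k=R/\mathfrak m$. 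Hence $\varphi$ induces an automorphism $\bar\varphi$ of $E'_{\ad,\sigma}(\Phi,k)$.

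By Theorem~\ref{thm:auto of E over field}, $\bar\varphi$ is a composition of an inner, a diagonal and a field automorphism. I then compose $\varphi$ with a suitable conjugation by a lift in $G_{\ad,\sigma}(\Phi,R)$, using Lemma~\ref{lemma:Lamda_I_is_sur} again. After this, the images of the distinguished elements $h_{[\alpha]}(-1)$, $w_{[\alpha]}(1)$ and $x_{[\alpha]}(1)$ are congruent modulo $\mathfrak m$ to the originals, up to a field automorphism of $k$ that I keep track of separately.

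\textbf{Step 2: normalizing the images of $h_{[\alpha]}(-1)$.} Work in $\PGL_{\ell+1}(R)$ with matrix representatives. The $h_{[\alpha]}(-1)$ are commuting involutions, and $2\in R^\times$. Their images are therefore commuting involutions congruent to diagonal sign matrices modulo $\mathfrak m$, up to scalars, so they can be simultaneously diagonalized over the local ring $R$. Splitting $R^{\ell+1}$ into joint eigenspaces, which are free and have the same ranks as modulo $\mathfrak m$, gives a basis change $C\in\GL_{\ell+1}(R)$ making all $\varphi(h_{[\alpha]}(-1))$ equal to $h_{[\alpha]}(-1)$.

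The delicate point is that $C$ must respect the hermitian form $Q$, up to scalars. For this I would use the fact that $\varphi(h)$ lies in the unitary group, so the eigenspaces are orthogonal or paired with respect to $Q$. The hypothesis $\ell\ge5$ should be needed here, to ensure that the eigenspace pattern of the $h_{[\alpha]}(-1)$ separates all coordinates, or pairs $\{i,\ell+2-i\}$.

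\textbf{Step 3: normalizing $w_{[\alpha]}(1)$ and $x_{[\alpha]}(1)$.} The images $\varphi(w_{[\alpha]}(1))$ must satisfy the same conjugation relations with the $h$'s as $w_{[\alpha]}(1)$. They are therefore monomial with the same pattern, with unit entries. A further diagonal conjugation, by a torus element of $G_{\ad,\sigma}(\Phi,R)$, makes them equal to $w_{[\alpha]}(1)$; here I use the simply connected representative and the self-conjugacy conditions.

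Then $\varphi(x_{[\alpha]}(1))$ commutes with suitable $h$'s and $w$'s and lies in the right unipotent shape. This forces $\varphi(x_{[\alpha]}(1))=x_{[\alpha]}(s)$ for some $s$, with $s\equiv 1$ modulo $\mathfrak m$. The Weyl relation $w_{[\alpha]}(1)=x_{[\alpha]}(1)\,x_{-[\alpha]}(-1)\,x_{[\alpha]}(1)$ then pins down $s=\pm1$, and hence $s=1$.

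\textbf{Step 4: recovering a ring automorphism.} Now $\varphi$ fixes all $u_{[\alpha]}$. For a simple long class $[\beta]$, $\varphi(x_{[\beta]}(t))$ commutes with everything that $x_{[\beta]}(t)$ commutes with, so $\varphi(x_{[\beta]}(t))=x_{[\beta]}(\mu(t))$. The commutator formula and Weyl conjugation then transport $\mu$ to all roots. Additivity and multiplicativity of $\mu$ follow from the relations $[x_{[\alpha]}(t),x_{[\beta]}(u)]=x_{[\alpha+\beta]}(\pm tu)$ type identities, and compatibility with $\theta$ follows from the $A_1^2$ relations. Composing $\varphi$ with $\mu^{-1}$ leaves an automorphism fixing every generator (Lemma~\ref{lemma:generators_of_E(R)}), which is therefore the identity.

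\textbf{Expected main obstacle.} I expect the hardest part to be Step 2. The basis changes must stay within the $\sigma$-fixed group, and the resulting conjugation must be strictly inner, that is, induced by an element of $G$ rather than an arbitrary element of $\GL_{\ell+1}(R)$. To handle this, I would absorb the leftover torus factors into $T_{\ad,\sigma}(\Phi,R)$, using the fact that $G'_{\ad,\sigma}=T_{\ad,\sigma}E'_{\ad,\sigma}$ over local rings.
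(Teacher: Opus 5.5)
Your proposal has genuine gaps: one at exactly the point you flag as the main obstacle, and one in Step 3, which rests on claims that are false.

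First, the suggested fix for strict innerness does not work. The basis changes that are actually needed are not diagonal (see the next paragraph), so they cannot be absorbed into $T_{\mathrm{ad},\sigma}(\Phi,R)$. You also give no mechanism for keeping each of them inside the unitary group.

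The paper avoids this issue entirely. It allows arbitrary $g_2,g_3,g_4\in\GL_{\ell+1}(R,J)$, so the intermediate maps are only isomorphisms onto subgroups of $\PGL_{\ell+1}(R)$. It then proves afterwards that $C_1=g_2g_3g_4$ is projectively $\sigma$-fixed:
\begin{enumerate}[(i)]
\item Once $\varphi_4=i_{[C_1]}^{-1}\circ\varphi_1$ fixes every $u_{[\alpha]}$, the classes $[C_1u_{[\alpha]}C_1^{-1}]=\varphi_1(u_{[\alpha]})$ lie in the twisted group.
\item With $D=Q^{-1}\overline{C_1}^{\,t}QC_1$ this gives $Du_{[\alpha]}D^{-1}=\lambda_{[\alpha]}u_{[\alpha]}$ with $\lambda_{[\alpha]}\equiv 1\pmod J$.
\item A commutator argument together with $2\in R^\times$ forces $\lambda_{[\alpha]}=1$.
\item The $u_{[\alpha]}$ generate $M_{\ell+1}(R)$ as an $R$-algebra (Lemma~\ref{lemma:M(R)_as_R_algebra}), so $D$ is scalar, i.e.\ $[C_1]\in G_{\mathrm{ad},\sigma}(\Phi,R)$.
\end{enumerate}
An argument of this kind is the missing idea that makes the final conjugation strictly inner.

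Second, the $h_{[\alpha]}(-1)$ do not split the space into coordinate lines. Each $h_{[\alpha]}(-1)=h_\alpha(-1)h_{\bar\alpha}(-1)$ acts by a scalar on every plane $B_r=\langle e_r,e_{\ell+2-r}\rangle$. So the joint eigenspaces are these planes, plus the line $\langle e_{m+1}\rangle$ when $\ell=2m$. Consequently the images of the $w_{[\alpha]}(1)$ are only block-monomial, with $2\times 2$ blocks such as $\begin{pmatrix} a & b\\ -(1+a^2)/b & -a\end{pmatrix}$ with $a\in J$. No diagonal or torus conjugation changes the diagonal entry $a$; the paper needs non-diagonal block changes $T_i$ here.

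More seriously, fixing all $h_{[\alpha]}(-1)$ and $w_{[\alpha]}(1)$ does \emph{not} force $\varphi(x_{[\alpha]}(1))=x_{[\alpha]}(s)$. For $\ell$ odd, all these normalized elements are signed permutation matrices preserving $Q$. Hence they commute with $E(c)=I+cQ$ for every $c\in J$, and $[E(c)]\in G_{\mathrm{ad},\sigma}(\Phi,R)$ when $\bar c=c$. Conjugating all the $x_{\pm[\alpha]}(1)$ simultaneously by $E(c)$ preserves every relation among the $h$'s, $w$'s and $x$'s, including $w=x\,x_-\,x$. Yet it turns $x_{[\alpha_1]}(1)$ into a matrix $x_1(p,q)$ with $q=-c/(1+c^2)\neq 0$, which is not a root element.

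So an additional normalization is unavoidable. The paper derives the shape $x_1(p,q)$ with $p^2+q^2=p$ from commutator relations and from $w_{[\widetilde{\alpha}_1]}(1)$, and then conjugates by $E(q/p)$. In the even case it must additionally show that the $A_2$ Weyl images $v_i(a,b)$ are forced to equal $w_{[\beta_i]}(1,1/2)$, and it normalizes $x_{[\alpha_m]}(1,1/2)$ with $D(a)$.

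A minor point: for $\ell$ odd the long classes are of type $A_1$ with parameters only in $R_\theta$. So in Step 4 the map $\mu$ must be read off from an $A_1^2$ class, as the paper does.
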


\begin{thm}\label{MT_local2}
    Let $R$ be a local ring with $2\in R^{\times}$, and let $G = E'_{\pi, \sigma} (\Phi, R)$ be an elementary twisted Chevalley group of type ${}^2A_{\ell} \ (\ell \geqslant 5)$. Then every automorphism of $G$ is standard; that is, it can be expressed as a composition of an inner automorphism and a ring automorphism of~$G$.
\end{thm}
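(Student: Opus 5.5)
The plan is to reduce Theorem~\ref{MT_local2} to the adjoint case, Theorem~\ref{MT_local1}, via Lemma~\ref{lemma:E_ad=E_pi/Z}. Let $\varphi$ be an automorphism of $G=E'_{\pi,\sigma}(\Phi,R)$. It preserves the center $Z(G)$, so it induces an automorphism $\bar\varphi$ of $G/Z(G)\cong E'_{\ad,\sigma}(\Phi,R)$. The isomorphism is the map $\delta'_\pi$ sending $x_{[\alpha],\pi}(t)\mapsto x_{[\alpha],\ad}(t)$. By Theorem~\ref{MT_local1}, $\bar\varphi=i_{\bar g}\circ\mu$ with $\bar g\in E'_{\ad,\sigma}(\Phi,R)$ and $\mu$ a ring automorphism commuting with $\theta$. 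The ring automorphism $\mu$ is also defined on $G$ and commutes with $\delta'_\pi$. Replacing $\varphi$ by $\mu^{-1}\circ\varphi$, we may assume $\bar\varphi=i_{\bar g}$.

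Next we lift $\bar g$. Since $\delta'_\pi$ is surjective, choose $g\in E'_{\pi,\sigma}(\Phi,R)$ with $\delta'_\pi(g)=\bar g$. Replacing $\varphi$ by $i_g^{-1}\circ\varphi$, we get $\delta'_\pi\circ\varphi=\delta'_\pi$. Then $\varphi(x)=\tau(x)x$ with $\tau(x)\in Z(G)$. Because $Z(G)$ is central and $\varphi$ is a homomorphism, $\tau(xy)=\varphi(x)\varphi(y)y^{-1}x^{-1}=\tau(x)\tau(y)$, so $\tau\colon G\to Z(G)$ is a homomorphism into an abelian group. Since $G$ is perfect (\cite[Corollary~6.6]{SG&DM1}), $\tau$ is trivial and $\varphi=\mathrm{id}$. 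This exhibits the original automorphism as $\mu\circ i_g$, composed in the appropriate order. Since a ring automorphism conjugates an inner automorphism to another inner one, $\mu\circ i_g=i_{\mu(g)}\circ\mu$, so the order can be normalized.

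One check is needed: the inner automorphism $i_g$ must be defined on $G\subseteq G_\pi(\Phi,R)$ in the sense of the paper, and its image under $\delta'_\pi$ must equal $i_{\bar g}$. Both hold because $\delta'_\pi$ is a homomorphism and $g$ itself lies in $G$, so here we even get a strictly inner automorphism.

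I expect the main obstacle to be whether Theorem~\ref{MT_local1} really produces a strictly inner $\bar g$, that is, an element of $E'_{\ad,\sigma}$. If it only gives $\bar g\in G_{\ad,\sigma}(\Phi,R)$, or an element over an extension ring $S$, then lifting $\bar g$ to $G_{\pi,\sigma}(\Phi,S)$ is not automatic. In that case I would work inside $\operatorname{PGL}_{\ell+1}$ and use the realization $A Q\bar A^t=\lambda Q$ to produce a matrix $A\in\operatorname{GL}_{\ell+1}(S)$ normalizing the image of $G$. I would then adjoin a suitable root of $\lambda$ (a square root, taken $\theta$-compatibly, which is possible since $2$ is invertible) to rescale $A$ into $G_{\pi,\sigma}(\Phi,S)$. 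This is exactly why the statement allows inner automorphisms in the generalized sense rather than only strictly inner ones. The rest of the argument, namely that the residual automorphism is central and hence trivial by perfectness, goes through unchanged.
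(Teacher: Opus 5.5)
Your outer structure matches the paper's. You pass to $E'_{\mathrm{ad},\sigma}(\Phi,R)\cong G/Z(G)$ via Lemma~\ref{lemma:E_ad=E_pi/Z}, apply Theorem~\ref{MT_local1}, lift, and kill the residual central homomorphism by perfectness. The gap is in the lifting step, which is where the actual content of the theorem lies.

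In the paper, ``strictly inner'' means $\bar g\in G_{\mathrm{ad},\sigma}(\Phi,R)$, not $\bar g\in E'_{\mathrm{ad},\sigma}(\Phi,R)$. Indeed, the proof of Theorem~\ref{MT_local1} produces $C=g_1[C_1]$, where $g_1$ lifts the element supplied by Steinberg's theorem (whose automorphisms include diagonal ones) and $[C_1]\in G_{\mathrm{ad},\sigma}(\Phi,R)$. Since $G_{\mathrm{ad},\sigma}(\Phi,R)\neq E'_{\mathrm{ad},\sigma}(\Phi,R)$ in general, $\bar g$ typically has no preimage in $G$ at all. So your main line covers only a special case, and the ``fallback'' is the real proof. (A minor slip: $\mu^{-1}\circ\varphi$ induces $i_{\mu^{-1}(\bar g)}$, not $i_{\bar g}$; this is harmless.)

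The fallback as you describe it does not work. Take $A=\mathrm{diag}(u,1,\dots,1,\bar u^{-1})$ with $u\in R^\times$. Then $AQ\bar A^{t}=Q$, so $\lambda=1$, while $\det A=u\bar u^{-1}$ is in general not $1$. A square root of $\lambda=1$ is just $\pm1$, so rescaling by it cannot repair the determinant. More generally, a $\theta$-fixed square root of $\lambda$ lands $A$ at best in the full unitary group, never in $\mathrm{SU}_{\ell+1}(S)=G_{\mathrm{sc},\sigma}(\Phi,S)$. What you need is $c\in S^\times$ with
\[
c^{\ell+1}=(\det A)^{-1}, \qquad c\,\theta_S(c)=\lambda^{-1}.
\]
For instance, take $S=R[c]/(c^{\ell+1}-(\det A)^{-1})$ with $\theta_S(c)=\lambda^{-1}c^{-1}$. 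This is a well-defined involution because $\lambda\in R_\theta$ and $\det A\cdot\overline{\det A}=\lambda^{\ell+1}$; invertibility of $2$ plays no role here.

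Beyond that, two further points are unaddressed. For intermediate $\pi$, the group $G_{\pi,\sigma}(\Phi,S)$ does not sit inside $\mathrm{GL}_{\ell+1}(S)$. You must also check that the lifted element normalizes $G_{\pi,\sigma}(\Phi,R)$, not merely $E'_{\pi,\sigma}(\Phi,R)$, since that is how the paper defines inner automorphisms.

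The paper handles all of this uniformly, as follows.
\begin{itemize}
\item Over a local ring, $G_{\mathrm{ad},\sigma}=T_{\mathrm{ad},\sigma}E'_{\mathrm{ad},\sigma}$, so it writes $\bar g=h(\chi)X$ and lifts $X$ through $\delta'_\pi$.
\item It extends $\chi\in\mathrm{Hom}_1(\Lambda_r,R^\times)$ to a self-conjugate $\chi'\in\mathrm{Hom}_1(\Lambda_\pi,S^\times)$ over $S=R[\Lambda_\pi]\otimes_{R[\Lambda_r]}R$, with the involution induced by $\rho$ and $\theta$.
\item The element $h(\chi')$ acts on $x_{[\alpha]}(t)$ by $t\mapsto\chi(\alpha)\cdot t$ with $\chi(\alpha)\in R^\times$, and it centralizes $T_{\pi,\sigma}(\Phi,R)$.
\item This gives the required normalization of $G_{\pi,\sigma}(\Phi,R)=T_{\pi,\sigma}(\Phi,R)E'_{\pi,\sigma}(\Phi,R)$.
\end{itemize}
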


\begin{thm}\label{MT_local3}
    Let $R$ be a local ring with $2\in R^{\times}$, and let $G = G_{\pi, \sigma} (\Phi, R)$ be a twisted Chevalley group of type ${}^2A_{\ell} \ (\ell \geqslant 5)$. Then every automorphism of $G$ is standard; that is, it can be written as a composition of an inner, a central, and a ring automorphism of~$G$.
\end{thm}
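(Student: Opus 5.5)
The statement immediately above is Theorem~\ref{MT_local3}. I will prove it by reduction to Theorem~\ref{MT_local2}, using that $E'=E'_{\pi,\sigma}(\Phi,R)$ is characteristic in $G=G_{\pi,\sigma}(\Phi,R)$ (\cite[Corollary~A.2]{SG&DM1}). Let $\varphi\in\Aut(G)$. Then $\varphi|_{E'}\in\Aut(E')$, so by Theorem~\ref{MT_local2} there are an inner automorphism $i_g$, with $g\in G_{\pi,\sigma}(\Phi,S)$ normalizing $E'$, and a ring automorphism $\mu$ commuting with $\theta$, such that $\varphi|_{E'}=i_g\circ\mu$ on $E'$.

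The first step is to check that $i_g\circ\mu$ is also an automorphism of $G$, i.e.\ that $g$ normalizes $G$. Over a local ring, $G=T_{\pi,\sigma}(\Phi,R)E'$ (\cite[Proposition~5.8]{SG&DM1}), so it suffices to handle $T_{\pi,\sigma}(\Phi,R)$. I expect $g$ to arise as a lift of a conjugating element in the adjoint group, i.e.\ in $\PGL_{\ell+1}(R)$, together with a torus factor over an extension. If so, conjugation by $g$ preserves the ambient $G_\pi(\Phi,S)$ and commutes with $\sigma$. Moreover, $g$ preserves $G_\pi(\Phi,R)$: this group is the normalizer-type closure of $E_\pi(\Phi,R)$, and $g$ normalizes $E'$, which spans $M_{\ell+1}(R)$ as an algebra (Lemma~\ref{lemma:M(R)_as_R_algebra}). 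Hence $i_g$ preserves $G$. Put $\psi:=\mu^{-1}\circ i_g^{-1}\circ\varphi\in\Aut(G)$; by construction $\psi$ is the identity on $E'$.

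The second step is to show that such a $\psi$ is central. For $x\in G$ and $y\in E'$, since $E'$ is normal,
\[
\psi(x)\,y\,\psi(x)^{-1}=\psi(xyx^{-1})=xyx^{-1}.
\]
Thus $c(x):=x^{-1}\psi(x)$ centralizes $E'$. I then need $C_G(E')=Z(G)$. I will obtain this from Theorem~\ref{thm:normalsubgroups} applied to the subgroup $C_G(E')$, which is normalized by $E'$. It gives an ideal $I$ with $E'_{\pi,\sigma}(\Phi,R,I)\subseteq C_G(E')\subseteq G_{\pi,\sigma}(\Phi,R,I)$. Since $E'(R,I)$ is noncentral unless $I=0$, we get $I=0$, so $C_G(E')\subseteq\lambda_0^{-1}(Z(G))=Z(G)$. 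Therefore $c(x)\in Z(G)$. The map $x\mapsto c(x)$ is a homomorphism because $c(x)$ is central:
\[
c(xy)=y^{-1}x^{-1}\psi(x)\psi(y)=y^{-1}c(x)\psi(y)=c(x)c(y).
\]
Hence $\psi(x)=c(x)\,x$ is a central automorphism, and $\varphi=i_g\circ\mu\circ\psi$ is standard.

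The main obstacle is the first step, namely ensuring that the inner automorphism supplied by Theorem~\ref{MT_local2} (with $g$ possibly over an extension ring $S$) normalizes the full group $G$ and not just $E'$. To handle it, I will exploit the explicit form of $g$ coming from the adjoint case, Theorem~\ref{MT_local1}, where $g$ is strictly inner in $G_{\ad,\sigma}$. I will realize $g$ as a matrix in $\GL_{\ell+1}(R)$ satisfying $gQ\bar g^{t}=\lambda Q$ for some $\lambda\in R^{\times}$. Conjugation by such a matrix preserves the defining equations of $\operatorname{SU}$-type groups and of their $\Lambda_\pi$-intermediate quotients, and hence preserves $G$.
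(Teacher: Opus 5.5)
Your overall architecture matches the paper's: restrict $\varphi$ to the characteristic subgroup $E'$, apply Theorem~\ref{MT_local2}, extend $i_g\circ\mu$ to $G$, and show that $\psi=\mu^{-1}\circ i_g^{-1}\circ\varphi$ is central. Your second step is sound. Deriving $C_G(E')=Z(G)$ from Theorem~\ref{thm:normalsubgroups} is a valid and more self-contained substitute for the paper's appeal to Suzuki's centralizer theorem; you only need to add the check that $E'_{\pi,\sigma}(\Phi,R,I)$ is non-central for $I\neq 0$, e.g.\ via $[x_{[\alpha_1]}(t),x_{[\alpha_2]}(1)]=x_{[\alpha_1]+[\alpha_2]}(t)$.

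The gap is in your first step. You correctly flag it as the crux, but your proposed resolution does not close it. Lemma~\ref{lemma:M(R)_as_R_algebra} and the condition $gQ\bar g^{t}=\lambda Q$ live in the natural $(\ell+1)$-dimensional representation, so they only control $\operatorname{SU}_{\ell+1}(R)$ and, projectively, the adjoint group. Those are exactly the cases where nothing needs proving:
\begin{itemize}
\item for $\pi=\mathrm{sc}$ one has $G=E'$ over a local ring;
\item for $\pi=\mathrm{ad}$ the element $g$ is strictly inner.
\end{itemize}
The only nontrivial case is an intermediate lattice $\Lambda_r\subsetneq\Lambda_\pi\subsetneq\Lambda_{sc}$, and these occur, e.g., for $\ell=5$. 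There $G_{\pi,\sigma}(\Phi,R)$ is not a group of $(\ell+1)\times(\ell+1)$ matrices. Nor is it the image of $\operatorname{SU}_{\ell+1}(R)$: that image is only $E'$, while $G=T_{\pi,\sigma}(\Phi,R)E'$ is in general larger. So ``conjugation by such a matrix preserves the defining equations of the $\Lambda_\pi$-intermediate quotients'' is not an argument unless you supply a scheme-theoretic action of $G_{\mathrm{ad}}$ on $G_\pi$ compatible with $\sigma$. Likewise, ``$G_\pi(\Phi,R)$ is the normalizer-type closure of $E_\pi(\Phi,R)$'' is not a result available here.

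What has to be shown is that $g$ carries $T_{\pi,\sigma}(\Phi,R)$ into $G$. The paper uses the explicit form of $g$ produced in the proof of Theorem~\ref{MT_local2}: $g=h(\chi)X$ with $\chi\in\Hom_1(\Lambda_\pi,S^{\times})$, $\chi|_{\Lambda_r}$ taking values in $R^{\times}$, and $X\in E'$.
\begin{itemize}
\item Since the torus is commutative, $g\,h(\chi')\,g^{-1}=Y h(\chi')Y^{-1}$ with $Y=h(\chi)Xh(\chi)^{-1}$.
\item $Y\in E'$, because $h(\chi)$ acts on root elements only through the $R^{\times}$-valued $\chi|_{\Lambda_r}$. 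Hence $gGg^{-1}\subseteq G$.
\item Surjectivity follows because $i_g$ is the identity on $G/E'$ and bijective on $E'$.
\end{itemize}
Alternatively, under the paper's conventions an inner automorphism of $E'$ is by definition the restriction of some $i_g$ with $gGg^{-1}=G$. The proof of Theorem~\ref{MT_local2} checks exactly this for $g'=h'X'$. If you invoke Theorem~\ref{MT_local2} in that form, your first step is immediate. With either repair, the rest of your argument goes through unchanged.
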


The proof of Theorem~\ref{MT_local1} is carried out in Sections~\ref{sec:outline_of_main_thm}--\ref{sec:image_of_x(t)}. 
In Section~\ref{sec:outline_of_main_thm}, we outline the argument and reduce the problem to several key steps. 
These steps are then proved in Sections~\ref{sec:image_of_h(-1)}--\ref{sec:image_of_x(t)}. 
Finally, in Section~\ref{sec:proof_of_MT2_and_MT3}, we prove Theorems~\ref{MT_local2} and~\ref{MT_local3}.


\section{Outline of the Proof of Theorem~\ref{MT_local1}}\label{sec:outline_of_main_thm}

Let $R$ be a local ring with unity, and let $I \subseteq R$ be a $\theta$-invariant ideal. We introduce the following notation (cf.~Section~\ref{subsec:normal_subgroup_of_G}):
\[
    N_I \coloneqq E'_{\text{ad}, \sigma} (\Phi, R) \cap G_{\text{ad}, \sigma}(\Phi, I), \quad
    C_I \coloneqq E'_{\text{ad}, \sigma} (\Phi, R) \cap G_{\text{ad}, \sigma}(\Phi, R, I), \quad
    E_I \coloneqq E'_{\text{ad}, \sigma} (\Phi, R, I).
\]
Since we are working in the adjoint case, we have $G_{\text{ad}, \sigma}(\Phi, I) = G_{\text{ad}, \sigma}(\Phi, R, I)$ for all such ideals $I$, and hence $N_I = C_I$. By Corollary~\ref{cor:normalsubgroups}, under the hypotheses of the main theorem, the following classification holds:
\begin{quote}
    A subgroup $H$ of $E'_{\text{ad}, \sigma}(\Phi, R)$ is normal in $E'_{\text{ad}, \sigma}(\Phi, R)$ if and only if there exists a $\theta$-invariant ideal $I$ of $R$ such that
    \begin{equation}\label{eq_m1}
        E_I \subseteq H \subseteq C_I = N_I.
    \end{equation}
\end{quote}

Let $J = \Rad(R)$ be the Jacobson radical of $R$ (which is also its unique maximal ideal), and let $k \coloneqq R/J$ denote the residue field. Since $J$ is $\theta$-invariant, the automorphism $\theta$ induces a well-defined automorphism of the field~$k$. Consequently, the groups $G_{\text{ad}, \sigma}(\Phi, k)$ and $E'_{\text{ad}, \sigma}(\Phi, k)$ are well-defined.


\begin{lemma}
    \normalfont    
    Let $N_J = E'_{\text{ad}, \sigma} (\Phi, R) \cap G_{\text{ad}, \sigma} (\Phi, J)$. Then 
    \begin{enumerate}[(a)]
        \item $N_J$ is the greatest proper normal subgroup of $E'_{\text{ad}, \sigma} (\Phi, R)$.
        \item $E'_{\text{ad}, \sigma} (\Phi, R)/N_J \cong E'_{\text{ad}, \sigma} (\Phi, k)$.
    \end{enumerate}
\end{lemma}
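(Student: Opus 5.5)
Both parts will follow from the normal subgroup classification \eqref{eq_m1} together with Lemma~\ref{lemma:Lamda_I_is_sur}, using that $J$ is the unique maximal ideal of $R$ and is $\theta$-invariant.

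For (a), first note that $N_J$ is normal: it is the kernel of the restriction of $\lambda_J$ to $E'_{\text{ad},\sigma}(\Phi,R)$. It is proper because, for example, $x_{[\alpha]}(1)$ (or $x_{[\alpha]}(1,1/2)$ in the $A_2$ case) does not map to the identity modulo $J$. Now let $H$ be any proper normal subgroup of $E'_{\text{ad},\sigma}(\Phi,R)$. By \eqref{eq_m1} there is a $\theta$-invariant ideal $I$ with $E_I\subseteq H\subseteq N_I$. If $I=R$, then $E_R=E'_{\text{ad},\sigma}(\Phi,R)\subseteq H$, contradicting properness. Hence $I$ is a proper ideal, so $I\subseteq J$ because $R$ is local. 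Therefore $G_{\text{ad},\sigma}(\Phi,I)\subseteq G_{\text{ad},\sigma}(\Phi,J)$: an element trivial modulo $I$ is trivial modulo $J$, since reduction modulo $J$ factors through $R/I$. It follows that $H\subseteq N_I\subseteq N_J$, which proves (a).

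For (b), restrict $\lambda_J$ to $E'_{\text{ad},\sigma}(\Phi,R)$. Its kernel is exactly $N_J$ by definition. In the proof of Lemma~\ref{lemma:Lamda_I_is_sur}, the elementary-factor step was carried out for an arbitrary $\theta$-invariant $I\subseteq\Rad(R)$: every generator $x_{[\alpha]}(\tilde t)$ with $\tilde t\in k_{[\alpha]}$ lifts to some $x_{[\alpha]}(t)$ with $t\in R_{[\alpha]}$. This uses averaging in the $A_1$ case, arbitrary lifts in the $A_1^2$ case, and the correction of $t_2$ by $\tfrac12(t_1\bar t_1-t_2-\bar t_2)$ in the $A_2$ case. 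Applied with $I=J$, and combined with Lemma~\ref{lemma:generators_of_E(R)} over $k$, this gives $\lambda_J(E'_{\text{ad},\sigma}(\Phi,R))=E'_{\text{ad},\sigma}(\Phi,k)$. The first isomorphism theorem then yields (b).

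The only delicate point is the well-definedness of $\lambda_J$ in the adjoint realization, i.e.\ that it is a homomorphism into $G_{\text{ad},\sigma}(\Phi,k)$ sending $x_{[\alpha]}(t)$ to $x_{[\alpha]}(\bar t)$. This is immediate from the functoriality of $G_{\text{ad}}$ in the ring, because $\theta$ descends to $k$. I expect no real obstacle beyond checking that the hypotheses of Corollary~\ref{cor:normalsubgroups} hold, namely type ${}^2A_\ell$ with $\ell\ge3$ and $2\in R^\times$. These are in force here.
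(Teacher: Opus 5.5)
Your proposal is correct and follows the paper's proof: part (a) uses the classification \eqref{eq_m1} together with $I\subseteq J$, and part (b) is the first isomorphism theorem for the reduction map $\lambda_J$, whose kernel is $N_J$. You also make explicit two points the paper only asserts: why the ideal attached to a proper normal subgroup must be proper, and why $\lambda_J$ maps $E'_{\text{ad},\sigma}(\Phi,R)$ onto $E'_{\text{ad},\sigma}(\Phi,k)$ (via the elementary-factor lifting in the proof of Lemma~\ref{lemma:Lamda_I_is_sur}).
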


\begin{proof}
    Observe that $N_J$ is a proper normal subgroup of $E'_{\text{ad}, \sigma}(\Phi, R)$. Suppose $H$ is a proper normal subgroup of $E'_{\text{ad}, \sigma}(\Phi, R)$. By \eqref{eq_m1}, there exists a $\theta$-invariant proper ideal $I$ of $R$ such that 
    \[
        E_I \subseteq H \subseteq N_I.
    \]
    Since $J$ is the unique maximal ideal of $R$, we have $I \subseteq J$, which implies $H \subseteq N_I \subseteq N_J$. 
    Therefore, $N_J$ is the greatest proper normal subgroup of $E'_{\text{ad}, \sigma}(\Phi, R)$, which proves part~(a).
    
    Let $\lambda_J: E'_{\text{ad}, \sigma}(\Phi, R) \to E'_{\text{ad}, \sigma}(\Phi, k)$ be the natural surjection. Since $N_J=\ker(\lambda_J)$, we have
    \[
        E'_{\text{ad}, \sigma}(\Phi, R)/N_J \cong E'_{\text{ad}, \sigma}(\Phi, k).
    \]
    This proves part~(b).
\end{proof}


Let $\varphi \in \operatorname{Aut}(E'_{\ad, \sigma} (\Phi, R))$ be an arbitrary automorphism. 
By part (a) of the preceding lemma, the congruence subgroup $N_J$ is invariant under $\varphi$, and thus $\varphi$ induces an automorphism on the quotient:
\[
    \bar{\varphi}: E'_{\text{ad}, \sigma}(\Phi, R)/N_J \cong E'_{\text{ad}, \sigma} (\Phi, k) \longrightarrow E'_{\text{ad}, \sigma} (\Phi, k),
\]
where $k = R/J$ is the residue field. Since $k$ is a field, it follows from the classical theorem of Steinberg (see Theorem~\ref{thm:auto of E over field}) that the automorphism $\bar{\varphi}$ of $E'_{\text{ad}, \sigma}(\Phi, k)$ can be decomposed as a composition $\bar{\varphi} = i_g \circ f$, where $i_g$ is an inner automorphism induced by some $g \in G_{\text{ad}, \sigma}(\Phi, k)$, and $f$ is induced by a field automorphism of $k$ that commutes with the automorphism induced by $\theta$.

Since $g \in G_{\text{ad}, \sigma}(\Phi, k)$ and the natural map $\lambda_J: G_{\text{ad}, \sigma}(\Phi, R) \to G_{\text{ad}, \sigma}(\Phi, k)$ is surjective (see Lemma~\ref{lemma:Lamda_I_is_sur}), there exists $g_1 \in G_{\text{ad}, \sigma}(\Phi, R)$ such that $\lambda_J(g_1) = g$. 
Now consider the map $\varphi_1 := i_{g_1}^{-1} \circ \varphi$. 
This defines an automorphism of $E'_{\text{ad}, \sigma} (\Phi, R)$ such that the induced automorphism $\overline{\varphi}_1$ is induced by a field automorphism of $k$ that commutes with the automorphism induced by $\theta$.


\subsection{Reduction to key steps}

To prove Theorem~\ref{MT_local1}, we reduce the argument to the verification of several key steps, which will be established in the subsequent sections. Since the proofs for the odd- and even-rank cases differ substantially, we treat them separately. Before stating these steps, we introduce the notation
\[
    \GL_{\ell+1}(R,J) = \{ A \in \GL_{\ell+1}(R) \mid A \equiv I \pmod{J} \}.
\]

\smallskip

\noindent \textbf{The case $\mathbf{A_{2m-1} \ (m \geq 3)}$.}
We first consider the case of odd rank. In this situation, we proceed through the following sequence of steps.

\medskip

\noindent \textbf{Step (O-1).} Find an element $g_2 \in \GL_{\ell+1}(R, J)$ such that the map $\varphi_2 := i_{[g_2]^{-1}} \circ \varphi_1$ satisfies
\[
    \varphi_2 \left(h_{[\alpha]}(- 1)\right) = h_{[\alpha]}(- 1) \qquad \text{for all } [\alpha] \in \Phi_\rho.
\]

\noindent \textbf{Step (O-2).} Find an element $g_3 \in \GL_{\ell+1}(R, J)$ such that the map $\varphi_3 := i_{[g_3]^{-1}} \circ \varphi_2$ satisfies
\[
    \varphi_3\left(w_{[\alpha]}(1)\right) = w_{[\alpha]}(1) \qquad \text{for all } [\alpha] \in \Phi_\rho.
\]

\noindent \textbf{Step (O-3).} Find an element $g_4 \in \GL_{\ell+1}(R, J)$ such that the map $\varphi_4 := i_{[g_4]^{-1}} \circ \varphi_3$ satisfies
\[
    \varphi_4\left(x_{[\alpha]}(1)\right) = x_{[\alpha]}(1) \qquad \text{for all } [\alpha] \in \Phi_\rho.
\]

\smallskip

\noindent \textbf{The case $\mathbf{A_{2m} \ (m \geq 3)}$.} 
For the even rank case, we first define some special elements. 
For each $i = 1, \dots, m$, define the root $[\beta_i] = [\alpha_i + \cdots + \alpha_{2m+1-i}]$.
These roots $[\beta_i]$ $(i=1,\dots,m)$ are precisely all the positive roots in $\Phi_\rho$ of type $A_2$.

For each $[\beta_i] \in \Phi_\rho$, $a \in R^{\times}$, and $b \in R$, we define $v_i(a,b)=v_{[\beta_i]}(a,b)$ as the element of $\GL_{2m+1}(R)$ which acts on the subspace $\langle e_{i}, e_{m+1}, e_{2m+2-i}\rangle$ by
\[
    \begin{pmatrix}
        b & 0 & a \\
        0 & -1 & 0 \\
        a^{-1}(1-b^2) & 0 & -b
    \end{pmatrix},
\]
and acts as the identity on all other basis vectors. 
Then for all $i= 1, \dots, m$, we have
\[
    v_i(a,b)^2 = I 
    \qquad \text{and} \qquad
    w_{[\beta_i]}(1,1/2) = v_i(1/2,0).
\]

\medskip

\noindent \textbf{Step (E-1).} Find an element $g_2 \in \GL_{\ell+1}(R, J)$ such that the map $\varphi_2 := i_{[g_2]^{-1}} \circ \varphi_1$ satisfies
\[
    \varphi_2 \left(h_{[\alpha]}(- 1)\right) = h_{[\alpha]}(- 1) \qquad \text{for all } [\alpha] \in \Phi_\rho \text{ such that } [\alpha] \sim A_1^2.
\]

\noindent \textbf{Step (E-2).} Find an element $g_3 \in \GL_{\ell+1}(R, J)$ such that the map $\varphi_3 := i_{[g_3]^{-1}} \circ \varphi_2$ satisfies
\begin{gather*}
    \varphi_3\left(w_{[\alpha]}(1)\right) = w_{[\alpha]}(1) \qquad \text{for all } [\alpha] \in \Phi_\rho \text{ such that } [\alpha] \sim A_1^2, \\
    \varphi_3\left(w_{[\alpha]}(1,1/2)\right) = v_{[\alpha]}(a,b) \qquad \text{for all } [\alpha] \in \Phi_\rho \text{ such that } [\alpha] \sim A_2,
\end{gather*}
where $a \in R^{\times}$, $a \equiv 1/2 \pmod{J}$, and $b \in J$.

\smallskip

\noindent \textbf{Step (E-3).} Find an element $g_4 \in \GL_{\ell+1}(R, J)$ such that the map $\varphi_4 := i_{[g_4]^{-1}} \circ \varphi_3$ satisfies
\begin{gather*}
    \varphi_4\left(x_{[\alpha]}(1)\right) = x_{[\alpha]}(1) \qquad \text{for all } [\alpha] \in \Phi_\rho \text{ such that } [\alpha] \sim A_1^2, \\
    \varphi_4\left(w_{[\alpha]}(1,1/2)\right) = w_{[\alpha]}(1,1/2) \qquad \text{for all } [\alpha] \in \Phi_\rho \text{ such that } [\alpha] \sim A_2, \\
    \varphi_4\left(x_{[\alpha]}(1, 1/2)\right) = x_{[\alpha]}(1, 1/2) \qquad \text{for all } [\alpha] \in \Phi_\rho \text{ such that } [\alpha] \sim A_2.
\end{gather*}

\medskip

For the moment, we assume the validity of each of the above steps and proceed with the proof of Theorem~\ref{MT_local1}. 
Set $C_1 := g_2 g_3 g_4 \in \GL_{\ell + 1}(R, J)$. 
Then 
\[
    \varphi_1 = i_{[C_1]} \circ \varphi_4.
\]
We claim that $[C_1] \in G_{\ad, \sigma} (\Phi, R)$.

For convenience, set
\[
    u_{[\alpha]}=
    \begin{cases}
        x_{[\alpha]}(1), & [\alpha]\not\sim A_2,\\[1mm]
        x_{[\alpha]}(1,1/2), & [\alpha]\sim A_2.
    \end{cases}
\]
By Steps (O-3) and (E-3),
\[
    \varphi_4(u_{[\alpha]})=u_{[\alpha]}
    \qquad
    ([\alpha]\in\Phi_\rho).
\]
Hence
\[
    \varphi_1(u_{[\alpha]})
    =
    (i_{[C_1]}\circ\varphi_4)(u_{[\alpha]})
    =
    [C_1u_{[\alpha]}C_1^{-1}].
\]

Since $\varphi_1$ is an automorphism of
$E'_{\ad,\sigma}(\Phi,R)$, it follows that
\[
    [C_1u_{[\alpha]}C_1^{-1}]
    \in
    E'_{\ad,\sigma}(\Phi,R)
\]
for every root $[\alpha]\in\Phi_\rho$. Therefore this element is fixed by the involution $\sigma$, and hence there exists a unit
$\lambda_{[\alpha]}\in R^{\times}$ such that
\[
    Q(C_1u_{[\alpha]}C_1^{-1})Q^{-1}
    =
    \lambda_{[\alpha]}
    \,\overline{(C_1u_{[\alpha]}C_1^{-1})}^{\, -t}.
\]

On the other hand, since $u_{[\alpha]}\in E'_{\mathrm{sc},\sigma}(\Phi,R)$, we have
\[
    Qu_{[\alpha]}Q^{-1}
    =
    \overline{u_{[\alpha]}}^{-t}.
\]
Substituting this identity into the previous relation yields
\[
    D u_{[\alpha]}
    =
    \lambda_{[\alpha]}u_{[\alpha]}D,
\]
where
\[
    D:= Q^{-1} \overline{C_1}^{\,t} Q C_1.
\]

Since $C_1\equiv I\pmod J$, we also have $D \equiv I \pmod{J}$. Consequently,
\[
    \lambda_{[\alpha]} \equiv 1 \pmod{J}.
\]

We now prove that
\[
    \lambda_{[\alpha]} = 1 \qquad \text{for all }[\alpha]\in\Phi_\rho.
\]

Let
\[
    \Gamma := \langle u_{[\alpha]} \mid [\alpha]\in\Phi_\rho \rangle.
\]
The relation above defines a group homomorphism
\[
    \chi:\Gamma \longrightarrow R^\times
\]
by
\[
    D \, g \, D^{-1} = \chi(g) \, g, \qquad g \in \Gamma.
\]
In particular,
\[
    \chi(u_{[\alpha]}) = \lambda_{[\alpha]}.
\]
Therefore $\chi$ is trivial on the commutator subgroup $[\Gamma,\Gamma]$. 

A direct rank-two application of the Chevalley commutator formulas
(see~\cite{SG&DM1}) shows that $u_{[\alpha]}\in[\Gamma,\Gamma]$ for short roots of $C_m$ and long roots of $B_m$, while $u_{[\alpha]}^2\in[\Gamma,\Gamma]$ for the remaining roots. It follows that
\[
    \lambda_{[\alpha]} = 1
    \qquad\text{or}\qquad
    \lambda_{[\alpha]}^2 = 1.
\]

Suppose that $\lambda_{[\alpha]}^2=1$. Since $\lambda_{[\alpha]}\equiv 1\pmod J$, we may write
\[
    \lambda_{[\alpha]} = 1 + a, \qquad a \in J.
\]
Then
\[
    1 = (1+a)^2 = 1+2a+a^2,
\]
and therefore
$a(2+a)=0$.

Because $2\in R^\times$ and $a\in J$, the element $2+a$ is invertible. Hence $a=0$, and so
$\lambda_{[\alpha]}=1$.

Thus
\[
    \lambda_{[\alpha]}=1 \qquad \text{for all } [\alpha]\in\Phi_\rho.
\]
Consequently,
\[
    D u_{[\alpha]}=u_{[\alpha]}D \qquad \text{for all } [\alpha]\in\Phi_\rho.
\]
Since the matrices \(u_{[\alpha]}\) generate \(M_{\ell+1}(R)\) as a unital \(R\)-algebra (cf. Lemma~\ref{lemma:M(R)_as_R_algebra}), the matrix \(D\) commutes with the whole algebra \(M_{\ell+1}(R)\). 
Hence
$D = \lambda I$
for some \(\lambda\in R^{\times}\). 
That is,
\[
    Q^{-1}\overline{C_1}^{\,t}QC_1=\lambda I.
\]
Equivalently,
\[
    QC_1Q^{-1}=\lambda\,\overline{C_1}^{\, -t}.
\]
Thus the projective class \([C_1]\) is fixed by the involution \(\sigma\). Since
\(C_1\in\GL_{\ell+1}(R)\), we obtain
\[
    [C_1]\in G_{\mathrm{ad},\sigma}(\Phi,R)\subset \PGL_{\ell+1}(R).
\]
This proves the claim. 

As a consequence, the map
$\varphi_4=i_{[C_1]}^{-1}\circ\varphi_1$
is an automorphism of the group $E'_{\ad,\sigma}(\Phi,R)$.

\medskip

\noindent \textbf{Step (C).} Show that there exists an automorphism $\mu$ of the ring $R$ such that $\mu \circ \theta = \theta \circ \mu$ and
\[
    \varphi_4 (x_{[\alpha]}(t)) = x_{[\alpha]}(\mu(t)) \qquad \text{for all } [\alpha] \in \Phi_\rho \text{ and } t \in R_{[\alpha]}.
\]

\begin{rmk}
    Recall that, if $[\alpha] \sim A_2$, then $R_{[\alpha]} = \mathcal{A}(R)$. 
    For $t = (t_1, t_2) \in \mathcal{A}(R)$, we write 
    \[
        \mu (t) = (\mu(t_1), \mu(t_2)).
    \]
\end{rmk}

Assuming Step~(C) holds, it follows that the map $\varphi_4$ is induced by an automorphism $\mu$ of the ring $R$; in accordance with standard notation, we also denote $\varphi_4$ by $\mu$.

Therefore, we obtain
\[
    \varphi = i_{g_1} \circ \varphi_1 = i_{g_1} \circ i_{[C_1]} \circ \mu.
\]
Setting $C = g_1 [C_1]$, we obtain
$\varphi = i_C \circ \mu$,
where $C \in G_{\ad, \sigma} (\Phi, R)$ and $\mu$ is a ring automorphism.
This completes the proof of Theorem~\ref{MT_local1}.

\medskip

It remains to establish the validity of the above steps, which will be carried out in the subsequent sections. 
In particular, Sections~\ref{sec:image_of_h(-1)}, \ref{sec:image_of_w(1)}, and \ref{sec:image_of_x(1)} contain the proofs of Steps (O-1)/(E-1), (O-2)/(E-2), and (O-3)/(E-3), respectively, while Section~\ref{sec:image_of_x(t)} is devoted to the common Step~(C).

\subsection{A lifting lemma}

Before concluding this section, we introduce a lemma that will be used in Sections~\ref{sec:image_of_h(-1)}--\ref{sec:image_of_x(t)}. The common first step in all of these sections is to carefully lift our setup from $\PGL_{\ell+1}(R)$ to $\GL_{\ell+1}(R)$, allowing us to perform the necessary computations within $\GL_{\ell+1}(R)$. The following lemma facilitates this process.

First, consider the commutative diagram
\[
    \begin{tikzcd}[row sep=large, column sep=large]
        E'_{\mathrm{sc},\sigma}(\Phi,R) \arrow[r, "\delta_R"] \arrow[d, "\lambda_{\mathrm{sc}}"'] & E'_{\mathrm{ad},\sigma}(\Phi,R) \arrow[d, "\lambda_{\mathrm{ad}}"] \\
        E'_{\mathrm{sc},\sigma}(\Phi,k) \arrow[r, "\delta_k"'] & E'_{\mathrm{ad},\sigma}(\Phi,k)
    \end{tikzcd}
\]
where the horizontal maps $\delta_R$ and $\delta_k$ are the natural central quotient homomorphisms, and the vertical maps $\lambda_{\mathrm{sc}}$ and $\lambda_{\mathrm{ad}}$ are the reduction homomorphisms modulo the Jacobson radical $J$.

\begin{lemma}\label{lemma:compatible-representatives}
    Let $\varphi_1 \in \operatorname{Aut}(E'_{\mathrm{ad}, \sigma}(\Phi, R))$ be as above; that is, the induced automorphism $\overline{\varphi}_1$ of $E'_{\mathrm{ad}, \sigma}(\Phi, k)$ is obtained by applying a field automorphism $f \colon k \to k$ entrywise, where $f$ commutes with the automorphism of $k$ induced by $\theta$. 
    Denote by $f_{\mathrm{ad}}$ and $f_{\mathrm{sc}}$ the automorphisms of $E'_{\mathrm{ad}, \sigma}(\Phi, k)$ and $E'_{\mathrm{sc}, \sigma}(\Phi, k)$, respectively, induced by the field automorphism $f$.
    In particular, $f_{\mathrm{ad}} = \overline{\varphi}_1$.

    For every $x \in E'_{\mathrm{sc}, \sigma}(\Phi, R)$, there exists an element $y \in E'_{\mathrm{sc}, \sigma}(\Phi, R)$ such that 
    \[
        \delta_R(y) = \varphi_1\bigl(\delta_R(x)\bigr)
        \qquad \text{and} \qquad
        \lambda_{\mathrm{sc}}(y) = f_{\mathrm{sc}} \bigl(\lambda_{\mathrm{sc}}(x)\bigr).
    \]
\end{lemma}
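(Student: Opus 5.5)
Pick any lift first, then correct it by a central element. Since $\delta_R$ is surjective (Lemma~\ref{lemma:E_ad=E_pi/Z}), choose $y_0 \in E'_{\mathrm{sc},\sigma}(\Phi,R)$ with $\delta_R(y_0)=\varphi_1(\delta_R(x))$. Commutativity of the diagram and $\overline{\varphi}_1 = f_{\mathrm{ad}}$ give
\[
    \delta_k\bigl(\lambda_{\mathrm{sc}}(y_0)\bigr)=\lambda_{\mathrm{ad}}\bigl(\varphi_1(\delta_R(x))\bigr)=f_{\mathrm{ad}}\bigl(\lambda_{\mathrm{ad}}(\delta_R(x))\bigr)=f_{\mathrm{ad}}\bigl(\delta_k(\lambda_{\mathrm{sc}}(x))\bigr)=\delta_k\bigl(f_{\mathrm{sc}}(\lambda_{\mathrm{sc}}(x))\bigr),
\]
where the last equality holds because $f$ acts entrywise and commutes with $\delta_k$ (check on generators $x_{[\alpha]}(t)$). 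Hence
\[
    z := f_{\mathrm{sc}}(\lambda_{\mathrm{sc}}(x))\,\lambda_{\mathrm{sc}}(y_0)^{-1} \in \ker\delta_k = Z\bigl(E'_{\mathrm{sc},\sigma}(\Phi,k)\bigr),
\]
again by Lemma~\ref{lemma:E_ad=E_pi/Z} applied over $k$.

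It then suffices to lift $z$ to a central element $\tilde z \in Z(E'_{\mathrm{sc},\sigma}(\Phi,R))$ with $\lambda_{\mathrm{sc}}(\tilde z)=z$, and to set $y := \tilde z\, y_0$. Then $\delta_R(y)=\delta_R(y_0)$ because $\tilde z\in\ker\delta_R$, and $\lambda_{\mathrm{sc}}(y)=z\lambda_{\mathrm{sc}}(y_0)=f_{\mathrm{sc}}(\lambda_{\mathrm{sc}}(x))$, as required.

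The main obstacle is the lifting of central elements. In the realization $E'_{\mathrm{sc},\sigma}(\Phi,k)\subseteq \mathrm{SU}_{\ell+1}(k)$, the center consists of scalar matrices $cI$ with $c^{\ell+1}=1$ and $c\bar c=1$. Such a $cI$ lies in $E'$, since $E'_{\mathrm{sc},\sigma}=G_{\mathrm{sc},\sigma}$ over local rings. We need $\tilde c\in R^\times$ with $\tilde c^{\ell+1}=1$, $\tilde c\theta(\tilde c)=1$, and $\tilde c\equiv c \pmod J$. Here $\ell+1$ need not be invertible, so Hensel's lemma is not available.

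To get around this, write $z$ as a product of central torus elements $h(\chi)$ with $\chi\in\mathrm{Hom}_1(\Lambda_{sc},k^\times)$ trivial on $\Lambda_r$. The task then becomes lifting a self-conjugate character of $\Lambda_{sc}/\Lambda_r$, a cyclic group of order $\ell+1$ on which $\rho$ acts by $-1$, to a self-conjugate character into $R^\times$ that is still trivial on $\Lambda_r$. If this fails in general, the fallback is to show that $f_{\mathrm{sc}}$ permutes $Z(E'_{\mathrm{sc},\sigma}(\Phi,k))$ compatibly with $\lambda_{\mathrm{sc}}$ on the images of central elements of $E'_{\mathrm{sc},\sigma}(\Phi,R)$. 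One would use that $\varphi_1$ preserves the center-related structure and that $x\mapsto y$ is multiplicative modulo the center, so it is enough to treat generators $x=x_{[\alpha]}(t)$. For these generators one can take $y$ to be an explicit lift whose reduction is unipotent, and unipotence pins down the central correction uniquely, since the only unipotent central element is $I$.
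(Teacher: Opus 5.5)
There is a genuine gap at the step of lifting the central correction.

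Your diagram chase is correct. The lemma is in fact equivalent to showing that your $z$ lies in $\lambda_{\mathrm{sc}}\bigl(Z(E'_{\mathrm{sc},\sigma}(\Phi,R))\bigr)$, because the lifts of $\varphi_1(\delta_R(x))$ form a single coset of $\ker\delta_R$. However, you try to obtain this by lifting an \emph{arbitrary} central element of $E'_{\mathrm{sc},\sigma}(\Phi,k)$, and that fails for general local rings. For example:
\begin{itemize}
\item Take $\ell=5$ and $R=\mathbb{Z}[\sqrt2]_{(5)}$ with $\theta(\sqrt2)=-\sqrt2$.
\item Then $2\in R^\times$, $k=\mathbb{F}_{25}$, and $\theta$ induces $c\mapsto c^5$ on $k$.
\item A primitive cube root of unity $\omega\in k$ satisfies $\omega^6=1$ and $\omega\bar\omega=\omega^6=1$, so $\omega I$ is central in $\mathrm{SU}_6(k)=E'_{\mathrm{sc},\sigma}(A_5,k)$.
\item But $R\subset\mathbb{R}$ contains no nontrivial cube root of unity, so the central elements of $E'_{\mathrm{sc},\sigma}(A_5,R)$ reduce only to $\pm I$.
\end{itemize}
Your character-lifting route is exactly this root-of-unity lifting problem. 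The torus-lifting argument in the surjectivity lemma for $\lambda_I$ lifts self-conjugate characters, but it cannot keep them trivial on $\Lambda_r$. Nothing in your proposal shows that the \emph{particular} $z$ produced by $\varphi_1$ is liftable.

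The fallback does not close the gap either. Reducing to generators is legitimate, since products of good lifts are good lifts. Unipotence does show that \emph{if} a lift has unipotent reduction, then that reduction equals $f_{\mathrm{sc}}(\lambda_{\mathrm{sc}}(x))$. But this is only uniqueness. You never produce such a lift, and since nothing is known about $\varphi_1$ beyond its reduction modulo $J$, no ``explicit'' lift is available.

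The missing idea is perfectness of $E'_{\mathrm{sc},\sigma}(\Phi,R)$:
\begin{itemize}
\item Write $x=\prod_s[a_s,b_s]$.
\item Choose arbitrary lifts $A_s,B_s$ of $\varphi_1(\delta_R(a_s))$ and $\varphi_1(\delta_R(b_s))$.
\item Set $y=\prod_s[A_s,B_s]$; then $\delta_R(y)=\varphi_1(\delta_R(x))$ automatically.
\item Your own diagram chase, applied to $a_s$ and $b_s$, gives $\lambda_{\mathrm{sc}}(A_s)=z_s\,f_{\mathrm{sc}}(\lambda_{\mathrm{sc}}(a_s))$ and $\lambda_{\mathrm{sc}}(B_s)=z'_s\,f_{\mathrm{sc}}(\lambda_{\mathrm{sc}}(b_s))$ with $z_s,z'_s$ central.
\item These central factors vanish in the commutators, so $\lambda_{\mathrm{sc}}(y)=f_{\mathrm{sc}}(\lambda_{\mathrm{sc}}(x))$.
\end{itemize}
No central element ever needs to be lifted. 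This is the paper's argument, and it is also what you would need to produce the required lift for each generator in your reduction.
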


\begin{proof}
    Observe that we have the following commutative diagram:
    \[
        \begin{tikzcd}[row sep=large, column sep=large]
            E'_{\mathrm{sc},\sigma}(\Phi,R) \arrow[r, "\delta_R"] \arrow[d, "\lambda_{\mathrm{sc}}"'] & E'_{\mathrm{ad},\sigma}(\Phi,R) \arrow[r, "\varphi_1"] \arrow[d, "\lambda_{\mathrm{ad}}"] & E'_{\mathrm{ad},\sigma}(\Phi,R) \arrow[d, "\lambda_{\mathrm{ad}}"] \\
            E'_{\mathrm{sc},\sigma}(\Phi,k) \arrow[r, "\delta_k"'] & E'_{\mathrm{ad},\sigma}(\Phi,k) \arrow[r, "\overline{\varphi}_1 = f_{\mathrm{ad}}"'] & E'_{\mathrm{ad},\sigma}(\Phi,k)
        \end{tikzcd}
    \]
    By Lemma~\ref{lemma:E_ad=E_pi/Z} and the proof of Lemma~\ref{lemma:Lamda_I_is_sur}, every map in the above diagram is surjective. Moreover, note that 
    \[
        f_{\mathrm{ad}} \circ \delta_k = \delta_k \circ f_{\mathrm{sc}}.
    \]

    Since the group $E'_{\mathrm{sc},\sigma}(\Phi,R)$ is perfect, we can express $x$ as a product of commutators:
    \[
        x = \prod_{s=1}^{N} [a_s, b_s]
    \]
    for some elements $a_s, b_s \in E'_{\mathrm{sc},\sigma}(\Phi,R)$.

    Since the homomorphism $\delta_R$ is surjective, for each index $s$ we may choose elements $A_s, B_s \in E'_{\mathrm{sc},\sigma}(\Phi,R)$ such that
    \[
        \delta_R(A_s) = \varphi_1\bigl(\delta_R(a_s)\bigr)
        \quad \text{and} \quad
        \delta_R(B_s) = \varphi_1\bigl(\delta_R(b_s)\bigr).
    \]
    Define
    \[
        y \coloneqq \prod_{s=1}^{N} [A_s, B_s].
    \]
    It immediately follows that
    \[
        \delta_R(y) = \prod_{s=1}^{N} \bigl[ \varphi_1\bigl(\delta_R(a_s)\bigr), \varphi_1\bigl(\delta_R(b_s)\bigr) \bigr] = \varphi_1\bigl(\delta_R(x)\bigr).
    \]

    Next, for each $s$, the commutativity of the diagram yields
    \begin{align*}
        \delta_k \bigl(\lambda_{\mathrm{sc}}(A_s)\bigr)
        = \lambda_{\mathrm{ad}}\bigl(\delta_R(A_s)\bigr) 
        = \lambda_{\mathrm{ad}}\bigl(\varphi_1\bigl(\delta_R(a_s)\bigr)\bigr) 
        = f_{\mathrm{ad}}\bigl(\delta_k\bigl(\lambda_{\mathrm{sc}}(a_s)\bigr)\bigr) 
        = \delta_k\bigl(f_{\mathrm{sc}}\bigl(\lambda_{\mathrm{sc}}(a_s)\bigr)\bigr).
    \end{align*}
    This implies that
    \[
        \lambda_{\mathrm{sc}}(A_s) = z_s \, f_{\mathrm{sc}}\bigl(\lambda_{\mathrm{sc}}(a_s)\bigr)
    \]
    for some central element $z_s \in \ker(\delta_k) = Z\bigl(E'_{\mathrm{sc},\sigma}(\Phi,k)\bigr)$.
    Similarly,
    \[
        \lambda_{\mathrm{sc}}(B_s) = z'_s \, f_{\mathrm{sc}}\bigl(\lambda_{\mathrm{sc}}(b_s)\bigr)
    \]
    for some central element $z'_s \in Z\bigl(E'_{\mathrm{sc},\sigma}(\Phi,k)\bigr)$.

    Because central factors do not affect commutators, we obtain
    \begin{multline*}
        \lambda_{\mathrm{sc}}(y)
        = \prod_{s=1}^{N} \bigl[ \lambda_{\mathrm{sc}}(A_s), \lambda_{\mathrm{sc}}(B_s) \bigr] 
        = \prod_{s=1}^{N} \bigl[ f_{\mathrm{sc}}\bigl(\lambda_{\mathrm{sc}}(a_s)\bigr), f_{\mathrm{sc}}\bigl(\lambda_{\mathrm{sc}}(b_s)\bigr) \bigr] \\
        = f_{\mathrm{sc}}\Biggl( \lambda_{\mathrm{sc}}\Biggl( \prod_{s=1}^{N} [a_s, b_s] \Biggr) \Biggr) 
        = f_{\mathrm{sc}}\bigl(\lambda_{\mathrm{sc}}(x)\bigr).
    \end{multline*}
    This completes the proof.
\end{proof}

\begin{rmk}
    While Lemma~\ref{lemma:compatible-representatives} will be used directly in Section~\ref{sec:image_of_h(-1)}, for the remaining sections (Sections~\ref{sec:image_of_w(1)} through \ref{sec:image_of_x(t)}), we will rely on the following immediate consequence of this lemma.
    
    Let
    \[
        \lambda_J \colon \GL_{\ell+1}(R) \to \GL_{\ell+1}(k)
    \]
    denote the entrywise reduction modulo $J$. Let $C \in \GL_{\ell+1}(R, J)$ be an element of the principal congruence subgroup modulo $J$, and define 
    \[
        \psi \coloneqq i_{[C]}^{-1} \circ \varphi_1.
    \]
    Observe that $\psi$ may not be an automorphism of $E'_{\mathrm{ad}, \sigma}(\Phi, R)$; rather, it is an isomorphism from $E'_{\mathrm{ad}, \sigma}(\Phi, R)$ onto a subgroup of $\PGL_{\ell+1}(R)$.
    Nevertheless, the induced map $\overline{\psi}$ is an automorphism of $E'_{\mathrm{ad}, \sigma}(\Phi, k)$ satisfying $\overline{\psi} = \overline{\varphi}_1$.

    If $y$ is chosen as in Lemma~\ref{lemma:compatible-representatives}, then the element
    \[
        y':= C^{-1} \, y \, C \in \SL_{\ell+1}(R)
    \]
    serves as a representative for $\psi\bigl(\delta_R(x)\bigr)$. Furthermore, because $\lambda_J(C) = I_{\ell+1}$, we obtain
    \[
        \lambda_J (y') = f_{\mathrm{sc}}\bigl(\lambda_{\mathrm{sc}}(x)\bigr).
    \]
\end{rmk}


\section{The Images of \texorpdfstring{$h_{[\alpha]}(-1)$}{h(-1)}}\label{sec:image_of_h(-1)}


Fix a simple system $\Delta = \{\alpha_1, \dots, \alpha_\ell\}$ of type $A_\ell$ with $\ell \geq 4$, as specified in Section~\ref{sec:G(R)_and_MT}. Let $\Delta_\rho = \{[\alpha_1], \dots, [\alpha_m]\}$ be the corresponding simple system of the associated twisted root system, where $m = \lfloor (\ell+1)/2 \rfloor$. 

First suppose that $\ell=2m-1$, i.e., that $\ell$ is odd. 
Then each simple root $[\alpha_i]$ $(i=1,\dots,m-1)$ is of type $A_1^2$, while $[\alpha_m]$ is of type $A_1$. 
Consequently, the elements $h_{[\alpha_i]}(-1)$ belong to the elementary subgroup $E'_{\mathrm{ad},\sigma}(\Phi,R)$ for all $i=1,\dots,m$.

Now assume that $\ell=2m$, i.e., that $\ell$ is even. 
In this case, each root $[\alpha_i]$ $(i=1,\dots,m-1)$ is of type $A_1^2$, whereas $[\alpha_m]$ is of type $A_2$. 
It follows that $h_{[\alpha_i]}(-1)\in E'_{\mathrm{ad},\sigma}(\Phi,R)$ for $i=1, \dots, m-1$. 
However, the element $h_{[\alpha_m]}(-1)$ need not belong to $E'_{\mathrm{ad},\sigma}(\Phi,R)$.
Therefore, in the even-rank case, we must restrict our attention to the simple roots of type $A_1^2$.

For each simple root $[\alpha_i]\in \Delta_\rho$ of type $A_1$ or $A_1^2$, we consider the element
\[
    H_i:=\varphi_1\bigl(h_{[\alpha_i]}(-1)\bigr)
    \in E'_{\ad,\sigma}(\Phi,R)\subset \PGL_{\ell+1}(R).
\]
We claim that there exists a representative $h_i \in \SL_{\ell+1}(R)$ of $H_i$ such that 
\[
    h_i \equiv h_{[\alpha_i]}(-1) \pmod{J}, \qquad h_i^2 = I, \qquad h_i h_j = h_j h_i.
\]
As noted in Section~\ref{sec:G(R)_and_MT}, the symbol $h_{[\alpha_i]}(-1) \in E'_{\text{sc}, \sigma}(\Phi, R)$ denotes the standard lift of the corresponding element in $E'_{\text{ad}, \sigma}(\Phi, R)$. 
Although we utilize the same notation for both, the distinction should be clear from the context. 

To avoid unnecessary repetition, we treat the odd- and even-rank cases simultaneously. 
In the even-rank case, the element $h_m$ is not taken into account. 
Accordingly, we set
\[
    m_0 :=
    \begin{cases}
        m, & \text{if } \ell=2m-1,\\
        m-1, & \text{if } \ell=2m.
    \end{cases}
\]
Thus the elements $h_i$ considered below are precisely $h_1, \ldots, h_{m_0}$.


\subsection{Choice of representatives for \texorpdfstring{$H_i$}{H-i}}

Our goal in this section is to choose representatives $h_i$ of $H_i$ satisfying the relations stated above.

Applying Lemma~\ref{lemma:compatible-representatives} to the standard lift
\[
    h_{[\alpha_i]}(-1) \in E'_{\mathrm{sc},\sigma}(\Phi,R),
\]
and using the fact that $f(-1) = -1$, we obtain a representative $h_i \in \SL_{\ell+1}(R)$ of $H_i$ satisfying
\[
    h_i \equiv h_{[\alpha_i]}(-1) \pmod{J}.
\]

Since $H_i^2 = I$, we have $h_i^{2} = \lambda_i I$ for some $\lambda_i \in R^{\times}$.
Taking determinants and reducing modulo $J$ yield $\lambda_i^{\ell + 1} = 1$ and $\lambda_i\equiv1\pmod J$.

We now show that the order of $\lambda_i$ is odd. If $\ell$ is even, this is immediate, since $o(\lambda_i)$ divides the odd integer $\ell+1$. If $\ell$ is odd, write $\ell+1 = 2^k s$ with $s$ odd. Then $(\lambda_i^s)^{2^k} = 1$. Since $R$ is local, $2 \in R^{\times}$, and $\lambda_i \equiv 1 \pmod{J}$, the only $2^k$-th root of unity congruent to $1$ modulo $J$ is the identity. Thus $\lambda_i^s = 1$, and $o(\lambda_i)$ is odd in this case as well.

Let $K_i \subset R^{\times}$ be the cyclic subgroup generated by $\lambda_i$. Since $|K_i|$ is odd, the squaring map $x \mapsto x^2$ is an automorphism of $K_i$. Hence there exists a unique $\eta_i \in K_i$ such that $\eta_i^2 = \lambda_i^{-1}$. Since $\eta_i^{\ell+1}=1$ and $\eta_i\equiv1\pmod J$, replacing $h_i$ by $\eta_i h_i$ preserves the determinant and the congruence, so we may assume that
\[
    h_i^2 = I \qquad (1 \leq i \leq m_0).
\]

Next, since the elements $H_i$ and $H_j$ commute in $E'_{\text{ad}, \sigma}(\Phi, R)$, their representatives satisfy
\[
    h_i h_j = \mu_{ij} \, h_j h_i
\]
for some scalar $\mu_{ij} \in R^{\times}$ and all $1 \leq i, j \leq m_0$. Because these matrices are congruent modulo $J$ to commuting diagonal matrices, it follows that 
\[
    \mu_{ij} \equiv 1 \pmod{J}.
\]
Furthermore, the relations $h_i^2 = h_j^2 = I$ imply that $\mu_{ij}^2 = 1$. 
Since $R$ is a local ring with $2 \in R^{\times}$, we must have $\mu_{ij} = 1$. 
Consequently, the representatives commute:
\[
    h_i h_j = h_j h_i \qquad (1 \leq i, j \leq m_0).
\]


\subsection{Normalization of \texorpdfstring{$h_i$}{h-i}}

Fix the standard basis $\{e_1, \ldots, e_{\ell+1}\}$ of $R^{\ell+1}$. 
With respect to this basis, the matrices $h_{[\alpha_i]}(-1) \in E'_{\text{sc},\sigma}(\Phi,R)$, for $1 \leq i \leq m_0$, have the form
\[
    h_{[\alpha_i]}(-1)=\operatorname{diag}[\pm1,\ldots,\pm1].
\]
In other words, for each $1 \leq i \leq m_0$ and $1 \leq j \leq \ell+1$, we have
\[
    h_{[\alpha_i]}(-1)(e_j)=\varepsilon_{ij}e_j,
    \qquad
    \varepsilon_{ij} \in \{1,-1\}.
\]

For all $j \in \{1,\ldots,\ell+1\}$, set $e_j^{(0)} := e_j$. 
For $k=1,\ldots,m_0$, define inductively
\[
    e_j^{(k)}
    :=
    \frac{e_j^{(k-1)}+\varepsilon_{kj}h_k(e_j^{(k-1)})}{2}
    \qquad
    (1\le j\le \ell+1).
\]
Since \(h_k\equiv h_{[\alpha_k]}(-1)\pmod J\), we have
\[
    e_j^{(k)}\equiv e_j^{(k-1)}\pmod J.
\]
Thus, at every step, \(\{e_1^{(k)},\ldots,e_{\ell+1}^{(k)}\}\) is a basis of \(R^{\ell+1}\), and the corresponding change-of-basis matrix lies in \(\GL_{\ell+1}(R,J)\).

We claim that for each \(k=1,\ldots,m_0\), with respect to the basis $\{e_1^{(k)},\ldots,e_{\ell+1}^{(k)}\}$, all elements \(h_i\), \(1\le i\le k\), act on \(e_j^{(k)}\) by multiplication by \(\varepsilon_{ij}\).
Equivalently, their action on this basis coincides with the action of \(h_{[\alpha_i]}(-1)\), \(1\le i\le k\), on the original basis \(\{e_1,\ldots,e_{\ell+1}\}\).

We prove this claim by induction on \(k\). First, the involution \(h_k\) acts on \(e_j^{(k)}\)
by multiplication by \(\varepsilon_{kj}\). Indeed,
\begin{multline*}
    h_k(e_j^{(k)})
    =
    \frac{h_k(e_j^{(k-1)})+\varepsilon_{kj}h_k^2(e_j^{(k-1)})}{2}
    =
    \frac{h_k(e_j^{(k-1)})+\varepsilon_{kj}e_j^{(k-1)}}{2}
    =\\
    \varepsilon_{kj}
    \frac{e_j^{(k-1)}+\varepsilon_{kj}h_k(e_j^{(k-1)})}{2}
    =
    \varepsilon_{kj}e_j^{(k)}.
\end{multline*}

Now let \(i<k\), and assume by induction that
\[
    h_i(e_j^{(k-1)})=\varepsilon_{ij}e_j^{(k-1)}.
\]
Since the representatives \(h_i\) commute with each other, we obtain
\begin{multline*}
    h_i(e_j^{(k)})
    =
    \frac{h_i(e_j^{(k-1)})+\varepsilon_{kj}h_i h_k(e_j^{(k-1)})}{2}
    =
    \frac{\varepsilon_{ij}e_j^{(k-1)}
    +\varepsilon_{kj}h_k h_i(e_j^{(k-1)})}{2}
    =\\
    \varepsilon_{ij}
    \frac{e_j^{(k-1)}+\varepsilon_{kj}h_k(e_j^{(k-1)})}{2}
    =
    \varepsilon_{ij}e_j^{(k)}.
\end{multline*}

This completes the induction.

Thus, for each \(1\le i\le m_0\), the action of \(h_i\) on the basis $\{e_1^{(m_0)},\ldots,e_{\ell+1}^{(m_0)}\}$ coincides with the action of \(h_{[\alpha_i]}(-1)\) on the original basis
\(\{e_1,\ldots,e_{\ell+1}\}\).

\smallskip

Let \(g_2\in \GL_{\ell+1}(R)\) be the matrix whose columns are $e_1^{(m_0)},\ldots,e_{\ell+1}^{(m_0)}$. By construction, we have $g_2 \in \GL_{\ell+1}(R,J)$.
Set
\[
    \varphi_2 := i_{[g_2]^{-1}} \circ \varphi_1.
\]
Then $\varphi_2$ is an isomorphism of $E'_{\ad, \sigma} (\Phi, R)$ onto a subgroup of $\PGL_{\ell+1} (R)$ and satisfies
\[
    \varphi_2\bigl(h_{[\alpha_i]}(-1)\bigr) = h_{[\alpha_i]}(-1)
    \qquad \text{for all } i \in \{ 1, \dots, m_0 \} .
\]

By the standard torus relations, if $\ell = 2m-1$ is odd, then $m_0 = m$, and therefore
\[
    \varphi_2 \bigl(h_{[\alpha]}(-1)\bigr) = h_{[\alpha]}(-1)
    \qquad
    \text{for every root } [\alpha] \in \Phi_\rho.
\]
Likewise, if $\ell = 2m$ is even, then $m_0 = m-1$, and therefore
\[
    \varphi_2 \bigl(h_{[\alpha]}(-1)\bigr) = h_{[\alpha]}(-1)
    \qquad
    \text{for every root } [\alpha]\in\Phi_\rho \text{ of type } A_1^2.
\]

\section{The Images of \texorpdfstring{$w_{[\alpha]}(1)$}{w(1)}}\label{sec:image_of_w(1)}


For each root $[\alpha] \in \Phi_\rho$, define
\[
    W_{[\alpha]} :=
    \begin{cases}
        \varphi_{2}\bigl(w_{[\alpha]}(1)\bigr) & \text{if } [\alpha] \sim A_1 \text{ or } A_1^2, \\
        \varphi_{2}\bigl(w_{[\alpha]}(1,1/2)\bigr) & \text{if } [\alpha] \sim A_2.
    \end{cases}
\]

We claim that $W_{[\alpha]}$ admits a representative $w_{[\alpha]} \in \SL_{\ell+1}(R)$ such that
\begin{gather*}
    w_{[\alpha]} \equiv 
    \begin{cases}
        w_{[\alpha]}(1) \pmod{J} & \text{if } [\alpha] \sim A_1 \text{ or } A_1^2, \\
        w_{[\alpha]}(1, 1/2) \pmod{J} & \text{if } [\alpha] \sim A_2,
    \end{cases}
    \\[4pt]
    (w_{[\alpha]})^2 =
    \begin{cases}
        h_{[\alpha]}(-1) & \text{if } [\alpha] \sim A_1 \text{ or } A_1^2, \\
        I & \text{if } [\alpha] \sim A_2.
    \end{cases}
\end{gather*}


\subsection{Choice of representatives for \texorpdfstring{$W_{[\alpha]}$}{W}}

Since $\varphi_2 = i_{[g_2]}^{-1} \circ \varphi_1$ and $g_2 \in \GL_{\ell+1}(R, J)$, applying the remark following Lemma~\ref{lemma:compatible-representatives} to the standard simply connected representative, and utilizing the identities $f(1) = 1$ and $f(1/2) = 1/2$, yields a representative $w_{[\alpha]} \in \SL_{\ell+1}(R)$ of $W_{[\alpha]}$ such that
\[
    w_{[\alpha]} \equiv
    \begin{cases}
        w_{[\alpha]}(1) \pmod{J}, & [\alpha] \sim A_1 \text{ or } A_1^2, \\[2mm]
        w_{[\alpha]}(1, 1/2) \pmod{J}, & [\alpha] \sim A_2.
    \end{cases}
\]

Since, in $\PGL_{\ell + 1} (R)$, we have
\[
    (W_{[\alpha]})^2 = \begin{cases}
        h_{[\alpha]}(-1) & \text{if } [\alpha] \sim A_1 \text{ or } A_1^2, \\
        I & \text{if } [\alpha] \sim A_2,
    \end{cases}
\]
it follows that
\[
    (w_{[\alpha]})^2 = \begin{cases}
        \lambda_{[\alpha]} \, h_{[\alpha]}(-1) & \text{if } [\alpha] \sim A_1 \text{ or } A_1^2, \\
        \lambda_{[\alpha]} \, I & \text{if } [\alpha] \sim A_2,
    \end{cases}
\]
for some $\lambda_{[\alpha]} \in R^{\times}$. 

Taking determinants yields
\[
    \lambda_{[\alpha]}^{\ell+1} = 1,
\]
and the congruence modulo $J$ implies
\[
    \lambda_{[\alpha]} \equiv1 \pmod{J}.
\]

As in Section~\ref{sec:image_of_h(-1)}, the $2$-primary part of the order of $\lambda_{[\alpha]}$ is trivial, since $2 \in R^{\times}$ and $\lambda_{[\alpha]} \equiv 1 \pmod{J}$. 
Hence the squaring map is an automorphism of the cyclic group generated by $\lambda_{[\alpha]}$. 
Therefore there exists $\eta_{[\alpha]} \in \langle \lambda_{[\alpha]} \rangle$ such that
\[
    \eta_{[\alpha]}^2 = \lambda_{[\alpha]}^{-1}.
\]
Since $\eta_{[\alpha]}^{\ell+1}=1$ and $\eta_{[\alpha]}\equiv1\pmod J$, replacing $w_{[\alpha]}$ by $\eta_{[\alpha]} w_{[\alpha]}$ gives a representative satisfying
\[
    (w_{[\alpha]})^2 =
    \begin{cases}
        h_{[\alpha]}(-1) & \text{if } [\alpha] \sim A_1 \text{ or } A_1^2, \\
        I & \text{if } [\alpha] \sim A_2,
    \end{cases}
\]
as required.


\medskip

We now proceed by considering the odd- and even-rank cases separately.


\subsection{The case \texorpdfstring{${}^2 A_{2m - 1} \ (m \geq 3)$}{2A(2m-1)}}\label{subsec:image_of_w_for_odd_case}

Assume that $(\Phi,\rho)$ has twisted type ${}^2 A_{2m-1}$ with $m \geq 3$, and let $\Delta_\rho = \{ [\alpha_1], \dots, [\alpha_m] \}$ be the simple system of $\Phi_\rho$. As noted in Section~\ref{sec:preliminaries}, $\Phi_\rho$ is of type $C_m$. We realize this root system in the Euclidean space $\mathbb{R}^{m}$ as
\[
    \Phi_\rho = \{ \pm \nu_i \pm \nu_j \mid 1 \leq i < j \leq m \} \cup \{ \pm 2\nu_i \mid 1 \leq i \leq m \},
\]
where $\{\nu_1,\dots,\nu_m\}$ denotes the standard orthonormal basis of $\mathbb{R}^m$.

Under this identification, the simple roots are given by 
\[
    [\alpha_i] = \nu_i - \nu_{i+1} \quad (i=1,\dots,m-1), \qquad [\alpha_m] = 2 \nu_m.
\]
In addition, for each $i=1,\dots,m-1$, we define
\[
    [\widetilde{\alpha_i}] = \nu_i+\nu_{i+1}.
\]
Observe that the roots $[\alpha_i]$ and $[\widetilde{\alpha_i}]$ are orthogonal for every $i=1,\dots,m-1$.

We also introduce the roots $[\beta_i]\in\Phi_\rho$ $(i=1,\dots,m)$ corresponding to the vectors $2 \nu_i$. 
The set $\{[\beta_1],\dots,[\beta_m]\}$ coincides with the set of all positive long roots of $\Phi_\rho$. 
Moreover, each root $[\beta_i]$ is of type $A_1$ and is given explicitly by
\[
    [\beta_i] = [\alpha_i+\cdots+\alpha_{2m-i}].
\]

\subsubsection{\textbf{Normalization of $w_{[\alpha]}$ for $[\alpha]\sim A_1$}}

As we just noted, the roots $\{[\beta_1], \dots, [\beta_m]\}$ are precisely the positive roots of type $A_1$ in $\Phi_\rho$. Our goal in this subsection is to normalize the corresponding elements $w_{[\beta_i]}$. We begin by determining their initial form.

Let $\{e_1, \dots, e_{2m}\}$ be the standard basis of $R^{2m}$, and define
\[
    B_r := \langle e_r, e_{2m+1-r} \rangle \qquad (1 \leq r \leq m).
\]

Fix $i \in \{1, \dots, m\}$. 
Since $w_{[\beta_i]}(1)$ commutes with $h_{[\beta_i]}(-1)$ in $\PGL_{2m}(R)$, the corresponding representatives in $\GL_{2m}(R)$ satisfy
\[
    w_{[\beta_i]} \, h_{[\beta_i]}(-1)
    =
    \lambda_i\, h_{[\beta_i]}(-1)\, w_{[\beta_i]}
\]
for some $\lambda_i\in R^\times$. 
Reducing modulo $J$ immediately yields $\lambda_i \equiv 1 \pmod{J}$.

We claim that $\lambda_i=1$. Indeed, rewriting the above relation as
\[
    w_{[\beta_i]}
    =
    \lambda_i \, h_{[\beta_i]}(-1) \, w_{[\beta_i]} \, h_{[\beta_i]}(-1)^{-1}
\]
and squaring both sides yields
\[
    w_{[\beta_i]}^2
    =
    \lambda_i^2 \,
    h_{[\beta_i]}(-1) \, w_{[\beta_i]}^2 \, h_{[\beta_i]}(-1)^{-1}.
\]
Since our representatives have been chosen so that
\[
    w_{[\beta_i]}^2 = h_{[\beta_i]}(-1),
\]
it follows that
\[
    h_{[\beta_i]}(-1)
    =
    \lambda_i^2 h_{[\beta_i]}(-1),
\]
and therefore
$\lambda_i^2=1$.

Since $R$ is a local ring with $2\in R^\times$ and $\lambda_i\equiv 1\pmod J$, the equality $\lambda_i^2=1$ implies that $\lambda_i=1$. Consequently,
\[
    h_{[\beta_i]}(-1) \, w_{[\beta_i]}
    =
    w_{[\beta_i]} \, h_{[\beta_i]}(-1)
    \qquad (1 \leq i \leq m).
\]

A straightforward computation using this commutation relation shows that $w_{[\beta_i]}$ preserves both the subspace $B_i$ and its complement
\[
    B' := B_1\oplus\cdots\oplus B_{i-1}
    \oplus B_{i+1}\oplus\cdots\oplus B_m.
\]

Moreover,
\[
    w_{[\beta_i]}^2 = h_{[\beta_i]}(-1),
\]
and $h_{[\beta_i]}(-1)$ acts trivially on $B'$. Since $w_{[\beta_i]}\equiv I\pmod J$ on $B'$, the usual argument shows that
\[
    w_{[\beta_i]}|_{B'} = I.
\]
Hence, $w_{[\beta_i]}$ acts non-trivially only on the subspace $B_i$.

Restricting to $B_i$, we obtain
\[
    \bigl(w_{[\beta_i]}|_{B_i}\bigr)^2 = -I_2,
    \qquad
    w_{[\beta_i]}|_{B_i}\equiv J_2 \pmod J,
\text{ where }
    J_2 =
    \begin{pmatrix}
        0 & 1\\
        -1 & 0
    \end{pmatrix}.
\]

It follows that
\[
    w_{[\beta_i]}|_{B_i}
    =
    \begin{pmatrix}
        a_i & b_i \\
        -\dfrac{1+a_i^2}{b_i} & -a_i
    \end{pmatrix},
\]
for some $a_i\in R$ and $b_i\in R^\times$ satisfying
\[
    a_i \in J,
    \qquad
    b_i\equiv 1\pmod J.
\]

Now set
\[
    T_i =
    \begin{pmatrix}
        b_i & 0\\
        -a_i & 1
    \end{pmatrix}.
\]
Then $T_i\in \GL_2(R)$ and $T_i\equiv I_2 \pmod J$. A direct calculation shows that
\[
    T_i^{-1} \, \bigl(w_{[\beta_i]}|_{B_i}\bigr) \, T_i = J_2.
\]

Finally, let
\[
    T = \diag(T_1, \dots, T_m).
\]
Then $T\in \GL_{2m}(R,J)$, and a direct computation using the obtained initial form of $w_{[\beta_i]}$ yields
\[
    T^{-1} \, w_{[\beta_i]} \, T = w_{[\beta_i]}(1)
    \qquad (1 \leq i \leq m).
\]
Therefore, $T$ provides the desired change of basis. 
In other words, after this conjugation, all elements $w_{[\beta_i]}$ are normalized to their standard forms.

\subsubsection{\textbf{Normalization of $w_{[\alpha_i]}$ for $i = 1, \dots, m-1$}}

After incorporating this change of basis into the final matrix $g_3$, we may assume without loss of generality that
\[
    \varphi_2 \bigl(h_{[\alpha_i]}(-1)\bigr) = h_{[\alpha_i]}(-1)
    \quad \text{and} \quad
    \varphi_2 \bigl(w_{[\beta_i]}(1)\bigr) = w_{[\beta_i]}(1)
\]
for all $1 \leq i \leq m$.

Fix $i\in\{1,\dots,m-1\}$. Since $w_{[\alpha_i]}(1)$ commutes with $h_{[\alpha_i]}(-1)$ in $\PGL_{2m}(R)$, the standard scalar argument shows that the chosen representative $w_{[\alpha_i]}$ also commutes with $h_{[\alpha_i]}(-1)$ in $\GL_{2m}(R)$. Consequently, $w_{[\alpha_i]}$ preserves both the subspace $B_i\oplus B_{i+1}$ and its complement
\[
    B' := B_1 \oplus \cdots \oplus B_{i-1} \oplus B_{i+2} \oplus \cdots \oplus B_m.
\]
Moreover, since
\[
    w_{[\alpha_i]}^2 = h_{[\alpha_i]}(-1),
\]
and $h_{[\alpha_i]}(-1)$ acts trivially on $B'$, while $w_{[\alpha_i]}\equiv I \pmod{J}$ on $B'$, it follows that
\[
    w_{[\alpha_i]}|_{B'}=I.
\]
Hence, $w_{[\alpha_i]}$ acts non-trivially only on the subspace
$B_i \oplus B_{i+1}$. 
We claim that, on this subspace, it has the block form
\[
    \begin{pmatrix}
        0 & P_i \\
        -P_i^{-1} & 0
    \end{pmatrix},
\]
where $P_i \in \GL_2(R)$ is of the form
\[
    P_i =
    \begin{pmatrix}
        a_i & b_i \\
        b_i & -a_i
    \end{pmatrix},
    \qquad
    a_i \equiv 1 \pmod{J},
    \quad
    b_i \in J.
\]

To see this, consider the relations
\begin{gather*}
    w_{[\alpha_i]}(1)\, h_{[\beta_i]}(-1)\, w_{[\alpha_i]}(1)^{-1}
    =
    h_{[\beta_{i+1}]}(-1), \\
    w_{[\alpha_i]}(1)\, w_{[\beta_i]}(1)\, w_{[\alpha_i]}(1)^{-1}
    =
    w_{[\beta_{i+1}]}(1)^{-1}.
\end{gather*}
By the usual scalar argument, these relations also hold for the chosen representatives in $\GL_{2m}(R)$. 
Using the relation
\[
    w_{[\alpha_i]} h_{[\beta_i]}(-1)
    =
    h_{[\beta_{i+1}]}(-1) w_{[\alpha_i]},
\]
on $B_i \oplus B_{i+1}$, the matrix of $w_{[\alpha_i]}$ takes the form
\[
    w_{[\alpha_i]} =
    \begin{pmatrix}
        0 & P_i \\
        Q_i & 0
    \end{pmatrix},
    \qquad
    P_i, Q_i \in \GL_2(R).
\]

Since $w_{[\alpha_i]}^2 = h_{[\alpha_i]}(-1)$ and $h_{[\alpha_i]}(-1)$ acts as $-I$ on $B_i \oplus B_{i+1}$, we obtain
\[
    P_i Q_i = Q_i P_i = -I_2,
\]
and hence
\[
    Q_i = -P_i^{-1}.
\]

Next, using the relation
\[
    w_{[\alpha_i]} w_{[\beta_i]}(1)
    =
    w_{[\beta_{i+1}]}(1)^{-1} w_{[\alpha_i]},
\]
we obtain on $B_i \oplus B_{i+1}$ that
\[
    \begin{pmatrix}
        0 & P_i \\
        -P_i^{-1} & 0
    \end{pmatrix}
    \begin{pmatrix}
        J_2 & 0 \\
        0 & I_2
    \end{pmatrix}
    =
    \begin{pmatrix}
        I_2 & 0 \\
        0 & -J_2
    \end{pmatrix}
    \begin{pmatrix}
        0 & P_i \\
        -P_i^{-1} & 0
    \end{pmatrix},
\]
where
\[
    J_2 =
    \begin{pmatrix}
        0 & 1 \\
        -1 & 0
    \end{pmatrix}
\]
and $I_2$ is the identity matrix. 
This implies that
\[
    P_i J_2 = -J_2 P_i.
\]
Given
\[
    P_i \equiv K_2 \pmod{J},
    \qquad
    K_2 =
    \begin{pmatrix}
        1 & 0 \\
        0 & -1
    \end{pmatrix},
\]
the above relation implies that $P_i$ must be of the form
\[
    P_i =
    \begin{pmatrix}
        a_i & b_i \\
        b_i & -a_i
    \end{pmatrix},
    \qquad
    a_i \equiv 1 \pmod{J},
    \quad
    b_i \in J,
\]
which completes the proof of the claim.

\medskip

We now proceed to normalize the representative $w_{[\alpha_i]}$ of $\varphi_2(w_{[\alpha_i]}(1))$ for the simple root $[\alpha_i]$, where $i = 1, \dots,m-1$. 

For each $1 \leq r \leq m$, we inductively define an invertible matrix
$T_r \in \GL_2(R)$ as follows:
\[
    T_1 := I_2,
    \qquad
    T_r := P_{r-1}^{-1} T_{r-1} K_2
    \quad (2 \le r \le m).
\]
Since
\[
    P_{r-1}J_2=-J_2P_{r-1}
    \quad \text{and} \quad
    K_2J_2=-J_2K_2,
\]
both $P_{r-1}$ and $K_2$ anti-commute with $J_2$. Hence $P_{r-1}^{-1}$ also anti-commutes with $J_2$.
If $T_{r-1}$ commutes with $J_2$, then
$$
    T_r J_2
    =
    P_{r-1}^{-1}T_{r-1}K_2J_2        
    =
    -P_{r-1}^{-1}T_{r-1}J_2K_2     
    =
    -P_{r-1}^{-1}J_2T_{r-1}K_2        
    =
    J_2P_{r-1}^{-1}T_{r-1}K_2         
    =
    J_2T_r.
$$
Thus, by induction, every $T_r$ commutes with $J_2$.

Moreover, since $P_{r-1}\equiv K_2\pmod J$, we have
\[
    T_r\equiv I_2\pmod J
    \qquad
    (1\le r\le m).
\]
Set
\[
    T:= \diag(T_1, T_2, \dots, T_m).
\]
Then $T\in \GL_{2m}(R,J)$.

We claim that the matrix $T$ is the required change of basis. More precisely, we claim that
\[
    T^{-1} w_{[\beta_i]}(1) T
    =
    w_{[\beta_i]}(1)
    \qquad
    (i = 1, \dots, m),
\]
and
\[
    T^{-1} w_{[\alpha_i]} T
    =
    w_{[\alpha_i]}(1)
    \qquad
    (i = 1, \dots, m-1).
\]

The first identity holds because each $T_r$ commutes with $J_2$, and hence conjugation by $T^{-1}$ on the left and $T$ on the right preserves the already normalized elements $w_{[\beta_r]}(1)$.

For the second identity, consider the restriction to the subspace $B_i \oplus B_{i+1}$. The corresponding block of $T^{-1} w_{[\alpha_i]} T$ is
\[
    \begin{pmatrix}
        T_i^{-1} & 0 \\
        0 & T_{i+1}^{-1}
    \end{pmatrix}
    \begin{pmatrix}
        0 & P_i \\
        -P_i^{-1} & 0
    \end{pmatrix}
    \begin{pmatrix}
        T_i & 0 \\
        0 & T_{i+1}
    \end{pmatrix}
    =
    \begin{pmatrix}
        0 & T_i^{-1} P_i T_{i+1} \\
        -\bigl(T_i^{-1} P_i T_{i+1}\bigr)^{-1} & 0
    \end{pmatrix}.
\]
By construction,
\[
    T_{i+1} = P_i^{-1} T_i K_2,
\]
and therefore
\[
    T_i^{-1} P_i T_{i+1}
    =
    T_i^{-1} P_i \bigl(P_i^{-1} T_i K_2\bigr)
    =
    K_2.
\]
It follows that, on $B_i \oplus B_{i+1}$,
\[
    T^{-1} w_{[\alpha_i]} T
    =
    \begin{pmatrix}
        0 & K_2 \\
        -K_2 & 0
    \end{pmatrix}.
\]
On each $B_r$ with $r \notin \{i, i+1\}$, the element $w_{[\alpha_i]}$ acts trivially, and hence so does its conjugate by $T$. Therefore,
\[
    T^{-1} w_{[\alpha_i]} T
    =
    w_{[\alpha_i]}(1)
    \qquad
    (1 \leq i \leq m-1),
\]
as required.

\subsubsection{\textbf{Conclusion}}

Let $g_3 \in \GL_{2m}(R)$ be the product, in the order of application, of the change-of-basis matrices used above. By construction, $g_3 \in \GL_{2m}(R, J)$. Define
\[
    \varphi_3 := i_{[g_3]}^{-1} \circ \varphi_2.
\]
Then $\varphi_3$ is an isomorphism of $E'_{\ad, \sigma}(\Phi, R)$ onto a subgroup of $\PGL_{2m}(R)$ such that
\[
    \varphi_3\bigl(w_{[\alpha]}(1)\bigr) = w_{[\alpha]}(1)
    \qquad \text{for all } [\alpha] \in \Phi_\rho.
\]
Consequently, Step~(O-2) outlined in Section~\ref{sec:outline_of_main_thm} follows.


\subsection{The case \texorpdfstring{${}^2 A_{2m} \ (m \geq 3)$}{2A(2m)}}\label{subsec:image_of_w_for_even_case}

Assume that $(\Phi,\rho)$ has twisted type ${}^2 A_{2m}$ with $m \geq 3$, and let $\Delta_\rho = \{ [\alpha_1], \dots, [\alpha_m] \}$ be the simple system of $\Phi_\rho$. 
As noted in Section~\ref{sec:preliminaries}, $\Phi_\rho$ is of type $B_m$. 
We realize this root system in the Euclidean space $\mathbb{R}^{m}$ as
\[
    \Phi_\rho=
    \{\pm \nu_i \pm \nu_j \mid 1 \leq i < j \leq m\}
    \cup
    \{\pm \nu_i \mid 1 \leq i \leq m\},
\]
where $\{\nu_1,\dots,\nu_m\}$ denotes the standard orthonormal basis of $\mathbb{R}^m$.

Under this identification, the simple roots are given by
\[
[\alpha_i]=\nu_i-\nu_{i+1}
\quad (i=1,\dots,m-1),
\qquad
[\alpha_m]=\nu_m.
\]
In addition, for each $i=1,\dots,m-1$, we define
\[
[\widetilde{\alpha_i}]=\nu_i+\nu_{i+1}.
\]
Observe that the roots $[\alpha_i]$ and $[\widetilde{\alpha_i}]$ are orthogonal for every $i=1,\dots,m-1$.

We also introduce the roots $[\beta_i]\in\Phi_\rho$ $(i=1,\dots,m)$ corresponding to the vectors $\nu_i$. 
The set $\{[\beta_1],\dots,[\beta_m]\}$ coincides with the set of all positive short roots of $\Phi_\rho$. 
Moreover, each root $[\beta_i]$ is of type $A_2$ and is given explicitly by
\[
    [\beta_i]
    =
    [\alpha_i+\alpha_{i+1}+\cdots+\alpha_{2m+1-i}].
\]


\subsubsection{\textbf{Normalization of $w_{[\alpha]}$ for roots of type $A_1^2$}}

By Step~(E-1), we have
\[
    \varphi_2\bigl(h_{[\alpha_i]}(-1)\bigr)=h_{[\alpha_i]}(-1)
    \qquad (1\leq i\leq m-1).
\]

Our objective is to show that, after a suitable change of basis, the elements $w_{[\alpha]} = \varphi_2 (w_{[\alpha]}(1))$ assume the standard forms $w_{[\alpha]}(1)$ for all $[\alpha] \in \Phi_\rho$ of type $A_1^2$. 

Since the elements $w_{[\alpha_i]}(1)$ for $i=1,\dots,m-1$, together with $w_{[\widetilde{\alpha}_1]}(1)$, generate all elements $w_{[\alpha]}(1)$ corresponding to roots $[\alpha]\in\Phi_\rho$ of type $A_1^2$, it suffices to normalize $w_{[\alpha_i]}$ for $i=1,\dots,m-1$ and $w_{[\widetilde{\alpha}_1]}$.

We first determine the initial structure of $w_{[\alpha_i]}$ and $w_{[\widetilde{\alpha_i}]}$ for $i = 1, \dots, m-1$. 

Let $\{e_1,\dots,e_{2m+1}\}$ be the standard basis of $R^{2m+1}$, and define
\[
    B_r:=\langle e_r, e_{2m+2-r}\rangle \quad (1\leq r\leq m),
    \qquad
    B_{m+1}:=\langle e_{m+1}\rangle.
\]

Fix $i\in\{1,\dots,m-1\}$. Since $w_{[\alpha_i]}(1)$ commutes with $h_{[\alpha_i]}(-1)$ in $\PGL_{2m+1}(R)$, the standard scalar argument shows that the chosen representative $w_{[\alpha_i]}$ also commutes with $h_{[\alpha_i]}(-1)$ in $\GL_{2m+1}(R)$. Consequently, $w_{[\alpha_i]}$ preserves both the subspace $B_i\oplus B_{i+1}$ and its complement
\[
    B' := B_1 \oplus \cdots \oplus B_{i-1} \oplus B_{i+2} \oplus \cdots \oplus B_{m+1}.
\]
Moreover, since
$w_{[\alpha_i]}^2 = h_{[\alpha_i]}(-1)$,
and $h_{[\alpha_i]}(-1)$ acts trivially on $B'$, while $w_{[\alpha_i]}\equiv I \pmod{J}$ on $B'$, it follows that
\[
    w_{[\alpha_i]}|_{B'}=I.
\]
Hence, $w_{[\alpha_i]}$ acts non-trivially only on the subspace
$B_i \oplus B_{i+1}$.

We next consider the relations
\begin{align*}
    h_{[\alpha_{i-1}]}(-1)\, w_{[\alpha_i]}(1)\, h_{[\alpha_{i-1}]}(-1)^{-1}
    &= w_{[\alpha_i]}(1)^{-1} \qquad (i>1),\\
    h_{[\alpha_{i+1}]}(-1)\, w_{[\alpha_i]}(1)\, h_{[\alpha_{i+1}]}(-1)^{-1}
    &= w_{[\alpha_i]}(1)^{-1} \qquad (i<m-1),\\
    w_{[\alpha_i]}(1)^2 &= h_{[\alpha_i]}(-1).
\end{align*}
Applying $\varphi_2$ and using the fact that it fixes every $h_{[\alpha]}(-1)$ for $[\alpha]$ of type $A_1^2$, we obtain the corresponding relations for $W_{[\alpha_i]}$. By the standard scalar argument, these relations also hold for the chosen representative $w_{[\alpha_i]}$.

A direct computation using the applicable relations above shows that the restriction of $w_{[\alpha_i]}$ to the subspace $B_i\oplus B_{i+1}$ has the form
\[
    \begin{pmatrix}
        0 & P_i \\
        - P_i^{-1} & 0
    \end{pmatrix},
\]
where $P_i\in\GL_2(R)$ satisfies
\[
    P_i \equiv K_2 \pmod{J},
    \qquad
    K_2=
    \begin{pmatrix}
        1 & 0\\
        0 & -1
    \end{pmatrix}.
\]

We now determine the initial form of $w_{[\widetilde{\alpha_i}]}$. Consider the relations
\begin{align*}
    h_{[\alpha_i]}(-1) \, w_{[\widetilde{\alpha_i}]}(1) &= w_{[\widetilde{\alpha_i}]}(1) \, h_{[\alpha_i]}(-1), \\
    h_{[\alpha_{i-1}]}(-1) \,w_{[\widetilde{\alpha_i}]}(1) \,h_{[\alpha_{i-1}]}(-1)^{-1} &= w_{[\widetilde{\alpha_i}]}(1)^{-1} \quad (i>1), \\
    h_{[\alpha_{i+1}]}(-1)\,w_{[\widetilde{\alpha_i}]}(1)\,h_{[\alpha_{i+1}]}(-1)^{-1} &= w_{[\widetilde{\alpha_i}]}(1)^{-1} \quad (i<m-1), \\
    w_{[\widetilde{\alpha_i}]}(1)^2 &= h_{[\widetilde{\alpha_i}]}(-1).
\end{align*}

Applying the same reasoning as above, we conclude that $w_{[\widetilde{\alpha_i}]}$ acts non-trivially only on the subspace $B_i\oplus B_{i+1}$. Moreover, its restriction to this subspace is given by
\[
    \begin{pmatrix}
        0 & Q_i\\
        -Q_i^{-1} & 0
    \end{pmatrix},
\]
where $Q_i\in\GL_2(R)$ satisfies
\[
    Q_i \equiv J_2 \pmod J,
    \qquad
    J_2=
    \begin{pmatrix}
        0 & 1\\
        -1 & 0
    \end{pmatrix}.
\]

\smallskip

We now proceed to normalize the elements $w_{[\alpha_i]}$ $(i=1,\dots,m-1)$ and $w_{[\widetilde{\alpha}_1]}$.

By the standard scalar argument, $w_{[\alpha_1]}$ and $w_{[\widetilde{\alpha_1}]}$ commute exactly; hence
\[
    (Q_1 P_1^{-1})^2 = I_2, \qquad
    Q_1 P_1^{-1} \equiv \begin{pmatrix}
        0 & -1\\
        -1 & 0
    \end{pmatrix} \pmod{J}.
\]
A direct computation shows that
\[
    Q_1 P_1^{-1} = \begin{pmatrix}
        a & b\\
        b^{-1}(1-a^2) & -a
    \end{pmatrix},
\]
for some $a\in R$ and $b\in R^\times$ satisfying
\[
    a \in J, \qquad b \equiv -1 \pmod J.
\]

We define
\[
    T_1 =
    \begin{pmatrix}
        -b & 0\\
        a & 1
    \end{pmatrix}.
\]
Then \(T_1\in \GL_2(R)\) and \(T_1\equiv I_2\pmod J\). A straightforward calculation yields
\[
    T_1^{-1}(Q_1P_1^{-1})T_1
    =
    \begin{pmatrix}
        0 & -1\\
        -1 & 0
    \end{pmatrix}.
\]

For $2 \leq r \leq m$, we define inductively
\[
    T_r := P_{r-1}^{-1} T_{r-1} K_2, \qquad T_{m+1}:=1,
\]
and set
\[
    T = \operatorname{diag}(T_1,T_2,\dots,T_m,T_{m+1}) \in \GL_{2m+1}(R,J).
\]

We claim that $T$ is the desired change of basis. More precisely, $T$ satisfies
\[
    T^{-1} w_{[\alpha_i]} T = w_{[\alpha_i]}(1) \quad (i=1,\dots,m-1),
    \qquad
    T^{-1} w_{[\widetilde{\alpha}_1]} T = w_{[\widetilde{\alpha}_1]}(1).
\]

First, fix $i \in \{ 1, \dots, m-1 \}$ and consider $T^{-1} w_{[\alpha_i]} T$. On the subspace $B_i \oplus B_{i+1}$, this matrix has the block decomposition
\[
    \begin{pmatrix}
        T_i^{-1} & 0 \\
        0 & T_{i+1}^{-1}
    \end{pmatrix}
    \begin{pmatrix}
        0 & P_i \\
        - P_i^{-1} & 0
    \end{pmatrix}
    \begin{pmatrix}
        T_i & 0 \\
        0 & T_{i+1}
    \end{pmatrix}
    =
    \begin{pmatrix}
        0 & T_i^{-1} P_i T_{i+1} \\
        -\bigl(T_i^{-1}P_iT_{i+1}\bigr)^{-1} & 0
    \end{pmatrix}.
\]
Using the definition $T_{i+1}=P_i^{-1}T_iK_2$, we obtain
\[
    T_i^{-1}P_iT_{i+1}
    =
    T_i^{-1}P_i(P_i^{-1}T_iK_2)
    =
    K_2.
\]
Hence, on $B_i\oplus B_{i+1}$ we have
\[
    T^{-1} w_{[\alpha_i]} T
    =
    \begin{pmatrix}
        0 & K_2\\
        - K_2 & 0
    \end{pmatrix}.
\]
On each subspace $B_r$ with $r\notin\{i,i+1\}$, the element $w_{[\alpha_i]}$ acts trivially, and therefore so does its conjugate by $T$. Consequently,
\[
    T^{-1} w_{[\alpha_i]} T = w_{[\alpha_i]}(1) \qquad (1 \le i \le m-1),
\]
as required.

Finally, consider the matrix $T^{-1} w_{[\widetilde{\alpha_1}]} T$. On the subspace $B_1 \oplus B_2$, this matrix has the form
\[
    T_1^{-1} Q_1 T_2 = T_1^{-1} Q_1 P_1^{-1} T_1 K_2 =
    \begin{pmatrix}
        0 & -1\\
        -1 & 0
    \end{pmatrix} K_2 = J_2.
\]
On each subspace $B_r$ with $r\notin\{1,2\}$, the element $w_{[\widetilde{\alpha_1}]}$ acts trivially, and therefore so does its conjugate by $T$. Consequently,
\[
    T^{-1} w_{[\widetilde{\alpha_1}]} T
    =
    w_{[\widetilde{\alpha_1}]}(1),
\]
as desired.


\subsubsection{\textbf{Normalization of $w_{[\alpha]}$ for roots of type $A_2$}}

Recall that the positive roots of type $A_2$ are precisely $\{[\beta_1],\dots,[\beta_m]\}$.
For $a\in R^\times$ and $b\in R$, let
\[
    v_i(a,b):=v_{[\beta_i]}(a,b)\in\GL_{2m+1}(R)
\]
denote the element acting on $B_i\oplus B_{m+1}$ as
\[
    \begin{pmatrix}
        b & a & 0\\
        a^{-1}(1-b^2) & -b & 0\\
        0 & 0 & -1
    \end{pmatrix},
\]
and acting trivially on all remaining blocks (cf. Section~\ref{sec:outline_of_main_thm}).

After incorporating this change of basis into the final matrix $g_3$, we may assume without loss of generality that
\[
    \varphi_2 \bigl(h_{[\alpha]}(-1)\bigr) = h_{[\alpha]}(-1),
    \qquad
    \varphi_2 \bigl(w_{[\alpha]}(1)\bigr) = w_{[\alpha]}(1)
\]
for all $[\alpha] \in \Phi_\rho$ of type $A_1^2$.

Since the roots $[\beta_1],\dots,[\beta_m]$ are conjugate under the action of the Weyl group generated by roots of type $A_1^2$, it is enough to normalize the element
\[
    w_m:=w_{[\beta_m]}=w_{[\alpha_m]}.
\]
Indeed, to establish Step~(E-2) from Section~\ref{sec:outline_of_main_thm}, it suffices to show that, after a suitable basis change, $w_m$ can be brought to the form $v_m(a,b)$ for some $a \in R^\times$ and $b \in R$ satisfying $a \equiv 1/2 \pmod{J}$ and $b \in J$.

\smallskip

For every $1\leq i\leq m-2$, the roots $[\alpha_i]$ and $[\alpha_m]$ are orthogonal, and hence the element $w_{[\alpha_m]}(1,1/2)$ commutes with $h_{[\alpha_i]}(-1)$ in $\operatorname{PGL}_{2m+1}(R)$.

For $i = m-1$, the same commutation relation follows directly from the block forms. 
Indeed, the element $w_{[\alpha_m]}(1,1/2)$ acts trivially on $B_{m-1}$ and preserves $B_m$ and $B_{m+1}$, whereas $h_{[\alpha_{m-1}]}(-1)$ acts as $-I_2$ on both $B_{m-1}$ and $B_m$, and as the identity on $B_{m+1}$.

By the standard scalar argument, these commutation relations hold exactly for the representative $w_m$ in $\operatorname{GL}_{2m+1}(R)$. Since the subspaces $B_1, \ldots, B_{m+1}$ are precisely the common eigenspaces of the elements
\[
    h_{[\alpha_i]}(-1), \qquad (i = 1, \ldots, m-1),
\]
it follows that $w_m$ preserves each subspace $B_i$, for $i = 1, \ldots, m+1$.

Since $w_m^2 = I_{2m+1}$ and
\[
    \left. w_m \right|_{B_i} \equiv I_2 \pmod{J} \qquad (i=1,\ldots,m-1),
\]
we obtain
\[
    \left. w_m \right|_{B_i} = I_2 \qquad (i = 1, \ldots, m-1).
\]

Moreover,
\[
    \left(\left. w_m \right|_{B_{m+1}} \right)^2 = 1
    \text{ and }
    \left. w_m \right|_{B_{m+1}} \equiv -1 \pmod{J}.
\]
Therefore,
\[
    \left. w_m \right|_{B_{m+1}} = -1.
\]

It remains to determine the action of $w_m$ on $B_m$. On this block we have
\[
    \left( \left.w_m \right|_{B_m} \right)^2 = I_2,
    \qquad
    \left. w_m \right|_{B_m} \equiv
    \begin{pmatrix}
        0 & 1/2\\
        2 & 0
    \end{pmatrix}
    \pmod{J}.
\]
Consequently,
\[
    \left. w_m \right|_{B_m} =
    \begin{pmatrix}
        b & a\\
        a^{-1}(1-b^2) & -b
    \end{pmatrix}
\]
for some $a\in R^\times$ and $b\in R$ satisfying
\[
    a \equiv \frac{1}{2} \pmod{J}, \qquad b \in J.
\]
Equivalently, $w_m = v_m(a,b)$, as required.

\begin{rmk}
    We note that a further change of basis can be used to normalize $w_m$ to the desired standard form. However, such a change of basis generally affects the elements $w_{[\widetilde{\alpha_i}]}$ that have already been normalized. Consequently, at this stage we have two possible approaches.
    
    The first is to normalize all elements $w_{[\alpha]}$ corresponding to roots $[\alpha]\in\Phi_\rho$ of type $A_1^2$, while bringing each $w_{[\beta_i]}$ to the form $v_i(a,b)$. The second is to normalize all elements $w_{[\alpha_i]}$ $(i=1,\dots,m-1)$ together with all elements $w_{[\beta_i]}$ $(i=1,\dots,m)$.
    
    In either approach, one can subsequently show that the remaining elements are automatically fixed. In this paper, we adopt the first approach.
\end{rmk}

\subsubsection{\textbf{Conclusion}}
Let $g_3 \in \GL_{2m+1}(R)$ be the product, in the order of application, of the change-of-basis matrices used above. By construction, $g_3 \in \GL_{2m+1}(R, J)$. Define
\[
    \varphi_3 := i_{[g_3]}^{-1} \circ \varphi_2.
\]
Then $\varphi_3$ is an isomorphism of $E'_{\ad, \sigma}(\Phi, R)$ onto a subgroup of $\PGL_{2m+1}(R)$ such that
\begin{gather*}
    \varphi_3 \left( w_{[\alpha]}(1) \right) = w_{[\alpha]}(1) \qquad \text{for all } [\alpha] \in \Phi_\rho \text{ such that } [\alpha] \sim A_1^2, \\
    \varphi_3 \left( w_{[\alpha]}(1,1/2) \right) = v_{[\alpha]}(a,b) \qquad \text{for all } [\alpha] \in \Phi_\rho \text{ such that } [\alpha] \sim A_2,
\end{gather*}
where $a\in R^\times$ and $b\in R$ satisfy $a \equiv 1/2 \pmod{J}$ and $b \in J$.
Consequently, Step~(E-2) outlined in Section~\ref{sec:outline_of_main_thm} follows.

\section{The Images of \texorpdfstring{$x_{[\alpha]}(1)$}{x(1)}} \label{sec:image_of_x(1)}


For each root $[\alpha] \in \Phi_\rho$, define
\[
    X_{[\alpha]} := 
    \begin{cases}
        \varphi_{3}\bigl(x_{[\alpha]}(1)\bigr), & \text{if } [\alpha] \sim A_1 \text{ or } A_1^2, \\[4pt]
        \varphi_{3}\bigl(x_{[\alpha]}(1,1/2)\bigr), & \text{if } [\alpha] \sim A_2.
    \end{cases}
\]
Since $\varphi_3 = i_{[g_2g_3]}^{-1} \circ \varphi_1$ and $g_2g_3 \in \GL_{\ell+1}(R,J)$, applying the remark following Lemma~\ref{lemma:compatible-representatives} with $C = g_2 g_3$ to the standard simply connected representative yields a representative $x_{[\alpha]} \in \SL_{\ell+1}(R)$ of $X_{[\alpha]}$ such that
\[
    x_{[\alpha]} \equiv
    \begin{cases}
        x_{[\alpha]}(1) \pmod{J}, & [\alpha] \sim A_1 \text{ or } A_1^2, \\[2mm]
        x_{[\alpha]}(1, 1/2) \pmod{J}, & [\alpha] \sim A_2.
    \end{cases}
\]
These representatives will be further normalized in Subsection~\ref{subsec:choice_of_x}.


\subsection{Choice of representative for \texorpdfstring{$X_{[\alpha]}$}{X-i}}\label{subsec:choice_of_x}

First, consider the odd case ${}^2 A_{2m-1}$ with $m \geq 3$. 
We begin by selecting a preliminary representative $x_{[\alpha_1]} \in \SL_{2m}(R)$ for the element $X_{[\alpha_1]}$, chosen such that
\[
    x_{[\alpha_1]} \equiv x_{[\alpha_1]}(1) \pmod{J}.
\]
Consider the standard relation
\[
    h_{[\alpha_2]}(-1) \, x_{[\alpha_1]}(1) \, h_{[\alpha_2]}(-1)^{-1} = x_{[\alpha_1]}(1)^{-1}.
\]
Applying the map $\varphi_3$ to both sides yields the corresponding projective identity
\[
    h_{[\alpha_2]}(-1) \, X_{[\alpha_1]} \, h_{[\alpha_2]}(-1)^{-1} = X_{[\alpha_1]}^{-1}.
\]
Thus, there exists a scalar $\lambda \in R^{\times}$ such that
\[
    h_{[\alpha_2]}(-1) \, x_{[\alpha_1]} \, h_{[\alpha_2]}(-1)^{-1} = \lambda \, x_{[\alpha_1]}^{-1}
\]
in $\SL_{2m}(R)$. Taking the congruence modulo $J$, we find $\lambda \equiv 1 \pmod{J}$, and computing the determinant of both sides gives $\lambda^{2m} = 1$.

As established in Section~\ref{sec:image_of_h(-1)}, the $2$-primary component of the order of $\lambda$ is trivial: any $2$-power root of unity congruent to $1$ modulo $J$ must equal $1$, since $2 \in R^{\times}$ and $R$ is local. Consequently, the order of $\lambda$ is odd, and the squaring map induces an automorphism of the cyclic group $\langle\lambda\rangle$. Therefore, there exists
$\eta \in \langle\lambda\rangle$
such that
$\eta^2 = \lambda^{-1}$.

Replacing the preliminary representative $x_{[\alpha_1]}$ with $\eta \, x_{[\alpha_1]}$, we obtain the exact relation
\[
    h_{[\alpha_2]}(-1) \, x_{[\alpha_1]} \, h_{[\alpha_2]}(-1)^{-1} = x_{[\alpha_1]}^{-1}.
\]
Furthermore, because $\eta^{2m} = 1$, the modified representative remains in $\SL_{2m}(R)$, and since $\eta \equiv 1 \pmod{J}$, the congruence condition modulo $J$ is preserved.

We naturally define the representatives of $\varphi_3(x_{[\alpha]}(-1))$ and $\varphi_3(x_{[\alpha]}(\pm 2))$ as $x_{[\alpha]}^{-1}$ and $x_{[\alpha]}^{\pm 2}$, respectively, for $[\alpha]\not\sim A_2$.

Next, let $[\beta] \in \Phi_\rho$ be an arbitrary root of type $A_1^2$. There exists a product $w$ of normalized Weyl elements such that
\[
    X_{[\beta]} = w X_{[\alpha_1]}^{\pm 1} w^{-1}.
\]
Accordingly, once $x_{[\alpha_1]}$ has been scalar-normalized, we define
\[
    x_{[\beta]} := w \, x_{[\alpha_1]}^{\pm 1} \, w^{-1}.
\]

We now turn to the roots of type $A_1$. First, consider the simple root $[\alpha_m]$. At this point, we choose a preliminary representative $x_{[\alpha_m]} \in \SL_{2m}(R)$ for $X_{[\alpha_m]}$ satisfying $x_{[\alpha_m]} \equiv x_{[\alpha_m]}(1) \pmod{J}$. We will rigorously normalize this element as required in the subsequent subsection. The representatives $x_{[\alpha]}$ for all other roots $[\alpha] \sim A_1$ are then defined via Weyl conjugation from $x_{[\alpha_m]}$.

\medskip

Finally, consider the even case ${}^2 A_{2m}$ with $m \geq 3$. In this setting, the roots of type $A_1^2$ correspond to the long roots. We normalize their representatives $x_{[\alpha]}$ exactly as in the ${}^2 A_{2m-1}$ case: we scale a preliminary representative $x_{[\alpha_1]} \in \SL_{2m+1}(R)$ by an appropriate scalar of odd order to satisfy the exact identity $h_{[\alpha_2]}(-1) \, x_{[\alpha_1]} \, h_{[\alpha_2]}(-1)^{-1} = x_{[\alpha_1]}^{-1}$, and generate all other long root representatives via Weyl conjugation.

For roots $[\alpha] \sim A_2$ (the short roots), we employ a parallel scalar-normalization strategy. We first select a preliminary representative $x_{[\alpha_m]}$ for $X_{[\alpha_m]}$ satisfying $x_{[\alpha_m]} \equiv x_{[\alpha_m]}(1,1/2) \pmod{J}$. Utilizing the standard relation
\[
    h_{[\alpha_{m-1}]}(-1) \, x_{[\alpha_m]}(1,1/2) \, h_{[\alpha_{m-1}]}(-1)^{-1}
    =
    x_{[\alpha_m]}(-1,1/2),
\]
we rescale this preliminary matrix exactly as before to obtain a representative $x_{[\alpha_m]} \in \SL_{2m+1}(R)$ of $X_{[\alpha_m]}$ satisfying
\[
    h_{[\alpha_{m-1}]}(-1) \, x_{[\alpha_m]} \, h_{[\alpha_{m-1}]}(-1)^{-1}
    =
    x_{[\alpha_m]}^{-1}.
\]
The representatives $x_{[\alpha]}$ for all remaining roots $[\alpha] \sim A_2$ are subsequently generated from this precisely normalized element $x_{[\alpha_m]}$ via Weyl conjugation.

\medskip

We now proceed to normalize the elements $x_{[\alpha]}$ for $[\alpha] \in \Phi_\rho$. 
Since any two roots of the same length are conjugate under the action of the Weyl group, and the Weyl elements have already been normalized, it suffices to verify that the representatives $x_{[\alpha]}$ attain the desired normalized form for one representative of each root length. 
Accordingly, it is enough to check the normalization for $x_{[\alpha_1]}$ and $x_{[\alpha_{m}]}$.

We consider the cases ${}^2 A_{2m-1}\ (m \geq 3)$ and ${}^2 A_{2m}\ (m \geq 3)$ separately, as the computations vary in each case.


\subsection{The case \texorpdfstring{${}^2 A_{2m-1} \ (m \geq 3)$}{A-odd}}

Assume that $(\Phi,\rho)$ has twisted type ${}^2 A_{2m-1}$ with $m \geq 3$, and let
\[
    \Delta_\rho = \{ [\alpha_1], \dots, [\alpha_m] \}
\]
be the simple system of $\Phi_\rho$.
Observe that $[\alpha_1], \dots, [\alpha_{m-1}] \sim A_1^2$ are short roots, while $[\alpha_m] \sim A_1$ is a long root. 
We continue to use the notation $[\beta_i]$ $(i = 1,\dots,m)$ for the roots of type $A_1$ introduced in the previous section. Likewise, we retain the notation $[\widetilde{\alpha_i}]$ $(i=1,\dots,m-1)$ from the previous section.

Let $\{ e_1, \dots, e_{2m} \}$ denote the standard basis of $R^{2m}$. Recall that we defined
\[
    B_r := \langle e_r, e_{2m+1-r} \rangle \qquad (1 \leq r \leq m).
\]
Then
\[
    R^{2m} = B_1 \oplus \dots \oplus B_m.
\]

\subsubsection{\textbf{Normalization of $x_{[\alpha_1]}$}}

Since $x_{[\alpha_1]}(1)$ commutes with $h_{[\beta_i]}(-1)$ for each $i = 3,\dots,m$, this commutativity is preserved for their images under the map $\varphi_3$. By the standard scalar-normalization argument, the chosen representative of $X_{[\alpha_1]}$ also satisfies the same commuting relations in $\GL_{2m}(R)$. Consequently, with respect to the decomposition 
\[
    R^{2m} = B_1 \oplus \dots \oplus B_m,
\]
the matrix $x_{[\alpha_1]}$ assumes the block form
\[
    x_{[\alpha_1]} = \begin{pmatrix} 
        A & B & 0 & \dots & 0 \\ 
        C & D & 0 & \dots & 0 \\ 
        0 & 0 & u_1 & \dots & 0 \\ 
        \vdots & \vdots & \vdots & \ddots & \vdots \\ 
        0 & 0 & 0 & \dots & u_{m-2} 
    \end{pmatrix},
\]
where $A, B, C, D$, and $u_i$ (for $i = 1,\dots,m-2$) are $2 \times 2$ matrices satisfying
\[
    A, D, u_i \equiv I_2 \pmod{J}, \qquad 
    B \equiv \begin{pmatrix} 1 & 0 \\ 0 & 0 \end{pmatrix} \pmod{J}, \qquad 
    C \equiv \begin{pmatrix} 0 & 0 \\ 0 & 1 \end{pmatrix} \pmod{J}.
\]

From the standard relation
\[
    h_{[\alpha_2]}(-1)\, x_{[\alpha_1]}(1)\, h_{[\alpha_2]}(-1)^{-1} = x_{[\alpha_1]}(-1),
\]
we deduce the corresponding projective relation for $X_{[\alpha_1]}$. Given our choice of the representative $x_{[\alpha_1]}$ in $\GL_{2m}(R)$ (cf. Subsection~\ref{subsec:choice_of_x}), this relation lifts exactly to
\[
    h_{[\alpha_2]}(-1)\, x_{[\alpha_1]}\, h_{[\alpha_2]}(-1)^{-1} = x_{[\alpha_1]}^{-1}.
\]
By examining the action on the blocks $B_3, \dots, B_m$ and noting that $h_{[\alpha_2]}(-1)$ acts as $\pm I_2$ on these subspaces, it follows that
\[
    u_i = u_i^{-1}, \qquad \text{i.e.,} \qquad u_i^2 = I_2.
\]
Since $u_i \equiv I_2 \pmod{J}$, we conclude that
\[
    u_i = I_2 \qquad (i=1,\dots,m-2).
\]

Let
\[
    B' = B_4 \oplus \cdots \oplus B_m.
\]
Then, with respect to the decomposition
\[
    R^{2m} = B_1 \oplus B_2 \oplus B_3 \oplus B',
\]
the matrix $x_{[\alpha_1]}$ takes the form
\[
    x_{[\alpha_1]} = \begin{pmatrix} 
        A & B & 0 & 0 \\ 
        C & D & 0 & 0 \\ 
        0 & 0 & I_2 & 0 \\ 
        0 & 0 & 0 & I 
    \end{pmatrix}, \qquad \text{where } I = I_{2m-6}.
\]

Adhering to the conventions established in Subsection~\ref{subsec:choice_of_x}, we fix the representatives for $X_{[\alpha_2]}$, $X_{-[\alpha_2]}$, and $X_{[\alpha_1]+[\alpha_2]}$ by imposing the following exact Weyl conjugation identities:
\begin{align*} 
    x_{[\alpha_2]} &:= (w_{[\alpha_1]}(1) w_{[\alpha_2]}(1))\, x_{[\alpha_1]}\, (w_{[\alpha_1]}(1) w_{[\alpha_2]}(1))^{-1}, \\ 
    x_{-[\alpha_2]} &:= w_{[\alpha_2]}(1)\, x_{[\alpha_2]}\, w_{[\alpha_2]}(1)^{-1}, \\ 
    x_{[\alpha_1]+[\alpha_2]} &:= w_{[\alpha_2]}(1)\, x_{[\alpha_1]}^{-1}\, w_{[\alpha_2]}(1)^{-1}. 
\end{align*}

To simplify the notation, we set
\[
    X_1 := X_{[\alpha_1]}, \quad X_2 := X_{[\alpha_2]}, \quad X_{-2} := X_{-[\alpha_2]}, \quad X_3 := X_{[\alpha_1]+[\alpha_2]},
\]
and denote the corresponding representatives by $x_1, x_2, x_{-2}$, and $x_3$, respectively. With respect to the decomposition $R^{2m} = B_1 \oplus B_2 \oplus B_3 \oplus B'$, these matrices are given by
\begin{align*}
    x_2 &= \begin{pmatrix} 
        I_2 & 0 & 0 & 0 \\ 
        0 & PAP & PBP & 0 \\ 
        0 & PCP & PDP & 0 \\ 
        0 & 0 & 0 & I 
    \end{pmatrix}, &
    x_{-2} &= \begin{pmatrix} 
        I_2 & 0 & 0 & 0 \\ 
        0 & D & -C & 0 \\ 
        0 & -B & A & 0 \\ 
        0 & 0 & 0 & I 
    \end{pmatrix},\\
    x_3^{-1} &= \begin{pmatrix} 
        A & 0 & -BP & 0 \\ 
        0 & I_2 & 0 & 0 \\ 
        - PC & 0 & PDP & 0 \\ 
        0 & 0 & 0 & I 
    \end{pmatrix},&\text{ where }
    P &= \begin{pmatrix} 1 & 0 \\ 0 & -1 \end{pmatrix} \in M_2(R).
\end{align*}

From the commutativity relation
\[
    [x_{[\alpha_1]}(1), x_{[\alpha_1]+[\alpha_2]}(1)] = 1,
\]
we obtain
$x_1 x_3 = \lambda \, x_3 x_1$,
for some $\lambda \in R^{\times}$. Comparing the $(2,2)$-blocks on both sides yields
$D = \lambda D$.

Since $D \equiv I_2 \pmod{J}$, the matrix $D$ is invertible, forcing $\lambda = 1$. Comparing the remaining blocks, we obtain
\[
    (A - I_2)B = 0, \qquad C(A - I_2) = 0, \qquad CB = 0.
\]

Similarly, from the relation
\[
    [x_{[\alpha_1]}(1), x_{-[\alpha_2]}(1)] = 1,
\]
we get $x_1 x_{-2} = \lambda \, x_{-2} x_1$ for some $\lambda \in R^{\times}$. Comparison of the $(1,1)$-blocks yields
$A = \lambda A$.

Since $A \equiv I_2 \pmod{J}$, the matrix $A$ is invertible, and again $\lambda = 1$. By comparing the remaining blocks, we get
\[
    B(D - I_2) = 0, \qquad (D - I_2)C = 0, \qquad BC = 0.
\]

Using the exact relation
\[
    h_{[\alpha_2]}(-1)\, x_{[\alpha_1]}\, h_{[\alpha_2]}(-1)^{-1} = x_{[\alpha_1]}^{-1},
\]
we deduce
\[
    A^2 - BC = I_2, \qquad D^2 - CB = I_2.
\]
Thus,
$A^2 = I_2 = D^2$.

Since $A, D \equiv I_2 \pmod{J}$, it follows that
$A = I_2 = D$.

Next, we apply the standard Chevalley relation
\[
    [x_{[\alpha_1]}(1), x_{[\alpha_2]}(1)] = x_{[\alpha_1+\alpha_2]}(1).
\]
For our chosen representatives, this lifted relation is exact. Indeed, if a scalar factor were to appear in the lift, comparing the $(2,2)$-blocks of both sides would compel this scalar to be $1$, as both sides act as the identity $I_2$ on $B_2$. Hence, we obtain
$[x_1,x_2]=x_3$.

A direct calculation provides
$BPB = B$ and $CPC = -C$.

Now, consider the root
\[
    [\widetilde{\alpha}_1] = \nu_1 + \nu_2 = [\alpha_1] + 2[\alpha_2] + \cdots + 2[\alpha_{m-1}] + [\alpha_m]
\]
(refer to Section~\ref{subsec:image_of_w_for_odd_case} for the notation $\nu_i$). Then $[\widetilde{\alpha}_1]$ is a root of type $A_1^2$, i.e., a short root, and the subsystem generated by $[\alpha_1]$ and $[\widetilde{\alpha}_1]$ is of type $B_2$.

Observe that, in the standard group,
\[
    w_{[\widetilde{\alpha}_1]}(1)\, x_{[\alpha_1]}(1)\, w_{[\widetilde{\alpha}_1]}(1)^{-1} = x_{[\alpha_1]}(-1).
\]
For the designated representatives, we utilize the corresponding relation in the form
\[
    x_{[\alpha_1]} w_{[\widetilde{\alpha}_1]}(1) x_{[\alpha_1]} = w_{[\widetilde{\alpha}_1]}(1).
\]
Indeed, if a scalar factor were present, the two sides would differ by a unit scalar. Because both sides act identically on the common fixed summand $B_3 \oplus B'$, this scalar is forced to be $1$. Hence, the above exact relation imposes the following conditions on the entries of $B = (b_{ij})$ and $C = (c_{ij})$:
\[
    b_{11} = c_{22} \equiv 1 \pmod{J}, \qquad b_{12} = c_{21}, \qquad b_{21} = c_{12}, \qquad b_{22} = c_{11},
\]
together with
\[
    b_{12} = -b_{21}, \qquad b_{22} = -\frac{b_{12}^2}{b_{11}}.
\]

Thus,
\[
    B = \begin{pmatrix} p & q \\ -q & -q^2/p \end{pmatrix}, \qquad C = \begin{pmatrix} -q^2/p & -q \\ q & p \end{pmatrix},
\]
where $p \equiv 1 \pmod{J}$ and $q \equiv 0 \pmod{J}$. Since $BPB = B$, we have
$p^2 + q^2 = p$.

Consequently, the matrix $x_1$ assumes the form
\[
    x_1 = x_1(p,q) := \begin{pmatrix} 
        1 & 0 & p & q & 0 \\ 
        0 & 1 & -q & -q^2/p & 0 \\ 
        -q^2/p & -q & 1 & 0 & 0 \\ 
        q & p & 0 & 1 & 0 \\ 
        0 & 0 & 0 & 0 & I_{2m-4} 
    \end{pmatrix},
\]
where $p \equiv 1 \pmod{J}$ and $q \equiv 0 \pmod{J}$ such that
$p^2 + q^2 = p$.

\medskip

We now construct a change of basis that preserves all previously fixed elements, namely $h_{[\alpha]}(-1)$ and $w_{[\alpha]}(1)$ for every root $[\alpha] \in \Phi_\rho$, and with respect to which the matrix $x_1$ assumes the same form as $x_{[\alpha_1]}(1)$ in the original basis.

Let $Q \in \GL_{2m}(R)$ be the matrix defining the unitary structure (cf. Section~\ref{sec:G(R)_and_MT}). By definition, $Q^2 = -I$. Hence, for any $c \in J$, the matrix
$E(c) := I + cQ$
is invertible, with inverse
\[
    E(c)^{-1} = \frac{1}{1 + c^2}(I - cQ).
\]

A direct computation shows that
\[
    E(c)\, w_{[\alpha]}(1)\, E(c)^{-1} = w_{[\alpha]}(1),
\]
and consequently,
\[
    E(c)\, h_{[\alpha]}(-1)\, E(c)^{-1} = h_{[\alpha]}(-1).
\]
Moreover,
\[
    E(c) \, x_1(p,q) \, E(c)^{-1} = x_1(p', q'),
\]
where
\[
    p' = \frac{(p + cq)^2}{p(1 + c^2)},
    \qquad
    q' = \frac{pq(1-c^2) - (p^2-q^2)c}{p(1 + c^2)}.
\]

Choose
 $c = q/p$.
Then $c \in J$, since $q \in J$ and $p \in R^\times$. Using the relation $p^2 + q^2 = p$, we obtain
\[
    p' = 1,
    \qquad
    q' = 0.
\]
Consequently,
\[
    E(c) \, x_1 \, E(c)^{-1} = x_{[\alpha_1]}(1),
    \qquad
    c=q/p\in J.
\]

As noted earlier, every $x_{[\alpha]}(1)$ with $[\alpha]\sim A_1^2$ is obtained from $x_{[\alpha_1]}(1)$ by conjugation with a product of normalized Weyl elements. Since $E(c)$ commutes with each such element and normalizes $x_{[\alpha_1]}$, all these root elements are normalized under this change of basis.

\subsubsection{\textbf{Normalization of $x_{[\alpha_m]}$}}

For the remainder of this subsection, set $x_m:=E(c)x_{[\alpha_m]}E(c)^{-1}$; since $E(c)$ fixes the normalized torus and Weyl elements, the following argument applies to this transformed representative, while all short-root elements remain normalized.

It remains to normalize $X_{[\beta_i]}$ for all $[\beta_i] \sim A_1$, representing the long roots. As previously established, it is sufficient to consider the case of the simple root $[\alpha_m]$.

Since $x_{[\alpha_m]}(1)$ commutes with $h_{[\beta_i]}(-1)$ and $w_{[\beta_i]}(1)$ for each $i=1,\dots,m-1$, the transformed representative $x_m$ commutes projectively with these normalized elements. By the standard scalar-normalization argument, the relations lift to exact equalities in $\GL_{2m}(R)$. Consequently, with respect to the decomposition
\[
    R^{2m} = B_1 \oplus \cdots \oplus B_m,
\]
the matrix $x_m$ assumes the block form
\[
    x_m = \begin{pmatrix} 
        u_1 & \cdots & 0 & 0 \\ 
        \vdots & \ddots & \vdots & \vdots \\ 
        0 & \cdots & u_{m-1} & 0 \\ 
        0 & \cdots & 0 & A 
    \end{pmatrix},
\]
where $A$ and $u_i$ (for $i=1,\dots,m-1$) are $2\times 2$ matrices satisfying
\[
    A \equiv \begin{pmatrix}1 & 1 \\ 0 & 1\end{pmatrix} \pmod{J}, \qquad 
    u_i = \begin{pmatrix}
        v_i & w_i \\
        -w_i & v_i
    \end{pmatrix} \equiv I_2 \pmod{J}.
\]

Moreover, because the lifted representatives commute exactly with $w_{[\alpha_i]}(1)$ for $i=1,\dots,m-2$, it follows that
\[
    u_i = u_{i+1} \qquad (i=1,\dots,m-2).
\]
Hence, all blocks $u_i$ coincide, allowing us to write
\[
    u := u_1 = \cdots = u_{m-1} = \begin{pmatrix} v & w \\ - w & v \end{pmatrix}, \qquad v \equiv 1 \pmod{J}, \quad w \in J.
\]

Similarly, we express $A$ as
\[
    A = \begin{pmatrix} a & b \\ c & d \end{pmatrix}, \qquad a, b, d \equiv 1 \pmod{J}, \quad c \in J.
\]

Next, consider the relation
\[
    [x_{-[\alpha_{m-1}]}(1),\, x_{[\alpha_m + \alpha_{m-1}]}(1)] = x_{[\alpha_m]}(2) = x_{[\alpha_m]}(1)^2.
\]
Since both $[\alpha_m + \alpha_{m-1}]$ and $[\alpha_{m-1}]$ are roots of type $A_1^2$, their corresponding root elements remain normalized after conjugation by $E(c)$. Thus, we have
\[
    \bigl(i_{[E(c)]}\circ\varphi_3\bigr)\bigl(x_{[\alpha_m]}(2)\bigr) = x_{[\alpha_m]}(2),
\]
which implies that
$[x_m]^2 = x_{[\alpha_m]}(2)$
in $\PGL_{2m}(R)$. It follows that there exists some $\lambda \in R^\times$ with $\lambda \equiv 1 \pmod{J}$ such that
$x_m^2 = \lambda\, x_{[\alpha_m]}(2)$
in $\GL_{2m}(R)$.

By comparing the first $m-1$ blocks, we obtain
\[
    u^2 = \begin{pmatrix} v^2 - w^2 & 2vw \\ -2vw & v^2 - w^2 \end{pmatrix} = \lambda I_2,
\]
which yields the system
\[
    2vw = 0, \qquad v^2 - w^2 = \lambda.
\]
Since $2 \in R^\times$ and $v \equiv 1 \pmod{J}$, we deduce that
$w = 0$ and  $v^2 = \lambda$.

Comparing the final block, we have
\[
    A^2 = \begin{pmatrix} a^2 + bc & ab + bd \\ ac + cd & bc + d^2 \end{pmatrix} = \lambda \begin{pmatrix} 1 & 2 \\ 0 & 1 \end{pmatrix},
\]
which yields the relations
\[
    c (a+d) = 0, \quad a^2 + bc = \lambda, \quad d^2 + bc = \lambda, \quad b(a+d) = 2\lambda.
\]
Because $a, d \equiv 1 \pmod{J}$ and $2 \in R^\times$, the sum $a+d$ is invertible. Hence, these equations simplify to
\[
    c = 0, \quad a = b = d, \quad a^2 = \lambda.
\]

Given that $a^2 = \lambda = v^2$ and $a, v \equiv 1 \pmod{J}$, it must be that
$a = v$.
Therefore, in $\GL_{2m}(R)$, we find
$x_m = a\, x_{[\alpha_m]}(1)$.

Passing to the projective group $\PGL_{2m}(R)$, we conclude
\[
    [x_m] = [x_{[\alpha_m]}(1)],
\]
as desired.

\subsubsection{\textbf{Conclusion}}

Set
\[
    g_4 := E(c)^{-1}, \qquad \text{where } c = q/p.
\]
Then $g_4\in \GL_{2m}(R,J)$, and by construction,
\[
    g_4^{-1} \, x_1 \, g_4 = E(c)x_1E(c)^{-1} = x_{[\alpha_1]}(1),
\]
while, as shown above,
\[
    \bigl[g_4^{-1} \, x_{[\alpha_m]} \, g_4\bigr] = \bigl[x_{[\alpha_m]}(1)\bigr].
\]
Furthermore, this conjugation fixes all previously normalized torus and Weyl elements.
Consequently, it normalizes all the elements $x_{[\alpha]}(1)$ for $[\alpha] \in \Phi_\rho$.

We now define the map
\[
    \varphi_4 := i_{[g_4]}^{-1} \circ \varphi_3.
\]
By construction, $\varphi_4$ is an isomorphism from $E'_{\ad, \sigma}(A_{2m-1}, R)$ onto a subgroup of $\PGL_{2m}(R)$ satisfying
\[
    \varphi_4\bigl(x_{[\alpha]}(1)\bigr) = x_{[\alpha]}(1)
    \qquad \text{for all } [\alpha] \in \Phi_\rho.
\]
This concludes the proof of Step~(O-3) as outlined in Section~\ref{sec:outline_of_main_thm}.


\subsection{The case \texorpdfstring{${}^2 A_{2m} \ (m \geq 3)$}{A-even}}

Assume that $(\Phi,\rho)$ has twisted type ${}^2 A_{2m}$ with $m \geq 3$, and let $\Delta_\rho = \{[\alpha_1], \dots, [\alpha_m]\}$ be the simple system of $\Phi_\rho$.
Then $[\alpha_1], \dots, [\alpha_{m-1}] \sim A_1^2$ are long roots, whereas $[\alpha_m] \sim A_2$ is a short root. 
We continue to denote by $[\beta_i] \ (i = 1, \dots, m)$ the roots of type $A_2$, as introduced in the previous section. 
Likewise, we retain the notation $[\widetilde{\alpha_i}] \ (i=1, \dots, m-1)$ from the previous section. 

Let \(\{e_1,\ldots,e_{2m+1}\}\) be the standard basis of \(R^{2m+1}\). Recall that we defined
\[
    B_r := \langle e_r, e_{2m+2-r}\rangle \qquad (1\le r\le m),
    \qquad
    B_{m+1}:=\langle e_{m+1}\rangle,
\]
so that
\[
    R^{2m+1}=B_1\oplus\cdots\oplus B_m\oplus B_{m+1}.
\]

\subsubsection{\textbf{Normalization of $x_{[\alpha_1]}$}}

We proceed by considering separate cases according to the value of $m$.

\medskip

\noindent \textbf{Case $m \geq 5$.}
This case can be treated by an argument analogous to that used for the type ${}^2A_{2m-1}$. For the reader’s convenience, we briefly sketch the proof.

Since $x_{[\alpha_1]}(1)$ commutes with $h_{[\alpha_1]}(-1)$ and with $h_{[\alpha_i]}(-1)$ for every $i=3,\dots,m-1$, the corresponding projective relations also hold for the images under $\varphi_3$. As in the previous cases, the standard scalar-normalization argument shows that the chosen representatives in $\GL_{2m+1}(R)$ satisfy these relations exactly. Consequently, with respect to the decomposition
\[
    R^{2m+1} = B_1 \oplus \cdots \oplus B_{m+1},
\]
the matrix $x_{[\alpha_1]}$ has the block form
\[
    x_1:= x_{[\alpha_1]} =
    \begin{pmatrix}
        A & B & 0 & \cdots & 0 & 0 \\
        C & D & 0 & \cdots & 0 & 0 \\
        0 & 0 & u_1 & \cdots & 0 & 0 \\
        \vdots & \vdots & \vdots & \ddots & \vdots & \vdots \\
        0 & 0 & 0 & \cdots & u_{m-2} & 0 \\
        0 & 0 & 0 & \cdots & 0 & u_{m-1}
    \end{pmatrix},
\]
where $A,B,C,D,u_i \in M_2(R)$ for $i=1,\dots,m-2$, and $u_{m-1} \in R^\times$, satisfying
\[
    A, D, u_i \equiv I_2 \pmod{J}, \qquad u_{m-1} \equiv 1 \pmod{J},
\]
\[
    B \equiv \begin{pmatrix}1 & 0 \\ 0 & 0\end{pmatrix} \pmod{J}, 
    \qquad 
    C \equiv \begin{pmatrix}0 & 0 \\ 0 & 1\end{pmatrix} \pmod{J}.
\]

From the standard relation
\[
    h_{[\alpha_2]}(-1)\, x_{[\alpha_1]}(1)\, h_{[\alpha_2]}(-1)^{-1}
    =
    x_{[\alpha_1]}(-1),
\]
we obtain the corresponding relation for $X_{[\alpha_1]}$. 
By our choice of representatives (cf. Subsection~\ref{subsec:choice_of_x}), this relation lifts to the exact identity 
\[
    h_{[\alpha_2]}(-1)\, x_{[\alpha_1]}\, h_{[\alpha_2]}(-1)^{-1}
    =
    x_{[\alpha_1]}^{-1}.
\]
Examining the action on the blocks \(B_3,\dots,B_m,B_{m+1}\), and using that \(h_{[\alpha_2]}(-1)\) acts as \(\pm I_2\) on the two-dimensional blocks and as \(\pm 1\) on \(B_{m+1}\), we obtain
\[
    u_i = I_2 \quad (i=1,\dots,m-2), \qquad u_{m-1} = 1.
\]

Let $B' = B_4 \oplus \cdots \oplus B_{m+1}$. Then, with respect to the decomposition
\[
    R^{2m+1} = B_1 \oplus B_2 \oplus B_3 \oplus B',
\]
the matrix $x_{[\alpha_1]}$ reduces to
\[
    x_{[\alpha_1]} =
    \begin{pmatrix}
        A & B & 0 & 0 \\
        C & D & 0 & 0 \\
        0 & 0 & I_2 & 0 \\
        0 & 0 & 0 & I
    \end{pmatrix},
    \qquad \text{where } I = I_{2m-5}.
\]

Analogous to the odd-dimensional case, we fix representatives of $X_{[\alpha_2]}$, $X_{-[\alpha_2]}$, and $X_{[\alpha_1]+[\alpha_2]}$ using the corresponding Weyl conjugation identities.

The exact lifts of the relations
\begin{align*}
    [x_{[\alpha_1]}(1), x_{[\alpha_1]+[\alpha_2]}(1)] &= 1, \\
    [x_{[\alpha_1]}(1), x_{-[\alpha_2]}(1)] &= 1, \\
    h_{[\alpha_2]}(-1)\, x_{[\alpha_1]}(1)\, h_{[\alpha_2]}(-1)^{-1} &= x_{[\alpha_1]}(-1), \\
    [x_{[\alpha_1]}(1), x_{[\alpha_2]}(1)] &= x_{[\alpha_1+\alpha_2]}(1),
\end{align*}
are the same as those considered in the odd-rank case.
As before, any scalar factors in these relations are forced to be trivial by comparing their actions on the common identity blocks.
Repeating this computation yields 
\[
    A = I_2 = D, \qquad BC = 0 = CB, \qquad BPB = B, \qquad CPC = -C.
\]

Now consider the root
\[
    [\widetilde{\alpha}_1] = \nu_1 + \nu_2
    =
    [\alpha_1] + 2[\alpha_2] + \cdots + 2[\alpha_{m-1}] + 2[\alpha_m]
\]
(refer to Section~\ref{subsec:image_of_w_for_even_case} for the notation $\nu_i$). Then $[\widetilde{\alpha}_1]$ is a root of type $A_1^2$, i.e., a long root.

Observe that, in the standard group,
\[
    w_{[\widetilde{\alpha}_1]}(1) \, x_{[\alpha_1]}(1)\,
    w_{[\widetilde{\alpha}_1]}(1)^{-1}
    =
    x_{[\alpha_1]}(1).
\]
For the chosen representatives, the corresponding relation is used in the exact form
\[
    w_{[\widetilde{\alpha}_1]}(1) \, x_{[\alpha_1]}\,
    w_{[\widetilde{\alpha}_1]}(1)^{-1}
    =
    x_{[\alpha_1]}.
\]
Indeed, if a scalar factor occurred, then both sides would differ by a scalar unit. Since both sides act identically on the common fixed summand \(B_3\oplus B'\), this scalar must be equal to \(1\). Hence the above exact relation imposes the following conditions on the entries of $B = (b_{ij})$ and $C = (c_{ij})$:
\[
    b_{11} = c_{22} \equiv 1 \pmod{J}, \qquad
    b_{12} = -c_{21}, \qquad
    b_{21} = -c_{12}, \qquad
    b_{22} = c_{11},
\]
together with
\[
    b_{12} = b_{21}, \qquad
    b_{22} = \frac{b_{12}^2}{b_{11}}.
\]

Thus,
\[
    B = \begin{pmatrix}
        p & q \\
        q & q^2/p
    \end{pmatrix},
    \qquad
    C = \begin{pmatrix}
        q^2/p & -q \\
        -q & p
    \end{pmatrix},
\]
where $p \equiv 1 \pmod{J}$ and $q \equiv 0 \pmod{J}$.
Since $BPB=B$, we have 
$p^2 - q^2 = p$.

Consequently, the matrix $x_1$ takes the form 
\[
    x_1 = x_1(p,q) := \begin{pmatrix}
         1 & 0 & p & q & 0 \\
         0 & 1 & q & q^2/p & 0 \\
         q^2/p & -q & 1 & 0 & 0 \\
         -q & p & 0 & 1 & 0 \\
         0 & 0 & 0 & 0 & I_{2m-4}
    \end{pmatrix},
\]
where $p \equiv 1 \pmod{J}$ and $q \equiv 0 \pmod{J}$ such that $p^2 - q^2 = p$.

\medskip

We now construct a change of basis that preserves the previously normalized torus elements and Weyl elements of type $A_1^2$, and under which $x_1$ assumes its standard form.

Let $Q \in \GL_{2m+1}(R)$ denote the matrix defining the unitary structure (cf. Section~\ref{sec:G(R)_and_MT}), so that $Q^2=I$. For $c\in J$, define
\[
    E(c)=I+cQ, \qquad E(c)^{-1}=\frac{1}{1-c^2}(I-cQ).
\]
Then
\[
    E(c) \, w_{[\alpha]}(1) \, E(c)^{-1} = w_{[\alpha]}(1), 
    \qquad 
    E(c) \, h_{[\alpha]}(-1) \, E(c)^{-1} = h_{[\alpha]}(-1) \qquad ([\alpha]\sim A_1^2),
\]
and
\[
    E(c)\,x_1(p,q)\,E(c)^{-1}=x_1(p',q'),
\]
where
\[
    p'=\frac{(p-cq)^2}{p(1-c^2)}, 
    \qquad 
    q'=\frac{pq(1+c^2)-(p^2+q^2)c}{p(1-c^2)}.
\]

Choosing $c=q/p\in J$ and applying the relation $p^2-q^2=p$, we obtain $p'=1$ and $q'=0$. Consequently,
\[
    E(c)\,x_1\,E(c)^{-1}=x_{[\alpha_1]}(1),
\]
as desired.

\medskip

\noindent \textbf{Case $m = 4$.}
This case is essentially similar to the case $m \geq 5$. The only difference is that additional relations are needed to obtain the required initial block decomposition.

The element $x_{[\alpha_1]}(1)$ commutes with both $h_{[\alpha_1]}(-1)$ and $h_{[\alpha_3]}(-1)$. Consequently, this property is preserved projectively for its image under $\varphi_3$. By the standard scalar-normalization argument, these commuting relations also hold for the chosen representatives. It follows that the matrix $x_{[\alpha_1]}$ admits the block decomposition
\[
    x_1 := x_{[\alpha_1]} =
    \begin{pmatrix}
        A_1 & 0 & 0 \\
        0 & A_2 & 0 \\
        0 & 0 & A_3
    \end{pmatrix},
\]
where $A_1, A_2 \in M_4(R)$ and $A_3 \in R^\times$, corresponding respectively to the subspaces $B_1 \oplus B_2$, $B_3 \oplus B_4$, and $B_5$.

From the standard relation
\[
    h_{[\alpha_2]}(-1)\, x_{[\alpha_1]}(1)\, h_{[\alpha_2]}(-1)^{-1}
    =
    x_{[\alpha_1]}(1)^{-1},
\]
and the usual scalar-normalization argument, we deduce the exact lifted relation
\[
    h_{[\alpha_2]}(-1)\, x_{[\alpha_1]}\, h_{[\alpha_2]}(-1)^{-1}
    =
    x_{[\alpha_1]}^{-1}.
\]
This immediately implies that $A_3 = 1$.

Next, a direct computation using the exact lifts of the following relations
\begin{align*}
    w_{[\alpha_3]}(1)\, x_{[\alpha_1]}(1) &= x_{[\alpha_1]}(1)\, w_{[\alpha_3]}(1), \\
    w_{[\widetilde{\alpha}_3]}(1)\, x_{[\alpha_1]}(1) &= x_{[\alpha_1]}(1)\, w_{[\widetilde{\alpha}_3]}(1), \\
    x_{[\alpha_1]}(1)\,\bigl(w_{[\alpha_2]}(1)\, x_{[\alpha_1]}(1)\, w_{[\alpha_2]}(1)^{-1}\bigr)
   & =
    \bigl(w_{[\alpha_2]}(1)\, x_{[\alpha_1]}(1)\, w_{[\alpha_2]}(1)^{-1}\bigr)\, x_{[\alpha_1]}(1), \\
    h_{[\alpha_2]}(-1)\, x_{[\alpha_1]}(1)\, h_{[\alpha_2]}(-1)^{-1} &= x_{[\alpha_1]}(1)^{-1},
\end{align*}
yields $A_2 = I_4$.
As before, any potential scalar factors in these lifted relations are eliminated either through the scalar-normalization convention or by comparison across a common fixed direct summand.

Applying the same argument as in the case $m \geq 5$, we obtain
\[
    A_1 =
    \begin{pmatrix}
        1 & 0 & p & q \\
        0 & 1 & q & \frac{q^2}{p} \\
        \frac{q^2}{p} & -q & 1 & 0 \\
        -q & p & 0 & 1
    \end{pmatrix},
\]
where $p \equiv 1 \pmod{J}$, $q \in J$, and $p^2-q^2=p$.
Therefore, using the same change-of-basis argument as for $m \geq 5$, we conclude that $x_1$ can be normalized to the desired standard form.

\medskip

\noindent \textbf{Case $m = 3$.}
This case is more difficult because fewer relations are available.

The element $x_{[\alpha_1]}(1)$ commutes with $h_{[\alpha_1]}(-1)$, and therefore this property holds projectively for its image under $\varphi_3$. 
By the usual scalar-normalization argument, this commuting relation lifts exactly to the chosen representative. Consequently, the matrix $x_{[\alpha_1]}$ admits the block decomposition
\[
    x_1 := x_{[\alpha_1]} =
    \begin{pmatrix}
        A_1 & 0 \\
        0 & A_2
    \end{pmatrix},
\]
where $A_1 \in M_4(R)$ and $A_2 \in M_3(R)$, corresponding to the respective subspaces $B_1 \oplus B_2$ and $B_3 \oplus B_4$.

We now claim that
\[
    A_1 =
    \begin{pmatrix}
        1 & 0 & p & q \\
        0 & 1 & q & \frac{q^2}{p} \\
        \frac{q^2}{p} & -q & 1 & 0 \\
        -q & p & 0 & 1
    \end{pmatrix},
\]
where $p \equiv 1 \pmod{J}$, $q \in J$, $p^2-q^2=p$, and $A_2 = I_3$.
This follows by a direct computation using the exact lifts of the relations:
\begin{align*}
    h_{[\alpha_2]}(-1) \, x_{[\alpha_1]}(1) \, h_{[\alpha_2]}(-1)^{-1} &= x_{[\alpha_1]}(-1), \\
    w_{[\widetilde{\alpha}_1]}(1) \, x_{[\alpha_1]}(1) &= x_{[\alpha_1]}(1) \, w_{[\widetilde{\alpha}_1]}(1), \\
    [x_{[\alpha_1]}(1), x_{[\alpha_1]+[\alpha_2]}(1)] &= 1, \\
    [x_{[\alpha_1]}(1), x_{-[\alpha_2]}(1)] &= 1, \\
    [x_{[\alpha_1]}(1), x_{[\alpha_2]}(1)] &= x_{[\alpha_1+\alpha_2]}(1).
\end{align*}

Thus, in this case we obtain
\[
    x_1 = x_1(p,q) :=
    \begin{pmatrix}
        1 & 0 & p & q & 0 \\
        0 & 1 & q & \frac{q^2}{p} & 0 \\
        \frac{q^2}{p} & -q & 1 & 0 & 0 \\
        -q & p & 0 & 1 & 0 \\
        0 & 0 & 0 & 0 & I_3
    \end{pmatrix}.
\]

Finally, using the same change-of-basis matrix $E(c)$ as for $m \geq 5$, we conclude that $x_1$ is normalized to the required form.

\medskip

Thus, after the corresponding change of basis $E(c)$, we have normalized $x_{[\alpha_1]}$. Since every $x_{[\alpha]}(1)$ with $[\alpha]\sim A_1^2$ is obtained from it by conjugation with a product of normalized Weyl elements, all such elements are normalized as well.

\subsubsection{\textbf{Image of $w_{[\beta_i]} \ (i = 1, \dots, m)$}}

After the preceding change of basis, each transformed element $w_{[\beta_i]}$ still has the form $v_i(a,b)$, where $a \equiv 1/2 \pmod{J}$ and $b \in J$. We now show that the normalized root elements of type $A_1^2$ uniquely determine these elements and force
\[
    w_{[\beta_i]} = w_{[\beta_i]}(1,\tfrac{1}{2}), \qquad i = 1, \dots, m.
\]

It suffices to establish this for the element $w_{[\beta_m]} = w_{[\alpha_m]}$, since all other elements $w_{[\beta_i]}$ are obtained from $w_{[\alpha_m]}$ by conjugation with already normalized Weyl group elements.

Observe that, in the standard group,
\[
    w_{[\alpha_m]}(1,\tfrac{1}{2})\,
    x_{[\alpha_{m-1}]}(1)^2 \,
    w_{[\alpha_m]}(1,\tfrac{1}{2})^{-1}
    =
    x_{[\widetilde{\alpha}_{m-1}]}(1).
\]
For our chosen representatives, this lifted relation is exact. If a scalar factor were to appear, the two sides would differ by a unit scalar. However, because both sides act identically on the common fixed direct summand complementary to
\[
    B_{m-1}\oplus B_m\oplus B_{m+1},
\]
this scalar is forced to be \(1\). Thus, we obtain
\[
    v_m(a,b)\,
    x_{[\alpha_{m-1}]}(1)^2\,
    v_m(a,b)^{-1}
    =
    x_{[\widetilde{\alpha}_{m-1}]}(1),
\]
or, equivalently,
\[
    v_m(a,b)\,
    x_{[\alpha_{m-1}]}(1)^2
    =
    x_{[\widetilde{\alpha}_{m-1}]}(1)\,
    v_m(a,b).
\]

On the subspace $B_{m-1}\oplus B_m\oplus B_{m+1}$, the matrix
\[
    v_m(a,b)x_{[\alpha_{m-1}]}(1)^2-x_{[\widetilde{\alpha}_{m-1}]}(1)v_m(a,b)
\]
has only the entries $(2a+b^2-1)/a$, $b$, $2a-1$, and $-2b$ possibly nonzero; hence
\[
    a=\tfrac{1}{2}
    \qquad\text{and}\qquad
    b=0.
\]
Consequently,
\[
    v_m(a,b) = v_m(\tfrac{1}{2},0) = w_{[\alpha_m]}(1, \tfrac{1}{2}),
\]
which completes the proof of the claim.

\subsubsection{\textbf{Normalization of $x_{[\alpha_m]}$}}

This case differs substantially from the corresponding argument for ${}^2 A_{2m-1}$, and we therefore provide a detailed proof. 

After the preceding change of basis, set $x_m:=E(c)x_{[\alpha_m]}E(c)^{-1}$. The scalar-normalization argument and a direct centralizer computation show that its projective commutation relations with the normalized elements $h_{[\alpha_i]}(-1)$, $w_{[\alpha_i]}(1)$, $w_{[\widetilde{\alpha}_i]}(1)$, $x_{[\alpha_i]}(1)$, and $x_{[\widetilde{\alpha}_i]}(1)$ for $i=1,\dots,m-2$ yield the decomposition
\[
    R^{2m+1} = B' \oplus B'', 
    \qquad \text{where } B' = B_1 \oplus \cdots \oplus B_{m-1} \text{ and } B'' = B_m \oplus B_{m+1},
\]
with respect to which the matrix $x_m$ assumes the block form
\[
    x_m =
    \begin{pmatrix}
        \lambda I_{2m-2} & 0 \\
        0 & A
    \end{pmatrix},
\]
where $A \in M_3(R)$ and $\lambda \in R^{\times}$ satisfy
\[
    A \equiv 
    \begin{pmatrix} 
        1 & 1/2 & 1 \\ 
        0 & 1 & 0 \\
        0 & 1 & 1
    \end{pmatrix} \pmod{J},
    \qquad
    \lambda \equiv 1 \pmod{J}.
\]

From the standard relation
\[
    h_{[\alpha_{m-1}]}(-1) \, x_{[\alpha_m]}(1,1/2) \, h_{[\alpha_{m-1}]}(-1)^{-1}
    =
    x_{[\alpha_m]}(-1,1/2),
\]
we deduce the corresponding projective identity
\[
    h_{[\alpha_{m-1}]}(-1) \, X_{[\alpha_m]} \, h_{[\alpha_{m-1}]}(-1)^{-1}
    =
    X_{[\alpha_m]}^{-1}.
\]
Given our choice of the representative $x_m = x_{[\alpha_m]}$ for $X_{[\alpha_m]}$ (cf. Subsection~\ref{subsec:choice_of_x}), a similar exact relation holds for $x_{[\alpha_m]}$, namely,
\[
    x_m \, h_{[\alpha_{m-1}]}(-1)\, x_m
    =
    h_{[\alpha_{m-1}]}(-1).
\]
This implies that
\[
    \lambda^2 = 1.
\]
Since $\lambda \equiv 1 \pmod{J}$ and $2 \in R^\times$, we conclude that
\[
    \lambda = 1.
\]

Next, utilizing the relations
\[
    w_{[\alpha_{m-1}]}(1)\, x_{[\alpha_m]}(1,1/2)\, w_{[\alpha_{m-1}]}(1)^{-1}
    =
    x_{[\alpha_m]+[\alpha_{m-1}]}(1,1/2),
\]
and
\[
    [x_{[\alpha_{m-1}]}(1),\, x_{[\alpha_{m-1}]+[\alpha_m]}(1,1/2)] = 1,
\]
we obtain
\[
    x_{[\alpha_{m-1}]}(1)\, w_{[\alpha_{m-1}]}(1)\, x_m\, w_{[\alpha_{m-1}]}(1)^{-1}
    =
    w_{[\alpha_{m-1}]}(1)\, x_m\, w_{[\alpha_{m-1}]}(1)^{-1}\, x_{[\alpha_{m-1}]}(1).
\]
Here, any scalar factors arising from the Weyl conjugation cancel, and a potential scalar factor in the lifted commutativity relation is forced to be \(1\) by comparing the action on the common fixed direct summand \(B_1\oplus\cdots\oplus B_{m-2}\). 
Furthermore, a direct entrywise computation shows that the entries of $A=(a_{ij})$ satisfy
\[
    a_{11} = a_{22} = 1, 
    \qquad 
    a_{21} = a_{23} = a_{31} = 0.
\]

Additionally, from the standard relation
\[
    [x_{[\alpha_m]}(1,1/2),\, x_{[\alpha_m]+[\alpha_{m-1}]}(1,1/2)]
    =
    x_{2[\alpha_m]+[\alpha_{m-1}]}(-1),
\]
we derive the exact lifted relation
\[
    x_{2[\alpha_m]+[\alpha_{m-1}]}(1)\, x_m\, 
    \bigl(w_{[\alpha_{m-1}]}(1)\, x_m\, w_{[\alpha_{m-1}]}(1)^{-1}\bigr)
    =
    \bigl(w_{[\alpha_{m-1}]}(1)\, x_m\, w_{[\alpha_{m-1}]}(1)^{-1}\bigr)\, x_m.
\]
Indeed, any potential scalar factor is again eliminated by restricting the action to the common fixed summand \(B_1\oplus\cdots\oplus B_{m-2}\). The same computation yields the conditions
\[
    a_{13}(1 - a_{33}) = 0, 
    \qquad 
    a_{13} a_{32} = 1,
\]
from which it follows that $a_{33} = 1$.

Finally, reapplying the exact relation
\[
    x_m\, h_{[\alpha_{m-1}]}(-1)\, x_m = h_{[\alpha_{m-1}]}(-1)
\]
yields
\[
    2 a_{12} = a_{13} a_{32}.
\]
Thus, $a_{12} = 1/2$.

Writing $a:=a_{13}\in R^\times$ (so $a^{-1}=a_{32}$ and $a\equiv1\pmod J$), we arrive at the simplified block form
\[
    A =
    \begin{pmatrix}
        1 & 1/2 & a \\
        0 & 1 & 0 \\
        0 & a^{-1} & 1
    \end{pmatrix}.
\]

To eliminate the parameter $a$, consider the diagonal matrix
\[
    D(a) = \diag(1,\dots,1,a),
\]
where the entry $a$ is in the coordinate corresponding to $B_{m+1}$. Since $a\in R^\times$ and $a\equiv1\pmod J$, the matrix $D(a)$ lies in $\GL_{2m+1}(R,J)$ and commutes with all previously normalized torus and Weyl elements, as well as with $x_{[\alpha]}(1)$ for $[\alpha]\sim A_1^2$. Moreover,
\[
    D(a)\, x_m \, D(a)^{-1} = x_{[\alpha_m]}(1,1/2).
\]
Thus, this change of basis normalizes $x_m$.

Finally, since all other roots $[\beta]$ of type $A_2$ are conjugate to $[\alpha_m]$ under the Weyl group, the corresponding elements $x_{[\beta]}(1,1/2)$ are also normalized.

\subsubsection{\textbf{Conclusion}}

Let \(E(c)\) denote the matrix introduced above to normalize the root elements corresponding to roots of type \(A_1^2\), and let \(D(a)\) be the diagonal matrix employed to normalize \(x_{[\alpha_m]}\). We define
\[
    g_4 := E(c)^{-1}D(a)^{-1}.
\]
Then \(g_4 \in \GL_{2m+1}(R,J)\). By construction, conjugation by \(g_4^{-1} = D(a)E(c)\) fixes all previously normalized torus and Weyl elements. Furthermore, it simultaneously normalizes every root element \(x_{[\alpha]}(1)\) for which \([\alpha] \sim A_1^2\), alongside \(x_{[\alpha_m]}(1,1/2)\).

We now define the map
\[
    \varphi_4 := i_{[g_4]}^{-1} \circ \varphi_3.
\]
By construction, \(\varphi_4\) is an isomorphism from \(E'_{\ad, \sigma}(A_{2m}, R)\) onto a subgroup of \(\PGL_{2m+1}(R)\) satisfying
\begin{gather*}
    \varphi_4\bigl(x_{[\alpha]}(1)\bigr) =  x_{[\alpha]}(1) \qquad \text{for all } [\alpha] \in \Phi_\rho \text{ such that } [\alpha] \sim A_1^2,\\
    \varphi_4\bigl(x_{[\alpha]}(1,1/2)\bigr) = x_{[\alpha]}(1,1/2) \qquad \text{for all } [\alpha] \in \Phi_\rho \text{ such that } [\alpha] \sim A_2.
\end{gather*}
This concludes the proof of Step~(E-3) as outlined in Section~\ref{sec:outline_of_main_thm}.


\section{The Images of \texorpdfstring{$x_{[\alpha]}(t)$}{x(t)}} \label{sec:image_of_x(t)}


Finally, to complete the proof of Theorem~\ref{MT_local1}, it remains to carry out Step~(C) outlined in Section~\ref{sec:outline_of_main_thm}. More precisely, we must show that there exists a ring automorphism $\mu$ of $R$ satisfying $\mu \circ \theta = \theta \circ \mu$ such that
\[
    \varphi_4\bigl(x_{[\alpha]}(t)\bigr) = x_{[\alpha]}(\mu(t))
    \qquad
    \text{for every } [\alpha] \in \Phi_\rho \text{ and } t \in R_{[\alpha]}.
\]

Denote by
\[
    X_{[\alpha]}(t) := \varphi_4\bigl(x_{[\alpha]}(t)\bigr)
    \in \PGL_{\ell+1}(R).
\]
Our first task is to choose a distinguished representative
\[
    x'_{[\alpha]}(t) \in \SL_{\ell+1}(R)
\]
of $X_{[\alpha]}(t)$. This construction is carried out in the following subsection.


\subsection{Choice of representatives for \texorpdfstring{$X_{[\alpha]}(t)$}{X\_[alpha](t)}}

Recall that, by the results established in the preceding section, the automorphism $\varphi_4$ fixes the elements $x_{[\alpha]}(1)$ for every root $[\alpha]$ of type $A_1$ or $A_1^2$, and it fixes the elements $x_{[\alpha]}(1,1/2)$ for every root $[\alpha]$ of type $A_2$. Furthermore, modulo $J$, the automorphism $\varphi_4$ induces a field automorphism $f \colon k \to k$ on the residue field $k = R/J$.

Our next objective is to select representatives in $\SL_{\ell+1}(R)$ for $X_{[\alpha]}(t)$ with $[\alpha]\sim A_1^2$ such that the identities used later lift to exact relations.

Since $\varphi_4 = i_{[C_1]}^{-1} \circ \varphi_1$, where $C_1 \coloneqq g_2 g_3 g_4 \in \GL_{\ell+1}(R,J)$, applying the remark following Lemma~\ref{lemma:compatible-representatives} with $C = C_1$ to $x_{[\alpha]}(t) \in E'_{\mathrm{sc},\sigma}(\Phi,R)$ for $[\alpha]\sim A_1^2$, we obtain a preliminary representative $x'_{[\alpha]}(t) \in \SL_{\ell+1}(R)$ of $X_{[\alpha]}(t)$ satisfying
\[
    x'_{[\alpha]}(t) \equiv x_{[\alpha]}\bigl(f'(t)\bigr) \pmod{J},
\]
where $f'(t)\in R$ is an arbitrary lift of $f(t+J)\in k$.

First, consider the odd case ${}^2A_{2m-1}$ for $m \geq 3$. We focus on the preliminary representative $x'_{[\alpha_1]}(t) \in \SL_{2m}(R)$ chosen above. The standard relation
\[
    h_{[\alpha_2]}(-1) x_{[\alpha_1]}(t) h_{[\alpha_2]}(-1)^{-1} = x_{[\alpha_1]}(t)^{-1}
\]
induces a corresponding projective relation for $X_{[\alpha_1]}(t)$. Consequently, there exists a scalar $\lambda \in R^\times$ such that our chosen representative satisfies
\[
    h_{[\alpha_2]}(-1) x'_{[\alpha_1]}(t) h_{[\alpha_2]}(-1)^{-1} = \lambda x'_{[\alpha_1]}(t)^{-1}
\]
in $\SL_{2m}(R)$. Taking this identity modulo $J$, we deduce that $\lambda \equiv 1 \pmod{J}$. Taking the determinant of both sides yields $\lambda^{2m} = 1$.

As established in Section~\ref{sec:image_of_h(-1)}, because $2 \in R^\times$ and $R$ is local, any $2$-power root of unity congruent to $1$ modulo $J$ must be trivial. Thus, the $2$-primary component of the order of $\lambda$ is trivial, meaning $\lambda$ has odd order. Consequently, the squaring map induces an automorphism on the cyclic group $\langle\lambda\rangle$, guaranteeing the existence of an element $\eta \in \langle\lambda\rangle$ such that $\eta^2 = \lambda^{-1}$.

By replacing the preliminary representative $x'_{[\alpha_1]}(t)$ with the rescaled element $\eta x'_{[\alpha_1]}(t)$, we obtain the exact relation
\[
    h_{[\alpha_2]}(-1) x'_{[\alpha_1]}(t) h_{[\alpha_2]}(-1)^{-1} = x'_{[\alpha_1]}(t)^{-1}.
\]
Because $\eta^{2m} = 1$, this modified representative remains in $\SL_{2m}(R)$, and since $\eta \equiv 1 \pmod{J}$, the original congruence condition modulo $J$ is preserved.

Next, let $[\beta] \in \Phi_\rho$ be an arbitrary root of type $A_1^2$. There exists a product $w$ of normalized Weyl elements such that
\[
    x_{[\beta]}(t) = w x_{[\alpha_1]}(t)^{\pm 1} w^{-1}.
\]
Accordingly, now that $x'_{[\alpha_1]}(t)$ has been scalar-normalized, we define the representatives for all other roots of type $A_1^2$ via Weyl conjugation:
\[
    x'_{[\beta]}(t) \coloneqq w x'_{[\alpha_1]}(t)^{\pm 1} w^{-1}.
\]

For roots of type $A_1$, it is unnecessary to fix representatives for $X_{[\alpha]}(t)$ at this stage. Instead, we will determine them directly using the Chevalley commutator formula in a subsequent step.

\medskip

Finally, consider the even case ${}^2 A_{2m}$ for $m \geq 3$. In this setting, the roots of type $A_1^2$ correspond to the long roots. We normalize their representatives $x'_{[\alpha]}(t)$ exactly as we did in the ${}^2 A_{2m-1}$ case: we scale the preliminary representative $x'_{[\alpha_1]}(t) \in \SL_{2m+1}(R)$ by an appropriate scalar of odd order to satisfy the exact identity 
\[
    h_{[\alpha_2]}(-1) x'_{[\alpha_1]}(t) h_{[\alpha_2]}(-1)^{-1} = x'_{[\alpha_1]}(t)^{-1},
\]
and we subsequently generate all other long root representatives via Weyl conjugation.

Similarly, for roots $[\alpha] \sim A_2$ (the short roots), we defer fixing representatives for $X_{[\alpha]}(t)$ at this point; these will also be determined later using the Chevalley commutator formula.


\subsection{The case \texorpdfstring{${}^2 A_{2m-1} \ (m \geq 3)$}{2A2m-1}}

Assume that $(\Phi,\rho)$ has twisted type ${}^2 A_{2m-1}$ with $m \geq 3$, and let $\Delta_\rho = \{[\alpha_1], \dots, [\alpha_m]\}$ be the simple system of $\Phi_\rho$. 
Observe that $[\alpha_1], \dots, [\alpha_{m-1}] \sim A_1^2$ are short roots, while $[\alpha_{m}] \sim A_1$ is a long root. 
We continue to use the notation $[\beta_i] \ (i = 1, \dots, m)$ for the roots of type $A_1$ introduced in the previous section. 
Likewise, we retain the notation $[\widetilde{\alpha_i}] \ (i=1,\dots,m-1)$ from the previous section.

Let \(\{e_1,\ldots,e_{2m}\}\) be the standard basis of \(R^{2m}\). Recall that we defined
\[
    B_r := \langle e_r, e_{2m+1-r}\rangle \qquad (1\le r\le m),
\]
so that
\[
    R^{2m}=B_1\oplus\cdots\oplus B_m.
\]

\subsubsection{\textbf{Normalization of $x'_{[\alpha]}(t)$ for $[\alpha] \in \Phi_\rho$ of type $A_1^2$}}

We first establish the normalization of the element 
\[
    x_1(t) := x'_{[\alpha_1]}(t) \qquad (t \in R).
\]
Since $x_{[\alpha_1]}(t)$ commutes with $h_{[\beta_i]}(-1)$ for each $i = 3, \dots, m$, the same property holds projectively for their images under the map $\varphi_4$. By the standard scalar-normalization argument, these commuting relations also hold for the chosen representative $x_1(t)$ of $X_{[\alpha_1]}(t)$ in $\GL_{2m}(R)$. Consequently, with respect to the decomposition 
\[
    R^{2m} = B_1 \oplus \dots \oplus B_m,
\]
the matrix $x_1(t)$ can be written in block form as
\[
    x_1(t) = \begin{pmatrix}
        A & B & 0 & \dots & 0 \\
        C & D & 0 & \dots & 0 \\
        0 & 0 & u_1 & \dots & 0 \\
        \vdots & \vdots & \vdots & \ddots & \vdots \\
        0 & 0 & 0 & \dots & u_{m-2}
    \end{pmatrix},
\]
where $A, B, C, D, u_i \ (i = 1, \dots, m-2)$ are $2 \times 2$ matrices satisfying
\[
    A, D, u_i \equiv I_2 \pmod{J}, 
    \qquad 
    B \equiv \begin{pmatrix}
        f'(t) & 0 \\
        0 & 0
    \end{pmatrix} \pmod{J}, 
    \qquad 
    C \equiv \begin{pmatrix}
        0 & 0 \\
        0 & \overline{f'(t)}
    \end{pmatrix} \pmod{J}.
\]
Here $f'(t)$ denotes a lift of the element $f(t+J)\in k$, where $f:k\to k$ is the residue-field automorphism induced by $\varphi_4$.

From the standard relation
\[
    h_{[\alpha_2]}(-1)\, x_{[\alpha_1]}(t)\, h_{[\alpha_2]}(-1)^{-1}
    =
    x_{[\alpha_1]}(-t),
\]
we obtain the corresponding projective inverse relation for $X_{[\alpha_1]}(t)$. By our choice of the representative $x_1(t)$, this relation lifts to an exact identity in $\GL_{2m}(R)$, namely
\[
    h_{[\alpha_2]}(-1)\, x_1(t) \, h_{[\alpha_2]}(-1)^{-1}
    =
    x_1(t)^{-1}.
\]
Examining the action on the blocks $B_3, \dots, B_m$, and recalling that $h_{[\alpha_2]}(-1)$ acts as $\pm I_2$ on these subspaces, it follows that
\[
    u_i = u_i^{-1}, \qquad \text{i.e.,} \qquad u_i^2 = I_2.
\]
Since $u_i \equiv I_2 \pmod{J}$, we conclude that
\[
    u_i = I_2 \qquad (i=1,\dots,m-2).
\]

Let $B' = B_3 \oplus \cdots \oplus B_m$. Then, with respect to the decomposition
\[
    R^{2m} = B_1 \oplus B_2 \oplus B',
\]
the matrix $x_1(t)$ takes the form
\[
    x_1(t) =
    \begin{pmatrix}
        A & B & 0 \\
        C & D & 0 \\
        0 & 0 & I 
    \end{pmatrix},
    \qquad \text{where } I = I_{2m-4}.
\]

Since $x_{[\alpha_1]}(t)$ commutes with $x_{[\alpha_1]}(1)$, $x_{[\alpha_1]+[\alpha_2]}(1)$, and $x_{-[\alpha_2]}(1)$, its image under $\varphi_4$ commutes projectively with these elements. Thus, the chosen representative $x_1(t)$ commutes with them up to a scalar factor; comparison of the invertible diagonal blocks (as in Section~\ref{sec:image_of_x(1)}) forces this scalar to be $1$. Hence, $x_1(t)$ satisfies the exact relations:
\begin{gather*}
    x_1(t)\, x_{[\alpha_1]}(1) = x_{[\alpha_1]}(1)\, x_1(t), \\ 
    x_1(t)\, x_{[\alpha_1]+[\alpha_2]}(1) = x_{[\alpha_1]+[\alpha_2]}(1)\, x_1(t), \\
    x_1(t)\, x_{-[\alpha_2]}(1) = x_{-[\alpha_2]}(1)\, x_1(t).
\end{gather*}
A direct computation reveals that these relations force $x_1(t)$ to assume the form
\[
    x_1(t) =
    \begin{pmatrix}
        1 & a & b & 0 & 0 \\
        0 & 1 & 0 & 0 & 0  \\
        0 & 0 & 1 & 0 & 0 \\
        0 & c & d & 1 & 0 \\
        0 & 0 & 0 & 0 & I
    \end{pmatrix},
    \qquad I=I_{2m-4},
\]
where $a, b, c, d \in R$. 

Next, using the relation
\[
    h_{[\alpha_2]}(-1)\, x_1(t)\, h_{[\alpha_2]}(-1)^{-1}
    =
    x_1(t)^{-1},
\]
we deduce that
\[
    x_1(t) \, h_{[\alpha_2]}(-1) \, x_1(t) = h_{[\alpha_2]}(-1).
\]
This forces the parameters to satisfy
\[
    a = 0 = d.
\]

Since the projective class of $x_1(t)$ lies in the twisted group, comparison on the identity block forces the projective unitary scalar to be $1$; hence the unitary condition gives
\[
    c = \overline{b}.
\]
Writing $\mu(t) := b$, the matrix $x_1(t)$ simplifies to
\[
    \varphi_4(x_{[\alpha_1]}(t)) = x_1(t) = x_{[\alpha_1]}(\mu(t)) =
    \begin{pmatrix}
        1 & 0 & \mu(t) & 0 & 0 \\
        0 & 1 & 0 & 0 & 0 \\
        0 & 0 & 1 & 0 & 0 \\
        0 & \overline{\mu(t)} & 0 & 1 & 0 \\
        0 & 0 & 0 & 0 & I_{2m-4}
    \end{pmatrix}.
\]

Finally, every root element $x_{[\alpha]}(t)$ with $[\alpha]\in\Phi_\rho$ of type $A_1^2$ is obtained from $x_{[\alpha_1]}(\pm t)$ by conjugation with a product of normalized Weyl elements. Since these Weyl elements are fixed by $\varphi_4$ and the other representatives were defined by exact Weyl-conjugation identities, it follows that
\[
    \varphi_4\bigl(x_{[\alpha]}(t)\bigr)
    =
    x_{[\alpha]}(\mu(t))
\]
for every $t\in R$ and every root $[\alpha]\in\Phi_\rho$ of type $A_1^2$.

\subsubsection{\textbf{$\mu$ as a ring automorphism}}

We first prove that $\mu$ is additive. Let $[\alpha] \in \Phi_\rho$ be of type $A_1^2$, and let $t_1,t_2 \in R$. Then we have
\[
    x_{[\alpha]}(t_1+t_2) = x_{[\alpha]}(t_1)\,x_{[\alpha]}(t_2).
\]
Applying $\varphi_4$ to this equality yields
\[
    x_{[\alpha]}(\mu(t_1+t_2)) = x_{[\alpha]}(\mu(t_1))\, x_{[\alpha]}(\mu(t_2)) = x_{[\alpha]}(\mu(t_1) + \mu(t_2)).
\]
Comparing the corresponding parameters, we obtain
\[
    \mu(t_1+t_2) = \mu(t_1)+\mu(t_2)
\]
for all $t_1, t_2 \in R$.
Hence, $\mu$ is additive.

We now prove that $\mu$ is multiplicative. By the Chevalley commutator relation,
\[
    [x_{[\alpha_1]}(t_1),x_{[\alpha_2]}(t_2)]
    =
    x_{[\alpha_1]+[\alpha_2]}(t_1t_2),
\]
for all $t_1,t_2\in R$. Applying $\varphi_4$ to this relation, we obtain
\[
    x_{[\alpha_1]+[\alpha_2]}(\mu(t_1t_2))
    =
    [x_{[\alpha_1]}(\mu(t_1)),
    x_{[\alpha_2]}(\mu(t_2))]
    =
    x_{[\alpha_1]+[\alpha_2]}(\mu(t_1)\mu(t_2)).
\]
Hence,
\[
    \mu(t_1t_2)
    =
    \mu(t_1)\mu(t_2)
\]
for all $t_1,t_2\in R$, and therefore $\mu$ is multiplicative.

Since $\varphi_4$ fixes $x_{[\alpha_1]}(1)$, we have $\mu(1)=1$; consequently, $\mu: R \longrightarrow R$ is a unital ring homomorphism. We claim that $\mu$ is a ring automorphism. Indeed, applying the preceding argument to the inverse automorphism $\varphi_4^{-1}$, which also fixes all the normalized
elements, we obtain a unital ring homomorphism $\nu: R \longrightarrow R$
such that
\[
    \varphi_4^{-1}\bigl(x_{[\alpha]}(t)\bigr)
    =
    x_{[\alpha]}(\nu(t))
\]
for every root $[\alpha] \sim A_1^2$ and every $t \in R$.
Therefore,
\[
    x_{[\alpha]}(t) = \varphi_4^{-1} \bigl(\varphi_4(x_{[\alpha]}(t))\bigr) = \varphi_4^{-1} \bigl(x_{[\alpha]}(\mu(t))\bigr) = x_{[\alpha]}(\nu(\mu(t))).
\]
Comparing the parameters, we obtain $\nu(\mu(t))=t$.
Similarly, $\mu(\nu(t))=t$.
Thus $\nu=\mu^{-1}$, and consequently $\mu$ is a ring automorphism.

Finally, using the relation
\[
    w_{[\widetilde{\alpha_1}]}(1)\,
    x_{[\alpha_1]}(t)\,
    w_{[\widetilde{\alpha_1}]}(1)^{-1}
    =
    x_{[\alpha_1]}(-\overline{t}),
    \qquad t\in R,
\]
and applying $\varphi_4$, we obtain
\[
    x_{[\alpha_1]}(-\overline{\mu(t)})
    =
    x_{[\alpha_1]}(-\mu(\overline{t})).
\]
Therefore, $\mu(\overline{t}) = \overline{\mu(t)}$ for all $t \in R$, that is, $\mu \circ \theta = \theta \circ \mu$.
In particular,
\[
    \mu(R_\theta) = R_\theta,
    \qquad
    \mu(R_\theta^-) = R_\theta^{-}.
\]

\subsubsection{\textbf{Normalization of $x'_{[\alpha]}(t)$ for $[\alpha] \in \Phi_\rho$ of type $A_1$}}

The normalization of the remaining root elements $x_{[\alpha]}(t)$, with $[\alpha] \in \Phi_\rho$ of type $A_1$, is now straightforward.

For every $t \in R_\theta$, the Chevalley commutator relation yields
\[
    [x_{-[\alpha_{m-1}]}(t),
    x_{[\alpha_{m-1}]+[\alpha_m]}(1/2)]
    =
    x_{[\alpha_m]}(t).
\]
Applying $\varphi_4$ to both sides and using the fact that the root elements corresponding to $-[\alpha_{m-1}]$ and $[\alpha_{m-1}]+[\alpha_m]$ are of type $A_1^2$ and hence they have already been normalized, we obtain
\[
    \varphi_4\bigl(x_{[\alpha_m]}(t)\bigr)
    =
    x_{[\alpha_m]}(\mu(t))
\]
for all $t\in R_\theta$.

Finally, every root element $x_{[\alpha]}(t)$ with $[\alpha]\in\Phi_\rho$ of type $A_1$ is obtained from $x_{[\alpha_m]}(\pm t)$ by conjugation with a product of normalized Weyl elements. Since these Weyl elements are fixed by $\varphi_4$, it follows that
\[
    \varphi_4\bigl(x_{[\alpha]}(t)\bigr)
    =
    x_{[\alpha]}(\mu(t))
\]
for every $t\in R_\theta$ and every root $[\alpha]\in\Phi_\rho$ of type $A_1$, as desired.


\subsection{The case \texorpdfstring{${}^2 A_{2m} \ (m \geq 3)$}{2A2m}}

Assume that $(\Phi,\rho)$ has twisted type ${}^2 A_{2m}$ with $m \geq 3$, and let $\Delta_\rho = \{[\alpha_1], \dots, [\alpha_m]\}$ be the simple system of $\Phi_\rho$. 
Observe that $[\alpha_1], \dots, [\alpha_{m-1}] \sim A_1^2$ are long roots, while $[\alpha_{m}] \sim A_2$ is a short root. 
We continue to use the notation $[\beta_i] \ (i = 1, \dots, m)$ for the roots of type $A_2$ introduced in the previous section. 
Likewise, we retain the notation $[\widetilde{\alpha_i}] \ (i=1,\dots,m-1)$ from the previous section.

Let $\{e_1, \dots, e_{2m+1}\}$ be the standard basis of $R^{2m+1}$. Recall that we defined
\[
    B_r := \langle e_r, e_{2m+2-r} \rangle \quad (1 \leq r \leq m), 
    \qquad 
    B_{m+1} := \langle e_{m+1} \rangle,
\]
so that
\[
    R^{2m+1} = B_1 \oplus \cdots \oplus B_m \oplus B_{m+1}.
\]

\subsubsection{\textbf{Normalization of $x'_{[\alpha]}(t)$ for $[\alpha] \in \Phi_\rho$ of type $A_1^2$}}

This case is closely analogous to the corresponding case of ${}^2A_{2m-1}$.
We first establish the normalization of the element
\[
    x_1(t) := x'_{[\alpha_1]}(t), \qquad t\in R.
\]
We claim that, with respect to the decomposition
\[
    R^{2m+1}=B_1\oplus\cdots\oplus B_m\oplus B_{m+1},
\]
the matrix $x_1(t)$ admits the block form
\[
    x_1(t)=
    \begin{pmatrix}
    A & B & 0 & \cdots & 0\\
    C & D & 0 & \cdots & 0\\
    0 & 0 & u_1 & \cdots & 0\\
    \vdots & \vdots & \vdots & \ddots & \vdots\\
    0 & 0 & 0 & \cdots & u_{m-1}
    \end{pmatrix},
\]
where $A,B,C,D,u_i$ $(i=1,\dots,m-2)$ are $2\times2$ matrices and $u_{m-1}\in R^\times$, satisfying
\begin{gather*}
    A, D, u_i \equiv I_2 \pmod{J},
    \qquad
    u_{m-1} \equiv 1 \pmod{J}, \\
    B \equiv \begin{pmatrix}
        f'(t) & 0\\
        0 & 0
    \end{pmatrix} \pmod{J},
    \qquad
    C \equiv \begin{pmatrix}
        0 & 0\\
        0 & \overline{f'(t)}
    \end{pmatrix} \pmod{J}.
\end{gather*}
Here \(f'(t)\) denotes a chosen lift of \(f(t+J)\), where \(f:k\to k\) is the residue-field automorphism induced by \(\varphi_4\).

It remains to justify the preceding block decomposition. 
If $m \geq 5$, this follows from the fact that $x_{[\alpha_1]}(t)$ commutes with $h_{[\alpha_1]}(-1)$ and with $h_{[\alpha_i]}(-1)$ for each $i=3,\dots,m-1$. 
These commuting relations are preserved projectively for their images under $\varphi_4$, and by the standard scalar-normalization argument, they lift to exact equalities in $\GL_{2m+1}(R)$.

If $m=4$, the same conclusion follows from a direct five-dimensional centralizer computation applied to $x_{\pm [\alpha_3]}(1)$ and $x_{\pm [\alpha_4]}(1, 1/2)$. 
Together with commutation with $h_{[\alpha_1]}(-1)$, this gives the stated block decomposition and forces the complementary block to be scalar.

Finally, if $m=3$, a direct three-dimensional centralizer computation gives the analogous decomposition from commutation with $h_{[\alpha_1]}(-1)$ and $x_{\pm [\alpha_3]}(1, 1/2)$.

Next, from the standard relation
\[
    h_{[\alpha_2]}(-1)\,x_{[\alpha_1]}(t)\,h_{[\alpha_2]}(-1)^{-1}
    =
    x_{[\alpha_1]}(-t),
\]
we deduce the corresponding projective identity for $X_{[\alpha_1]}(t)$. Given our choice of the representative $x_1(t)$, this relation lifts to an exact equality in $\GL_{2m+1}(R)$, namely
\[
    h_{[\alpha_2]}(-1)\,x_1(t)\,h_{[\alpha_2]}(-1)^{-1}
    =
    x_1(t)^{-1}.
\]
By analyzing the action on the blocks $B_3, \dots, B_m, B_{m+1}$ and recalling that $h_{[\alpha_2]}(-1)$ acts as $\pm I_2$ on the two-dimensional blocks $B_i$ ($i = 3, \dots, m$) and as $\pm 1$ on $B_{m+1}$, we conclude that
\[
    u_i = I_2 \quad (i = 1, \dots, m-2),
    \qquad
    u_{m-1} = 1.
\]

Let $B' := B_3 \oplus \cdots \oplus B_m \oplus B_{m+1}$.
Then, with respect to the decomposition
\[
    R^{2m+1} = B_1 \oplus B_2 \oplus B',
\]
the matrix $x_1(t)$ takes the form
\[
    x_1(t) = \begin{pmatrix}
        A & B & 0 \\
        C & D & 0 \\
        0 & 0 & I
    \end{pmatrix},
    \qquad
    \text{where } I = I_{2m-3}.
\]

Now, using the exact lifted commutativity relations with
\[
    x_{[\alpha_1]}(1),\qquad
    x_{[\alpha_1]+[\alpha_2]}(1),\qquad
    x_{-[\alpha_2]}(1),
\]
and repeating the same block computation as in the case ${}^2A_{2m-1}$, we deduce that
\[
    \varphi_4(x_{[\alpha_1]}(t)) = x_1(t) = x_{[\alpha_1]}(\mu(t)) =
    \begin{pmatrix}
        1&0&\mu(t)&0&0\\
        0&1&0&0&0\\
        0&0&1&0&0\\
        0&\overline{\mu(t)}&0&1&0\\
        0&0&0&0&I_{2m-3}
    \end{pmatrix}.
\]
Here the possible scalar factors in the lifted commutativity relations are forced to be $1$ by comparison on the common fixed summands, as in the odd-dimensional case.

Finally, every root element $x_{[\alpha]}(t)$ with $[\alpha]\in\Phi_\rho$ of type $A_1^2$ is obtained from $x_{[\alpha_1]}(\pm t)$ by conjugation with a product of normalized Weyl elements. Since these Weyl elements are fixed by $\varphi_4$ and the remaining representatives were defined by exact Weyl-conjugation identities, it follows that
\[
    \varphi_4\bigl(x_{[\alpha]}(t)\bigr) = x_{[\alpha]}(\mu(t))
\]
for every $t\in R$ and every root $[\alpha]\in\Phi_\rho$ of type $A_1^2$.

Moreover, by exactly the same argument as in the case ${}^2A_{2m-1}$, we conclude that $\mu:R\to R$ is a ring automorphism satisfying
\[
    \mu \circ \theta = \theta \circ \mu,
\]
as required.

\subsubsection{\textbf{Normalization of $x'_{[\alpha]}(t)$ for $[\alpha] \in \Phi_\rho$ of type $A_2$}}

Write $t = (t_1, t_2) \in \mathcal{A}(R)$. 
Then
\[
    t_1 \overline{t_1} = t_2 + \overline{t_2},
\]
which implies that
\[
    t_2 - \frac{t_1 \overline{t_1}}{2} \in R_\theta^-.
\]
Hence, in the group \(\mathcal{A}(R)\), we have the decomposition
\[
    t = (t_1, t_2)
      =
      \left(t_1, \frac{t_1 \overline{t_1}}{2}\right)
        \oplus
        \left(0, t_2 - \frac{t_1 \overline{t_1}}{2}\right).
\]
Consequently,
\[
    \varphi_4 \left( x_{[\alpha_m]}(t_1,t_2) \right)
    =
    \varphi_4 \left( x_{[\alpha_m]} \left(t_1, \frac{t_1 \overline{t_1}}{2}\right) \right) \, 
    \varphi_4 \left( x_{[\alpha_m]} \left(0, t_2 - \frac{t_1 \overline{t_1}}{2}\right) \right).
\]

We write
\[
    \mu(t):=(\mu(t_1),\mu(t_2)).
\]
Since \(\mu\) commutes with \(\theta\), the pair \(\mu(t)\) again belongs to \(\mathcal{A}(R)\).

To show that
\[
    \varphi_4 \left( x_{[\alpha_m]}(t) \right)
    =
    x_{[\alpha_m]}(\mu(t))
    =
    x_{[\alpha_m]}(\mu(t_1), \mu(t_2)),
\]
it suffices to prove that
\[
    \varphi_4 \left( x_{[\alpha_m]} \left(t_1, \frac{t_1 \overline{t_1}}{2}\right) \right)
    =
    x_{[\alpha_m]}\left(\mu(t_1), \frac{\mu(t_1) \overline{\mu(t_1)}}{2}\right)
\]
and
\[
    \varphi_4 \left( x_{[\alpha_m]} \left(0, t_2 - \frac{t_1 \overline{t_1}}{2}\right) \right)
    =
    x_{[\alpha_m]}\left(0, \mu(t_2) - \frac{\mu(t_1) \overline{\mu(t_1)}}{2}\right).
\]

The first equality follows from the Chevalley commutator relation
\[
    [x_{-[\alpha_{m-1}]}(t_1), x_{[\alpha_{m-1}]+[\alpha_m]}(1,1/2)]
    =
    x_{[\alpha_m]}\left(t_1, \frac{t_1 \overline{t_1}}{2}\right)
    x_{[\alpha_{m-1}]+2[\alpha_m]}(-t_1/2).
\]
After applying $\varphi_4$, both the transformed commutator formula and the image of its right-hand side contain the factor $x_{[\alpha_{m-1}]+2[\alpha_m]}(-\mu(t_1)/2)$, since the $A_1^2$-root images are known, $x_{[\alpha_{m-1}]+[\alpha_m]}(1,1/2)$ is fixed, and $\mu(-t_1/2)=-\mu(t_1)/2$. Cancelling this common factor gives
\[
    \varphi_4 \left( x_{[\alpha_m]} \left(t_1, \frac{t_1 \overline{t_1}}{2}\right) \right)
    =
    x_{[\alpha_m]}\left(\mu(t_1), \frac{\mu(t_1)\overline{\mu(t_1)}}{2}\right).
\]

The second equality follows similarly from the commutator relation
\[
    [x_{-[\alpha_{m-1}]}(u), x_{[\alpha_{m-1}]+2[\alpha_m]}(1/2)]
    =
    x_{[\alpha_m]}(0,u),
    \qquad
    \text{where }
    u := t_2 - \frac{t_1 \overline{t_1}}{2} \in R_\theta^-.
\]
Applying $\varphi_4$ and using the normalization already established for the roots of type $A_1^2$, we obtain
\[
    \varphi_4 \left( x_{[\alpha_m]} (0,u) \right)
    =
    x_{[\alpha_m]}(0,\mu(u)).
\]
Since $\mu$ is additive, multiplicative, and commutes with $\theta$, we have
\[
    \mu(u)
    =
    \mu(t_2) - \frac{\mu(t_1)\overline{\mu(t_1)}}{2}.
\]
Hence
\[
    \varphi_4 \left( x_{[\alpha_m]} \left(0, t_2 - \frac{t_1 \overline{t_1}}{2}\right) \right)
    =
    x_{[\alpha_m]}\left(0, \mu(t_2) - \frac{\mu(t_1) \overline{\mu(t_1)}}{2}\right).
\]

Combining the two equalities above, we conclude that
\[
    \varphi_4 \left( x_{[\alpha_m]}(t_1,t_2) \right)
    =
    x_{[\alpha_m]}(\mu(t_1),\mu(t_2)).
\]

Finally, every root element $x_{[\alpha]}(t)$ with $[\alpha]\in\Phi_\rho$ of type $A_2$ is obtained from $x_{[\alpha_m]}(t)^{\pm1}$ by conjugation with a product of normalized Weyl elements. Since $\mu$ commutes with $\theta$ and these Weyl elements are fixed by $\varphi_4$, it follows that
\[
    \varphi_4 \bigl(x_{[\alpha]}(t)\bigr)
    =
    x_{[\alpha]}(\mu(t))
\]
for every $t \in \mathcal{A}(R)$ and every root $[\alpha] \in \Phi_\rho$ of type $A_2$, as desired. This completes Step~(C).


\section{Proof of Theorem~\ref{MT_local2} and Theorem~\ref{MT_local3}} \label{sec:proof_of_MT2_and_MT3}


In this section, we first prove Theorem~\ref{MT_local2}. We then use this result to establish Theorem~\ref{MT_local3}.

\begin{proof}[Proof of Theorem~\ref{MT_local2}]
    Let $\varphi \colon E'_{\pi,\sigma}(\Phi,R) \to E'_{\pi,\sigma}(\Phi,R)$ be a given automorphism.  
    Lemma~\ref{lemma:E_ad=E_pi/Z} implies that $\varphi$ induces an automorphism
    \[
        \overline{\varphi} \colon E'_{\mathrm{ad},\sigma}(\Phi,R) \longrightarrow E'_{\mathrm{ad},\sigma}(\Phi,R)
    \]
    of the adjoint elementary twisted Chevalley group. 
    
    By Theorem~\ref{MT_local1}, the automorphism $\overline{\varphi}$ can be expressed as 
    \[
        \overline{\varphi} = i_g \circ \mu,
    \]
    where $i_g$ is the inner automorphism induced by an element $g \in G_{\mathrm{ad},\sigma}(\Phi,R)$, and $\mu$ is a ring automorphism.
    
    The ring automorphism $\mu$ of $R$ naturally induces a ring automorphism of $E'_{\pi,\sigma}(\Phi,R)$, which we also denote by $\mu$.  
    The remaining task is to lift the inner automorphism $i_g$ to an automorphism of $E'_{\pi,\sigma}(\Phi,R)$.

    \smallskip

    Since $R$ is a local ring, the group $G_{\mathrm{ad},\sigma}(\Phi,R)$ admits the decomposition
    \[
        G_{\mathrm{ad},\sigma}(\Phi,R) = T_{\mathrm{ad},\sigma}(\Phi,R) E'_{\mathrm{ad},\sigma}(\Phi,R).
    \]
    Therefore, every $g \in G_{\mathrm{ad},\sigma}(\Phi,R)$ can be factored as
    \[
        g = h(\chi) X,
    \]
    where $\chi \in \Hom_{1}(\Lambda_{r}, R^{\times})$ and $X \in E'_{\mathrm{ad}, \sigma}(\Phi, R)$.  
    By Lemma~\ref{lemma:E_ad=E_pi/Z}, there exists $X' \in E'_{\pi, \sigma}(\Phi, R)$ such that $\delta'_\pi(X')=X$. 
    
    To lift the torus element $h(\chi)$, we must extend the character $\chi$ of $\Lambda_r$ to a self-conjugate character $\chi'$ of $\Lambda_{\pi}$ over a suitable ring extension of $R$.

    Let $R[\Lambda_{r}]$ and $R[\Lambda_{\pi}]$ be the corresponding group algebras, with basis elements denoted by $e^{\lambda}$. The character $\chi$ defines an $R$-algebra homomorphism
    \[
        \varepsilon_{\chi} \colon R[\Lambda_{r}] \longrightarrow R
        \quad \text{by} \quad
        \varepsilon_{\chi}(e^{\alpha}) = \chi(\alpha), \quad \alpha \in \Lambda_{r}.
    \]
    Define 
    \[
        S \coloneqq R[\Lambda_{\pi}] \otimes_{R[\Lambda_{r}]} R,
    \]
    where $R$ is regarded as an $R[\Lambda_{r}]$-algebra via $\varepsilon_{\chi}$.

    Let $\mathcal{D}$ be a set of representatives for the finite quotient $\Lambda_{\pi}/\Lambda_{r}$ containing $0$. Then
    \[
        R[\Lambda_{\pi}] = \bigoplus_{\lambda \in \mathcal{D}} R[\Lambda_{r}]e^{\lambda}
    \]
    as an $R[\Lambda_{r}]$-module. Consequently,
    \[
        S \cong \bigoplus_{\lambda \in \mathcal{D}} R\bigl(e^{\lambda} \otimes 1\bigr)
    \]
    as an $R$-module. In particular, the canonical homomorphism $R \to S$ given by $r \mapsto 1 \otimes r$ is injective, allowing us to identify $R$ with its image in $S$.

    Define automorphisms of the group algebras by
    \[
        \tau_{r} \Biggl( \sum_{\alpha \in \Lambda_{r}} r_{\alpha}e^{\alpha} \Biggr) = \sum_{\alpha \in \Lambda_{r}} \theta(r_{\alpha})e^{\rho(\alpha)}
        \quad \text{and} \quad
        \tau_{\pi} \Biggl( \sum_{\lambda \in \Lambda_{\pi}} r_{\lambda}e^{\lambda} \Biggr) = \sum_{\lambda \in \Lambda_{\pi}} \theta(r_{\lambda})e^{\rho(\lambda)}.
    \]
    Since $\chi$ is self-conjugate, we have $\varepsilon_{\chi} \circ \tau_{r} = \theta \circ \varepsilon_{\chi}$.
        
    Thus, the formula
    \[
        \theta_{S}(b \otimes r) \coloneqq \tau_{\pi}(b) \otimes \theta(r)
    \]
    defines an involution on $S$ that extends $\theta$.

    For every $\lambda \in \Lambda_{\pi}$, put
    \[
        \chi'(\lambda) \coloneqq e^{\lambda} \otimes 1.
    \]
    Then $\chi'(\lambda)$ is invertible with inverse $\chi'(-\lambda)$, meaning
    \[
        \chi' \in \Hom(\Lambda_{\pi}, S^{\times}).
    \]
    For every $\alpha \in \Lambda_{r}$, the defining relations of the tensor product yield
    \[
        \chi'(\alpha) = e^{\alpha} \otimes 1 = 1 \otimes \chi(\alpha) = \chi(\alpha).
    \]
    Thus, $\chi'\big|_{\Lambda_{r}} = \chi$.
    Furthermore,
    \[
        \theta_{S}\bigl(\chi'(\lambda)\bigr) = e^{\rho(\lambda)} \otimes 1 = \chi'\bigl(\rho(\lambda)\bigr).
    \]
    Hence, $\chi' \in \Hom_{1}(\Lambda_{\pi}, S^{\times})$.

    Define the lifted torus element
    \[
        h' \coloneqq h(\chi') \in T_{\pi,\sigma}(\Phi,S).
    \]
    For every $[\alpha] \in \Phi_{\rho}$ and $t \in R_{[\alpha]}$, the action of the torus on the standard root generators gives
    \[
        h' x_{[\alpha]}(t) (h')^{-1} = x_{[\alpha]}\bigl(\chi(\alpha) \cdot t\bigr).
    \]
    Since $\chi(\alpha) \in R^{\times}$ and $\chi$ is self-conjugate, we have $\chi(\alpha) \cdot t \in R_{[\alpha]}$. 
    
    Applying the same formula to $(h')^{-1}$, we obtain
    \[
        h' E'_{\pi,\sigma}(\Phi,R) (h')^{-1} = E'_{\pi,\sigma}(\Phi,R).
    \]
    Moreover, $h'$ centralizes $T_{\pi,\sigma}(\Phi,R)$. Since $G_{\pi,\sigma}(\Phi,R) = T_{\pi,\sigma}(\Phi,R) E'_{\pi,\sigma}(\Phi,R)$, it follows that
    \[
        h' G_{\pi,\sigma}(\Phi,R) (h')^{-1} = G_{\pi,\sigma}(\Phi,R).
    \]

    Now, define
    \[
        g' \coloneqq h' X' \in G_{\pi,\sigma}(\Phi,S).
    \]
    Since $X' \in E'_{\pi,\sigma}(\Phi,R) \subseteq G_{\pi,\sigma}(\Phi,R)$, we have
    \[
        g' G_{\pi,\sigma}(\Phi,R) (g')^{-1} = G_{\pi,\sigma}(\Phi,R).
    \]
    Thus, conjugation by $g'$ restricts to a well-defined inner automorphism $i_{g'}$ of $E'_{\pi,\sigma}(\Phi,R)$. 
    By construction, $\delta'_\pi\circ i_{g'}=i_g\circ\delta'_\pi$; hence $i_{g'}$ induces $i_g$ on the adjoint quotient $E'_{\mathrm{ad},\sigma}(\Phi,R)$.

    \smallskip

    Next, consider the automorphism
    \[
        \varphi_{1} \coloneqq \mu^{-1} \circ i_{g'}^{-1} \circ \varphi \;\in\; \Aut\bigl(E'_{\pi,\sigma}(\Phi,R)\bigr).
    \]
    By construction, the automorphism induced by $\varphi_{1}$ on $E'_{\mathrm{ad},\sigma}(\Phi,R)$ is the identity. Thus, for every $x\in E'_{\pi,\sigma}(\Phi,R)$, the element $z(x):=x^{-1}\varphi_1(x)$ lies in $Z(E'_{\pi,\sigma}(\Phi,R))$. 
    Since $z(xy)=z(x)z(y)$, the map $z:E'_{\pi,\sigma}(\Phi,R)\to Z(E'_{\pi,\sigma}(\Phi,R))$ is a homomorphism; because $E'_{\pi,\sigma}(\Phi,R)$ is perfect, $z$ is trivial.  
    Hence, $\varphi_{1} = \mathrm{id}$.

    \smallskip

    Therefore, the original automorphism $\varphi$ decomposes exactly as
    \[
        \varphi = i_{g'} \circ \mu,
    \]
    where $g' \in G_{\pi,\sigma}(\Phi,S)$ normalizes $G_{\pi,\sigma}(\Phi,R)$, and $\mu$ is a ring automorphism commuting with $\theta$.  
    This completes the proof.
\end{proof}

The preceding argument in fact yields a stronger conclusion than Theorem~\ref{MT_local2}. Specifically, the element $g' \in G_{\pi,\sigma}(\Phi,S)$ defining the inner automorphism $i_{g'}$ can be chosen from the subgroup
\[
    T_{\pi,\sigma}(\Phi,S)\,E'_{\pi,\sigma}(\Phi,R).
\]
Furthermore, if we decompose $g' = h(\chi)X$ where $\chi \in \operatorname{Hom}_{1}(\Lambda_\pi, S^{\times})$ and $X \in E'_{\pi,\sigma}(\Phi,R)$, then the restriction $\chi|_{\Lambda_r}$ necessarily belongs to $\operatorname{Hom}_{1}(\Lambda_r, R^{\times})$.

\begin{defn}[Diagonal automorphisms]
    Let $\chi \in \operatorname{Hom}_1(\Lambda_r, R^{\times})$ be a character of the root lattice $\Lambda_r$. Consider a ring extension $S$ of $R$ satisfying the following conditions:
    \begin{enumerate}
        \item[(a)] the involution $\theta$ on $R$ extends to an involution on $S$;
        \item[(b)] there exists a character $\chi' \in \operatorname{Hom}_1(\Lambda_\pi, S^{\times})$ such that $\chi'|_{\Lambda_r} = \chi$.
    \end{enumerate}
    (Note that such a ring extension $S$ always exists.) 
    Let $h:= h(\chi') \in T_{\pi, \sigma}(\Phi, S)$ be the corresponding torus element.
    If $h \, G_{\pi, \sigma}(\Phi, R) \, h^{-1} = G_{\pi, \sigma}(\Phi, R)$, then the conjugation map $i_h \colon x \mapsto h x h^{-1}$ is an automorphism of $G_{\pi, \sigma}(\Phi, R)$, called a \emph{diagonal automorphism}.
\end{defn}

It follows immediately from this construction that for groups of adjoint type, every diagonal automorphism is strictly inner. 

The preceding discussion establishes the following corollary to the proof of Theorem~\ref{MT_local2}.

\begin{cor}\label{cor:inner as strictly inner}
    Under the hypotheses of Theorem~\ref{MT_local2}, every automorphism of $E'_{\pi, \sigma}(\Phi, R)$ factors as a product of strictly inner, diagonal, and ring automorphisms.
\end{cor}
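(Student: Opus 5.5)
The plan is to read the corollary off the proof of Theorem~\ref{MT_local2}, by splitting the element $g'$ produced there into a strictly inner part and a diagonal part. Let $\varphi\in\Aut(E'_{\pi,\sigma}(\Phi,R))$. As in that proof, $\varphi$ induces $\overline{\varphi}$ on $E'_{\ad,\sigma}(\Phi,R)$ via Lemma~\ref{lemma:E_ad=E_pi/Z}. Theorem~\ref{MT_local1} then writes $\overline{\varphi}=i_g\circ\mu$ with $g\in G_{\ad,\sigma}(\Phi,R)$. Since $R$ is local, $g=h(\chi)X$ with $\chi\in\Hom_1(\Lambda_r,R^\times)$ and $X\in E'_{\ad,\sigma}(\Phi,R)$.

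The construction in that proof gives three things:
\begin{itemize}
\item the ring $S=R[\Lambda_\pi]\otimes_{R[\Lambda_r]}R$, equipped with the involution $\theta_S$;
\item the self-conjugate extension $\chi'\in\Hom_1(\Lambda_\pi,S^\times)$ of $\chi$;
\item a lift $X'\in E'_{\pi,\sigma}(\Phi,R)$ of $X$.
\end{itemize}
The conclusion there is $\varphi=i_{g'}\circ\mu$ with $g'=h(\chi')X'$. We therefore factor
\[
\varphi=i_{h(\chi')}\circ i_{X'}\circ\mu .
\]
Here $i_{X'}$ is strictly inner, because $X'\in E'_{\pi,\sigma}(\Phi,R)\subseteq G_{\pi,\sigma}(\Phi,R)$.

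It remains to show that $i_{h(\chi')}$ is a diagonal automorphism in the sense of the Definition. Conditions (a) and (b) hold by construction: $\theta_S$ extends $\theta$ and is an involution, since $\tau_\pi$ and $\theta$ both have order $2$, and $\chi'|_{\Lambda_r}=\chi$ with $\chi'$ self-conjugate. We must also check that $h(\chi')$ normalizes $G_{\pi,\sigma}(\Phi,R)$. This was verified in the proof of Theorem~\ref{MT_local2}. For every generator, $h(\chi')x_{[\alpha]}(t)h(\chi')^{-1}=x_{[\alpha]}(\chi(\alpha)\cdot t)$ with $\chi(\alpha)\cdot t\in R_{[\alpha]}$, so $h(\chi')$ normalizes $E'_{\pi,\sigma}(\Phi,R)$. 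It also centralizes $T_{\pi,\sigma}(\Phi,R)$, and $G_{\pi,\sigma}=T_{\pi,\sigma}E'_{\pi,\sigma}$ over a local ring. Since $E'_{\pi,\sigma}(\Phi,R)$ is characteristic in $G_{\pi,\sigma}(\Phi,R)$, the restriction of this diagonal automorphism to $E'_{\pi,\sigma}(\Phi,R)$ is an automorphism of it. Combined with the ring automorphism $\mu$, which commutes with $\theta$, this gives the required factorization.

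The only delicate point is that the Definition is stated for automorphisms of $G_{\pi,\sigma}(\Phi,R)$, while the corollary concerns $E'_{\pi,\sigma}(\Phi,R)$. This is resolved by restricting, using the normalization of both groups verified above. The order of the factors is immaterial: conjugating a strictly inner automorphism by a diagonal one gives another strictly inner automorphism, since $h(\chi')$ normalizes $G_{\pi,\sigma}(\Phi,R)$.
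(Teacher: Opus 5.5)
Your proposal is correct and is essentially the paper's own argument. The paper also reads the corollary off the proof of Theorem~\ref{MT_local2}, writing $g' = h(\chi')X'$ with $\chi'|_{\Lambda_r} = \chi \in \operatorname{Hom}_1(\Lambda_r, R^{\times})$, so that $i_{g'} = i_{h(\chi')}\circ i_{X'}$ splits into a diagonal and a strictly inner automorphism, which is then composed with $\mu$; your extra checks (that $\theta_S$ is an involution, that $h(\chi')$ normalizes $G_{\pi,\sigma}(\Phi,R)$, and the restriction to $E'_{\pi,\sigma}(\Phi,R)$) just make explicit what the paper states in passing.
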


We now proceed to the proof of Theorem~\ref{MT_local3}.

\begin{proof}[Proof of Theorem~\ref{MT_local3}]
    Let $\varphi : G_{\pi,\sigma}(\Phi,R) \to G_{\pi,\sigma}(\Phi,R)$ be an automorphism.  
    Since $E'_{\pi,\sigma}(\Phi,R)$ is a characteristic subgroup of $G_{\pi,\sigma}(\Phi,R)$, the restriction $\varphi|_{E'_{\pi,\sigma}(\Phi,R)}$ is an automorphism of $E'_{\pi,\sigma}(\Phi,R)$.  
    By Theorem~\ref{MT_local2}, we may write
    \[
        \varphi|_{E'_{\pi,\sigma}(\Phi,R)} \;=\; i_g \circ \mu,
    \]
    where $i_g$ is an inner automorphism induced by some element $g \in G_{\pi,\sigma}(\Phi,S)$, and $\mu$ is induced by a ring automorphism of $R$.  
    Clearly, $\mu$ extends naturally to an automorphism of the whole group $G_{\pi,\sigma}(\Phi,R)$.

    It remains to verify that $i_g$ extends to an automorphism of $G_{\pi,\sigma}(\Phi, R)$.  
    Since $R$ is a local ring, we have the standard decomposition
    \[
        G_{\pi,\sigma}(\Phi, R) = T_{\pi,\sigma}(\Phi, R)\, E'_{\pi,\sigma}(\Phi, R).
    \]
    We already know that $i_g$ stabilizes $E'_{\pi,\sigma}(\Phi, R)$. 
    We claim that it also satisfies
    \[
        g T_{\pi,\sigma}(\Phi, R) g^{-1} \subseteq G_{\pi,\sigma}(\Phi, R).
    \]
    
    By the construction in the proof of Theorem~\ref{MT_local2}, we may choose $g$ in the form
    \[
        g = h(\chi) X,
    \]
    where $\chi \in \operatorname{Hom}_1(\Lambda_\pi, S^{\times})$ such that $\chi|_{\Lambda_r} \in \operatorname{Hom}_{1}(\Lambda_r, R^{\times})$, and $X \in E'_{\pi,\sigma}(\Phi,R)$. 
    
    For any $h(\chi') \in T_{\pi,\sigma}(\Phi,R)$, we have
    \[
        g h(\chi') g^{-1}
        = h(\chi) X h(\chi') X^{-1} h(\chi)^{-1}
        = Y h(\chi') Y^{-1},
    \]
    where $Y \coloneqq h(\chi) X h(\chi)^{-1}$.  
    Because $\chi|_{\Lambda_r}$ takes values in $R^{\times}$, conjugation by $h(\chi)$ preserves $E'_{\pi,\sigma}(\Phi,R)$; hence, $Y \in E'_{\pi,\sigma}(\Phi,R)$.  
    Therefore,
    \[
        g h(\chi') g^{-1} \in E'_{\pi,\sigma}(\Phi,R) T_{\pi,\sigma}(\Phi,R) = G_{\pi,\sigma}(\Phi,R).
    \]
    This proves our claim. 

    Combining this with the fact that $E'_{\pi,\sigma}(\Phi,R)$ is invariant under $i_g$, we deduce that
    \[
        g \, G_{\pi,\sigma}(\Phi,R) \, g^{-1} \subseteq G_{\pi,\sigma}(\Phi,R).
    \]
    Thus, $i_g$ is a well-defined homomorphism on $G_{\pi, \sigma}(\Phi, R)$.
    Since it is given by conjugation, $i_g$ is trivially injective. 
    
    We now show that $i_g$ is surjective. Let $y \in G_{\pi, \sigma}(\Phi, R)$. We seek an element $x \in G_{\pi, \sigma}(\Phi, R)$ such that $i_g(x) = y$.

    Define $b \coloneqq i_g(y) y^{-1}$. 
    We claim that $b \in E'_{\pi, \sigma}(\Phi, R)$. 
    Indeed, $h(\chi)$ centralizes the torus and $X\in E'_{\pi,\sigma}(\Phi,R)$, so $i_g$ induces the identity on $G_{\pi,\sigma}(\Phi,R)/E'_{\pi,\sigma}(\Phi,R)$. Consequently, $i_g(y)y^{-1}\in E'_{\pi,\sigma}(\Phi,R)$.

    Because the restriction of $i_g$ to $E'_{\pi,\sigma}(\Phi,R)$ is an automorphism, it is surjective on this subgroup. Hence, there exists an element $a \in E'_{\pi,\sigma}(\Phi,R)$ such that $i_g(a) = b^{-1}$.
    Consequently,
    \[
        i_g(ay) = i_g(a) i_g(y) = b^{-1} i_g(y) = y.
    \]
    By setting $x = ay \in G_{\pi, \sigma}(\Phi, R)$, we conclude that $i_g$ is surjective, as desired.

    \smallskip

    Finally, set
    \[
        \varphi_{1} \;=\; \mu^{-1} \circ i_g^{-1} \circ \varphi.
    \]
    Then $\varphi_1$ is an automorphism of $G_{\pi, \sigma}(\Phi, R)$ such that $\varphi_1 \mid_{E'_{\pi, \sigma}(\Phi, R)}$ is the identity map. Since $E'_{\pi,\sigma}(\Phi, R)$ is a normal subgroup of $G_{\pi, \sigma}(\Phi, R)$, for any $x \in E'_{\pi, \sigma}(\Phi, R)$ and $a \in G_{\pi, \sigma}(\Phi, R)$, there exists $y \in E'_{\pi, \sigma}(\Phi, R)$ such that 
    \[
        a x a^{-1} = y.
    \]
    Applying $\varphi_1$ to both sides, we obtain
    \[
        \varphi_1(a x a^{-1}) = \varphi_1(y) = y = a x a^{-1},
    \]
    which implies
    \[
        \varphi_1(a) x \varphi_1(a^{-1}) = a x a^{-1}.
    \]
    Therefore, we have
    \[
        a^{-1} \varphi_1(a) x = x a^{-1} \varphi_1(a).
    \]
    This holds for every $x \in E'_{\pi, \sigma}(\Phi, R)$, so $a^{-1} \varphi_1(a) \in C_{G_{\pi, \sigma}(\Phi, R)}(E'_{\pi, \sigma}(\Phi, R))$, the centralizer of $E'_{\pi, \sigma}(\Phi, R)$ in $G_{\pi, \sigma}(\Phi, R)$. 
    By \cite{KS3}, this centralizer coincides with the center $Z(G_{\pi, \sigma}(\Phi, R))$. 
    Hence,
    \[
        a^{-1}\varphi_1(a) \in Z\bigl(G_{\pi,\sigma}(\Phi,R)\bigr) \qquad \text{for all } a \in G_{\pi,\sigma}(\Phi,R).
    \]

    Define the map $\tau \colon G_{\pi,\sigma}(\Phi,R) \to Z\bigl(G_{\pi,\sigma}(\Phi,R)\bigr)$ by $\tau(a) \coloneqq a^{-1}\varphi_1(a)$. 
    It follows that $\varphi_1(a) = a \tau(a) = \tau(a) a$.

    We claim that $\tau$ is a group homomorphism. Indeed, for all $a,b \in G_{\pi,\sigma}(\Phi,R)$, we have
    \[
        \tau(ab) = (ab)^{-1}\varphi_1(ab) = b^{-1}a^{-1}\varphi_1(a)\varphi_1(b) = b^{-1}\tau(a)\varphi_1(b) = \tau(a)b^{-1}\varphi_1(b) = \tau(a)\tau(b),
    \]
    where the penultimate equality uses the fact that $\tau(a)$ is central.

    Thus, $\tau$ is a group homomorphism, meaning that $\varphi_1$ is a central automorphism of $G_{\pi,\sigma}(\Phi,R)$. Since $\varphi_1 = \mu^{-1} \circ i_g^{-1} \circ \varphi$, we finally obtain
    \[
        \varphi = i_g \circ \mu \circ \varphi_1.
    \]
    Therefore, $\varphi$ is the composition of an inner automorphism, a ring automorphism, and a central automorphism. This completes the proof.
\end{proof}

The proof of Theorem~\ref{MT_local3}, together with Corollary~\ref{cor:inner as strictly inner}, yields the following result.

\begin{cor}
    Under the hypotheses of Theorem~\ref{MT_local3}, every automorphism of $G_{\pi, \sigma}(\Phi, R)$ factors as a product of strictly inner, diagonal, ring, and central automorphisms.
\end{cor}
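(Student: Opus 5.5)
The corollary should follow by combining the factorization from Corollary~\ref{cor:inner as strictly inner} with the argument in the proof of Theorem~\ref{MT_local3}. Let $\varphi$ be an automorphism of $G_{\pi,\sigma}(\Phi,R)$. Since $E'_{\pi,\sigma}(\Phi,R)$ is characteristic, $\varphi$ restricts to an automorphism of $E'_{\pi,\sigma}(\Phi,R)$. By Corollary~\ref{cor:inner as strictly inner} and the construction in the proof of Theorem~\ref{MT_local2}, this restriction has the form $i_{h}\circ i_{X}\circ\mu$. Here $X\in E'_{\pi,\sigma}(\Phi,R)$, and $h=h(\chi')\in T_{\pi,\sigma}(\Phi,S)$ with $\chi'|_{\Lambda_r}\in\Hom_1(\Lambda_r,R^\times)$. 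Finally, $\mu$ is a ring automorphism commuting with $\theta$.

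First, I will check that each factor extends to an automorphism of the whole group $G_{\pi,\sigma}(\Phi,R)$. This is clear for $\mu$ and for the strictly inner automorphism $i_X$. For $i_h$, the proof of Theorem~\ref{MT_local2} already shows that $h$ normalizes $E'_{\pi,\sigma}(\Phi,R)$ and centralizes $T_{\pi,\sigma}(\Phi,R)$. Using $G_{\pi,\sigma}(\Phi,R)=T_{\pi,\sigma}(\Phi,R)E'_{\pi,\sigma}(\Phi,R)$, it follows that $hG_{\pi,\sigma}(\Phi,R)h^{-1}=G_{\pi,\sigma}(\Phi,R)$. Therefore $i_h$ is a diagonal automorphism in the sense of the Definition above.

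Next, set $\varphi_1:=\mu^{-1}\circ i_X^{-1}\circ i_h^{-1}\circ\varphi$. This is an automorphism of $G_{\pi,\sigma}(\Phi,R)$ that is the identity on $E'_{\pi,\sigma}(\Phi,R)$. The final paragraph of the proof of Theorem~\ref{MT_local3} then applies verbatim. It uses the normality of $E'_{\pi,\sigma}(\Phi,R)$, together with \cite{KS3} to identify the centralizer of $E'_{\pi,\sigma}(\Phi,R)$ with $Z(G_{\pi,\sigma}(\Phi,R))$. This shows that $a\mapsto a^{-1}\varphi_1(a)$ is a homomorphism into the center, so $\varphi_1$ is a central automorphism. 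Hence
\[
\varphi=i_h\circ i_X\circ\mu\circ\varphi_1,
\]
a product of diagonal, strictly inner, ring and central automorphisms.

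The only point needing care is that the diagonal factor genuinely extends to $G_{\pi,\sigma}(\Phi,R)$ rather than just to $E'_{\pi,\sigma}(\Phi,R)$. This is handled by the torus-centralizing property of $h$, as shown above. If a particular ordering of the factors is desired, one can reorder them. Conjugating a strictly inner automorphism by a diagonal or ring automorphism yields another strictly inner one, since these automorphisms preserve $G_{\pi,\sigma}(\Phi,R)$. Likewise, conjugating a central automorphism by an automorphism yields a central one.
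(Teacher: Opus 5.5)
Your proposal is correct and follows the paper's own route: the paper gets this corollary by combining Corollary~\ref{cor:inner as strictly inner} with the proof of Theorem~\ref{MT_local3}. That means splitting $g=h(\chi)X$ into a diagonal and a strictly inner factor, extending both to $G_{\pi,\sigma}(\Phi,R)$, and using the centralizer-equals-center argument to show that the remaining automorphism is central. You extend $i_h$ directly, using that $h$ centralizes $T_{\pi,\sigma}(\Phi,R)$ and normalizes $E'_{\pi,\sigma}(\Phi,R)$; this is the observation already made in the proof of Theorem~\ref{MT_local2}, and it makes the separate surjectivity check in the proof of Theorem~\ref{MT_local3} unnecessary.
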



\section*{Acknowledgements}

The authors are deeply grateful to the late Professor Eugene Plotkin, who took part in discussions of this project and generously shared his insights, comments, and encouragement. This paper is dedicated to his memory.

The authors also thank Shripad M.~Garge, Dipendra Prasad, and Pavel Gvozdevsky for valuable discussions and helpful comments on various aspects of this work.



\end{document}